\definecolor{linkcolor}{rgb}{0,0,0.6}
\tikzset{
  symbol/.style={
    draw=none,
    every to/.append style={
      edge node={node [sloped, allow upside down, auto=false]{$#1$}}}
  }
}
\title{The Bruhat-Tits stratification for basic unramified $\mathrm{GU}(1,n-1)$ Rapoport-Zink spaces at arbitrary parahoric level}
\author{Joseph Muller\footnote{NCTS, National Taiwan University, muller@ncts.ntu.edu.tw}\,\orcidlink{0000-0002-1546-0910}}
\date{}
\begin{document}

\newtheorem{theo}{Theorem}[section]
\newtheorem{prop}[theo]{Proposition}
\newtheorem{lem}[theo]{Lemma}
\newtheorem{corol}[theo]{Corollary}
\newtheorem{conj}[theo]{Conjecture}
\newtheorem*{theo*}{Theorem}
\newtheorem*{lem*}{Lemma}
\newtheorem*{prop*}{Proposition}
\newtheorem*{corol*}{Corollary}

\theoremstyle{remark}
\newtheorem{rk}[theo]{Remark}
\newtheorem{rks}[theo]{Remarks}
\newtheorem{ex}[theo]{Example}

\theoremstyle{definition}
\newtheorem{defi}[theo]{Definition}
\newtheorem*{notation}{Notation}
\newtheorem*{notations}{Notations}


\maketitle

\begin{center}

\parbox{15cm}{\small
\textbf{Abstract} : \it In this paper, we describe a stratification on the reduced special fiber of the basic unramified unitary Rapoport-Zink space of signature $(1,n-1)$ and at arbitrary parahoric level. We prove the smoothness, irreducibility and compute the dimensions of the closed strata, which are isomorphic to the closure of certain fine Deligne-Lusztig varieties for a product of unitary and general linear groups. We also describe the incidence relations of the stratification by using Bruhat-Tits indices, which are related to the Bruhat-Tits building of an underlying $p$-adic unitary group.}

\vspace{0.5cm}
\end{center}

\tableofcontents

\vspace{1.5cm}

\section{Introduction} 
In \cite{cox15}, \cite{fullHN} and \cite{cox24}, Görtz, He and Nie identified and fully classified all the basic Rapoport-Zink spaces whose (reduced) special fiber can be, up to perfection, naturally stratified as a union of classical Deligne-Lusztig varieties. The Rapoport-Zink space (or rather the underlying group theoretic datum) is said to be \textit{fully Hodge-Newton decomposable} if such a decomposition exists, and \textit{of Coxeter type} if additionally all the Deligne-Lusztig varieties are actually Coxeter varieties, as defined by Lusztig in \cite{CoxOrbits}. The authors used a purely group theoretic approach relying on affine Deligne-Lusztig varieties. While they prove the existence of a stratification by classical Deligne-Lusztig varieties, the indexing poset of this stratification, denoted $\mathrm{Adm}(\{\mu\})\cap {}^K\widetilde{W}$ in \cite{fullHN}, is not explicitely described. In fact, the underlying combinatorics is expected to be quite complicated, especially outside of the Coxeter case. Nonetheless, it has been determined successfully on a case-by-case principle in a handful of cases throughout the years, following the pioneering approach of Vollaard and Wedhorn in \cite{vw1} and \cite{vw2} using vertex lattices. The resulting stratification is known as the \textit{Bruhat-Tits stratification} because of the relation between vertex lattices and the Bruhat-Tits building of the underlying $p$-adic group. Together with loc. cit., a long series of contributions by many authors (in chronological order of publication \cite{howardpappasgu}, \cite{RTW}, \cite{wu}, \cite{howardpappasspinor}, \cite{tatecycle}, \cite{wangquaternion}, \cite{fox}, \cite{oki}) unveiled the combinatorics of the stratification in many cases of Coxeter type. In cases which are fully Hodge-Newton decomposable but not of Coxeter type, a similar approach was also successful in
\cite{cho}, \cite{wangparahoric}, \cite{maxramified} and \cite{gspinbasiclocus}. Eventually, we point out that the special fiber of certain Rapoport-Zink spaces which are not fully Hodge-Newton decomposable has also been studied in \cite{FoxImai}, \cite{FoxImaiHoward}, \cite{shimada} and \cite{trentin}. While one can not expect a decomposition into a union of classical Deligne-Lusztig varieties anymore, it turns out that there still exists a stratification, whose strata can be Deligne-Lusztig varieties or fiber bundles over them, giving a glimpse of what might be a bigger picture going beyond the fully Hodge-Newton decomposable case. With the exception of \cite{wangparahoric}, all the papers cited above deal with Rapoport-Zink spaces at maximal parahoric level (or, in ramified settings, vertex stabilizer level).\\
In this paper, we consider the basic unramified unitary Rapoport-Zink space of signature $(1,n-1)$. The hyperspecial level has been studied by Vollaard and Wedhorn in \cite{vw1} and \cite{vw2}, and their results have been generalized by Cho to the case of any maximal parahoric level in \cite{cho}. We complete the picture by generalizing one step further to the case of arbitrary parahoric level. We define a notion of \textit{Bruhat-Tits index}, which is a certain kind of chains of vertex lattices subject to some constraints on their types, and use them as indexing set for our Bruhat-Tits stratification. We prove that the closed Bruhat-Tits strata (ie. the Zariski closure of the strata) are isomorphic to the closure in a finite partial flag variety of a fine Deligne-Lusztig variety which we explicitely determine. Using a result of He in \cite{he}, we compute the decomposition of these closures as a union of fine Deligne-Lusztig varieties of smaller dimensions. We prove smoothness and irreducibility of the closed Bruhat-Tits strata, and study their orbits under the action of the group of auto-quasi-isogenies of the underlying $p$-divisible group. In particular, we determine the dimensions and the number of orbits of the irreducible components of the special fiber of the Rapoport-Zink space.\\
Via $p$-adic uniformization, these results can be transported to the basic locus of the corresponding unitary Shimura variety of signature $(1,n-1)$ over an inert prime. While we do not spell this out in this paper, the interested reader may follow the lines of \cite{RZ}, \cite{vw2} and \cite{cho} with no difficulty. Explicit constructions of the Bruhat-Tits stratifications in the papers cited in the first paragraph have many applications of arithmetical nature, such as the Kudla-Rapoport conjecture, the Tate-conjecture for Shimura varieties, the arithmetic Gan-Gross-Prasad conjecture, level raising and level lowering problems, etc. We hope that our results may contribute to such applications as well.\\
Let us sum up our main results in more details.\\

Let $p>2$ be an odd prime number and let $n\geq 1$. Fix a tuple of integers $\mathbbm h := (h_1 < \ldots < h_{m})$ where $m \geq 1$, $0 \leq h_i \leq n$ for all $1\leq i \leq m$, and where all the $h_i$'s have the same parity. Such tuples determine the parahoric level of the corresponding Rapoport-Zink space. For instance, the hyperspecial level studied in \cite{vw1} and \cite{vw2} corresponds to $m=1$ and $h_1 = 0$ or $h_1 = n$ (both choices giving rise to isomorphic spaces). In \cite{cho}, the case $m = 1$ with no restriction on $h_1$, ie. the maximal parahoric case, has been considered.\\

\textbf{For notational convenience, in this Introduction we assume {\boldmath $m \geq 2$}, {\boldmath $h_1 = 0$} and {\boldmath $h_m = n$}.} As it turns out, many definitions in this paper require to distinguish whether $h_1 = 0$ or $h_m = n$, or not. To avoid lengthy discussions, we leave this aside here. Let $F$ be a $p$-adic field and $E/F$ an unramified quadratic extension. Let $\pi$ denote a common uniformizer, and let $\breve E$ denote the completion of the maximal unramified extension of $E$. The symbols $\mathcal O$ and $\kappa$ will be used to denote rings of integers and the residue fields of the corresponding local fields. Let $q := \#\kappa_F$. The Rapoport-Zink space $\mathcal N_{E/F}^{\mathbbm h}$ is a formal scheme over $\mathrm{Spec}(\mathcal O_{\breve E})$ which is locally formally of finite type and regular. It classifies deformations by quasi-isogenies of a fixed framing object, that is a chain of mutually isogeneous supersingular strict formal $\mathcal O_F$-modules $(\mathbb X^{[h_i]})_{1\leq i \leq m}$ over $\mathbb F_{q^2}$, equipped with an $\mathcal O_E$-action of signature $(1,n-1)$ and a polarization of degree $q^{2h}$. By Dieudonné theory, we produce a certain hermitian space $(C,\{\cdot,\cdot\})$ of dimension $n$ over $F_{\mathbb F_{q^2}} := \mathrm{Frac}(W_{\mathcal O_F}(\mathbb F_{q^2}))$, whose discriminant is determined by the parity of the components of $\mathbbm h$. Note that $F_{\mathbb F_{q^2}} \simeq E$, but since both fields play different roles we single them with different notations using relative rings of Witt vectors. For an algebraically closed field $k$ containing $\kappa_{\breve E}$, we have (cf. Proposition \ref{PointsRZSpaceOverFields})
\begin{equation*}
\mathcal N_{E/F}^{\mathbbm h}(k) \simeq \left\{\begin{array}{c}
W_{\mathcal O_F}(k)\text{-lattices in }C_k\\
A_m \subset \ldots \subset A_1 \subset B_1 \subset \ldots \subset B_m
\end{array}
\,\middle|\, \forall 1\leq i \leq m, 
\begin{array}{c}
\pi A_i^{\vee} \overset{1}{\subset} B_i \subset A_i^{\vee},\\
\pi B_i^{\vee} \overset{1}{\subset} A_i \subset B_i^{\vee},\\
\pi B_i \subset A_i \overset{h_i}{\subset} B_i 
\end{array}
\right\}.
\end{equation*}
Here $C_k := C \otimes_{F_{\mathbb F_{q^2}}} \mathrm{Frac}(W_{\mathcal O_F}(k))$ and $\cdot^{\vee}$ denotes dual lattices with respect to the extension of $\{\cdot,\cdot\}$ to $C_k$. We note that $A_1 = B_1$ and $\pi B_m = A_m$ since we assumed $h_1 = 0$ and $h_m = n$ in this Introduction. \\
We point out that $(M^{\vee})^{\vee} = \tau(M)$ for any $W_{\mathcal O_F}(k)$-lattice $M \subset C_k$, where $\tau := \mathrm{id} \otimes \sigma^2$ and $\sigma$ is the lift to $\mathrm{Frac}(W_{\mathcal O_F}(k))$ of the $q$-power arithmetic Frobenius $x\mapsto x^q$ on $k$. Fix a point $(A_m \subset \ldots \subset B_m) \in \mathcal N_{E/F}^{\mathbbm h}(k)$. For $1 \leq i \leq m$, let $c_i,d_i \geq 1$ be the smallest integers such that the lattices 
\begin{align*}
T_{c_i}(A_i) & := A_i + \tau(A_i) + \ldots + \tau^{c_i-1}(A_i),\\
T_{d_i}(B_i) & := B_i + \tau(B_i) + \ldots + \tau^{d_i-1}(B_i),
\end{align*}
are $\tau$-stable. Let $\Lambda_{A_i},\Lambda_{B_i}$ be the $W_{\mathcal O_F}(\mathbb F_{q^2})$-lattices in $C$ such that $T_{c_i}(A_i) = (\Lambda_{A_i})_k$ and $T_{d_i}(B_i) = (\Lambda_{B_i})_k$. Eventually, for $i=0,1$ define the set of \textit{vertex lattices} 
\begin{equation*}
\mathcal L_i := \left\{W_{\mathcal O_F}(\mathbb F_{q^2})\text{-lattices } \Lambda \subset C \,\middle|\, \pi^{i+1}\Lambda^{\vee} \subset \Lambda \subset \pi^i\Lambda^{\vee}\right\}.
\end{equation*}
The \textit{type} of $\Lambda \in \mathcal L_i$ is the integer $t(\Lambda) := [\Lambda:\pi^{i+1}\Lambda^{\vee}]$. We have $0 \leq t(\Lambda) \leq n$, $t(\Lambda)$ is odd if $i=0$, and $t(\Lambda) \equiv n+1 \mod 2$ if $i=1$. We define the \textit{Bruhat-Tits type} of a point $(A_m \subset \ldots \subset B_m) \in \mathcal N_{E/F}^{\mathbbm h}(k)$ as the subset $I \subset \{1,\ldots,m-1\}$ such that 
\begin{itemize}
\item for $1 \leq i \leq m-1$, $i \in I \iff \Lambda_{B_i} \subset \pi\Lambda_{A_{i+1}}^{\vee}$.
\end{itemize}
We note that the inclusion $\Lambda_{B_i} \subset \pi\Lambda_{A_{i+1}}^{\vee}$ implies that $\Lambda_{B_i} \in \mathcal L_0$ and $\Lambda_{A_{i+1}} \in \mathcal L_1$ (cf. Proposition \ref{ConditionStar}), but the converse does not hold. If $h_1 > 0$ or $h_m < n$, the definition of Bruhat-Tits type is slightly different, cf. Definition \ref{BTType}. The starting point of our analysis is the following ``crucial Lemma'' which, in its general version, is a generalization of \cite{vw1} Lemma 2.1 and of \cite{cho} Lemma 2.7 (cf Lemma \ref{CrucialLemma}).

\begin{lem*}
The Bruhat-Tits type $I$ of any point $(A_m \subset \ldots \subset B_m) \in \mathcal N_{E/F}^{\mathbbm h}(k)$ is not empty.
\end{lem*}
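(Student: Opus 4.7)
The approach is to argue by contradiction: assume $I = \emptyset$, so that $\Lambda_{B_i} \not\subset \pi \Lambda_{A_{i+1}}^{\vee}$ for every $i \in \{1,\ldots,m-1\}$, and derive a contradiction from the chain inclusions together with the endpoint conditions $A_1 = B_1$ and $\pi B_m = A_m$. The first step is to reformulate the condition over $C_k$: since duality commutes with $\tau$ and with extension of scalars, the statement $\Lambda_{B_i} \subset \pi\Lambda_{A_{i+1}}^{\vee}$ is equivalent to $T_{d_i}(B_i) \subset \pi T_{c_{i+1}}(A_{i+1})^{\vee} = \pi\bigcap_{j=0}^{c_{i+1}-1}\tau^{j}(A_{i+1}^{\vee})$, and the $\tau$-stability of the left-hand side further reduces this to the single inclusion $T_{d_i}(B_i) \subset \pi A_{i+1}^{\vee}$. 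Note that $B_i \subset B_{i+1} \subset A_{i+1}^{\vee}$ is automatic from the chain; what the vertex condition asks is that every $\tau$-translate of $B_i$ land inside the proper sublattice $\pi A_{i+1}^{\vee}$.

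The second step is to anchor the two ends of the chain using the endpoint data. The relation $A_1 = B_1$ combined with $\pi A_1^{\vee} \overset{1}{\subset} A_1 \subset A_1^{\vee}$ puts $A_1$ in near self-duality, and the single-lattice crucial lemmas of Vollaard--Wedhorn \cite{vw1} and Cho \cite{cho} show that $\Lambda_{A_1} = \Lambda_{B_1}$ lies in $\mathcal L_0$ with a computable type. A symmetric argument using $\pi B_m = A_m$ places $\Lambda_{B_m}$ in $\mathcal L_1$ after a shift by $\pi$. These endpoint vertex lattices anchor the chain, while the inclusions $\Lambda_{A_{i+1}} \subset \Lambda_{A_i} \subset \Lambda_{B_i} \subset \Lambda_{B_{i+1}}$ inherited from the chain in $C_k$ force a compatible structure throughout.

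The third step is to propagate the anchoring along the chain. Attach to each index $i \in \{1,\ldots,m-1\}$ a non-negative integer measuring the protrusion of $T_{d_i}(B_i)$ outside $\pi A_{i+1}^{\vee}$; under the assumption $I = \emptyset$ each such integer is strictly positive. A global length count---summing these contributions and comparing to the closure length imposed by $\Lambda_{A_m} = \pi\Lambda_{B_m}$ together with the anchored endpoint types---should be incompatible with strict positivity at every index, yielding the contradiction.

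The main obstacle is the delicate interplay between the $\tau$-hull operation (a sum of $\tau$-translates) and duality (which on a $\tau$-hull becomes an intersection of $\tau$-translates of the dual). Identifying a numerical invariant that is simultaneously compatible with both operations and with the chain inclusions is the central technical challenge, and this is precisely where the Bruhat-Tits intuition---viewing the chain of $\Lambda$'s as a simplex in the building of the underlying unitary group---should guide the combinatorics.
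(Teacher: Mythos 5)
Your contradiction setup, the reformulation of $\Lambda_{B_i}\subset\pi\Lambda_{A_{i+1}}^{\vee}$ as the single inclusion $T_{d_i}(B_i)\subset\pi A_{i+1}^{\vee}$, and the identification of the endpoint vertex lattices are all correct and in the spirit of the paper. But there is a genuine gap where the argument has to land a contradiction, and the gap is not just a matter of filling in details.

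You have not used the rigidity that drives the whole proof: under $I=\emptyset$, Proposition~\ref{ConditionStar}(3) (the generalization of Cho's Lemma 2.8) forces \emph{all} of the integers $c_1,\dots,c_m,d_1,\dots,d_m$ to coincide with a single value $t$. Without this, none of the $\tau$-hull manipulations propagate. With it, the paper shows a stronger non-inclusion than your reformulation delivers: not merely $T_{d_i}(B_i)\not\subset\pi A_{i+1}^{\vee}$, but $B_i\not\subset\pi A_{i+1}^{\vee}$ for every $i$ (the argument is that if $B_i\subset\pi A_{i+1}^{\vee}$, then using $\pi A_{i+1}^{\vee}+\pi\tau(A_{i+1}^{\vee})=\tau(B_{i+1})$ together with the equal lengths one walks the hull of $B_i$ inside $\pi(\Lambda_{A_{i+1}})_k^{\vee}$, contradicting $i\notin I$). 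Since $\pi A_{i+1}^{\vee}\overset{1}{\subset}B_{i+1}$, this forces $B_{i+1}=B_i+\pi A_{i+1}^{\vee}$, hence $B_m=\pi A_m^{\vee}+B_1$ and, dually, $A_1=\pi B_1^{\vee}+A_m$. The contradiction is then not a ``global length count'' but a clean dichotomy: the chain $\pi A_1^{\vee}\subset\pi A_1^{\vee}+A_m\subset B_1\cap B_m^{\vee}\subset B_1$ has total index $1$, so either $\pi A_1^{\vee}+A_m=\pi A_1^{\vee}$ or $B_1\cap B_m^{\vee}=B_1$. The first yields $A_1\subset\pi A_1^{\vee}$, hence $\Lambda_{A_1}\in\mathcal L_1$, and the second yields $B_m\subset B_m^{\vee}$, hence $\Lambda_{B_m}\in\mathcal L_0$; either conclusion contradicts $a>1$ or $b<m$. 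Your proposed invariant (the ``protrusion'' of $T_{d_i}(B_i)$ past $\pi A_{i+1}^{\vee}$) is always exactly $1$ once the above is in place, so a sum of protrusions carries no information by itself; what is needed is the index-$1$ chain and the resulting dichotomy, which your plan does not supply. To close the gap you would need (i) the common value $t$ of the $c_i,d_i$, (ii) the upgrade from ``$\tau$-hull not contained'' to ``$B_i$ itself not contained'', and (iii) the index-$1$ pigeonhole replacing the length count.
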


A pair $(I,\mathbf{\Lambda})$ is called a \textit{Bruhat-Tits index} (cf. Definition \ref{DefinitionBTIndex}) if $I$ is a non-empty subset of $\{1,\ldots ,m-1\}$ and $\mathbf{\Lambda}$ is a collection of vertex lattices $\Lambda_0^i$ and $\Lambda_1^i$ for all $i\in I$, such that 
\begin{itemize}
\item $\forall i \in I, \Lambda_0^i \in \mathcal L_0^{\geq h_i+1}$ and $\Lambda_1^{i} \in \mathcal L_1^{\geq n-h_{i+1}+1}$,
\item if $1 \leq i_1 < \ldots < i_s \leq m-1$ denote the elements of $I$, we have
\begin{equation*}
\Lambda_0^{i_1} \subset \pi\Lambda_1^{i_1\vee} \subset \Lambda_0^{i_2} \subset \ldots \subset \pi \Lambda_1^{i_s-1\vee} \subset \Lambda_0^{i_s} \subset \pi\Lambda_1^{i_s\vee}.
\end{equation*}
\end{itemize}
Here, for any $0 \leq x \leq n$ we write $\mathcal L_i^{\geq x}$ for the subset of $\mathcal L_i$ consisting of those vertex lattices with $t(\Lambda) \geq x$. To any Bruhat-Tits index $(I,\mathbf{\Lambda})$, we attach a subset $\mathcal N_{I,\mathbf{\Lambda}}^{\mathbbm h}(k) \subset \mathcal N_{E/F}^{\mathbbm h}(k)$ (cf. Definition \ref{DefinitionBTStrata}) consisting of all the points $(A_m \subset \ldots \subset B_m)$ such that 

\begin{center}
\begin{tikzcd}[column sep=small, row sep=small]
\pi(\Lambda_0^{i_1})_k^{\vee} \arrow[r,symbol=\subset] & \pi B_{i_1}^{\vee} \arrow[d,symbol=\subset, outer sep=2pt,"1"] \arrow[r,symbol=\subset] & \ldots \arrow[r,symbol=\subset] & \pi B_1^{\vee} \arrow[d,symbol=\subset, outer sep=2pt,"1"] \arrow[r,symbol=\subset] & \pi A_1^{\vee} \arrow[d,symbol=\subset, outer sep=2pt,"1"] \arrow[r,symbol=\subset] & \ldots \arrow[r,symbol=\subset] & \pi A_{i_1}^{\vee} \arrow[d,symbol=\subset, outer sep=2pt,"1"] & {} \\
{} & A_{i_1} \arrow[r,symbol=\subset] & \ldots \arrow[r,symbol=\subset] & A_1 \arrow[r,symbol=\subset] & B_1 \arrow[r,symbol=\subset] & \ldots \arrow[r,symbol=\subset] & B_{i_1}  \arrow[r,symbol=\subset] & (\Lambda_0^{i_1})_k
\end{tikzcd}
\end{center} 

\noindent and

\begin{center}
\begin{tikzcd}[column sep=small, row sep=small]
\pi^2(\Lambda_1^{i_s})_k^{\vee} \arrow[r,symbol=\subset] & \pi^2 A_{i_s+1}^{\vee} \arrow[d,symbol=\subset, outer sep=2pt,"1"] \arrow[r,symbol=\subset] & \ldots \arrow[r,symbol=\subset] & \pi^2 A_m^{\vee} \arrow[d,symbol=\subset, outer sep=2pt,"1"] \arrow[r,symbol=\subset] & \pi B_m^{\vee} \arrow[d,symbol=\subset, outer sep=2pt,"1"] \arrow[r,symbol=\subset] & \ldots \arrow[r,symbol=\subset] & \pi B_{i_s+1}^{\vee} \arrow[d,symbol=\subset, outer sep=2pt,"1"] & {} \\
{} & \pi B_{i_s+1} \arrow[r,symbol=\subset] & \ldots \arrow[r,symbol=\subset] & \pi B_m \arrow[r,symbol=\subset] & A_m \arrow[r,symbol=\subset] & \ldots \arrow[r,symbol=\subset] & A_{i_s+1}  \arrow[r,symbol=\subset] & (\Lambda_1^{i_s})_k
\end{tikzcd}
\end{center}

\noindent and for all $1 \leq j \leq s-1$,\\

\hspace*{-3.2cm}
\begin{tikzcd}[column sep=small, row sep=small]
\pi^2(\Lambda_1^{i_j})_k^{\vee} \arrow[r,symbol=\subset] & \pi^2 A_{i_j+1}^{\vee} \arrow[d,symbol=\subset, outer sep=2pt,"1"] \arrow[r,symbol=\subset] & \ldots \arrow[r,symbol=\subset] & \pi^2 A_{i_{j+1}}^{\vee} \arrow[d,symbol=\subset, outer sep=2pt,"1"] & {} \\
{} & \pi B_{i_j+1} \arrow[r,symbol=\subset] & \ldots \arrow[r,symbol=\subset] & \pi B_{i_{j+1}} \arrow[r,symbol=\subset] & \pi(\Lambda_0^{i_{j+1}})_k
\end{tikzcd}
\begin{tikzcd}[column sep=small, row sep=small]
\pi(\Lambda_0^{i_{j+1}})_k^{\vee} \arrow[r,symbol=\subset] & \pi B_{i_{j+1}}^{\vee} \arrow[d,symbol=\subset, outer sep=2pt,"1"] \arrow[r,symbol=\subset] & \ldots \arrow[r,symbol=\subset] & \pi B_{i_j+1}^{\vee} \arrow[d,symbol=\subset, outer sep=2pt,"1"] & {} \\
{} & A_{i_{j+1}} \arrow[r,symbol=\subset] & \ldots \arrow[r,symbol=\subset] & A_{i_j+1} \arrow[r,symbol=\subset] & (\Lambda_1^{i_j})_k
\end{tikzcd}.\\

It turns out that $\mathcal N_{I,\mathbf{\Lambda}}^{\mathbbm h}(k)$ is the set of $k$-rational points of some closed subscheme $\mathcal N_{I,\mathbf{\Lambda}}^{\mathbbm h}$ of the reduced special fiber $\mathcal N_{E/F,\mathrm{red}}^{\mathbbm h}$ of the Rapoport-Zink space, cf. Propositions \ref{ClosedBTStrataAsSubschemes} and \ref{EquivalenceClosedBTStrata}. Moreover, by ``reducing the lattices mod $p$'', we prove that $\mathcal N_{I,\mathbf{\Lambda}}^{\mathbbm h}$ is isomorphic to the closure $X_{I,\mathbf{\Lambda}}^{\mathbbm h}$ in the corresponding partial flag variety, of an explicitely determined fine Deligne-Lusztig variety for the product of the finite groups $\mathrm{U}(\Lambda_0^{i_1}/\pi\Lambda_0^{i_1\vee})$, $\mathrm U(\Lambda_1^{i_s}/\pi^2\Lambda_1^{i_s\vee})$ and $\mathrm{GL}(\Lambda_0^{i_{j+1}}/\pi\Lambda_1^{i_j\vee})$ for all $1 \leq j \leq s-1$, cf. Section \ref{Section3.3} and Theorem \ref{IsomorphismWithDLVariety}. It follows that the subschemes  $\mathcal N_{I,\mathbf{\Lambda}}^{\mathbbm h}$ are smooth, projective and irreducible. \\
We define a notion of inclusion $(I,\mathbf{\Lambda}) \subset (I',\mathbf{\Lambda'})$ and intersection $(I,\mathbf{\Lambda})\cap (I',\mathbf{\Lambda'})$ on Bruhat-Tits indices, cf. Definitions \ref{InclusionBTIndices} and \ref{IntersectionBTIndices}, and we prove that they describe the incidence relations of the subschemes $\mathcal N_{I,\mathbf{\Lambda}}^{\mathbbm h}$, cf. Proposition \ref{CombinatoricsBTStratification}. We define 
\begin{equation*}
\mathcal N_{I,\mathbf{\Lambda}}^{\mathbbm h,0} := \mathcal N_{I,\mathbf{\Lambda}}^{\mathbbm h} \setminus \bigcup_{(I',\mathbf{\Lambda'})\subsetneq (I,\mathbf{\Lambda})} \mathcal N_{I',\mathbf{\Lambda'}}^{\mathbbm h}. 
\end{equation*} 
Then $\mathcal N_{I,\mathbf{\Lambda}}^{\mathbbm h,0}$ defines a locally closed subscheme of $\mathcal N_{E/F,\mathrm{red}}^{\mathbbm h}$, whose rational points are described as follows (cf. Lemma \ref{RationalPointsBTStrata}).

\begin{lem*}
Let $k$ be an algebraically closed field containing $\kappa_{\breve E}$ and let $(A_m \subset \ldots \subset B_m) \in \mathcal N_{E/F}^{\mathbbm h}(k)$. The following statements are equivalent. 
\begin{enumerate}
\item $(A_m\subset \ldots \subset B_m) \in \mathcal N_{I,\mathbf{\Lambda}}^{\mathbbm h,0}(k)$,
\item $I$ is the Bruhat-Tits type of $(A_m\subset \ldots \subset B_m)$, and for all $i\in I$, we have $\Lambda_0^{i} = \Lambda_{B_i}$ and $\Lambda_1^{i} = \Lambda_{A_{i+1}}$. 
\end{enumerate}
\end{lem*}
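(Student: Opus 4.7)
The plan is to first characterize membership in the closed stratum $\mathcal N_{I,\mathbf{\Lambda}}^{\mathbbm h}(k)$ purely in terms of the $\tau$-closures $\Lambda_{B_i}$ and $\Lambda_{A_{i+1}}$, and then to deduce the description of the open stratum by a minimality argument. First I would establish the following claim: $(A_m \subset \ldots \subset B_m)$ lies in $\mathcal N_{I,\mathbf{\Lambda}}^{\mathbbm h}(k)$ if and only if the Bruhat-Tits type $I^{\star}$ of the point contains $I$ and, for every $i \in I$, one has $\Lambda_{B_i} \subset \Lambda_0^i$ and $\Lambda_{A_{i+1}} \subset \Lambda_1^i$. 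The point is that all colength-$1$ vertical arrows and all horizontal arrows in the three kinds of diagrams defining $\mathcal N_{I,\mathbf{\Lambda}}^{\mathbbm h}$ are already forced by the chain $A_m \subset \ldots \subset B_m$ together with the polarization conditions of $\mathcal N_{E/F}^{\mathbbm h}$, so that the genuinely new information reduces to the boundary inclusions $B_i \subset (\Lambda_0^i)_k$ and $A_{i+1} \subset (\Lambda_1^i)_k$ for each $i \in I$. Since $(\Lambda_0^i)_k$ and $(\Lambda_1^i)_k$ are $\tau$-stable, these are equivalent to $\Lambda_{B_i} \subset \Lambda_0^i$ and $\Lambda_{A_{i+1}} \subset \Lambda_1^i$ by minimality of the $\tau$-closures. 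Finally, the defining chain $\Lambda_0^i \subset \pi \Lambda_1^{i \vee}$ of a Bruhat-Tits index pulls back to $\Lambda_{B_i} \subset \pi \Lambda_{A_{i+1}}^{\vee}$, which is precisely the condition $i \in I^{\star}$.

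Applying this characterization to the candidate $(I^{\star}, \mathbf{\Lambda^{\star}})$ defined by $\Lambda_0^{\star,i} := \Lambda_{B_i}$ and $\Lambda_1^{\star,i} := \Lambda_{A_{i+1}}$ for $i \in I^{\star}$, I would next verify that $(I^{\star}, \mathbf{\Lambda^{\star}})$ is itself a valid Bruhat-Tits index. The type bounds $\Lambda_{B_i} \in \mathcal L_0^{\geq h_i+1}$ and $\Lambda_{A_{i+1}} \in \mathcal L_1^{\geq n - h_{i+1}+1}$ should come from Proposition \ref{ConditionStar} combined with the type computations already performed when defining the Bruhat-Tits type, while the chain condition is precisely the defining property of $I^{\star}$. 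Hence the point lies in $\mathcal N_{I^{\star},\mathbf{\Lambda^{\star}}}^{\mathbbm h}(k)$, and together with the intersection description of closed strata in Proposition \ref{CombinatoricsBTStratification}, this identifies $(I^{\star},\mathbf{\Lambda^{\star}})$ as the unique minimum among all Bruhat-Tits indices whose closed stratum contains the point.

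The equivalence of (1) and (2) then follows directly: by definition, $(A_m \subset \ldots \subset B_m) \in \mathcal N_{I,\mathbf{\Lambda}}^{\mathbbm h,0}(k)$ if and only if the point lies in $\mathcal N_{I,\mathbf{\Lambda}}^{\mathbbm h}(k)$ but in no strictly smaller $\mathcal N_{I',\mathbf{\Lambda'}}^{\mathbbm h}(k)$, which by the minimality above means $(I,\mathbf{\Lambda}) = (I^{\star},\mathbf{\Lambda^{\star}})$, i.e.\ condition (2). The main technical hurdle should be the first step: one must carefully verify that the rich web of colength-$1$ inclusions and dualities in the defining diagrams genuinely distills to the two conditions $\Lambda_{B_i} \subset \Lambda_0^i$ and $\Lambda_{A_{i+1}} \subset \Lambda_1^i$ with no hidden extra constraints, in particular checking that the compatibilities at successive indices $i_j, i_{j+1}$ are fully captured by the chain $\Lambda_0^{i_j} \subset \pi \Lambda_1^{i_j \vee} \subset \Lambda_0^{i_{j+1}}$ built into the definition of a Bruhat-Tits index.
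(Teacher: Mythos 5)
Your proposal is correct and follows essentially the same route as the paper, merely reorganized. The paper's proof of $2\Rightarrow 1$ shows the point lies in $\mathcal N_{I^\star,\mathbf{\Lambda}^\star}^{\mathbbm h}(k)$ and then derives a contradiction from membership in a strictly smaller $\mathcal N_{I',\mathbf{\Lambda'}}^{\mathbbm h}(k)$: the squeeze $\Lambda_0^{\prime i}\subset\Lambda_{B_i}\subset\Lambda_0^{\prime i}$ forces $I^\star\subsetneq I'$, and casework on $i\in I'\setminus I^\star$ (using exactly the chain conditions $\Lambda_0^{\prime i}\subset\pi\Lambda_1^{\prime i\vee}$ and, for the endpoints, $\Lambda_1^{\prime 0}\in\mathcal L_1$, $\Lambda_0^{\prime m}\in\mathcal L_0$) pulls back to a contradiction with $i\notin I^\star$; then $1\Rightarrow 2$ follows from disjointness. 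You package the same content as the single equivalence
``$(A_m\subset\ldots\subset B_m)\in\mathcal N_{I,\mathbf{\Lambda}}^{\mathbbm h}(k)\iff(I^\star,\mathbf{\Lambda}^\star)\subset(I,\mathbf{\Lambda})$,'' identifying $(I^\star,\mathbf{\Lambda}^\star)$ as the unique minimum, which makes the logical structure cleaner (the open stratum then falls out by minimality rather than by contradiction). The reduction of the defining diagrams to the two inclusions $B_i\subset(\Lambda_0^i)_k$ and $A_{j+1}\subset(\Lambda_1^j)_k$, which you flag as the main hurdle, is already recorded in the paper as part of the proof of Proposition \ref{EquivalenceClosedBTStrata}, so it can be cited rather than re-derived. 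The one place you should be a little more careful in the general form of the lemma (where $0$ or $m$ may lie in $I$) is the ``only if'' direction of your Step 1: for $i=0$ there is no chain inclusion $\Lambda_0^0\subset\pi\Lambda_1^{0\vee}$ to pull back, and one instead argues $\Lambda_{A_1}\subset\Lambda_1^0\subset\pi\Lambda_1^{0\vee}\subset\pi\Lambda_{A_1}^\vee$ directly from $\Lambda_1^0\in\mathcal L_1$, and symmetrically for $i=m$; under the standing simplification $h_1=0$, $h_m=n$ this point does not arise and your argument is complete.
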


Using the main result of \cite{he}, we compute the decomposition of $X_{I,\mathbf{\Lambda}}^{\mathbbm h}$ as a disjoint union of fine Deligne-Lusztig varieties in Section \ref{Section3.3}. With this decomposition, we identify a certain open subvariety $X_{I,\mathbf{\Lambda}}^{\mathbbm h,0}$, and prove that the isomorphism between $\mathcal N_{I,\mathbf{\Lambda}}^{\mathbbm h}$ and $X_{I,\mathbf{\Lambda}}^{\mathbbm h}$ induces an isomorphism between $\mathcal N_{I,\mathbf{\Lambda}}^{\mathbbm h,0}$ and $X_{I,\mathbf{\Lambda}}^{\mathbbm h,0}$. In particular, we have the following (cf. Corollary \ref{BTStrataNotEmpty}).

\begin{corol*}
Let $k$ be an algebraically closed field containing $\kappa_{\breve E}$. For every Bruhat-Tits type $(I,\mathbf{\Lambda})$, there exists a point $(A_m \subset \ldots \subset B_m) \in \mathcal N_{I,\mathbf{\Lambda}}^{\mathbbm h,0}(k)$. In particular, $\mathcal N_{I,\mathbf{\Lambda}}^{\mathbbm h,0} \not = \emptyset$.
\end{corol*}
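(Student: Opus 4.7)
The plan is to transport the statement to the partial flag variety side via the isomorphism developed in Section \ref{Section3.3}. As recalled just before the corollary, that isomorphism $\mathcal N_{I,\mathbf{\Lambda}}^{\mathbbm h} \simeq X_{I,\mathbf{\Lambda}}^{\mathbbm h}$ restricts to an isomorphism $\mathcal N_{I,\mathbf{\Lambda}}^{\mathbbm h,0} \simeq X_{I,\mathbf{\Lambda}}^{\mathbbm h,0}$. Thus it is enough to exhibit a $k$-rational point of $X_{I,\mathbf{\Lambda}}^{\mathbbm h,0}$; pulling it back through the isomorphism produces the desired point of $\mathcal N_{I,\mathbf{\Lambda}}^{\mathbbm h,0}(k)$, which in particular gives $\mathcal N_{I,\mathbf{\Lambda}}^{\mathbbm h,0} \neq \emptyset$.

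To produce such a $k$-point, the key input is the explicit He-style decomposition of $X_{I,\mathbf{\Lambda}}^{\mathbbm h}$ as a disjoint union of fine Deligne-Lusztig varieties for the product group $G := \mathrm U(\Lambda_0^{i_1}/\pi\Lambda_0^{i_1\vee}) \times \mathrm U(\Lambda_1^{i_s}/\pi^2\Lambda_1^{i_s\vee}) \times \prod_{j=1}^{s-1} \mathrm{GL}(\Lambda_0^{i_{j+1}}/\pi\Lambda_1^{i_j\vee})$ over $\mathbb F_{q^2}$. By construction in Section \ref{Section3.3}, the open subvariety $X_{I,\mathbf{\Lambda}}^{\mathbbm h,0}$ is precisely the top-dimensional stratum of this decomposition, namely the original fine Deligne-Lusztig variety $Y$ whose Zariski closure is $X_{I,\mathbf{\Lambda}}^{\mathbbm h}$. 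Since $G$ is a connected reductive group over a finite field and $Y$ is an open subvariety of the classical Deligne-Lusztig variety $X(w)$ attached to its defining Weyl-group element $w$, the Lang-Steinberg theorem yields the non-emptiness of $X(w)$ over $k$, and then density of $Y$ in $X_{I,\mathbf{\Lambda}}^{\mathbbm h}$ together with the explicit description of $X_{I,\mathbf{\Lambda}}^{\mathbbm h,0} = Y$ shows that $X_{I,\mathbf{\Lambda}}^{\mathbbm h,0}(k) \neq \emptyset$.

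The main step that requires genuine work is the one carried out earlier, namely the identification of $X_{I,\mathbf{\Lambda}}^{\mathbbm h,0}$ as precisely the open top stratum of the He-decomposition (as opposed to some a priori smaller locally closed piece), and the verification that the corresponding Weyl-group datum $w$ defines a non-empty fine stratum for $G$. Once these two points are granted from Section \ref{Section3.3}, the non-emptiness follows from standard Deligne-Lusztig theory and the corollary is immediate via the isomorphism.
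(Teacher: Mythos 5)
Your overall strategy is the same as the paper's: use the theorem immediately preceding the corollary, which upgrades the isomorphism $\mathcal N_{I,\mathbf{\Lambda}}^{\mathbbm h} \simeq X_{I,\mathbf{\Lambda}}^{\mathbbm h}$ to an isomorphism $\mathcal N_{I,\mathbf{\Lambda}}^{\mathbbm h,0} \simeq X_{I,\mathbf{\Lambda}}^{\mathbbm h,0}$, then observe that the right-hand side is non-empty because it contains a non-empty fine Deligne-Lusztig variety. That much is correct.

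There is, however, a mischaracterization in your description of $X_{I,\mathbf{\Lambda}}^{\mathbbm h,0}$. You assert it is "precisely the top-dimensional stratum, namely the original fine Deligne-Lusztig variety $Y$," but by Definitions \ref{BTStratumDLVarU0}, \ref{BTStratumDLVarUs} and \ref{BTStratumDLVarGLj}, $X_{I,\mathbf{\Lambda}}^{\mathbbm h,0}$ is a disjoint union of \emph{several} fine Deligne-Lusztig strata (indexed by tuples satisfying inequality constraints on the $t_i$'s), of which $Y_{I,\mathbf{\Lambda}}^{\mathbbm h}$ is only one. As the paper remarks right after the definition, one has strict open immersions $Y_{I,\mathbf{\Lambda}}^{\mathbbm h} \hookrightarrow X_{I,\mathbf{\Lambda}}^{\mathbbm h,0} \hookrightarrow X_{I,\mathbf{\Lambda}}^{\mathbbm h}$ in general, and $X_{I,\mathbf{\Lambda}}^{\mathbbm h,0}$ collapses to a single fine Deligne-Lusztig variety only in the Coxeter cases. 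This slip does not break your argument, since the one-sided containment $Y_{I,\mathbf{\Lambda}}^{\mathbbm h} \subset X_{I,\mathbf{\Lambda}}^{\mathbbm h,0}$ is all you need. A second minor point: a fine Deligne-Lusztig variety $X_I\{w\}$ is the \emph{image} of the classical $X(w)$ under $\mathbf G/\mathbf B \to \mathbf G/\mathbf P_I$ (equivalently, it is isomorphic to a parabolic Deligne-Lusztig variety $X_{I_\infty}(w)$ by Bédard's result quoted in Section \ref{GeneralitiesDLVarieties}), not an open subvariety of $X(w)$; non-emptiness follows from surjectivity onto the image together with non-emptiness of $X(w)$ by Lang--Steinberg. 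With these two corrections, the argument is sound and agrees with the paper's.
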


The locally closed subschemes $\mathcal N_{I,\mathbf{\Lambda}}^{\mathbbm h,0}$ are called the \textit{Bruhat-Tits strata}, and their closures $\mathcal N_{I,\mathbf{\Lambda}}^{\mathbbm h}$ are called the \textit{closed Bruhat-Tits strata}. See Remark \ref{ComparisonWithCho} for a comparison with the Bruhat-Tits strata defined in \cite{cho}. \\
As it turns out, the Bruhat-Tits strata $\mathcal N_{I,\mathbf{\Lambda}}^{\mathbbm h,0}$ are in general not isomorphic to a fine Deligne-Lusztig variety, but rather a disjoint union of them. This disjoint union consists of a single variety for all Bruhat-Tits indices if and only if the Rapoport-Zink space is of Coxeter type, ie. ($m = 1$ and $h_1 = 0$ or $n$) or ($m=2$, $n$ is even and $\mathbbm h = (0,n)$). \\
The strata cover the whole special fiber, since for any algebraically closed field $k$ containing $\kappa_{\breve E}$, we have 
\begin{equation*}
\mathcal N_{E/F}^{\mathbbm h}(k) = \bigsqcup_{(I,\mathbf{\Lambda})} \mathcal N_{I,\mathbf{\Lambda}}^{\mathbbm h,0}(k),
\end{equation*}
where $(I,\mathbf{\Lambda})$ runs over all the Bruhat-Tits indices. The Rapoport-Zink space carries a natural action of the group of auto-quasi-isogenies $J(E) := \mathrm{Aut}(\mathbb X^{[h_i]})$ of any member of the framing object (the choice of $i$ does not matter), which can be identified with the group $\mathrm{GU}^{0}(C,\{\cdot,\cdot\})$ of unitary similitudes of $C$ whose factor of similitude is a unit. The group also acts on vertex lattices preserving types and inclusions, thus preserving Bruhat-Tits indices. Any $g \in J(E)$ induces an isomorphism 
\begin{equation*}
g:\mathcal N_{I,\mathbf{\Lambda}}^{\mathbbm h} \xrightarrow{\sim} \mathcal N_{I,g(\mathbf{\Lambda})}^{\mathbbm h}.
\end{equation*}
By construction, the collection of vertex lattices $\mathbf{\Lambda}$ can be identified with a facet in the Bruhat-Tits building of the unitary group of $C$. It follows that the stabilizer of $\mathbf{\Lambda}$ in $J(E)$ is a parahoric subgroup, and its action on $\mathcal N_{I,\mathbf{\Lambda}}^{\mathbbm h}$ factors through its reductive quotient which is isomorphic, up to the similitude factor, to the product of the finite groups $\mathrm{U}(\Lambda_0^{i_1}/\pi\Lambda_0^{i_1\vee})$, $\mathrm U(\Lambda_1^{i_s}/\pi^2\Lambda_1^{i_s\vee})$ and $\mathrm{GL}(\Lambda_0^{i_{j+1}}/\pi\Lambda_1^{i_j\vee})$ for all $1 \leq j \leq s-1$. The isomorphism between $\mathcal N_{I,\mathbf{\Lambda}}^{\mathbbm h}$ and $X_{I,\mathbf{\Lambda}}^{\mathbbm h}$ is equivariant under this action.\\
Eventually, we investigate the irreducible components of $\mathcal N_{E/F,\mathrm{red}}^{\mathbbm h}$. These correspond to the maximal closed Bruhat-Tits strata. Given a Bruhat-Tits index $(I,\mathbf{\Lambda})$, for $i \in I$ define $\mathbf{\Lambda}^i := \{\Lambda_0^i,\Lambda_1^i\}$. Then we have (cf. Lemma \ref{BruhatTitsStrataAsIntersection})
\begin{equation*}
\mathcal N_{I,\mathbf{\Lambda}}^{\mathbbm h} = \bigcap_{i \in I} \mathcal N_{\{i\},\mathbf{\Lambda}^{i}}^{\mathbbm h}.
\end{equation*}
We deduce the following description of the irreducible components (cf. Corollary \ref{IrreducibleComponents}).

\begin{corol*}
The irreducible components of $\mathcal N_{E/F,\mathrm{red}}^{\mathbbm h}$ consists of all the closed Bruhat-Tits strata of the form $\mathcal N_{\{i\},\{\Lambda_0^i,\Lambda_1^i\}}^{\mathbbm h}$ where $1 \leq i \leq m-1$, $\Lambda_0^i \in \mathcal L_0^{\geq h_i+1}$, $\Lambda_1^i \in \mathcal L_1^{\geq n-h_{i+1}+1}$ and $\Lambda_0^i = \pi\Lambda_1^{i\vee}$. Moreover, we have
\begin{equation*}
\dim\left(\mathcal N_{\{i\},\{\Lambda_0^i,\Lambda_1^i\}}^{\mathbbm h}\right) = n-1-\frac{h_{i+1}-h_i}{2}.
\end{equation*}
\end{corol*}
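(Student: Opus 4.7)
The plan is to combine the irreducibility of closed Bruhat-Tits strata with the intersection formula and the characterization of open strata to identify irreducible components with maximal closed strata, and then to read off the dimension from the associated fine Deligne-Lusztig variety.

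\emph{Step 1 (reduction to $|I|=1$).} Since $\mathcal N_{E/F,\mathrm{red}}^{\mathbbm h}$ is the disjoint union of its locally closed strata $\mathcal N_{I,\mathbf{\Lambda}}^{\mathbbm h,0}$, each with irreducible closure by Theorem \ref{IsomorphismWithDLVariety}, every irreducible component is of the form $\mathcal N_{I,\mathbf{\Lambda}}^{\mathbbm h}$. To eliminate $|I|\geq 2$, I first note that any $k$-point of $\mathcal N_{\{i\},\mathbf{\Lambda}^i}^{\mathbbm h}$ has $i$ in its Bruhat-Tits type: the defining diagrams imply $\Lambda_{B_i}\subset\Lambda_0^i$ and $\Lambda_{A_{i+1}}\subset\Lambda_1^i$ (by taking $\tau$-envelopes of the lattices $B_i$, $A_{i+1}$), whence $\Lambda_{B_i}\subset\Lambda_0^i\subset\pi\Lambda_1^{i\vee}\subset\pi\Lambda_{A_{i+1}}^\vee$. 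Combined with the intersection formula (Lemma \ref{BruhatTitsStrataAsIntersection}), this shows that all points of $\mathcal N_{I,\mathbf{\Lambda}}^{\mathbbm h}$ have Bruhat-Tits type containing $I$. When $|I|\geq 2$, Corollary \ref{BTStrataNotEmpty} furnishes, for any $i\in I$, a point in $\mathcal N_{\{i\},\mathbf{\Lambda}^i}^{\mathbbm h,0}$ whose Bruhat-Tits type is exactly $\{i\}$, witnessing the strict containment $\mathcal N_{I,\mathbf{\Lambda}}^{\mathbbm h}\subsetneq\mathcal N_{\{i\},\mathbf{\Lambda}^i}^{\mathbbm h}$, so $\mathcal N_{I,\mathbf{\Lambda}}^{\mathbbm h}$ cannot be maximal.

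\emph{Step 2 (characterization among singletons).} For $I=\{i\}$, if $\Lambda_0^i\subsetneq\pi\Lambda_1^{i\vee}$ I set $\widetilde\Lambda_1^i:=\pi\Lambda_0^{i\vee}$; then $\pi\widetilde\Lambda_1^{i\vee}=\Lambda_0^i$ and, dualizing the strict inclusion, $\widetilde\Lambda_1^i\supsetneq\Lambda_1^i$. The identity $t(\widetilde\Lambda_1^i)=n-t(\Lambda_0^i)$ together with $t(\Lambda_0^i)<n-t(\Lambda_1^i)$ and the parity constraint on $\mathcal L_0$-types yield $t(\widetilde\Lambda_1^i)\geq t(\Lambda_1^i)+2\geq n-h_{i+1}+1$, so $(\{i\},\{\Lambda_0^i,\widetilde\Lambda_1^i\})$ is a valid Bruhat-Tits index. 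A generic point of its open stratum has $\Lambda_{A_{i+1}}=\widetilde\Lambda_1^i\not\subset\Lambda_1^i$ by Lemma \ref{RationalPointsBTStrata}, hence lies outside $\mathcal N_{\{i\},\{\Lambda_0^i,\Lambda_1^i\}}^{\mathbbm h}$, giving the strict enlargement. Conversely, if $\Lambda_0^i=\pi\Lambda_1^{i\vee}$ and $\mathcal N_{\{i\},\{\Lambda_0^i,\Lambda_1^i\}}^{\mathbbm h}\subseteq\mathcal N_{\{i\},\{\Lambda_0',\Lambda_1'\}}^{\mathbbm h}$, reapplying the argument of Step 1 to a generic point of the left side gives $\Lambda_0^i\subset\Lambda_0'$ and $\Lambda_1^i\subset\Lambda_1'$; then sandwiching $\Lambda_0^i\subset\Lambda_0'\subset\pi\Lambda_1'^\vee\subset\pi\Lambda_1^{i\vee}=\Lambda_0^i$ forces equality throughout, establishing maximality.

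\emph{Step 3 (dimension and main difficulty).} The dimension follows from the isomorphism $\mathcal N_{\{i\},\{\Lambda_0^i,\Lambda_1^i\}}^{\mathbbm h}\simeq X_{\{i\},\{\Lambda_0^i,\Lambda_1^i\}}^{\mathbbm h}$ of Theorem \ref{IsomorphismWithDLVariety}: the right-hand side is the closure in a product partial flag variety of a fine Deligne-Lusztig variety for $\mathrm U(\Lambda_0^i/\pi\Lambda_0^{i\vee})\times\mathrm U(\Lambda_1^i/\pi^2\Lambda_1^{i\vee})$, and its dimension is the sum of the two factor dimensions computed in Section \ref{Section3.3}. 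The identity $t(\Lambda_0^i)+t(\Lambda_1^i)=n$ (forced by $\Lambda_0^i=\pi\Lambda_1^{i\vee}$) causes the $t$-dependent parts to cancel, producing the stated $n-1-(h_{i+1}-h_i)/2$ independently of the choice of $\Lambda_0^i$. The main obstacle lies in Step 2: one must simultaneously verify the type inequalities for the enlargement (relying on the parity constraint forcing $h_{i+1}-h_i\geq 2$) and extract an explicit rational-point witness for the strict containment via Lemma \ref{RationalPointsBTStrata}; once these are in place, the dimension formula is a mechanical consequence of the computations in Section \ref{Section3.3}.
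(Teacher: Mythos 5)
Your proof is correct and follows essentially the same route as the paper's: irreducible components are the maximal closed Bruhat--Tits strata, $|I|=1$ is forced by Lemma \ref{BruhatTitsStrataAsIntersection}, maximality among singletons is characterized by the sandwich $\Lambda_0^i\subset\Lambda_0'\subset\pi\Lambda_1'^\vee\subset\pi\Lambda_1^{i\vee}$, and the dimension falls out of the product decomposition in Section \ref{Section3.3}. The extra detail you supply (the explicit witness for strict containment in Step 1 and the explicit enlargement $\widetilde\Lambda_1^i=\pi\Lambda_0^{i\vee}$ in Step 2) just makes rigorous what the paper leaves implicit.
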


Moreover, we compute the number of $J(E)$-orbits of irreducible components (cf. Theorem \ref{NumberOrbitsIrreducibleComponents}).

\begin{theo*}
The number of $J(E)$-orbits of irreducible components in $\mathcal N_{E/F,\mathrm{red}}^{\mathbbm h}$ is $\frac{n}{2}$. More precisely, for a fixed $1 \leq i \leq m-1$, the irreducible components of the form $\mathcal N_{\{i\},\{\Lambda_0^i,\Lambda_1^i\}}^{\mathbbm h}$ contribute to $\frac{h_{i+1}-h_i}{2}$ orbits. 
\end{theo*}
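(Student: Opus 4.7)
The plan is to leverage the preceding Corollary, which identifies each irreducible component of $\mathcal N_{E/F,\mathrm{red}}^{\mathbbm h}$ with a Bruhat-Tits index $(\{i\},\{\Lambda_0^i,\Lambda_1^i\})$ constrained by $1 \leq i \leq m-1$, $\Lambda_0^i \in \mathcal L_0^{\geq h_i+1}$, $\Lambda_1^i \in \mathcal L_1^{\geq n-h_{i+1}+1}$, and $\Lambda_0^i = \pi\Lambda_1^{i\vee}$. The action of $J(E) \simeq \mathrm{GU}^0(C,\{\cdot,\cdot\})$ on the special fiber translates into the rule $g\cdot(\{i\},\{\Lambda_0^i,\Lambda_1^i\}) = (\{i\},\{g\Lambda_0^i,g\Lambda_1^i\})$, reducing the problem to counting $J(E)$-orbits on this parameter space.

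First I would observe that the index $i$ is a $J(E)$-invariant of each orbit: the dimension formula $\dim \mathcal N_{\{i\},\{\Lambda_0^i,\Lambda_1^i\}}^{\mathbbm h} = n-1-\frac{h_{i+1}-h_i}{2}$ recovers $i$ from the gap $h_{i+1}-h_i$ (recall the $h_j$'s are strictly increasing), and $J(E)$ acts by isomorphisms preserving dimension. It therefore suffices to establish $O_i = h_{i+1}-h_i-1$ for each $i$, where $O_i$ is the number of orbits at level $i$; summing then yields $\sum_{i=1}^{m-1}(h_{i+1}-h_i-1) = h_m - h_1 - (m-1) = n-m+1$ using $h_1=0$ and $h_m=n$.

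To obtain $O_i$, fix $i$ and use the duality $\Lambda_0^i = \pi\Lambda_1^{i\vee}$ to determine each member of the pair from the other. Then $O_i$ equals the number of $J(E)$-orbits on vertex lattices $\Lambda_0 \in \mathcal L_0$ subject to $h_i+1 \leq t(\Lambda_0) \leq h_{i+1}-1$, where the upper bound comes from $t(\pi\Lambda_0^\vee) = n - t(\Lambda_0)$. The pair $\{\Lambda_0, \pi\Lambda_0^\vee\}$ is canonically a facet of the Bruhat-Tits building of $\mathrm U(C)$, so $J(E)$-orbits correspond to facet types. The strategy is to classify these types explicitly, using Witt's extension theorem for the hermitian space $C$ together with the parahoric stabilizer description, and verify that the resulting set of orbits is in bijection with $\{h_i+1, \ldots, h_{i+1}-1\}$.

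The main obstacle is this last orbit enumeration: a careful Bruhat-Tits analysis is required to pin down the correct facet invariant and to check that it takes exactly $h_{i+1}-h_i-1$ admissible values. In particular, one must reconcile the orbit invariant with the parity constraints on $t(\Lambda_0)$ and the duality between $\mathcal L_0$ and $\mathcal L_1$, paying attention to how the similitude factor of $\mathrm{GU}^0(C)$ interacts with the lattice structure; this is where the interplay between the Bruhat-Tits building and the arithmetic of the hermitian form genuinely enters the proof.
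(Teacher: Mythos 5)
Your plan is essentially the same as the paper's: reduce to counting $J(E)$-orbits on the Bruhat-Tits indices singled out in the Corollary, classify orbits by identifying the pair $\{\Lambda_0^i,\pi\Lambda_0^{i\vee}\}$ with a facet of the building of $\mathrm{SU}(C)(F)$, and use transitivity there. This is exactly what the paper does: it proves Proposition~\ref{OrbitJ(E)Action} (orbits of Bruhat-Tits indices are classified by the types $t(\Lambda_0^i),t(\Lambda_1^j)$) via the identification of $\mathcal L_0$ with the building of $\mathrm{SU}(C)(F)$ from \cite{vw1} and transitivity on alcoves, and then counts. You do not actually carry out that classification -- you correctly flag it as the ``main obstacle'' -- so the proof is incomplete at precisely the step the paper settles with Proposition~\ref{OrbitJ(E)Action}. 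A minor aside: your claim that the dimension formula ``recovers $i$ from the gap $h_{i+1}-h_i$'' is false in general, since the gaps need not be distinct (they are all equal to $2$ in the Iwahori case); but the point is moot, because $J(E)$ acts on Bruhat-Tits indices by $g\cdot(I,\mathbf{\Lambda})=(I,g(\mathbf{\Lambda}))$, so $I=\{i\}$ is trivially preserved.

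There is a more substantive issue you should confront when you carry out the final enumeration. Once you know the orbit of $(\{i\},\{\Lambda_0^i,\Lambda_1^i\})$ with $\Lambda_0^i=\pi\Lambda_1^{i\vee}$ is determined by $t:=t(\Lambda_0^i)$, the constraints are $h_i+1\le t\le h_{i+1}-1$ \emph{together with} the parity constraint $t\equiv h_1+1\pmod 2$ from Lemma~\ref{ParityType} (with $h_1=0$ here, $t$ must be odd). Since all $h_j$ share a parity, both endpoints $h_i+1$ and $h_{i+1}-1$ have the allowed parity, and the number of admissible values is $\tfrac{h_{i+1}-h_i}{2}=\Delta h_i$, not $h_{i+1}-h_i-1$. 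These agree exactly when $h_{i+1}-h_i=2$, which is the Iwahori case and explains why the worked examples in Section~\ref{Examples} do not expose the discrepancy; but for, say, $\mathbbm h=(0,4)$ with $n=4$ one gets $t\in\{1,3\}$, i.e.\ $2$ orbits, whereas the stated formula gives $3$. The paper's own proof of Theorem~\ref{NumberOrbitsIrreducibleComponents} reads ``this can take any value between $h_i+1$ and $h_{i+1}-1$'' without invoking the parity restriction, so before attempting to ``verify'' the asserted count $h_{i+1}-h_i-1$ you should satisfy yourself whether it, and hence the total $n-m+1$ as opposed to $\sum_i\Delta h_i=n/2$, is actually what the orbit enumeration produces.
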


We point out that in general, the number of orbits computed in Theorem \ref{NumberOrbitsIrreducibleComponents} depends only on $h_1$ and on $h_m$; in this Introduction, we specialized the statement to the case $h_1 = 0$ and $h_m = n$. Finally, in Section \ref{Examples} we illustrate the results stated above in the case $m=2$ and in the Iwahori case.

\section{The moduli space of formal $\mathcal O_F$-modules of signature $(1,n-1)$}
\subsection{Maximal parahoric level}

Let $p>2$ be an odd prime. Let $F$ be a finite extension of $\mathbb Q_p$, with ring of integers $\mathcal O_F$ and uniformizer $\pi \in \mathcal O_F$. Let $\kappa_F := \mathcal O_F/(\pi)$ denote the residue field of $F$, and let $q := \#\kappa_F$ so that $\kappa_F \simeq \mathbb F_q$. Let $E$ be a quadratic unramified extension of $F$, with ring of integer $\mathcal O_E$ and residue field $\kappa_E = \mathcal O_E/(\pi) \simeq \mathbb F_{q^2}$. Let $x\mapsto x^*$ denote the non-trivial element of $\mathrm{Gal}(E/F)$. The following definition was introduced in \cite{rz17} (see also \cite{Mih}).

\begin{defi} 
A \textit{formal $\mathcal O_F$-module} over a scheme $S$ over which $p$ is locally nilpotent, is a formal $p$-divisible group $X$ over $S$ together with an $\mathcal O_F$-action, ie. a ring morphism $i:\mathcal O_F \to \mathrm{End}(X)$.\\
Assume that $S$ is an $\mathcal O_F$-scheme. The formal $\mathcal O_F$-module $X$ is said to be \textit{strict} if the $\mathcal O_F$-action on $\mathrm{Lie}(X)$ induced by $i$ coincides with the natural action given by the structure morphism $\mathcal O_F \to \mathcal O_S$.
\end{defi}

For $0\leq h \leq n$, we fix a datum $\mathbb X^{[h]} = (\mathbb X,i_{\mathbb X},\lambda_{\mathbb X}^{[h]})$ where
\begin{itemize}
\item $\mathbb X$ is a strict formal $\mathcal O_F$-module over $\mathbb F_{q^2}$, supersingular and of relative $F$-height $2n$,
\item $i_{\mathbb X}:\mathcal O_E\to\mathrm{End}(\mathbb X)$ is an $\mathcal O_E$-action on $\mathbb X$ which is compatible with the natural $\mathcal O_F$-action,
\item $\lambda_{\mathbb X}^{[h]}:\mathbb X \to \mathbb X^{\vee}$ is an $\mathcal O_E$-linear polarization on $\mathbb X$ such that $\mathrm{Ker}(\lambda_{\mathbb X}^{[h]}) \subset \mathbb X[\pi]$ has order $q^{2h}$.
\end{itemize}

In the last item, $\mathbb X^{\vee}$ denotes the Serre dual of $\mathbb X$, which is equipped with the $\mathcal O_E$-action $i_{\mathbb X^{\vee}}(x) := i_{\mathbb X}(x^*)^{\vee}$. Lastly, one also requires that the $(1,n-1)$ signature condition is satisfied, ie. 
\begin{equation*}
\forall x\in\mathcal O_E, \mathrm{charpol}(i_{\mathbb X}(x)\,|\, \mathrm{Lie}(\mathbb X)) = (T-x)(T-x^*)^{n-1} \in W(\mathbb F_{q^2})[T].
\end{equation*}   
The existence and unicity of such a datum $\mathbb X^{[h]}$ is well-known, see \cite{cho} Remark 3.32. Let $\textbf{Nilp}$ denote the category of $\mathcal O_E$-schemes $S$ over which $\pi$ is locally nilpotent. For $S \in \textbf{Nilp}$, let $\mathcal N_{E/F}^{h}(S)$ denote the set of tuples $(X,i_X,\lambda_X,\rho_X)$ up to isomorphism, where 
\begin{itemize}
\item $X$ is a strict formal $\mathcal O_F$-module of relative $F$-height $2n$ over $S$,
\item $i_X$ is an $\mathcal O_E$-action on $X$ compatible with the natural $\mathcal O_F$-action, satisfying the $(1,n-1)$ signature condition 
\begin{equation*}
\forall x\in\mathcal O_E, \mathrm{charpol}(i_{X}(x)\,|\, \mathrm{Lie}(X)) = (T-x)(T-x^*)^{n-1} \in \mathcal O_S[T].
\end{equation*}   
\item $\lambda_X:X\to X^{\vee}$ is an $\mathcal O_E$-linear polarization on $X$,
\item $\rho_X:X_{\overline S} \to \mathbb X_{\overline S}$ is an $\mathcal O_E$-linear quasi-isogeny of height $0$, making the following diagram commute up to a unit scalar in $\mathcal O_F^{\times}$
\begin{equation*}
\begin{tikzcd}
X_{\overline S} \arrow[r,"(\lambda_{X})_{\overline S}"] \arrow[d,swap,"\rho_X"] & X_{\overline S}^{\vee} \\
\mathbb X_{\overline S} \arrow[r,"(\lambda_{\mathbb X})_{\overline S}"] & \mathbb X_{\overline S}^{\vee} \arrow[u,swap,"\rho_X^{\vee}"]
\end{tikzcd}
\end{equation*}
\end{itemize}
In the last item, $\overline S := S \times_{\mathcal O_F} \kappa_F$ and $X_{\overline S}$ and $\mathbb X_{\overline S}$ denote the base change to $\overline S$. An isomorphism $(X,i_X,\lambda_X,\rho_X) \xrightarrow{\sim} (X',i_{X'},\lambda_{X'},\rho_{X'})$ is an $\mathcal O_E$-linear isomorphism $\gamma:X\xrightarrow{\sim} X'$ such that $\rho_{X'} \circ \gamma_{\overline S} = \rho_X$ and $\gamma^{\vee}\circ\lambda_{X'}\circ\gamma$ differs from $\lambda_X$ locally on $S$ by a scalar in $\mathcal O_F^{\times}$. \\
Let $\breve{E}$ denote the completion of the maximal unramified extension of $E$. The following statement follows from \cite{Mih} and \cite{cho}.

\begin{theo}
The set-valued functor $\mathcal N_{E/F}^{h} \otimes \mathcal O_{\breve E}$ is representable by a formal scheme over $\mathrm{Spf}(\mathcal O_{\breve E})$ which is locally formally of finite type and regular.
\end{theo}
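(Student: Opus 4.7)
The plan is to deduce the statement from the general representability theorem for unitary Rapoport-Zink spaces, after relativizing to formal $\mathcal{O}_F$-modules in the sense of Mihatsch. More precisely, I would first invoke the main representability result of \cite{RZ} to obtain, in a reasonable formulation, a formal scheme $\mathcal N$ locally formally of finite type over $\mathrm{Spf}(\mathcal O_{\breve E})$ classifying tuples $(X,i_X,\lambda_X,\rho_X)$ satisfying the analogous conditions for $p$-divisible groups (as opposed to formal $\mathcal{O}_F$-modules). Then I would explain how to descend along the Lubin-Tate deformation to obtain the version for strict formal $\mathcal{O}_F$-modules; this is precisely the content of \cite{Mih}, where Mihatsch checks that the strictness condition, the relative height condition and the $\mathcal{O}_F$-linearity of the polarization all cut out a closed formal subscheme.

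Second, I would verify the representability and the ``locally formally of finite type'' property. These are consequences of Grothendieck-Messing deformation theory: quasi-isogenies are rigid, so the functor is relatively representable over its fiber at $\kappa_{\breve E}$, and the underlying reduced scheme is of finite type because the set of framing quasi-isogenies modulo stabilizer corresponds to an affine Deligne-Lusztig variety, known to be locally of finite type by work of Rapoport-Zink (or He-Rapoport). At this stage I would simply cite \cite{Mih} and \cite{cho} for the precise statement in the formal $\mathcal{O}_F$-module setting.

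The substantive step is regularity. For this I would argue via the local model diagram: by Grothendieck-Messing, the local structure of $\mathcal N_{E/F}^{h}$ near a point is controlled by the choice of a Hodge filtration $\mathrm{Fil} \subset D(X)$ in the Dieudonné module, satisfying the signature $(1,n-1)$ condition and the compatibility with the polarization of degree $q^{2h}$. This produces a smooth morphism from $\mathcal N_{E/F}^{h}$ to a local model $\mathbb M^{\mathrm{loc}}_{(1,n-1),h}$, so that regularity of $\mathcal N_{E/F}^{h}$ reduces to regularity of $\mathbb M^{\mathrm{loc}}_{(1,n-1),h}$. The local model in this setting was analyzed by Pappas and by Cho, who show that for signature $(1,n-1)$ at maximal parahoric level it is explicitly cut out by the wedge condition and the spin condition, and is regular (and even flat over $\mathrm{Spec}(\mathcal O_{\breve E})$ with smooth generic fiber).

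The main obstacle I expect is organizing the local model computation correctly: for general $h$ one must check that the scheme defined by the signature condition together with the constraint that the polarization has kernel of order $q^{2h}$ contained in the $\pi$-torsion is indeed regular, not merely Cohen-Macaulay or flat. Fortunately this is exactly the case already treated in \cite{cho} Section 3, so in practice the proof reduces to citing the local model theorem of Rapoport-Zink together with Cho's computation of the local model. I would therefore present the proof as a two-line argument: representability and local formal finite type follow from \cite{Mih}, and regularity follows from \cite{cho} via the local model diagram.
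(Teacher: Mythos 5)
The paper itself proves this theorem only by the one-line remark that it "follows from \cite{Mih} and \cite{cho}," so your proposal — which unpacks that citation along standard Rapoport-Zink lines (representability and locally formally of finite type via Grothendieck-Messing, rigidity of quasi-isogenies, and the affine Deligne-Lusztig description of the underlying reduced scheme; regularity via the local model diagram) — takes essentially the same route.

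One factual slip: the wedge and spin conditions are \emph{not} the relevant tools here. Those conditions were introduced by Pappas (and refined by Pappas-Rapoport, Smithling, Arzdorf) to repair the failure of flatness of the naive local model in the \emph{ramified} unitary case. In the present paper $E/F$ is \emph{unramified}, so $\mathcal O_E \otimes_{\mathcal O_F} \mathcal O_{\breve E}$ splits into a product, the relative position decouples, and the local model reduces to a $\mathrm{GL}_n$-type local model for the Drinfeld cocharacter $\mu = (1,0,\ldots,0)$. In this setting the naive local model is already flat with reduced special fiber by Görtz's flatness theorem for $\mathrm{GL}_n$ local models — no extra closed conditions need to be imposed — and regularity is a known feature of local models for this particular minuscule coweight at any parahoric level (this is precisely what Cho uses). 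So your reduction to "regularity of the local model" and the citation of \cite{cho} are right, but the intermediate justification should invoke Görtz's flatness and the Drinfeld-type cocharacter rather than the wedge/spin machinery, which belongs to a different (ramified) setting.
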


By abuse of notation, this formal scheme is also denoted by $\mathcal N_{E/F}^{h}$. Let $\kappa_{\breve E} = \overline{\mathbb F_{q^2}}$ denote the residue field of $\breve E$. In \cite{cho}, the author describes the Bruhat-Tits stratification on the reduced special fiber $\mathcal N_{E/F,\mathrm{red}}^{h} := (\mathcal N^{h}_{E/F}\otimes \kappa_{\breve E})_{\mathrm{red}}$. The remaining of this section is dedicated to recall their constructions. For notions related to relative Dieudonné theory, see \cite{KR14} Notations.\\ 

Let $k$ be an algebraically closed field containing $\kappa_E = \mathbb F_{q^2}$. For an $\mathcal O_F$-algebra $R$, we denote by $W_{\mathcal O_F}(R)$ the ring of relative Witt vectors of $R$. Note that if $R$ is a perfect field extension of $\mathbb F_q$, we have a natural isomorphism $W_{\mathcal O_F}(R) \simeq \mathcal O_F \otimes_{\mathcal O_{F^u}} W(R)$, where $W(R)$ is the absolute ring of Witt vectors of $R$ and $\mathcal O_{F^u} = W(\mathbb F_q)$ is the ring of integers of the maximal unramified extension $F^u$ of $\mathbb Q_p$ which is contained in $F$. In this case, we write $F_R$ for the fraction field of $W_{\mathcal O_F}(R)$. Thus $F_R$ is a field extension of $F$ with residue field $R$. Let $\varphi_0:E\xrightarrow{\sim} F_{\mathbb F_{q^2}}$ be a fixed isomorphism which extends the natural embedding of $F$ into the right-hand side, and let $\varphi_1$ be given by $\varphi_1(x) := \varphi_0(x^*)$ for all $x\in E$.\\
Let $(\mathbb M,\mathcal V,\mathcal F)$ denote the relative Dieudonné module of $\mathbb X$, and let $\mathbb N := \mathbb M\otimes \mathbb Q$ denote the relative Dieudonné crystal. Then $\mathbb M$ is a free $W_{\mathcal O_F}(\mathbb F_{q^2})$-module of rank $2n$, and $\mathbb N$ is an $F_{\mathbb F_{q^2}}$-vector space of dimension $2n$. Furthermore the Verschiebung $\mathcal V$ and the Frobenius $\mathcal F$ are $\sigma$-linear operators on $\mathbb M$ satisfying $\mathcal V\mathcal F = \mathcal F\mathcal V = \pi$, where $\sigma\in \mathrm{Gal}(F_{\mathbb F_{q^2}}/F)$ is the non-trivial element. The $\mathcal O_E$-action $i_{\mathbb X}$ on $\mathbb X$ induces a structure of $\mathcal O_E \otimes_{\mathcal O_F} W_{\mathcal O_F}(\mathbb F_{q^2})$-module on $\mathbb M$. Via the decomposition 
\begin{align*}
\mathcal O_E \otimes_{\mathcal O_F} W_{\mathcal O_F}(\mathbb F_{q^2}) & \xrightarrow{\sim} W_{\mathcal O_F}(\mathbb F_{q^2}) \times W_{\mathcal O_F}(\mathbb F_{q^2}),\\
x\otimes a & \mapsto (\varphi_0(x)a,\varphi_1(x)a),
\end{align*}
we obtain a $\mathbb Z/2\mathbb Z$-grading $\mathbb M = \mathbb M_0 \oplus \mathbb M_1$ for which $\mathcal F$ and $\mathcal V$ are homogeneous operators of degree $1$. Here, each summand $\mathbb M_0$ and $\mathbb M_1$ is a free $W_{\mathcal O_F}(\mathbb F_{q^2})$-module of rank $n$. Likewise, we also have a $\mathbb Z/2\mathbb Z$-grading $\mathbb N = \mathbb N_0 \oplus \mathbb N_1$ where, for $i=0,1$, we have $\mathbb N_i = \mathbb M_i \otimes\mathbb Q$. Eventually, the polarization $\lambda_{\mathbb X}^{[h]}$ induces a nondegenerate $F_{\mathbb F_{q^2}}$-valued bilinear pairing $\langle \cdot,\cdot \rangle_{[h]}$ on $\mathbb N$ such that 
\begin{align*}
\forall v,w\in \mathbb N, \forall x\in E, & & \langle \mathcal Fv,w\rangle_{[h]} = \langle v,\mathcal Vw\rangle_{[h]}^{\sigma}, & & \langle i_{\mathbb X}(x)v,w\rangle_{[h]} = \langle v,i_{\mathbb X}(x^*)w\rangle_{[h]}.
\end{align*} 
In particular, for $i=0,1$ the subspace $\mathbb N_i$ is totally isotropic for $\langle\cdot,\cdot\rangle_{[h]}$.\\
Now, let $\mathbb M_k := \mathbb M \otimes_{W_{\mathcal O_F}(\mathbb F_{q^2})} W_{\mathcal O_F}(k)$ and $\mathbb N_k := \mathbb N \otimes_{F_{\mathbb F_{q^2}}} F_k$ denote the base changes to $k$. Note that $\mathcal F$ (resp. $\mathcal V$) is extended to a $\sigma$-linear (resp. $\sigma^{-1}$-linear) operator on $\mathbb M_k$, where $\sigma \in \mathrm{Gal}(F_k/F)$ now denotes the relative Frobenius automorphism. Note that we still have $\mathbb Z/2\mathbb Z$-gradings 
\begin{align*}
\mathbb M_k = \mathbb M_{k,0}\oplus \mathbb M_{k,1}, & & \mathbb N_k = \mathbb N_{k,0}\oplus \mathbb N_{k,1}.
\end{align*}
Given an $\mathcal O_E$-stable $W_{\mathcal O_F}(k)$-lattice $M = M_0 \oplus M_1 \subset \mathbb N_k$, we define the dual lattice $M^{\dagger}$ with respect to $\langle\cdot,\cdot\rangle_{[h]}$ by 
\begin{equation*}
M^{\dagger} := \{x \in \mathbb N_k \,|\, \langle x,M \rangle_{[h]} \subset W_{\mathcal O_F}(k)\}.
\end{equation*}
The dual lattice $M^{\dagger}$ is also stable under the action of $\mathcal O_E$, so that it decomposes as $M^{\dagger} = (M^{\dagger})_0 \oplus (M^{\dagger})_1$. In fact, we have $(M^{\dagger})_i = M_{i+1}^{\dagger}$ where $i\in \mathbb Z/2\mathbb Z$ and
\begin{equation*}
M_i^{\dagger} := \{x \in \mathbb N_{k,i+1} \,|\, \langle x,M_i\rangle_{[h]} \subset W_{\mathcal O_F}(k)\}.
\end{equation*}
Let $\tau := \pi\mathcal V^{-2}$. Then $\tau$ is a $\sigma^2$-linear operator on $\mathbb N_k$ whose slopes are all zero. Let $\mathbb N_{k,0}^{\tau}$ denote the subset of all $\tau$-fixed vectors in $\mathbb N_{k,0}$. Then $\mathbb N_{k,0}^{\tau}$ is an $F_{\mathbb F_{q^2}}$-vector space and we have $\mathbb N_{k,0} = \mathbb N_{k,0}^{\tau}\otimes_{F_{\mathbb F_{q^2}}}F_k$.

\begin{rk}
The space $\mathbb N_{k,0}^{\tau}$ does not depend on $k$, in the sense that if $k'/k$ is an extension of algebraically closed fields containing $\kappa_{\breve E}$, the isomorphism $\mathbb N_{k,0}\otimes_{F_k} F_{k'} \simeq \mathbb N_{k',0}$ identifies $\mathbb N_{k,0}^{\tau}$ with $\mathbb N_{k',0}^{\tau}$. This allows us to define $C := \mathbb N_{k,0}^{\tau}$ without ambiguity. 
\end{rk}

We define an $F_k$-valued pairing $\{\cdot,\cdot\}_{[h]}$ on $\mathbb N_{k,0}$ via the formula 
\begin{align*}
\forall v,w\in \mathbb N_{k,0},& & \{v,w\}_{[h]}:= \delta\langle v,\mathcal Fw\rangle_{[h]},
\end{align*} 
where $\delta \in W_{\mathcal O_F}(\mathbb F_{q^2})^{\times}$ is a fixed scalar such that $\delta^{\sigma} = -\delta$. Then $\{\cdot,\cdot\}_{[h]}$ is left linear, right $\sigma$-linear and non-degenerate. Furthermore it satisfies 
\begin{align*}
\forall v,w\in \mathbb N_{k,0}, & & \{v,w\}_{[h]} = \{w,\tau^{-1}(v)\}_{[h]}^{\sigma}, & & \{\tau(v),\tau(w)\}_{[h]} = \{v,w\}^{\sigma^2}_{[h]}.
\end{align*}
It follows that $\{\cdot,\cdot\}_{[h]}$ induces a non-degenerate $F_{\mathbb F_{q^2}}/F$-hermitian pairing on $C$. Given a $W_{\mathcal O_F}(k)$-lattice $A\subset \mathbb N_{k,0}$, we write 
\begin{equation*}
A^{\vee} := \{v\in \mathbb N_{k,0}\,|\,\{v,A\}_{[h]}\subset W_{\mathcal O_F}(k)\},
\end{equation*}
for the dual lattice of $A$. The following Lemma is proved in \cite{vw1}.

\begin{lem}
For every $W_{\mathcal O_F}(k)$-lattice $A\subset \mathbb N_{k,0}$, we have 
\begin{align*}
\tau(A^{\vee}) = \tau(A)^{\vee}, & & (A^{\vee})^{\vee} = \tau(A).
\end{align*}
\end{lem}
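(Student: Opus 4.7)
The two identities are of rather different nature and I would prove them separately.

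The first identity follows directly from the $\sigma^2$-semilinearity relation $\{\tau(v),\tau(w)\}_{[h]} = \{v,w\}_{[h]}^{\sigma^2}$ which is already established. For $v \in \mathbb N_{k,0}$, set $v' := \tau^{-1}(v)$. Then for every $a \in A$,
\[
\{v,\tau(a)\}_{[h]} \;=\; \{\tau(v'),\tau(a)\}_{[h]} \;=\; \{v',a\}_{[h]}^{\sigma^2}.
\]
Since $W_{\mathcal O_F}(k)$ is stable under $\sigma^2$, the left-hand side lies in $W_{\mathcal O_F}(k)$ if and only if $\{v',a\}_{[h]}$ does. Hence $v \in \tau(A)^\vee$ iff $v' = \tau^{-1}(v) \in A^\vee$, giving $\tau(A)^\vee = \tau(A^\vee)$.

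For the second identity, my plan is to reduce to the biduality of the original bilinear polarization pairing $\langle\cdot,\cdot\rangle_{[h]}$. Since $\delta \in W_{\mathcal O_F}(\mathbb F_{q^2})^\times$ is a unit and $\{v,w\}_{[h]} = \delta\langle v,\mathcal F w\rangle_{[h]}$, the condition $\{v,A\}_{[h]} \subset W_{\mathcal O_F}(k)$ is equivalent to $\langle v,\mathcal F A\rangle_{[h]} \subset W_{\mathcal O_F}(k)$, so
\[
A^\vee \;=\; (\mathcal F A)^\dagger,
\]
where $(\cdot)^\dagger$ denotes duality with respect to $\langle\cdot,\cdot\rangle_{[h]}$, viewed as an alternating perfect bilinear pairing between $\mathbb N_{k,0}$ and $\mathbb N_{k,1}$ (the remaining graded components pair trivially). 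Applying this formula twice yields $(A^\vee)^\vee = (\mathcal F(\mathcal F A)^\dagger)^\dagger$. The key computation is then the identity
\[
(\mathcal F L)^\dagger \;=\; \mathcal V^{-1}(L^\dagger) \qquad \text{for every } W_{\mathcal O_F}(k)\text{-lattice } L \subset \mathbb N_{k,0}.
\]
Indeed, by alternation and the adjunction $\langle \mathcal F x,y\rangle_{[h]} = \langle x,\mathcal V y\rangle_{[h]}^\sigma$ one has $\langle v,\mathcal F l\rangle_{[h]} = -\langle l,\mathcal V v\rangle_{[h]}^\sigma$, which lies in $W_{\mathcal O_F}(k)$ iff $\langle l,\mathcal V v\rangle_{[h]}$ does; ranging over $l \in L$ this translates to $\mathcal V v \in L^\dagger$. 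Combined with biduality $((\mathcal F A)^\dagger)^\dagger = \mathcal F A$ for a non-degenerate bilinear pairing on a free module of finite rank, one concludes
\[
(A^\vee)^\vee \;=\; \mathcal V^{-1}(\mathcal F A) \;=\; \tau(A),
\]
the last equality coming from $\mathcal V\mathcal F = \pi$, which gives $\mathcal V^{-1}\mathcal F = \pi\mathcal V^{-2} = \tau$.

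The most delicate point is the adjunction step leading to $(\mathcal F L)^\dagger = \mathcal V^{-1}(L^\dagger)$: one has to correctly track the $\sigma$-semilinearity of $\mathcal F$ and $\mathcal V$ against the pairing, and invoke the alternation of $\langle\cdot,\cdot\rangle_{[h]}$, which is a standard feature of pairings coming from polarizations. Everything else is formal. An alternative for the second identity would be to combine the direct inclusion $\tau(A) \subset (A^\vee)^\vee$ (immediate from $\{v,w\}_{[h]} = \{w,\tau^{-1}(v)\}_{[h]}^\sigma$ with $v = \tau(a)$) with a length argument showing both sides have the same index inside a reference lattice, but this seems less transparent than the Dieudonné-theoretic route sketched above.
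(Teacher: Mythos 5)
The paper itself does not prove this lemma; it simply cites Vollaard--Wedhorn. Your argument is correct and, modulo exposition, is the standard proof found there. The first identity you reduce cleanly to the $\sigma^2$-semilinearity relation and the $\sigma^2$-stability of $W_{\mathcal O_F}(k)$. For the second, passing to the bilinear Dieudonné pairing via $\{v,w\}_{[h]} = \delta\langle v,\mathcal F w\rangle_{[h]}$ (so that $A^\vee = (\mathcal F A)^\dagger$), then using the adjunction $\langle\mathcal F x,y\rangle_{[h]} = \langle x,\mathcal V y\rangle_{[h]}^\sigma$ together with the alternation of the polarization pairing to get $(\mathcal F L)^\dagger = \mathcal V^{-1}(L^\dagger)$, and finally combining with biduality and $\mathcal V^{-1}\mathcal F = \pi\mathcal V^{-2} = \tau$, is exactly the expected route; it is also consistent with the identity $\pi L^\vee = \mathcal F(L^\dagger)$ that the paper uses a few lines later. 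Your alternative sketch (the inclusion $\tau(A)\subset(A^\vee)^\vee$ plus an index count) would work too but, as you say, is less structural. The only point worth making explicit is that the alternation of $\langle\cdot,\cdot\rangle_{[h]}$, which your adjunction step uses twice, is a property of the polarization pairing that the paper never states in so many words, so it deserves a sentence rather than a silent invocation.
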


Given two lattices $A,B \subset \mathbb N_{k,0}$ and a nonnegative integer $x$, we write $A\overset{x}{\subset} B$ if $A\subset B$ and the module $B/A$ has length $x$. We also write $x = [B:A]$, and say that $x$ is the index of $A$ in $B$. The following statement is proved in \cite{cho} Propositions 2.2 and 2.4, and describes the $k$-rational points of $\mathcal N_{E/F}^h$ in terms of relative Dieudonné theory. 

\begin{theo}\label{RationalPointsMaximalParahoric}
There is a bijection 
\begin{align*}
\mathcal N_{E/F}^h(k) & \simeq \left\{ W_{\mathcal O_F}(k)\text{-lattices } M \subset \mathbb N_k \,\middle|\, \begin{array}{c}
M \text{ is } \mathcal O_E,\mathcal F,\mathcal V\text{-stable,}\\
M_0 \overset{h}{\subset} M_1^{\dagger} \overset{n-h}{\subset} \pi^{-1}M_0, \quad M_1 \overset{h}{\subset} M_0^{\dagger} \overset{n-h}{\subset} \pi^{-1}M_1, \\
\pi M_0 \overset{n-1}{\subset} \mathcal VM_1 \overset{1}{\subset} M_0, \quad \pi M_1 \overset{1}{\subset} \mathcal VM_0 \overset{n-1}{\subset} M_1.
\end{array} \right\},\\
& \simeq \left\{W_{\mathcal O_F}(k)\text{-lattices } A\overset{h}{\subset}B \subset \mathbb N_{k,0} \,\middle|\, \begin{array}{c}
\pi A^{\vee} \overset{1}{\subset} B \subset A^{\vee},\\
\pi B^{\vee} \overset{1}{\subset} A \subset B^{\vee},\\
\pi B \subset A \subset B
\end{array} \right\}.
\end{align*}
\end{theo}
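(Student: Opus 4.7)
The plan is to prove the two bijections in sequence. The first is an instance of Dieudonn\'e theory for strict formal $\mathcal O_F$-modules endowed with the extra structures present in the moduli problem (the $\mathcal O_E$-action, the polarization, and the signature condition), while the second is a reduction of the $\mathcal O_E$-graded data to a pair of lattices living in the degree-$0$ component $\mathbb N_{k,0}$ alone.

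For the first bijection, I would invoke the relative Dieudonn\'e theory developed in \cite{Mih}: a datum $(X,i_X,\lambda_X,\rho_X)$ with quasi-isogeny of height zero corresponds to an $\mathcal O_E\otimes_{\mathcal O_F}W_{\mathcal O_F}(k)$-stable $W_{\mathcal O_F}(k)$-lattice $M\subset\mathbb N_k$ stable under $\mathcal F$ and $\mathcal V$. The ring decomposition already displayed in the excerpt immediately yields $M=M_0\oplus M_1$ with $\mathcal F,\mathcal V$ of degree one. Next, the polarization $\lambda_X$ (matching $\lambda_{\mathbb X}^{[h]}$ up to a scalar in $\mathcal O_F^{\times}$) forces $M\subset M^{\dagger}$ with respect to $\langle\cdot,\cdot\rangle_{[h]}$, while the condition $\mathrm{Ker}(\lambda_{\mathbb X}^{[h]})\subset\mathbb X[\pi]$ of order $q^{2h}$ translates to $\pi M^{\dagger}\subset M$ with $[M^{\dagger}:M]=2h$. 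Splitting by the grading and using the identity $(M^{\dagger})_i=M_{i+1}^{\dagger}$ recalled in the excerpt produces $M_0\overset{h}{\subset}M_1^{\dagger}\overset{n-h}{\subset}\pi^{-1}M_0$ and its symmetric analog. Finally, the $(1,n-1)$ signature condition, applied to $\mathrm{Lie}(X)=M/\mathcal VM=(M_0/\mathcal VM_1)\oplus(M_1/\mathcal VM_0)$, forces the $\mathcal V$-chains $\pi M_0\overset{n-1}{\subset}\mathcal VM_1\overset{1}{\subset}M_0$ and the analog for $M_1$.

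For the second bijection, the natural assignment is $A:=M_0$ and $B:=M_1^{\dagger}\subset\mathbb N_{k,0}$. The inclusion $A\overset{h}{\subset}B$ is immediate, and $B\subset\pi^{-1}A$ gives $\pi B\subset A\subset B$. To extract the two hermitian chains involving $A^{\vee}$ and $B^{\vee}$, I would unfold the definition of $\cdot^{\vee}$ via the formula $\{v,w\}_{[h]}=\delta\langle v,\mathcal Fw\rangle_{[h]}$, obtaining for instance $A^{\vee}=\{v\in\mathbb N_{k,0}\,|\,\langle v,\mathcal FM_0\rangle_{[h]}\subset W_{\mathcal O_F}(k)\}$, and then use $\mathcal F=\pi\mathcal V^{-1}$ together with the $\mathcal V$-chains to convert this into the stated conditions on $A^{\vee}$ and by symmetry on $B^{\vee}$. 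Conversely, given $(A,B)$ I would set $M_0:=A$ and define $M_1\subset\mathbb N_{k,1}$ as the dual of $B$ with respect to $\langle\cdot,\cdot\rangle_{[h]}$, so that $M_1^{\dagger}=B$ by the involutivity of duality; the stated inclusions then translate back to $\mathcal F,\mathcal V$-stability of $M=M_0\oplus M_1$ and to the polarization conditions, giving an inverse map.

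The main obstacle is the detailed bookkeeping of indices: every chain in the $(A,B)$ picture must match exactly, with the correct colengths, a chain in the $M$ picture. The systematic tools for this are the identities $\mathcal F\mathcal V=\pi$, $(M^{\dagger})_i=M_{i+1}^{\dagger}$, and $\{v,w\}_{[h]}=\delta\langle v,\mathcal Fw\rangle_{[h]}$, which let one convert each condition on one side into a condition on the other; one must then verify that no datum is lost in passing from $M$ to $(A,B)$ and back.
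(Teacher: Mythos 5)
Your proposal follows essentially the same route as the paper. The paper does not reprove the statement at all: it attributes Theorem 2.6 to \cite{cho} Propositions 2.2 and 2.4 and then only records the dictionary between the two descriptions, namely $A:=M_0$, $B:=M_1^{\dagger}$ in one direction and $M:=A\oplus B^{\dagger}$ in the other, together with the key identity $\pi L^{\vee}=\mathcal F(L^{\dagger})$ for $W_{\mathcal O_F}(k)$-lattices $L\subset\mathbb N_{k,0}$. Your sketch reconstructs exactly that: relative Dieudonn\'e theory of \cite{Mih} for the first bijection, then the graded decomposition $A=M_0$, $B=M_1^{\dagger}$, and the passage from $\{\cdot,\cdot\}_{[h]}$-duality to $\langle\cdot,\cdot\rangle_{[h]}$-duality via $\{v,w\}_{[h]}=\delta\langle v,\mathcal F w\rangle_{[h]}$ and $\mathcal F\mathcal V=\pi$; this computation, carried through, yields precisely the identity $\pi L^{\vee}=\mathcal F(L^{\dagger})$ the paper singles out, so your plan is a slightly expanded version of the paper's (and Cho's) argument rather than a different one.
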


One goes from one description to the other as follows. If $M = M_0 \oplus M_1 \in \mathcal N_{E/F}^{h}(k)$, one defines $A := M_0$ and $B := M_1^{\dagger}$. Conversely, if $(A \overset{h}{\subset} B) \in \mathcal N_{E/F}^{h}(k)$, one defines $M := A \oplus B^{\dagger}$. To prove that both maps are well-defined, one may rely on the following identity
\begin{equation*}
\pi L^{\vee} = \mathcal F (L^{\dagger}),
\end{equation*}
for all $W_{\mathcal O_F}(k)$-lattice $L\subset \mathbb N_{k,0}$. We will mostly use the second description throughout the paper, however the first one will be useful in Section \ref{SectionClosedSubschemes}.

\begin{rk}
If $h=0$ then $B = A$ and we require that $\pi B^{\vee} \overset{1}{\subset} B \subset B^{\vee}$. This case corresponds to the hyperspecial level as studied in \cite{vw1} and \cite{vw2}.\\
If $h=n$ then $A = \pi B$ and we require that $\pi^2 A^{\vee} \overset{1}{\subset} A \subset \pi A^{\vee}$. 
\end{rk}

In order to define the Bruhat-Tits stratification on $\mathcal N_{E/F,\mathrm{red}}^h$, Cho first introduces the sets of $k$-rational points of what will be the closed Bruhat-Tits strata, and proves that these sets cover $\mathcal N_{E/F}^h(k)$. \\
Given a point $(A\overset{h}{\subset} B) \in \mathcal N_{E/F}^h(k)$, for $i\geq 1$ we define 
\begin{align*}
T_i(A) & := A + \tau(A) + \ldots + \tau^{i-1}(A),\\
T_i(B) & := B + \tau(B) + \ldots + \tau^{i-1}(B).
\end{align*}
According to \cite{RZ} Proposition 2.17, the lattices $T_i(A)$ and $T_i(B)$ are $\tau$-invariant for $i$ large enough. Note that any $\tau$-invariant lattice $M\subset \mathbb N_{k,0}$ has the form $M = M^{\tau}\otimes_{W_{\mathcal O_F}(\mathbb F_{q^2})}W_{\mathcal O_F}(k) =: M^{\tau}_k$, where $M^{\tau}$ is seen as a lattice in $C$. We will always denote by $\Lambda_A$ and $\Lambda_B$ the lattices in $C$ such that $(\Lambda_A)_k = T_i(A)$ and $(\Lambda_B)_k = T_i(B)$ for $i$ large enough. The following statement is proved in \cite{cho} Lemma 2.7 and Lemma 2.8. 

\begin{lem}\label{CrucialLemmaCho}
Let $(A\overset{h}{\subset} B)\in \mathcal N_{E/F}^h(k)$ and let $c,d\geq 1$ be the smallest positive integers such that $T_c(A)$ and $T_d(B)$ are $\tau$-invariant.
\begin{enumerate}
\item For all $1\leq i < c$ we have 
\begin{align*}
T_i(A)\overset{1}{\subset} T_{i+1}(A), & & \text{and if } i+1<c\text{ then } \tau(T_i(A)) = T_{i+1}(A)\cap\tau(T_{i+1}(A)).
\end{align*}
\item For all $1\leq j < d$ we have 
\begin{align*}
T_j(B)\overset{1}{\subset} T_{j+1}(B), & & \text{and if } j+1<d\text{ then } \tau(T_j(B)) = T_{j+1}(B)\cap\tau(T_{j+1}(B)).
\end{align*}
\item We have $T_c(A) \subset \pi T_c(A)^{\vee}$ or $T_d(B) \subset T_d(B)^{\vee}$. Furthermore, if $c<d$ then the former is true, and if $d<c$ then the latter is true. 
\end{enumerate}
\end{lem}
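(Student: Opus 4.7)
The approach is to follow the Vollaard-Wedhorn strategy, adapted to the chain $A \overset{h}{\subset} B$ as in Cho. Parts (1) and (2) admit a parallel inductive proof, and (3) will require a duality argument together with a case analysis based on the ordering of $c$ and $d$.

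For (1), I would argue by induction on $i$ using the identity $T_{i+1}(A) = T_i(A) + \tau(T_i(A))$. The base case is a dichotomy: either $A = \tau(A)$, which forces $c = 1$ and makes the assertion vacuous, or $A \overset{1}{\subset} A + \tau(A)$. This dichotomy comes from dualizing $\pi A^\vee \overset{1}{\subset} B$ via $(M^\vee)^\vee = \tau(M)$ to obtain $\pi B^\vee \overset{1}{\subset} \tau(A)$; combined with the given $\pi B^\vee \overset{1}{\subset} A$, both $A$ and $\tau(A)$ are simple one-step extensions of $\pi B^\vee$, so either they coincide or $A \cap \tau(A) = \pi B^\vee$ and $A + \tau(A)$ sits one step above $A$. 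The inductive step uses the second isomorphism theorem:
\begin{equation*}
T_{i+1}(A)/T_i(A) \simeq \tau(T_i(A))/(T_i(A) \cap \tau(T_i(A))) = \tau(T_i(A))/\tau(T_{i-1}(A)),
\end{equation*}
which has length $1$ by induction. For the companion equality $\tau(T_i(A)) = T_{i+1}(A) \cap \tau(T_{i+1}(A))$ (for $i+1 < c$), the inclusion $\subset$ is tautological, and the reverse follows by a length comparison: a strict inclusion would force $T_{i+1}(A) = \tau(T_{i+1}(A))$, contradicting the minimality $i+1 < c$. Part (2) is identical after swapping the roles of $A$ and $B$.

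For (3), the $\tau$-stable lattices $T_c(A)$ and $T_d(B)$ descend to lattices $\Lambda_A, \Lambda_B$ in $C$, so the assertion translates to $\Lambda_A \subset \pi \Lambda_A^\vee$ or $\Lambda_B \subset \Lambda_B^\vee$. I would exploit the chains $\pi A^\vee \subset B \subset A^\vee$ and $\pi B^\vee \subset A \subset B^\vee$; $\tau$-averaging these and using (1)--(2) to control indices yields analogous chains for the $T_\bullet$'s and their duals. When $c < d$, the fact that $T_c(A)$ is already $\tau$-stable while the $\tau$-span of $B$ must still grow creates, after the duality exchange, an extra factor of $\pi$ on the $A$-side, producing $T_c(A) \subset \pi T_c(A)^\vee$; the case $d < c$ is symmetric, and the case $c = d$ is handled by a direct pairing computation using the chain $\pi B^\vee \subset A \subset B$. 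The main obstacle is precisely this case analysis in (3): carefully tracking $\pi$-denominators through the $\tau$-averaging and dualization, and ruling out ``mixed'' scenarios, is delicate, whereas the inductive core of (1) and (2) is routine once the base dichotomy is in hand.
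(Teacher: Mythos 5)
The paper does not prove this lemma; it is quoted from Cho's paper (his Lemmas 2.7 and 2.8), so there is no in-paper argument to compare your sketch against. Judging the sketch on its own terms:

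Parts (1) and (2) are correct and follow the standard Vollaard--Wedhorn strategy. Dualizing $\pi A^\vee \overset{1}{\subset} B$ to get $\pi B^\vee \overset{1}{\subset} \tau(A)$ is indeed the right source of the base dichotomy, and the interleaved induction (the index-one statement at step $i$ feeds the companion intersection identity at step $i$, which feeds the index-one statement at step $i+1$) is the right structure. One point to make explicit: ruling out strict inclusion in the companion identity first yields only $\tau(T_{i+1}(A)) \subset T_{i+1}(A)$, and the upgrade to equality (hence ``$\tau$-invariant'') uses that $\tau$ has all slopes zero, a fact the paper invokes elsewhere for exactly this purpose.

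Part (3) is where the genuine gap lies, and you acknowledge it. The cases $c<d$ and $d<c$ can indeed be done with the tools you name, once spelled out as a descent: for $c<d$, $T_c(A)$ is $\tau$-stable and contained in $T_c(B)$, hence in $T_c(B)\cap\tau(T_c(B))=\tau(T_{c-1}(B))$ by part (2), hence in $T_{c-1}(B)$; iterating down gives $T_c(A)\subset B$, and one further step gives $T_c(A)\subset B\cap\tau(B)=\pi A^\vee$, which by $\tau$-stability of $T_c(A)$ is equivalent to $T_c(A)\subset\pi T_c(A)^\vee$. The case $d<c$ is the mirror image, descending the $\tau$-stable lattice $\pi T_d(B)$ through the $T_i(A)$ down to $A\cap\tau(A)=\pi B^\vee$, which is equivalent to $T_d(B)\subset T_d(B)^\vee$. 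But this descent fails at the very first step when $c=d$: $T_c(B)\cap\tau(T_c(B))$ is already $T_c(B)$ and $T_d(A)\cap\tau(T_d(A))$ is already $T_d(A)$, so there is no foothold. Establishing the ``or'' when $c=d$ needs a further idea --- for instance, showing that at least one of $T_c(A)\subset T_{c-1}(B)$ or $\pi T_c(B)\subset T_{c-1}(A)$ must hold so that one of the two descents can restart --- and ``a direct pairing computation using $\pi B^\vee\subset A\subset B$'' does not supply it. That step is precisely the substantive content of Cho's Lemma 2.8, and as written your proposal leaves it open.
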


\begin{rk}
If $h=0$ then we always have $T_d(B) \subset T_d(B)^{\vee}$. If $h=n$ then we always have $T_c(A) \subset \pi T_c(A)^{\vee}$. 
\end{rk}

\begin{defi}
For $i\in\mathbb Z$, we define 
\begin{equation*}
\mathcal L_i := \left\{W_{\mathcal O_F}(\mathbb F_{q^2})\text{-lattices } \Lambda \subset C \,\middle|\, \pi^{i+1}\Lambda^{\vee} \subset \Lambda \subset \pi^i\Lambda^{\vee}\right\}.
\end{equation*}
The elements of $\mathcal L_i$ are called \textit{vertex lattices} of rank $i$. The \textit{type} of a vertex lattice $\Lambda \in \mathcal L_i$ is the index $t(\Lambda) := [\Lambda:\pi^{i+1}\Lambda^{\vee}]$.
\end{defi}

\begin{rk}
The notion of vertex lattice depends on the pairing $\langle\cdot,\cdot\rangle_{[h]}$, although the notation $\mathcal L_i$ does not make it apparent. 
\end{rk}

\begin{lem}\label{ParityType}
Given a vertex lattice $\Lambda \in \mathcal L_i$, we have $0 \leq t(\Lambda) \leq n$ and 
\begin{equation*}
t(\Lambda) \equiv \begin{cases}
h+1 \mod 2 & \text{if } i \text{ is even},\\
n-h+1 \mod 2 & \text{if } i \text{ is odd}.
\end{cases}
\end{equation*}
\end{lem}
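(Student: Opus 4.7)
The plan is to handle the bounds and the parity congruence separately. The bounds are elementary, while the parity is reduced to a single invariant of the hermitian space $(C, \{\cdot,\cdot\}_{[h]})$ computed on one reference lattice.

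For the bounds, the chain $\pi^{i+1}\Lambda^\vee \subset \Lambda \subset \pi^i\Lambda^\vee$ gives $t(\Lambda) \ge 0$ tautologically, and
\[
t(\Lambda) = [\Lambda : \pi^{i+1}\Lambda^\vee] \le [\pi^i\Lambda^\vee : \pi^{i+1}\Lambda^\vee] = n,
\]
where the last equality uses that $\Lambda^\vee$ has rank $n$ over $W_{\mathcal O_F}(\mathbb F_{q^2})$.

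For the parity, I first express the virtual index $[\Lambda^\vee:\Lambda]$ through the same chain:
\[
[\Lambda^\vee : \Lambda] = [\Lambda^\vee : \pi^{i+1}\Lambda^\vee] - [\Lambda : \pi^{i+1}\Lambda^\vee] = n(i+1) - t(\Lambda).
\]
The crucial observation is that the parity of $[\Lambda^\vee : \Lambda]$ is an invariant of $C$ alone. Indeed, for any $\Lambda_1 \subset \Lambda_2$, duality gives $\Lambda_2^\vee \subset \Lambda_1^\vee$ with $[\Lambda_1^\vee : \Lambda_2^\vee] = [\Lambda_2 : \Lambda_1]$, so
\[
[\Lambda_1^\vee : \Lambda_1] - [\Lambda_2^\vee : \Lambda_2] = 2\,[\Lambda_2 : \Lambda_1],
\]
and arbitrary pairs of lattices are compared through a common sublattice.

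It then suffices to compute this parity on a single reference lattice, for which I would use the Dieudonné module of the framing object. Applying Theorem \ref{RationalPointsMaximalParahoric} to $(\mathbb X^{[h]}, \mathrm{id})$ with $(A,B) = (\mathbb M_0, \mathbb M_1^\dagger)$ gives $[\mathbb M_1^\dagger : \mathbb M_0] = h$ and $\pi\mathbb M_0^\vee \overset{1}{\subset} \mathbb M_1^\dagger$, whence
\[
[\mathbb M_0^\vee : \mathbb M_0] = [\mathbb M_0^\vee : \mathbb M_1^\dagger] + [\mathbb M_1^\dagger : \mathbb M_0] = (n-1) + h.
\]
Substituting yields $t(\Lambda) \equiv n(i+1) - (n+h-1) \equiv ni + h + 1 \pmod{2}$, equal to $h+1$ for $i$ even and $n-h+1$ for $i$ odd. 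The main point requiring care is the transfer from $\mathbb N_{k,0}$ to $C$: although $\mathbb M_0$ is not itself $\tau$-invariant, the parity invariant for $\{\cdot,\cdot\}_{[h]}$ on $\mathbb N_{k,0}$ coincides with the parity invariant for the induced hermitian form $\{\cdot,\cdot\}$ on $C = \mathbb N_{k,0}^\tau$, since duals and indices commute with the base change $C \hookrightarrow \mathbb N_{k,0}$.
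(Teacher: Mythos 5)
Your proof is correct and reaches the same conclusion by essentially the same underlying mechanism as the paper: both reduce the parity computation to the Dieudonné lattices of the framing object $\mathbb X^{[h]}$ via index chasing. The difference is one of packaging. The paper's proof inserts the chain $\pi(\Lambda_k)^{\vee} \subset \pi\mathbb B^{\vee} \subset \mathbb A \subset \Lambda_k$ directly into $[\Lambda_k : \pi^{i+1}(\Lambda_k)^{\vee}]$, paying for this with (implicitly signed) intermediate indices $[\Lambda_k : \mathbb A]$, whereas you isolate the intrinsic invariant $[\Lambda^{\vee}:\Lambda] \bmod 2$ of the hermitian space $(C,\{\cdot,\cdot\}_{[h]})$ — in effect the mod-$2$ valuation of the discriminant — observe once that it does not depend on the lattice, and then only need a single reference computation. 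This makes the logical structure a touch cleaner (there is no signed index to interpret and no need to position $\Lambda_k$ relative to $\mathbb A$), and it is a standard way to express the same arithmetic. The caveat you raise about transferring between $C$ and $\mathbb N_{k,0}$ is well placed and handled correctly: since $(\Lambda^{\vee})_k = (\Lambda_k)^{\vee}$ and indices are preserved under $\Lambda \mapsto \Lambda_k$, the invariant computed on $\mathbb N_{k,0}$ using $\mathbb M_0$ (which is not $\tau$-stable) agrees with the one on $C$, which is all that is needed. No gap; this is a valid and marginally tidier proof of the lemma.
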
 

\begin{proof}
The fact that $0 \leq t(\Lambda) \leq n$ is obvious. On the other hand, by construction, $\mathbb X^{[h]}\otimes k$ together with $\rho_{\mathbb X^{[h]}} := \mathrm{id}$ defines a point in $\mathcal N_{E/F}^h(k)$. This corresponds to lattices $\mathbb A \overset{h}{\subset} \mathbb B \subset \mathbb N_{k,0}$ by relative Dieudonné theory. We have 
\begin{align*}
t(\Lambda) = [\Lambda_k:\pi^{i+1}\Lambda_k^{\vee}] & = [\Lambda_k:\mathbb A] + [\mathbb A:\pi \mathbb B^{\vee}] + [\pi \mathbb B^{\vee}:\pi \mathbb A^{\vee}] + [\pi \mathbb A^{\vee}:\pi^{i+1}\Lambda_k^{\vee}]\\
& = [\Lambda_k:\mathbb A] + 1 - h + [\pi^{-i}\Lambda_k:\mathbb A] \\
& = 2[\Lambda_k:\mathbb A] + 1 - h + ni.
\end{align*}
The result follows.
\end{proof}

Given a point $(A\overset{h}{\subset} B)\in \mathcal N_{E/F}^h(k)$, we always have the following diagram 
\begin{center}
\begin{tikzcd}[column sep=small, row sep=small]
& \pi A^{\vee} \arrow[r,symbol=\subset, outer sep=2pt,"1"] & B \arrow[r,symbol=\subset] & (\Lambda_B)_k \\
\pi (\Lambda_B)_k^{\vee} \arrow[r,symbol=\subset] & \pi B^{\vee} \arrow[u,symbol=\subset] \arrow[r,symbol=\subset, outer sep=2pt,"1"] & A \arrow[u,symbol=\subset] \arrow[r,symbol=\subset] & (\Lambda_A)_k \arrow[u,symbol=\subset] \\
\pi^2 (\Lambda_A)_k^{\vee} \arrow[u,symbol=\subset] \arrow[r,symbol=\subset] & \pi^2A^{\vee} \arrow[u,symbol=\subset] \arrow[r,symbol=\subset, outer sep=2pt,"1"] & \pi B \arrow[u,symbol=\subset] & 
\end{tikzcd}
\end{center}

Thus, item 3. of Lemma \ref{CrucialLemmaCho} says that $\Lambda_B \in \mathcal L_0$ or $\Lambda_A \in \mathcal L_1$. In particular, the former holds if $d<c$ and the latter holds for $c<d$. Note that both cases are not exclusive. This motivates the following definitions.

\begin{defi}\label{BTStrataForMaximalParahoric}
Let $\Lambda_0 \in \mathcal L_0$ and $\Lambda_1 \in \mathcal L_1$. We define the following sets
\begin{align*}
\mathcal N^h_{\Lambda_0}(k) & := \left\{(A\overset{h}{\subset} B)\in \mathcal N_{E/F}^h(k) \,\middle|\, 
\begin{tikzcd}[ampersand replacement=\&,column sep=small, row sep=small]
\& \pi A^{\vee} \arrow[r,symbol=\subset, outer sep=2pt,"1"] \& B \arrow[r,symbol=\subset] \& (\Lambda_0)_k \\
\pi (\Lambda_0)_k^{\vee} \arrow[r,symbol=\subset] \& \pi B^{\vee} \arrow[u,symbol=\subset] \arrow[r,symbol=\subset, outer sep=2pt,"1"] \& A \arrow[u,symbol=\subset] \&
\end{tikzcd} \right\},\\
\mathcal N^h_{\Lambda_1}(k) & := \left\{(A\overset{h}{\subset} B)\in \mathcal N_{E/F}^h(k) \,\middle|\, 
\begin{tikzcd}[ampersand replacement=\&,column sep=small, row sep=small]
\& \pi B^{\vee} \arrow[r,symbol=\subset, outer sep=2pt,"1"] \& A \arrow[r,symbol=\subset] \& (\Lambda_1)_k \\
\pi^2 (\Lambda_1)_k^{\vee} \arrow[r,symbol=\subset] \& \pi^2A^{\vee} \arrow[u,symbol=\subset] \arrow[r,symbol=\subset, outer sep=2pt,"1"] \& \pi B \arrow[u,symbol=\subset] \& 
\end{tikzcd} \right\}.
\end{align*}
Moreover, if $\pi\Lambda_1^{\vee} \subset \Lambda_0$ then we define 
\begin{equation*}
\mathcal N^h_{\Lambda_0,\Lambda_1}(k) := \left\{(A\overset{h}{\subset} B)\in \mathcal N_{E/F}^h(k) \,\middle|\, 
\begin{tikzcd}[ampersand replacement=\&,column sep=small, row sep=small]
(\Lambda_0)_k \& B \arrow[l,symbol=\subset] \& \pi A^{\vee} \arrow[l,symbol=\subset, outer sep=2pt,swap,"1"] \& \pi(\Lambda_1)_k^{\vee} \arrow[l,symbol=\subset] \\
\pi (\Lambda_0)_k^{\vee} \arrow[r,symbol=\subset] \& \pi B^{\vee} \arrow[r,symbol=\subset, outer sep=2pt,"1"] \& A \arrow[r,symbol=\subset] \& (\Lambda_1)_k \arrow[u,symbol=\subset]
\end{tikzcd} \right\}.
\end{equation*}
\end{defi}

In \cite{cho}, the sets $\mathcal N^h_{\Lambda_0}(k)$ and $\mathcal N^h_{\Lambda_1}(k)$ are denoted by $S_{\Lambda_0}(k)$ and $R_{\Lambda_1}(k)$ respectively. Moreover, when $\pi\Lambda_1^{\vee} \subset \Lambda_0$ we have 
\begin{equation*}
\mathcal N^h_{\Lambda_0,\Lambda_1}(k) = \mathcal N^h_{\Lambda_0}(k) \cap \mathcal N^h_{\Lambda_1}(k).
\end{equation*}
We point out that $\mathcal N^h_{\Lambda_0}(k) \not = \emptyset$ if and only if $t(\Lambda_0) \geq h+1$, and $\mathcal N^h_{\Lambda_1}(k) \not = \emptyset$ if and only if $t(\Lambda_1) \geq n-h+1$. Given an integer $0 \leq x \leq n$, we write 
\begin{equation*}
\mathcal L_i^{\geq x} := \{\Lambda \in \mathcal L_i \,|\, t(\Lambda) \geq x \}.
\end{equation*}
The following theorem is a direct consequence of Lemma \ref{CrucialLemmaCho}. 

\begin{theo}
We have 
\begin{equation*}
\mathcal N_{E/F}^h(k) = \bigcup_{\Lambda_0 \in \mathcal L_0^{\geq h+1}} \mathcal N_{\Lambda_0}^h(k) \cup \bigcup_{\Lambda_1 \in \mathcal L_1^{\geq n-h+1}} \mathcal N_{\Lambda_1}^h(k).
\end{equation*}
\end{theo}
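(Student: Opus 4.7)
The plan is to simply unpack Lemma \ref{CrucialLemmaCho}(3), which is exactly the crucial dichotomy, and show that in each case the point lands in the corresponding set defined in Definition \ref{BTStrataForMaximalParahoric}.

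First, fix $(A \overset{h}{\subset} B) \in \mathcal N_{E/F}^h(k)$ and let $c,d \geq 1$ be the minimal integers such that $T_c(A)$ and $T_d(B)$ are $\tau$-stable, with associated vertex lattices $\Lambda_A, \Lambda_B \subset C$ defined by $(\Lambda_A)_k = T_c(A)$ and $(\Lambda_B)_k = T_d(B)$. By item 3 of Lemma \ref{CrucialLemmaCho} we have $T_c(A)\subset \pi T_c(A)^{\vee}$ or $T_d(B) \subset T_d(B)^{\vee}$, and it suffices to handle the two cases separately. (If both hold, the point will actually lie in both unions, which is harmless.)

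Suppose $T_d(B) \subset T_d(B)^{\vee}$. I would first check that $\Lambda_B \in \mathcal L_0$: since $\pi B^{\vee} \subset A \subset B$, applying $\tau^i$ and summing gives $\pi T_d(B)^{\vee} = \bigcap_i \pi\tau^i(B)^{\vee} \subset T_d(B)$, using that $(\sum L_i)^{\vee} = \bigcap L_i^{\vee}$ and $\tau$-invariance of $T_d(B)$. Combined with the hypothesis, this yields $\pi\Lambda_B^{\vee} \subset \Lambda_B \subset \Lambda_B^{\vee}$. Next, the diagram defining $\mathcal N_{\Lambda_B}^h(k)$ in Definition \ref{BTStrataForMaximalParahoric} is automatic: $B \subset (\Lambda_B)_k$ holds by construction, and $\pi(\Lambda_B)_k^{\vee} \subset \pi B^{\vee}$ follows by dualising. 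Finally I check the type bound: the chain $\pi(\Lambda_B)_k^{\vee} \subset \pi B^{\vee} \overset{1}{\subset} A \overset{h}{\subset} B \subset (\Lambda_B)_k$ gives
\begin{equation*}
t(\Lambda_B) = [(\Lambda_B)_k : \pi(\Lambda_B)_k^{\vee}] = 2[(\Lambda_B)_k : B] + h + 1 \geq h+1,
\end{equation*}
using that dualising preserves indices, so $\Lambda_B \in \mathcal L_0^{\geq h+1}$.

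The case $T_c(A) \subset \pi T_c(A)^{\vee}$ is analogous, using $\pi^2 A^{\vee} \subset \pi B \subset A$ instead: one obtains $\pi^2\Lambda_A^{\vee} \subset \Lambda_A \subset \pi\Lambda_A^{\vee}$, so $\Lambda_A \in \mathcal L_1$, the diagram defining $\mathcal N_{\Lambda_A}^h(k)$ is automatic from $A \subset (\Lambda_A)_k$ and its dual, and the chain $\pi^2(\Lambda_A)_k^{\vee} \subset \pi^2 A^{\vee} \overset{1}{\subset} \pi B \overset{n-h}{\subset} A \subset (\Lambda_A)_k$ gives $t(\Lambda_A) = 2[(\Lambda_A)_k : A] + n - h + 1 \geq n-h+1$, so $\Lambda_A \in \mathcal L_1^{\geq n-h+1}$.

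There is really no obstacle here: the content is entirely in the Crucial Lemma, and everything else is a matter of chasing the inclusions and indices. The one small care to take is the parity/type computation above, which is a variant of the computation already carried out in the proof of Lemma \ref{ParityType}.
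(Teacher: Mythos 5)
Your proof is correct and is precisely the unpacking that the paper leaves implicit when it calls the theorem a ``direct consequence'' of Lemma \ref{CrucialLemmaCho}: item 3 of that Lemma gives the dichotomy, and you verify that in either case the relevant $\Lambda_A$ or $\Lambda_B$ is a vertex lattice of the correct rank and of type at least $h+1$ (resp.\ $n-h+1$), and that the point lands in $\mathcal N_{\Lambda_A}^h(k)$ or $\mathcal N_{\Lambda_B}^h(k)$. The one phrase that could be sharpened is ``applying $\tau^i$ and summing'': the clean argument is that $T_d(B)^{\vee}=\bigcap_{i}\tau^i(B^{\vee})\subset B^{\vee}$, so $\pi T_d(B)^{\vee}\subset\pi B^{\vee}\subset A\subset B\subset T_d(B)$ (and likewise with $A$), which gives $\pi\Lambda_B^{\vee}\subset\Lambda_B$ and $\pi^2\Lambda_A^{\vee}\subset\Lambda_A$ unconditionally, as the paper also observes.
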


\begin{rk}
If $h=0$ the set $\mathcal L_1^{\geq n-h+1}$ is empty. Likewise, if $h=n$ the set $\mathcal L_0^{\geq h+1}$ is empty. 
\end{rk}

It turns out that $\mathcal N^h_{\Lambda_0}(k)$ and $\mathcal N^h_{\Lambda_1}(k)$ are the sets of $k$-points of closed subvarieties of $\mathcal N_{E/F,\mathrm{red}}^h$, which correspond to the closures of the Bruhat-Tits strata. Moreover, these closed subvarieties turn out to be isomorphic to the closure of some coarse Deligne-Lusztig varieties attached to finite unitary groups. 

\subsection{Some results on vertex lattices} \label{Section2.2}

In this section we prove several helpful results related to vertex lattices. 

\begin{prop}
Let $i<j$ be two distinct integers. If $j\not = i+1$ then $\mathcal L_i \cap \mathcal L_j = \emptyset$. Moreover, we have 
\begin{equation*}
\mathcal L_i \cap \mathcal L_{i+1} = \begin{cases}
\{\Lambda \subset C \,|\, \Lambda = \pi^{i+1}\Lambda^{\vee}\} & \text{if } h+1 \equiv ni \mod 2,\\
\emptyset & \text{else}.
\end{cases}
\end{equation*}
\end{prop}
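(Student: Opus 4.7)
The statement splits into three parts: ruling out $j > i+1$, characterizing the intersection when $j = i+1$, and handling the parity obstruction. All three follow directly from the definitions together with Lemma \ref{ParityType}, so the proof should be short.

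First, suppose $\Lambda \in \mathcal L_i \cap \mathcal L_j$ with $i < j$. The defining inclusions give $\pi^{i+1}\Lambda^{\vee} \subset \Lambda$ (from $\mathcal L_i$) and $\Lambda \subset \pi^j \Lambda^{\vee}$ (from $\mathcal L_j$), hence the chain $\pi^{i+1}\Lambda^{\vee} \subset \pi^j \Lambda^{\vee}$. Since $\pi^a \Lambda^{\vee} \subset \pi^b \Lambda^{\vee}$ happens if and only if $a \geq b$, this forces $i+1 \geq j$, which combined with $j > i$ gives $j = i+1$. This proves the first assertion.

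Second, assume $j = i+1$. Combining the two defining chains $\pi^{i+1}\Lambda^{\vee} \subset \Lambda \subset \pi^i \Lambda^{\vee}$ and $\pi^{i+2}\Lambda^{\vee} \subset \Lambda \subset \pi^{i+1}\Lambda^{\vee}$ collapses to $\pi^{i+1}\Lambda^{\vee} \subset \Lambda \subset \pi^{i+1}\Lambda^{\vee}$, so $\Lambda = \pi^{i+1}\Lambda^{\vee}$. Conversely, any $\Lambda$ satisfying $\Lambda = \pi^{i+1}\Lambda^{\vee}$ automatically lies in both $\mathcal L_i$ and $\mathcal L_{i+1}$, since $\pi^{i+2}\Lambda^{\vee} \subset \pi^{i+1}\Lambda^{\vee} = \Lambda \subset \pi^i \Lambda^{\vee}$. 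This gives the set-theoretic identification.

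Third, for the parity split: if $\Lambda = \pi^{i+1}\Lambda^{\vee}$, then viewed inside $\mathcal L_i$ its type is $t(\Lambda) = [\Lambda : \pi^{i+1}\Lambda^{\vee}] = 0$. By Lemma \ref{ParityType}, this is possible only if $h+1 \equiv 0 \pmod 2$ (when $i$ is even) or $n-h+1 \equiv 0 \pmod 2$ (when $i$ is odd). Since $ni \equiv 0 \pmod 2$ when $i$ is even and $ni \equiv n \pmod 2$ when $i$ is odd, both cases are uniformly captured by the condition $h+1 \equiv ni \pmod 2$. When this congruence fails, no such $\Lambda$ can exist, and the intersection is empty.

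\textbf{Obstacle.} There is no real difficulty: the whole argument is a bookkeeping exercise on the chains of lattice inclusions. The only mild subtlety is repackaging the two parity conditions ($i$ even versus $i$ odd) into the single congruence $h+1 \equiv ni \pmod 2$, which is a one-line check.
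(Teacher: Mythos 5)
Your proof is correct and follows essentially the same route as the paper: the chain $\pi^{i+1}\Lambda^{\vee}\subset\Lambda\subset\pi^j\Lambda^{\vee}$ forces $j=i+1$, the case $j=i+1$ collapses to $\Lambda=\pi^{i+1}\Lambda^{\vee}$, and the parity clause is read off from Lemma \ref{ParityType} applied to a type-$0$ vertex lattice of rank $i$. The only cosmetic difference is that you spell out the backward inclusion $\{\Lambda=\pi^{i+1}\Lambda^{\vee}\}\subset\mathcal L_i\cap\mathcal L_{i+1}$ explicitly, which the paper dismisses as immediate from the definition.
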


\begin{rk}
It follows that the rank of a vertex lattice is not exactly unique. In particular, the type $t(\Lambda)$ of a vertex lattice might not be well-defined. Indeed, if $\Lambda = \pi^{i+1}\Lambda^{\vee}$, then $\Lambda$ has type $0$ as a vertex lattice of rank $i$, and has type $n$ as a vertex lattice of rank $i+1$. We hope that the context is clear enough to avoid any confusion anytime we refer to the type of a vertex lattice in the remaining of the exposition.
\end{rk}

\begin{proof}
Assume that there exists $\Lambda \in \mathcal L_i \cap \mathcal L_j$. We have $\pi^{i+1}\Lambda^{\vee} \subset \Lambda \subset \pi^{j}\Lambda^{\vee}.$ It follows that $i+1 \geq j$. Since $i<j$, we have $j=i+1$. Thus, we deduce that $\Lambda = \pi^{i+1}\Lambda^{\vee}$. It follows that as a vertex lattice of rank $i$, $\Lambda$ has type $0$. By Lemma \ref{ParityType}, if $i$ is even then $h+1 \equiv 0 \mod 2$, and if $i$ is odd then $h+1 \equiv n \mod 2$. In other words, we have $h+1 \equiv ni \mod 2$.\\
Conversely, if $\Lambda$ is a lattice such that $\Lambda = \pi^{i+1}\Lambda^{\vee}$, then by definition we have $\Lambda \in \mathcal L_i \cap \mathcal L_{i+1}$, and the condition $h+1 \equiv ni \mod 2$ is satisfied. 
\end{proof}

\begin{prop}
Let $\Lambda \in \mathcal L_i$ for some integer $i\in \mathbb Z$. We have $\Lambda^{\vee} \in \mathcal L_{-i-1}$ and $t(\Lambda^{\vee}) = n - t(\Lambda)$. 
\end{prop}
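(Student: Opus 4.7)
The plan is to dualize the defining inclusions of a vertex lattice. Starting from $\Lambda \in \mathcal L_i$, which by definition means
\begin{equation*}
\pi^{i+1}\Lambda^{\vee} \subset \Lambda \subset \pi^{i}\Lambda^{\vee},
\end{equation*}
I would apply the dual operation $\cdot^{\vee}$ to each term. Since the hermitian pairing $\{\cdot,\cdot\}$ on $C$ is nondegenerate and $\Lambda$ is a $W_{\mathcal O_F}(\mathbb F_{q^2})$-lattice of full rank in $C$, double duality on lattices in $C$ returns the original lattice, i.e., $(\Lambda^{\vee})^{\vee} = \Lambda$. (Here one must be careful: the earlier statement $(A^{\vee})^{\vee} = \tau(A)$ is for lattices in $\mathbb N_{k,0}$, but on the $\tau$-fixed subspace $C$ the operator $\tau$ acts trivially, so it collapses to the usual identity.) Dualizing and using the elementary rule $(\pi^j L)^{\vee} = \pi^{-j} L^{\vee}$, the chain above yields
\begin{equation*}
\pi^{-i}\Lambda \subset \Lambda^{\vee} \subset \pi^{-i-1}\Lambda = \pi^{(-i-1)+1}\bigl((\Lambda^{\vee})^{\vee}\bigr),
\end{equation*}
which is exactly the condition $\Lambda^{\vee} \in \mathcal L_{-i-1}$.

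For the claim on the type, I would use the chain just obtained and the index-additivity. The quotient $\pi^{-i-1}\Lambda/\pi^{-i}\Lambda$ is isomorphic (via multiplication by $\pi^i$) to $\Lambda/\pi\Lambda$, a $k$-vector space of dimension $n$ (since $\Lambda$ has rank $n$ over $W_{\mathcal O_F}(\mathbb F_{q^2})$), hence has length $n$. By additivity of the index,
\begin{equation*}
n = [\pi^{-i-1}\Lambda : \pi^{-i}\Lambda] = [\pi^{-i-1}\Lambda : \Lambda^{\vee}] + [\Lambda^{\vee} : \pi^{-i}\Lambda].
\end{equation*}
By definition, the second summand on the right equals $t(\Lambda^{\vee})$ (viewing $\Lambda^{\vee}$ as a vertex lattice of rank $-i-1$). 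Multiplication by $\pi^{i+1}$ is a $W_{\mathcal O_F}(\mathbb F_{q^2})$-module isomorphism $\pi^{-i-1}\Lambda / \Lambda^{\vee} \xrightarrow{\sim} \Lambda / \pi^{i+1}\Lambda^{\vee}$, so the first summand equals $t(\Lambda)$. Rearranging gives $t(\Lambda^{\vee}) = n - t(\Lambda)$.

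The only mildly delicate point is the double-duality identity $(\Lambda^{\vee})^{\vee} = \Lambda$ for lattices in $C$, which I would justify once at the beginning (or cite as a standard consequence of nondegeneracy of a hermitian pairing on a finite-dimensional vector space). Everything else is a routine formal manipulation of inclusions and indices; no obstacle is anticipated.
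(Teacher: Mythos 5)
Your proof is correct and takes essentially the same route as the paper, whose entire argument is the one-line observation that dualizing the defining chain yields $\pi^{-i}\Lambda \subset \Lambda^{\vee} \subset \pi^{-i-1}\Lambda$. You have simply made explicit the double-duality identity $(\Lambda^{\vee})^{\vee} = \Lambda$ on $C$ and the index-additivity computation that the paper leaves implicit.
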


\begin{proof}
This is straightforward, since we have $\pi^{-i}\Lambda \subset \Lambda^{\vee} \subset \pi^{-i-1}\Lambda$. 
\end{proof}

Let $i\in \mathbb Z$ and let $\Lambda \in \mathcal L_i$. We define
\begin{align*}
V_{\Lambda}^0 := \Lambda / \pi^{i+1}\Lambda^{\vee}, & & V_{\Lambda}^1 := \pi^i \Lambda^{\vee} / \Lambda.
\end{align*}
Then $V_{\Lambda}^0$ and $V_{\Lambda}^1$ are $\mathbb F_{q^2}$-vector spaces of dimension respectively $t(\Lambda)$ and $n-t(\Lambda)$. The restriction to $\Lambda$ (resp. to $\pi^i \Lambda^{\vee}$) of the form $\pi^{-i}\{\cdot,\cdot\}_{[h]}$ (resp. $\pi^{-i+1}\{\cdot,\cdot\}_{[h]}$) induces a structure of non-degenerate $\mathbb F_{q^2}/\mathbb F_q$-hermitian space on $V_{\Lambda}^0$ (resp. on $V_{\Lambda}^1$). By abuse of notations, we still denote by $\{\cdot,\cdot\}_{[h]}$ the hermitian form on both spaces. If $U$ is a subspace of $V_{\Lambda}^{0}$ or of $V_{\Lambda}^1$, we denote its orthogonal by $U^{\perp}$. The following Proposition follows from \cite{vw1} and \cite{vw2}. 

\begin{prop}\label{SubOverVertexLattices}
Let $i\in \mathbb Z$ and let $\Lambda \in \mathcal L_i$. Let $t^{-}$ and $t^{+}$ be integers such that $0 \leq t^{-} \leq t(\Lambda) \leq t^{+} \leq n$ and $t^- \equiv t^+ \equiv t(\Lambda) \mod 2$. 
\begin{enumerate}
\item The mapping $\Lambda' \mapsto \Lambda'/\pi^{i+1}\Lambda^{\vee}$ defines a bijection between the set of vertex lattices $\Lambda' \in \mathcal L_i$ such that $\Lambda' \subset \Lambda$ and $t(\Lambda') = t^-$, and the set of subspaces $U \subset V_{\Lambda}^0$ such that $U^{\perp} \subset U$ and $\dim(U) = \frac{t(\Lambda) + t^-}{2}$.
\item The mapping $\Lambda' \mapsto \pi^{i}\Lambda^{\prime \vee}/\Lambda$ defines a bijection between the set of vertex lattices $\Lambda' \in \mathcal L_i$ such that $\Lambda \subset \Lambda'$ and $t(\Lambda') = t^+$, and the set of subspaces $U \subset V_{\Lambda}^1$ such that $U^{\perp} \subset U$ and $\dim(U) = n-\frac{t(\Lambda)+t^+}{2}$.
\end{enumerate}
\end{prop}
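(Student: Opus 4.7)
Both parts have the same structure, so I focus on part 1 and indicate the modifications for part 2 (alternatively, part 2 can be deduced from part 1 applied to $\Lambda^{\vee} \in \mathcal L_{-i-1}$, using $t(\Lambda^{\vee}) = n - t(\Lambda)$). For the forward direction, take $\Lambda' \in \mathcal L_i$ with $\Lambda' \subset \Lambda$; dualizing gives $\Lambda^{\vee} \subset \Lambda^{\prime\vee}$, so $\pi^{i+1}\Lambda^{\vee} \subset \pi^{i+1}\Lambda^{\prime\vee} \subset \Lambda'$ and the quotient $U := \Lambda'/\pi^{i+1}\Lambda^{\vee}$ is a genuine subspace of $V_\Lambda^0$. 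The chain $\Lambda \supset \Lambda' \supset \pi^{i+1}\Lambda^{\prime\vee} \supset \pi^{i+1}\Lambda^{\vee}$, together with the duality identity $[\pi^{i+1}\Lambda^{\prime\vee} : \pi^{i+1}\Lambda^{\vee}] = [\Lambda^{\vee} : \Lambda^{\prime\vee}] = [\Lambda : \Lambda']$, yields $t(\Lambda) = 2[\Lambda : \Lambda'] + t(\Lambda')$; hence $\dim U = t(\Lambda) - [\Lambda : \Lambda'] = (t(\Lambda) + t^-)/2$, and the parity assumption is exactly what makes this an integer. A direct computation with the induced form $\pi^{-i}\{\cdot, \cdot\}_{[h]}$ modulo $\pi$ shows $U^{\perp} = (\pi^{i+1}\Lambda^{\prime\vee} \cap \Lambda)/\pi^{i+1}\Lambda^{\vee}$; since $\pi^{i+1}\Lambda^{\vee} \subset \Lambda'$ dualizes to $\Lambda^{\prime\vee} \subset \pi^{-i-1}\Lambda$, we have $\pi^{i+1}\Lambda^{\prime\vee} \subset \Lambda$, so $U^{\perp}$ simplifies to $\pi^{i+1}\Lambda^{\prime\vee}/\pi^{i+1}\Lambda^{\vee}$, which lies in $U$ because $\Lambda' \in \mathcal L_i$.

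For the inverse direction, to a coisotropic $U \subset V_\Lambda^0$ of the prescribed dimension associate the preimage $\Lambda' \subset \Lambda$ of $U$ under the projection $\Lambda \to V_\Lambda^0$. The inclusion $\Lambda' \subset \pi^i \Lambda^{\prime\vee}$ is automatic from $\Lambda' \subset \Lambda \subset \pi^i \Lambda^{\vee} \subset \pi^i \Lambda^{\prime\vee}$, and $\pi^{i+1}\Lambda^{\prime\vee} \subset \Lambda'$ is the exact translation of $U^{\perp} \subset U$ (running the preceding computation in reverse, and using again $\pi^{i+1}\Lambda^{\prime\vee} \subset \Lambda$ which follows from $\pi^{i+1}\Lambda^{\vee} \subset \Lambda'$). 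Hence $\Lambda' \in \mathcal L_i$, and the length-to-dimension dictionary forces $t(\Lambda') = t^-$. The two constructions are mutually inverse by design.

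For part 2, the analogous procedure sends $U \subset V_\Lambda^1$ to its preimage $P \subset \pi^i \Lambda^{\vee}$ and then to $\Lambda' := \pi^i P^{\vee}$; the inclusions $\Lambda \subset \Lambda' \subset \pi^i \Lambda^{\vee}$ and $\Lambda' \subset \pi^i \Lambda^{\prime\vee}$ drop out of the same dualization game as above, and the dimension count proceeds as before using $t(\Lambda') = 2[\Lambda' : \Lambda] + t(\Lambda)$. The only nonroutine step --- the main obstacle I anticipate --- is verifying $\pi^{i+1}\Lambda^{\prime\vee} \subset \Lambda'$, which amounts to $\{P, P\}_{[h]} \subset \pi^{i-1}\mathcal O$ and is not automatic from $P \subset \pi^i \Lambda^{\vee}$ alone. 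The key input is the sharp bound $\{\Lambda^{\vee}, \Lambda^{\vee}\}_{[h]} \subset \pi^{-i-1}\mathcal O$, which follows from $\pi^{i+1}\{\Lambda^{\vee}, \Lambda^{\vee}\}_{[h]} = \{\pi^{i+1}\Lambda^{\vee}, \Lambda^{\vee}\}_{[h]} \subset \{\Lambda, \Lambda^{\vee}\}_{[h]} \subset \mathcal O$; combined with $P \subset \pi^i \Lambda^{\vee}$ this yields $\{P, P\}_{[h]} \subset \pi^{2i}\{\Lambda^{\vee}, \Lambda^{\vee}\}_{[h]} \subset \pi^{i-1}\mathcal O$, as required.
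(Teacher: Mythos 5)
Your proof is correct and follows the same blueprint the paper gestures at: identify the image of the map via the length computation, use the identities $(\Lambda'/\pi^{i+1}\Lambda^{\vee})^{\perp} = \pi^{i+1}\Lambda^{\prime\vee}/\pi^{i+1}\Lambda^{\vee}$ and $(\pi^{i}\Lambda^{\prime\vee}/\Lambda)^{\perp} = \Lambda'/\Lambda$ to characterize the image as the coisotropic subspaces, and observe the construction is reversible. The paper itself leaves the bijectivity to the references [vw1], [vw2] and only records the index chain justifying well-definedness; you have supplied the missing details, including the useful observation that part 2 reduces to part 1 via $\Lambda \mapsto \Lambda^{\vee}$ and $t(\Lambda^{\vee}) = n - t(\Lambda)$.

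One small mislabeling in the direct argument for part 2: with $P$ the preimage of $U$ and $\Lambda' := \pi^{i}P^{\vee}$, the inclusion $\Lambda' \subset \pi^{i}\Lambda^{\prime\vee}$ (i.e.\ $\pi^{i}P^{\vee} \subset P$) is not ``pure dualization'' --- it is precisely the translation of $U^{\perp} \subset U$, since $U^{\perp} = \pi^{i}P^{\vee}/\Lambda$. What is genuinely automatic is $\Lambda \subset \Lambda' \subset \pi^{i}\Lambda^{\vee}$, and you are right that the step needing the extra input is $\pi^{i+1}\Lambda^{\prime\vee} \subset \Lambda'$, which your bound $\{\Lambda^{\vee},\Lambda^{\vee}\}_{[h]} \subset \pi^{-i-1}\mathcal O_{W_{\mathcal O_F}(\mathbb F_{q^2})}$ handles correctly. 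The conclusion is unaffected; only the attribution of which inclusion uses coisotropy should be adjusted.
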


Note that if $\Lambda' \subset \Lambda$, we have 
\begin{equation*}
\pi^{i+1}\Lambda^{\vee} \overset{\frac{t(\Lambda)-t^-}{2}}{\subset} \pi^{i+1}\Lambda^{\prime\vee} \overset{t^-}{\subset} \Lambda' \overset{\frac{t(\Lambda)-t^-}{2}}{\subset} \Lambda.
\end{equation*}
Together with the identity $(\Lambda'/\pi^{i+1}\Lambda^{\vee})^{\perp} = \pi^{i+1}\Lambda^{\prime\vee} /\pi^{i+1}\Lambda^{\vee}$, it justifies that the map in 1. is well-defined. Likewise, if $\Lambda \subset \Lambda'$, we have 
\begin{equation*}
\Lambda \overset{\frac{t^+-t(\Lambda)}{2}}{\subset} \Lambda' \overset{n-t^+}{\subset} \pi^{i}\Lambda^{\prime\vee} \overset{\frac{t^+-t(\Lambda)}{2}}{\subset} \pi^{i}\Lambda^{\vee}.
\end{equation*}
Together with the identity $(\pi^i\Lambda^{\prime\vee}/\Lambda)^{\perp} = \Lambda'/\Lambda$, it justifies that the map in 2. is well-defined.

\subsection{The moduli space $\mathcal N_{E/F}^{\mathbbm h}$ at arbitrary parahoric level} \label{Section2.3}

In this section, we are going to generalize the results of \cite{cho} to the case of arbitrary parahoric level. First of all, given $0 \leq h < h' \leq n$, let us observe that there exists an isogeny $\alpha_{h',h}:\mathbb X^{[h']}\to \mathbb X^{[h]}$ compatible with the additional structures and such that $\mathrm{Ker}(\alpha_{h',h})\subset \mathbb X[\pi]$ has degree $q^{h'-h}$, if and only if $h \equiv h' \mod 2$, see \cite{LRZ} Section 3.4. For $h \leq n-2$, we fix such an isogeny $\alpha_{h+2,h}$. For $h$ and $h'$ as above with the same parity, we define $\alpha_{h',h} := \alpha_{h+2,h}\circ \ldots \circ\alpha_{h',h'-2}$. In particular, the compatibility with the polarizations means that the following diagram
\begin{center}
\begin{tikzcd}
\mathbb X^{[h']} \arrow[r,"\alpha_{h',h}"] \arrow[d,swap,"\lambda_{\mathbb X}^{[h']}"] & \mathbb X^{[h]} \arrow[d,"\lambda_{\mathbb X}^{[h]}"] \\
\mathbb X^{[h']\vee} & \mathbb X^{[h]\vee} \arrow[l,"\alpha_{h',h}^{\vee}"] 
\end{tikzcd}
\end{center}
commutes up to a scalar in $\mathbb F_{q^2}$. Thus, $\alpha_{h',h}$ induces an isometry between $(\mathbb N,\langle\cdot,\cdot\rangle_{[h']})$ and $(\mathbb N,\langle\cdot,\cdot\rangle_{[h]})$.
\\
Let $m \geq 1$ and let $\mathbbm h = (h_1,\ldots,h_m)$ be a $m$-tuple of integers such that $0 \leq h_1 < \ldots < h_m \leq n$, and such that all the $h_i$'s have the same parity. We define a functor $\mathcal N_{E/F}^{\mathbbm h}$ as follows. For $S \in \textbf{Nilp}$, let $\mathcal N_{E/F}^{\mathbbm h}(S)$ denote the set of tuples $(X^{[i]},i_{X^{[i]}},\lambda_{X^{[i]}},\rho_{X^{[i]}})_{1\leq i \leq m}$ up to isomorphism, where 
\begin{itemize}
\item for all $1\leq i \leq m$, $(X^{[i]},i_{X^{[i]}},\lambda_{X^{[i]}},\rho_{X^{[i]}})\in \mathcal N_{E/F}^{h_i}(S)$,
\item for $1\leq i < m$, there exists an isogeny $\widetilde{\alpha}_{i+1,i}:X^{[i+1]}\to X^{[i]}$ such that the following diagram commutes 
\begin{center}
\begin{tikzcd}[column sep = large]
X^{[i+1]}_{\overline S} \arrow[r,"(\widetilde{\alpha}_{i+1,i})_{\overline S}"] \arrow[d,swap,"\rho_{X^{[i+1]}}"] & X^{[i]}_{\overline S} \\
\mathbb X_{\overline S}^{[h_{i+1}]} \arrow[r,"(\alpha_{h_{i+1},h_i})_{\overline S}"] & \mathbb X_{\overline S}^{[h_i]} \arrow[u,swap,"\rho_{X^{[i]}}^{-1}"]
\end{tikzcd}
\end{center}
\end{itemize}

Note that the isogeny $\widetilde{\alpha}_{i+1,i}$, when it exists, is unique. An isomorphism 
\begin{equation*}
(X^{[i]},i_{X^{[i]}},\lambda_{X^{[i]}},\rho_{X^{[i]}})_{1\leq i \leq m} \xrightarrow{\sim} (X^{\prime[i]},i_{X^{\prime[i]}},\lambda_{X^{\prime[i]}},\rho_{X^{\prime[i]}})_{1\leq i \leq m}
\end{equation*}
is a collection of isomorphisms $\gamma_i:(X^{[i]},i_{X^{[i]}},\lambda_{X^{[i]}},\rho_{X^{[i]}}) \xrightarrow{\sim} (X^{\prime[i]},i_{X^{\prime[i]}},\lambda_{X^{\prime[i]}},\rho_{X^{\prime[i]}})$ in the sense of the maximal parahoric case, such that 
\begin{equation*}
\gamma_{i}^{-1} \circ \widetilde{\alpha'}_{i+1,i}\circ\gamma_{i+1} = \widetilde{\alpha}_{i+1,i}
\end{equation*}
for all $1 \leq i < m$.

\begin{prop}
The functor $\mathcal N_{E/F}^{\mathbbm h}\otimes \mathcal O_{\breve E}$ is represented by a formal scheme over $\mathrm{Spf}(\mathcal O_{\breve E})$ which is locally formally of finite type and regular.
\end{prop}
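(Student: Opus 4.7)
The plan is to realize $\mathcal N_{E/F}^{\mathbbm h}$ as a closed formal subscheme of the fiber product over $\mathrm{Spf}(\mathcal O_{\breve E})$ of the formal schemes $\mathcal N_{E/F}^{h_i}$, and to deduce the geometric properties from there. First, I would construct the natural forgetful morphism of functors
\[
\Phi : \mathcal N_{E/F}^{\mathbbm h} \longrightarrow \mathcal N_{E/F}^{h_1} \times_{\mathrm{Spf}(\mathcal O_{\breve E})} \cdots \times_{\mathrm{Spf}(\mathcal O_{\breve E})} \mathcal N_{E/F}^{h_m},
\]
sending a chain datum to the tuple of its members. This is well defined because the glueing isogeny $\widetilde{\alpha}_{i+1,i}$, when it exists, is uniquely determined and therefore not additional structure.

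The next step is to show that $\Phi$ identifies the source with a closed subfunctor of the product. Given a point $(X^{[i]},i_{X^{[i]}},\lambda_{X^{[i]}},\rho_{X^{[i]}})_{1\leq i \leq m}$ of the product over $S \in \textbf{Nilp}$, one forms the $\mathcal O_E$-linear quasi-isogenies
\[
\eta_{i+1,i} := \rho_{X^{[i]}}^{-1} \circ (\alpha_{h_{i+1},h_i})_{\overline S} \circ \rho_{X^{[i+1]}} : X^{[i+1]}_{\overline S} \longrightarrow X^{[i]}_{\overline S}.
\]
By the rigidity of quasi-isogenies (\cite{RZ}, Proposition 2.9), each $\eta_{i+1,i}$ lifts uniquely to a quasi-isogeny $\widetilde{\eta}_{i+1,i}$ defined over $S$. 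The requirement that this lift be an honest isogeny with kernel of order $q^{2(h_{i+1}-h_i)}$, and not merely a quasi-isogeny, amounts via Grothendieck-Messing theory to asking that intrinsic maps between finite locally free sheaves factor through prescribed sub-sheaves; this is a closed condition cut out inside the product. Representability of $\mathcal N_{E/F}^{\mathbbm h}\otimes \mathcal O_{\breve E}$ by a closed formal subscheme of the product then follows, and local formal finite type is inherited from the factors.

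Regularity will be the main obstacle. For this, I would invoke the local model formalism. Around any geometric point, Grothendieck-Messing deformation theory identifies the completed local ring of $\mathcal N_{E/F}^{\mathbbm h}$ étale-locally with that of a local model $M^{\mathrm{loc}}_{\mathbbm h}$ parametrizing chains of hyperplane quotients on the Dieudonné modules of the $\mathbb X^{[h_i]}$, subject to the chain conditions dictated by the fixed isogenies $\alpha_{h_{i+1},h_i}$, the polarization types indexed by $\mathbbm h$, and the $(1,n-1)$ signature condition. In the unramified unitary setting the cocharacter determining the signature is minuscule, and the lattice-theoretic description underlying Theorem \ref{RationalPointsMaximalParahoric} provides explicit affine charts. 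The regularity of $M^{\mathrm{loc}}_{\mathbbm h}$ can then be checked either by a direct coordinate computation in these charts, or by appealing to the general results on local models of minuscule parahoric type in the unramified case (\cite{RZ}, Chapter 3), which specialize in the maximal parahoric case to the regularity statement already established in \cite{cho}. Once regularity of the local model is in hand, the local model diagram transports it to $\mathcal N_{E/F}^{\mathbbm h}$.
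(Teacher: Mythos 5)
The paper states this proposition without proof, treating it as a standard extension of the maximal parahoric case, so there is no reference argument to compare against. Your strategy, identifying $\mathcal N_{E/F}^{\mathbbm h}$ with a closed formal subscheme of the fiber product $\prod_i \mathcal N_{E/F}^{h_i}$ to get representability and local formal finite type and then invoking the local model diagram for regularity, is the correct and standard route, and your reduction to a closed condition via rigidity is sound in outline.

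There are two issues in execution. First, the citation is mixed up: \cite{RZ} Proposition 2.9 is not the rigidity of quasi-isogenies; it is precisely the complementary statement that the locus where a quasi-isogeny restricts to an honest isogeny with bounded kernel is a closed subscheme. You need the rigidity lemma (Drinfeld's rigidity for quasi-isogenies, also in \cite{RZ} Chapter 2) for the unique lift $\widetilde{\eta}_{i+1,i}$ over $S$, and then Proposition 2.9 for closedness, which also renders your Grothendieck-Messing detour for the closed condition unnecessary. Second and more substantially, you assert regularity of $M^{\mathrm{loc}}_{\mathbbm h}$ at arbitrary parahoric level but do not prove it: \cite{RZ} Chapter 3 constructs local models and addresses flatness but contains no general parahoric regularity theorem, and if it did, your own observation that \cite{cho} ``established'' the maximal parahoric case would be incoherent. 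The assertion is true: in the unramified $\mathrm{GU}(1,n-1)$ setting the $\mathbb Z/2$-grading reduces the deformation problem to the Hodge line in a single graded piece, and the resulting local model has semistable reduction with \'etale-local equations of Drinfeld type $T_1\cdots T_k = \pi$. But this needs to be carried out explicitly in the charts you invoke, or cited from a source that actually proves it, rather than gestured at.
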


By abuse of notations, $\mathcal N_{E/F}^{\mathbbm h}$ will denote this formal scheme over $\mathrm{Spf}(\mathcal O_{\breve E})$. We refer to it as the \textit{(relative) basic unramified unitary Rapoport-Zink space of parahoric level $\mathbbm h$}. For $1\leq i < m$, we define 
\begin{equation*}
\Delta h_i := \frac{h_{i+1}-h_i}{2}.
\end{equation*}
Let $k$ be an algebraically closed field containing $\kappa_{\breve E}$. By relative Dieudonné theory and following \cite{cho}'s arguments, it is easy to see that the $k$-points of $\mathcal N_{E/F}^{\mathbbm h}$ are described as follows.

\begin{prop}\label{PointsRZSpaceOverFields}
There is a bijection 
\begin{equation*}
\mathcal N_{E/F}^{\mathbbm h}(k) \simeq \left\{\begin{array}{c}
W_{\mathcal O_F}(k)\text{-lattices in }\mathbb N_{k,0}\\
A_m \subset \ldots \subset A_1 \subset B_1 \subset \ldots \subset B_m
\end{array}
\,\middle|\, \forall 1\leq i \leq m, 
\begin{array}{c}
\pi A_i^{\vee} \overset{1}{\subset} B_i \subset A_i^{\vee},\\
\pi B_i^{\vee} \overset{1}{\subset} A_i \subset B_i^{\vee},\\
\pi B_i \subset A_i \overset{h_i}{\subset} B_i 
\end{array}
\right\}.
\end{equation*}
\end{prop}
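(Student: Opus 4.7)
My plan is to reduce the proposition to Theorem \ref{RationalPointsMaximalParahoric} by applying it $m$ times componentwise and then analyzing the chain compatibility given by the isogenies $\widetilde\alpha_{i+1,i}$. Starting from a $k$-point $(X^{[i]},i_{X^{[i]}},\lambda_{X^{[i]}},\rho_{X^{[i]}})_{1\leq i\leq m}$, I would apply Theorem \ref{RationalPointsMaximalParahoric} for each $h=h_i$ to obtain a pair of lattices $A_i \overset{h_i}{\subset} B_i \subset \mathbb N_{k,0}$ satisfying the three local duality relations stated in the proposition. The Dieudonné lattice of $X^{[i]}$ is then recovered as $M^{(i)} = A_i \oplus B_i^{\dagger}$ inside $\mathbb N_k = \mathbb N_{k,0} \oplus \mathbb N_{k,1}$, where the dual is taken with respect to $\langle\cdot,\cdot\rangle_{[h_i]}$.

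Next, I would translate the chain compatibility. By covariant relative Dieudonné theory together with the commutative diagram defining $\widetilde\alpha_{i+1,i}$, the existence of this isogeny is equivalent to the inclusion of Dieudonné lattices $\alpha_{h_{i+1},h_i*}(M^{(i+1)}) \subset M^{(i)}$ inside $\mathbb N_k$, with total index $h_{i+1}-h_i$. Since $\alpha_{h_{i+1},h_i}$ is $\mathcal O_E$-linear, it is homogeneous for the $\mathbb Z/2\mathbb Z$-grading. After absorbing the action of $\alpha_{h_{i+1},h_i*}$ into the identification of $\mathbb N_k$ (equivalently, using $\mathbb X^{[h_1]}$ as the universal reference via the chain of $\alpha$'s to pull back the lattices into a common ambient space), the inclusion splits into two: the degree-$0$ part yields $A_{i+1}\subset A_i$, while the degree-$1$ part, combined with the polarization compatibility relating $\langle\cdot,\cdot\rangle_{[h_{i+1}]}$ and $\langle\cdot,\cdot\rangle_{[h_i]}$, dualizes to $B_i\subset B_{i+1}$. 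Conversely, given a chain of lattices $A_m\subset\ldots\subset A_1\subset B_1\subset\ldots\subset B_m$ satisfying all the stated local conditions, Theorem \ref{RationalPointsMaximalParahoric} produces each tuple $(X^{[i]},\ldots)\in \mathcal N_{E/F}^{h_i}(k)$, and the chain inclusions provide, through the Dieudonné correspondence, the required isogenies $\widetilde\alpha_{i+1,i}$, which are unique as noted in the excerpt. Functoriality of Dieudonné theory handles isomorphism compatibility.

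The main obstacle I anticipate is the normalization step in the middle argument. Concretely, one must verify that the compatibility of $\alpha_{h_{i+1},h_i}$ with the polarizations $\lambda_{\mathbb X}^{[h_i]}$ transfers cleanly, after passage to the induced hermitian forms $\{\cdot,\cdot\}_{[h_i]}$ on $\mathbb N_{k,0}$, into a compatibility of the various duals $\cdot^{\vee}$ appearing in the proposition, so that the chain inclusion reads as $A_m\subset\ldots\subset A_1\subset B_1\subset\ldots\subset B_m$ without any interposed twist by $\alpha_{h_{i+1},h_i*}$. Once this normalization is checked, the rest of the argument is a direct extension of the proof of the second bijection in Theorem \ref{RationalPointsMaximalParahoric} (i.e.\ \cite{cho} Propositions 2.2 and 2.4), no new difficulty arising beyond the bookkeeping of the chain indexing.
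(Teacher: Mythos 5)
Your proposal is correct and follows exactly the route the paper intends (the text simply defers to Cho's arguments): apply Theorem \ref{RationalPointsMaximalParahoric} componentwise to each level $h_i$, and translate the requirement that each $\widetilde\alpha_{i+1,i}$ be an isogeny, via covariant relative Dieudonné theory, into the lattice inclusion $\alpha_{h_{i+1},h_i*}(M^{(i+1)})\subset M^{(i)}$, which after transfer to the fixed ambient $(\mathbb N_{k,0},\{\cdot,\cdot\}_{[h_1]})$ splits by $\mathcal O_E$-linearity into $A_{i+1}\subset A_i$ and (dualizing the degree-$1$ part) $B_i\subset B_{i+1}$. The normalization issue you flag is resolved by the paper's earlier observation that each $\alpha_{h',h}$ induces an isometry $(\mathbb N,\langle\cdot,\cdot\rangle_{[h']})\xrightarrow{\sim}(\mathbb N,\langle\cdot,\cdot\rangle_{[h]})$ (up to a unit scalar), so duals transfer cleanly and no twist by $\alpha$ is interposed in the resulting chain.
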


\begin{rk}
In the right-hand side, the dual lattices are taken with respect to a single fixed pairing $\{\cdot,\cdot\}_{[h_i]}$ for some $1\leq i \leq m$. For another $1 \leq j \leq m$, the isogeny $\alpha_{h_j,h_i}$ (if $j>i$) or the isogeny $\alpha_{h_i,h_j}$ (if $j<i$) induces a bijection between the two right-hand side sets defined respectively with $\{\cdot,\cdot\}_{[h_i]}$ and with $\{\cdot,\cdot\}_{[h_j]}$. In order to remove ambiguity, we impose the following convention.\\
\textbf{Convention:} Unless precised otherwise, the space $\mathbb N_{0}$ and its base changes are always equipped with the pairing $\{\cdot,\cdot\}_{[h_1]}$. If $M \subset \mathbb N_{0}$ is a lattice, the dual lattice $M^{\vee}$ is taken with respect to $\{\cdot,\cdot\}_{[h_1]}$. In particular, this applies to vertex lattices $\Lambda \in \mathcal L_i$ as well.
\end{rk} 

\begin{rk}
If $h_1 = 0$ then $A_1 = B_1$, and if $h_m = n$ then $A_m = \pi B_m$. 
\end{rk}

Our first goal is to define subsets of $\mathcal N_{E/F}^{\mathbbm h}(k)$ indexed by vertex lattices, which will later become the sets of $k$-rational points of the closed Bruhat-Tits strata. Let us start off with the following lemma.

\begin{lem}
Given a point $(A_m\subset \ldots \subset B_m)\in \mathcal N_{E/F}^{\mathbbm h}(k)$, for $1\leq i < m$ we have 
\begin{align*}
A_{i+1}\ \overset{\Delta h_{i}}{\subset} A_i, & & B_i \overset{\Delta h_i}{\subset} B_{i+1}.
\end{align*}
\end{lem}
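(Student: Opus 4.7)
The plan is to reduce everything to comparing lengths of chains of lattices, using only the equalities provided by Proposition \ref{PointsRZSpaceOverFields} and the fact that taking duals reverses inclusions and preserves indices.

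First, I would use the chain $A_{i+1} \subset A_i \subset B_i \subset B_{i+1}$ (which is part of the defining data) together with $A_j \overset{h_j}{\subset} B_j$ for $j \in \{i,i+1\}$. Additivity of indices along this chain yields
\begin{equation*}
h_{i+1} = [B_{i+1}:A_{i+1}] = [B_{i+1}:B_i] + h_i + [A_i:A_{i+1}],
\end{equation*}
so the problem reduces to showing $[A_i:A_{i+1}] = [B_{i+1}:B_i]$, since then both must equal $\tfrac{h_{i+1}-h_i}{2} = \Delta h_i$.

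The key step is a two-path comparison from $\pi B_{i+1}^{\vee}$ to $A_i$. From $B_i \subset B_{i+1}$, duality gives $B_{i+1}^{\vee} \subset B_i^{\vee}$ with $[B_i^{\vee}:B_{i+1}^{\vee}] = [B_{i+1}:B_i]$, and multiplying by $\pi$ preserves this, so $\pi B_{i+1}^{\vee} \subset \pi B_i^{\vee}$ with the same index. Using the defining conditions $\pi B_i^{\vee} \overset{1}{\subset} A_i$ and $\pi B_{i+1}^{\vee} \overset{1}{\subset} A_{i+1}$, one obtains two chains from $\pi B_{i+1}^{\vee}$ to $A_i$: on the one hand $\pi B_{i+1}^{\vee} \subset \pi B_i^{\vee} \overset{1}{\subset} A_i$ of total length $[B_{i+1}:B_i] + 1$, and on the other hand $\pi B_{i+1}^{\vee} \overset{1}{\subset} A_{i+1} \subset A_i$ of total length $1 + [A_i:A_{i+1}]$. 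Equating the two total indices gives $[A_i:A_{i+1}] = [B_{i+1}:B_i]$, which combined with the reduction above finishes the proof.

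There is no real obstacle here; the only point requiring a moment of care is making sure all the containments invoked live inside a common ambient (they are all $W_{\mathcal O_F}(k)$-lattices in $\mathbb N_{k,0}$) and that $\pi B_{i+1}^{\vee} \subset A_i$ so that $[A_i:\pi B_{i+1}^{\vee}]$ is well-defined, which is immediate from $\pi B_{i+1}^{\vee} \subset \pi B_i^{\vee} \subset A_i$. No parity argument is needed in this lemma, since the common parity of the $h_i$'s will automatically make $h_{i+1}-h_i$ even once we have shown the two indices are equal.
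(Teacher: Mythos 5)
Your proof is correct and uses essentially the same key step as the paper: the two-path comparison of $[A_i:\pi B_{i+1}^{\vee}]$, once via $\pi B_{i+1}^{\vee} \subset \pi B_i^{\vee} \overset{1}{\subset} A_i$ and once via $\pi B_{i+1}^{\vee} \overset{1}{\subset} A_{i+1} \subset A_i$, to conclude $[A_i:A_{i+1}] = [B_{i+1}:B_i]$. The only difference is cosmetic: where the paper reduces to the common value $\Delta h_i$ by writing the global index from $A_{i+1}$ to $B_{i+1}$ through the entire chain $A_{i+1}\subset\ldots\subset A_1\subset B_1\subset\ldots\subset B_{i+1}$ and then running an induction on $i$, you instead use the local chain $A_{i+1}\subset A_i\subset B_i\subset B_{i+1}$ to get $h_{i+1}=[B_{i+1}:B_i]+h_i+[A_i:A_{i+1}]$ directly, which sidesteps the induction entirely. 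That is a mild simplification, but the heart of the argument is identical.
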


\begin{proof}
Let us write
\begin{equation*}
A_{m} \overset{x_{m-1}}{\subset} \ldots \overset{x_1}{\subset} A_1 \overset{h_1}{\subset} B_1 \overset{y_1}{\subset} \ldots \overset{y_{m-1}}{\subset} B_{m},
\end{equation*}
for some integers $x_i,y_i\geq 0$. Since $A_{i+1} \overset{h_{i+1}}{\subset} B_{i+1}$, for all $1\leq i < m$ we have 
\begin{equation*}
x_{i}+\ldots + x_1 + h_1 + y_1 + \ldots + y_i = h_{i+1}.
\end{equation*} 
Thus, by induction, it is enough to prove that $x_i = y_i$. We have
\begin{align*}
x_i = [A_i:A_{i+1}] & = [A_i:\pi B_i^{\vee}] + [\pi B_i^{\vee}:\pi B_{i+1}^{\vee}] + [\pi B_{i+1}^{\vee}:A_{i+1}]\\
& = 1 + [B_{i+1} : B_i] - 1 = y_i,
\end{align*}
thus the result follows.
\end{proof}

For $1\leq i \leq m$, let $c_i,d_i \geq 1$ denote the smallest positive integers such that $T_{c_i}(A_i)$ and $T_{d_i}(B_i)$ are $\tau$-stable. Let $\Lambda_{A_i}$ and $\Lambda_{B_i}$ be the $W_{\mathcal O_F}(\mathbb F_{q^2})$-lattices in $C$ such that $T_{c_i}(A_i) = (\Lambda_{A_i})_k$ and $T_{d_i}(B_i) = (\Lambda_{B_i})_k$. 

\begin{rk}
If $h_1 = 0$, we have $B_1 = A_1$ so that $c_1 = d_1$. Likewise, if $h_m = n$ we have $c_k = d_k$. 
\end{rk}

\begin{lem}\label{OneVertexLatticeThenManyOther}
For $1 \leq i \leq m$, we have 
\begin{enumerate}
\item if $\Lambda_{B_i} \in \mathcal L_0$ then $\Lambda_{B_1},\ldots , \Lambda_{B_i} \in \mathcal L_0$,
\item if $\Lambda_{A_i} \in \mathcal L_i$ then $\Lambda_{A_i},\ldots,\Lambda_{A_m} \in \mathcal L_1$.
\end{enumerate}
\end{lem}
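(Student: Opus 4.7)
The plan is to prove both statements by a parallel strategy: for each, combine a monotonicity of $\tau$-closures (deduced from the nesting of the $A_i$'s and $B_i$'s) with the duality/pairing identities of Lemma~\ref{CrucialLemmaCho}-style computations; conclude via Lemma~2.4 that both half-conditions defining membership in $\mathcal L_0$ (resp.\ $\mathcal L_1$) are inherited. Throughout, all duals are taken with respect to the fixed pairing $\{\cdot,\cdot\}_{[h_1]}$, per the convention.

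\medskip

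\textbf{Step 1 (monotonicity of $\tau$-closures).} For part (1), since $B_j \subset B_i$ whenever $j \leq i$, and since $T_{d_i}(B_i)$ is $\tau$-invariant, we get $\tau^{k}(B_j) \subset \tau^{k}(T_{d_i}(B_i)) = T_{d_i}(B_i)$ for every $k \geq 0$; summing over $0 \leq k < d_j$ shows $T_{d_j}(B_j) \subset T_{d_i}(B_i)$, i.e. $\Lambda_{B_j} \subset \Lambda_{B_i}$. Likewise, for part (2), $A_j \subset A_i$ whenever $j \geq i$ gives $\Lambda_{A_j} \subset \Lambda_{A_i}$.

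\medskip

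\textbf{Step 2 (inheritance of the ``outer'' inclusion).} Assume $\Lambda_{B_i} \in \mathcal L_0$, i.e.\ $\Lambda_{B_i} \subset \Lambda_{B_i}^{\vee}$. Dualizing the inclusion of Step~1 (which reverses arrows) gives $\Lambda_{B_i}^{\vee} \subset \Lambda_{B_j}^{\vee}$, and concatenation yields $\Lambda_{B_j} \subset \Lambda_{B_i} \subset \Lambda_{B_i}^{\vee} \subset \Lambda_{B_j}^{\vee}$. The analogous computation in part (2) uses $\Lambda_{A_i} \subset \pi \Lambda_{A_i}^{\vee}$ to deduce $\Lambda_{A_j} \subset \pi \Lambda_{A_j}^{\vee}$.

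\medskip

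\textbf{Step 3 (the ``inner'' inclusion from the Dieudonné relations).} This step is actually automatic and does not use the hypothesis: by Proposition~\ref{PointsRZSpaceOverFields} we have $\pi B_j^{\vee} \subset B_j$, and iteratively $\pi^{2} A_j^{\vee} \subset \pi B_j \subset A_j$. Applying $\tau^k$ and using Lemma~2.4 (i.e.\ $\tau(M^{\vee}) = \tau(M)^{\vee}$) gives $\pi \tau^{k}(B_j^{\vee}) \subset \tau^{k}(B_j)$ for all $k$, and similarly for $A_j$. Intersecting over $0 \leq k < d_j$ (resp.\ $c_j$) and using the formula $\bigl(\sum_k \tau^{k}(L)\bigr)^{\vee} = \bigcap_k \tau^{k}(L^{\vee})$ yields
\begin{equation*}
\pi (\Lambda_{B_j})_k^{\vee} \subset \bigcap_k \tau^{k}(B_j) \subset B_j \subset (\Lambda_{B_j})_k,
\end{equation*}
and analogously $\pi^{2}(\Lambda_{A_j})_k^{\vee} \subset (\Lambda_{A_j})_k$. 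Descending the $\tau$-invariant inclusions from $\mathbb N_{k,0}$ to $C$ gives $\pi \Lambda_{B_j}^{\vee} \subset \Lambda_{B_j}$ and $\pi^{2} \Lambda_{A_j}^{\vee} \subset \Lambda_{A_j}$.

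\medskip

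Combining Steps~2 and~3 yields $\Lambda_{B_j} \in \mathcal L_0$ and $\Lambda_{A_j} \in \mathcal L_1$ respectively. The only nontrivial point is the interaction between $\vee$ and the sum defining the $\tau$-closure; everything else is formal. I do not expect a real obstacle: the whole proof is a book-keeping argument exploiting the nesting of the chain and the fact that $\tau$-stability is preserved by $\vee$ and by $\pi$-scaling.
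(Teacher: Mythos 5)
Your proof is correct and follows the same strategy as the paper's: the monotonicity $\Lambda_{B_j}\subset\Lambda_{B_i}$ for $j\leq i$ plus dualizing gives the chain $\Lambda_{B_j}\subset\Lambda_{B_i}\subset\Lambda_{B_i}^{\vee}\subset\Lambda_{B_j}^{\vee}$, and the ``inner'' inclusion $\pi\Lambda_{B_j}^{\vee}\subset\Lambda_{B_j}$ is noted to hold unconditionally (the paper simply cites the $m=1$ discussion after Lemma~\ref{CrucialLemmaCho}, whereas you rederive it directly from $B_j\subset(\Lambda_{B_j})_k$ and $\pi B_j^{\vee}\subset B_j$). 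Your Step~3 is a somewhat longer route to that inner inclusion than strictly needed, but the content is the same.
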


\begin{proof}
As observed in the case $m=1$, we always have $\pi \Lambda_{B_i}^{\vee} \subset \Lambda_{B_i}$ and $\pi^{2}\Lambda_{A_i}^{\vee} \subset \Lambda_{A_i}$. Now let us assume that $\Lambda_{B_i}\in\mathcal L_0$. Then we have 
\begin{equation*}
\Lambda_{B_1} \subset \ldots \subset \Lambda_{B_i} \subset \Lambda_{B_i}^{\vee} \subset \ldots \subset \Lambda_{B_1}^{\vee},
\end{equation*}
from which it follows that $\Lambda_{B_1},\ldots , \Lambda_{B_i} \in \mathcal L_0$. Point 2. is proved similarly.
\end{proof}

Given a point $(A_m\subset \ldots \subset B_m) \in \mathcal N_{E/F}^{\mathbbm h}(k)$, let us define two integers 
\begin{align*}
a := \min \{1\leq i \leq m, \Lambda_{A_i} \in \mathcal L_1\}, & & b := \max \{1\leq i \leq m, \Lambda_{B_i} \in \mathcal L_0\}.
\end{align*}

If none of the $\Lambda_{A_i}$'s are in $\mathcal L_i$, we put $a := m+1$. Likewise, if none of the $\Lambda_{B_i}$'s is in $\mathcal L_0$, we put $b = 0$.

\begin{lem}
We have $a \leq b+1$.
\end{lem}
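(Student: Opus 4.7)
The plan is a one-line application of the ``crucial Lemma'' \ref{CrucialLemmaCho} to the pair $(A_{b+1}\subset B_{b+1})$. First I would dispose of the trivial case $b=m$: in that case the conclusion $a \leq m+1 = b+1$ is automatic, since by definition $a \leq m+1$.

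Assume therefore that $b \leq m-1$, so that the index $i_0 := b+1$ satisfies $1 \leq i_0 \leq m$. The configuration $(A_{i_0} \subset B_{i_0})$, extracted from the chain in $\mathcal N_{E/F}^{\mathbbm h}(k)$, matches exactly the description of a point of $\mathcal N_{E/F}^{h_{i_0}}(k)$ given in Theorem \ref{RationalPointsMaximalParahoric}: indeed Proposition \ref{PointsRZSpaceOverFields} guarantees $\pi A_{i_0}^{\vee} \overset{1}{\subset} B_{i_0} \subset A_{i_0}^{\vee}$, $\pi B_{i_0}^{\vee} \overset{1}{\subset} A_{i_0} \subset B_{i_0}^{\vee}$ and $\pi B_{i_0} \subset A_{i_0} \overset{h_{i_0}}{\subset} B_{i_0}$, with duals taken w.r.t.\ $\{\cdot,\cdot\}_{[h_1]}$, which is identified with $\{\cdot,\cdot\}_{[h_{i_0}]}$ via the isometry induced by $\alpha_{h_{i_0},h_1}$. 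Lemma \ref{CrucialLemmaCho}(3) applied to this pair gives the dichotomy
\begin{equation*}
T_{c_{i_0}}(A_{i_0}) \subset \pi T_{c_{i_0}}(A_{i_0})^{\vee} \quad \text{or} \quad T_{d_{i_0}}(B_{i_0}) \subset T_{d_{i_0}}(B_{i_0})^{\vee},
\end{equation*}
which, after passing to $\tau$-invariants, translates to $\Lambda_{A_{i_0}} \in \mathcal L_1$ or $\Lambda_{B_{i_0}} \in \mathcal L_0$. The second possibility contradicts the maximality of $b$ (since $i_0 = b+1 > b$), so $\Lambda_{A_{i_0}} \in \mathcal L_1$, and the minimality of $a$ then forces $a \leq i_0 = b+1$, as required.

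The entire argument hinges on being able to apply the ``crucial Lemma'' of \cite{cho} to the intermediate pair $(A_{i_0} \subset B_{i_0})$; once the pairing conventions are reconciled via the isometry $\alpha_{h_{i_0},h_1}$, there is nothing further to check. I expect no genuine obstacle here, since the very definition of the parahoric chain in Proposition \ref{PointsRZSpaceOverFields} has been engineered precisely so that every intermediate pair satisfies the hypotheses of the maximal parahoric case of Theorem \ref{RationalPointsMaximalParahoric}.
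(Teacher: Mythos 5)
Your proposal is correct and takes essentially the same approach as the paper: both dispose of the case $b=m$ trivially, then apply item 3 of Lemma \ref{CrucialLemmaCho} to the pair at index $b+1$, use the maximality of $b$ to rule out $\Lambda_{B_{b+1}} \in \mathcal L_0$, and conclude $\Lambda_{A_{b+1}} \in \mathcal L_1$, whence $a \leq b+1$. The extra care you take about identifying the pairings $\{\cdot,\cdot\}_{[h_{i_0}]}$ and $\{\cdot,\cdot\}_{[h_1]}$ via $\alpha_{h_{i_0},h_1}$ is implicit in the paper but is indeed the correct justification.
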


\begin{proof}
According to Lemma \ref{CrucialLemmaCho}, for all $1 \leq i \leq m$ we have $\Lambda_{A_i} \in \mathcal L_1$ or $\Lambda_{B_i} \in \mathcal L_0$. If $b = m$, the inequality is trivial. Otherwise, if $b<m$ by definition we have $\Lambda_{B_{b+1}} \not \in \mathcal L_0$. Thus, we must have $\Lambda_{A_{b+1}} \in \mathcal L_1$, which implies $a\leq b+1$.
\end{proof}

Table 1 shows a table aimed at helping visualize the repartition of vertex lattices among the $\Lambda_{A_i}$'s and the $\Lambda_{B_i}$'s. For each box, a symbol $\bigcirc$ indicates that the corresponding lattice is a vertex lattice in $\mathcal L_0$ for the first row, and in $\mathcal L_1$ for the second row. On the other hand, a symbol $\times$ indicates that the corresponding lattice is not a vertex lattice. 

\begin{table}[h]
\centering
\begin{tabular}{|c|ccccccccc|}
\hline 
$i$ & $1$ & $2$ & $\ldots$ & $a$ & $\ldots$ & $b$ & $\ldots$ & $m-1$ & $m$\\
\hline
$\Lambda_{B_i}$ & $\bigcirc$ & $\bigcirc$ & $\bigcirc$ & $\bigcirc$ & $\bigcirc$ & $\bigcirc$ & $\times$ & $\times$ & $\times$ \\
$\Lambda_{A_i}$ & $\times$ & $\times$ & $\times$ & $\bigcirc$ & $\bigcirc$ & $\bigcirc$ & $\bigcirc$ & $\bigcirc$ & $\bigcirc$ \\
\hline
\end{tabular}
\caption{The repartition of vertex lattices.}
\end{table}

Note that in the extremal case $a=b+1$, the circles in the two rows do not overlap, see Table 2. 

\begin{table}[h]
\centering
\begin{tabular}{|c|cccccccc|}
\hline 
$i$ & $1$ & $2$ & $\ldots$ & $b$ & $a$ & $\ldots$ & $m-1$ & $m$\\
\hline
$\Lambda_{B_i}$ & $\bigcirc$ & $\bigcirc$ & $\bigcirc$ & $\bigcirc$ & $\times$ & $\times$ & $\times$ & $\times$ \\
$\Lambda_{A_i}$ & $\times$ & $\times$ & $\times$ & $\times$ & $\bigcirc$ & $\bigcirc$ & $\bigcirc$ & $\bigcirc$ \\
\hline
\end{tabular}
\caption{The repartition of vertex lattices when $a=b+1$.}
\end{table}

\begin{rk}
We point out that when $h_1 = 0$, we always have $\Lambda_{B_1} \in \mathcal L_0$ and $\Lambda_{A_1} \not \in \mathcal L_1$. Thus, we have $a \geq 2$ and $b \geq 1$ in this case. Likewise, we have $a \leq m$ and $b \leq m-1$ if $h_m = n$. 
\end{rk}

\begin{prop}\label{ConditionStar}
Let $1 \leq i < j \leq m$. The following statements hold. 
\begin{enumerate}
\item We have 
\begin{align*}
\Lambda_{B_i}\in\mathcal L_0 & \iff \Lambda_{B_i}+\pi\Lambda_{A_j}^{\vee} \in \mathcal L_0, \\
\Lambda_{A_j}\in\mathcal L_1 & \iff \Lambda_{B_i}\cap \pi \Lambda_{A_j}^{\vee} \in \mathcal L_0.
\end{align*}
\item If $\Lambda_{B_i} \subset \pi \Lambda_{A_j}^{\vee}$ then $\Lambda_{B_i} \in \mathcal L_0$ and $\Lambda_{A_j} \in \mathcal L_1$.
\item If $d_i < d_j$ or if $c_j < c_i$ then $\Lambda_{B_i} \subset \pi \Lambda_{A_j}^{\vee}$.
\end{enumerate}
\end{prop}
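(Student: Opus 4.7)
The first step is a reduction using always-true inclusions. Since $\pi \Lambda_{B_i}^{\vee} \subset \Lambda_{B_i}$ and $\pi^2 \Lambda_{A_j}^{\vee} \subset \Lambda_{A_j}$ hold unconditionally (as used in the proof of Lemma~\ref{OneVertexLatticeThenManyOther}), the memberships $\Lambda_{B_i} \in \mathcal L_0$ and $\Lambda_{A_j} \in \mathcal L_1$ reduce to the single inclusions $\Lambda_{B_i} \subset \Lambda_{B_i}^{\vee}$ and $\Lambda_{A_j} \subset \pi \Lambda_{A_j}^{\vee}$, respectively. Next I would establish the double inclusion
\begin{equation*}
\pi \Lambda_{B_i} \subset \Lambda_{A_j} \subset \Lambda_{B_i},
\end{equation*}
valid for all $i < j$. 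The right-hand inclusion follows from $A_j \subset A_i \subset B_i \subset \Lambda_{B_i}$ combined with the $\tau$-stability of $\Lambda_{B_i}$, since $\Lambda_{A_j}$ is the minimal $\tau$-stable lattice containing $A_j$. The left-hand inclusion follows from $\pi B_i \subset \pi B_j \subset A_j \subset \Lambda_{A_j}$ (using the moduli relation $\pi B_j \subset A_j$) together with the observation that $\pi \Lambda_{B_i}$ is the $\tau$-closure of $\pi B_i$ (since $\pi$ commutes with $\tau$). Dualizing the left inclusion yields the unconditional bound $\pi \Lambda_{A_j}^{\vee} \subset \Lambda_{B_i}^{\vee}$.

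For item~1, I would expand the $\mathcal L_0$-membership conditions on $\Lambda_{B_i} + \pi \Lambda_{A_j}^{\vee}$ and $\Lambda_{B_i} \cap \pi \Lambda_{A_j}^{\vee}$ using the formal identities $(L+L')^{\vee} = L^{\vee} \cap L'^{\vee}$, $(L \cap L')^{\vee} = L^{\vee} + L'^{\vee}$, and $(\pi L)^{\vee} = \pi^{-1} L^{\vee}$. Each condition breaks into a conjunction of containments, of which all but one are automatic from the foundational double inclusion and the unconditional inclusions above. For 1(a) the residual non-automatic condition reduces exactly to $\Lambda_{B_i} \subset \Lambda_{B_i}^{\vee}$, and for 1(b) it reduces to $\Lambda_{A_j} \subset \pi \Lambda_{A_j}^{\vee}$; this gives both equivalences.

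Item~2 is then a two-line consequence. Assuming $\Lambda_{B_i} \subset \pi \Lambda_{A_j}^{\vee}$ and combining with the unconditional $\pi \Lambda_{A_j}^{\vee} \subset \Lambda_{B_i}^{\vee}$ gives $\Lambda_{B_i} \subset \Lambda_{B_i}^{\vee}$, hence $\Lambda_{B_i} \in \mathcal L_0$. Chaining with $\Lambda_{A_j} \subset \Lambda_{B_i}$ similarly yields $\Lambda_{A_j} \subset \pi \Lambda_{A_j}^{\vee}$, hence $\Lambda_{A_j} \in \mathcal L_1$.

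Item~3 is the main obstacle. By $\tau$-stability of $\pi \Lambda_{A_j}^{\vee}$, the inclusion $\Lambda_{B_i} \subset \pi \Lambda_{A_j}^{\vee}$ is equivalent to the a~priori weaker statement $B_i \subset \pi \Lambda_{A_j}^{\vee}$. To prove this I would mirror the argument of Lemma~\ref{CrucialLemmaCho}, exploiting the strict length-one growth $T_r(B) \overset{1}{\subset} T_{r+1}(B)$ for $r < d$ and the intersection identity $\tau(T_r(B)) = T_{r+1}(B) \cap \tau(T_{r+1}(B))$ to compare $B_i$ with each of the $\tau$-translates $\tau^s(\pi A_j^{\vee})$ (for $s = 0, \ldots, c_j - 1$) whose intersection is exactly $\pi \Lambda_{A_j}^{\vee}$. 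The case $c_j < c_i$ is handled by a symmetric dual argument, interchanging the roles of the $A_i$'s and the $B_i^{\vee}$'s. The hard part is turning the strict inequality $d_i < d_j$ (resp.\ $c_j < c_i$) into the precise lattice inclusion we need: I anticipate an inductive argument on the gap $d_j - d_i$ (resp.\ $c_i - c_j$), paralleling point~(3) of the proof of \cite{cho} Lemma~2.7 while keeping careful track of the two distinct indices $i \neq j$.
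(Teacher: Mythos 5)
Your plan for item~1 tracks the paper's argument: expand the $\mathcal L_0$-condition via $(L+L')^{\vee}=L^{\vee}\cap L'^{\vee}$, note that several of the resulting containments (in particular $\Lambda_{B_i}\subset\pi^{-1}\Lambda_{A_j}$, its dual $\pi\Lambda_{A_j}^{\vee}\subset\Lambda_{B_i}^{\vee}$, and $\pi^2\Lambda_{A_j}^{\vee}\subset\Lambda_{A_j}$) hold unconditionally, and reduce to the single condition $\Lambda_{B_i}\subset\Lambda_{B_i}^{\vee}$ (resp.\ $\Lambda_{A_j}\subset\pi\Lambda_{A_j}^{\vee}$). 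Your item~2, however, is a genuinely different and cleaner route. The paper deduces it from item~1 plus Lemma~\ref{CrucialLemmaCho} and Lemma~\ref{OneVertexLatticeThenManyOther} (splitting into the cases $\Lambda_{B_i}\in\mathcal L_0$ or not, and in the latter case running a contradiction through $\Lambda_{A_i}\in\mathcal L_1\Rightarrow\Lambda_{A_j}\in\mathcal L_1$). You instead chain the two unconditional inclusions $\Lambda_{A_j}\subset\Lambda_{B_i}$ (from $A_j\subset B_i$ and $\tau$-minimality of $\Lambda_{A_j}$) and $\pi\Lambda_{A_j}^{\vee}\subset\Lambda_{B_i}^{\vee}$ (dual of $\pi\Lambda_{B_i}\subset\Lambda_{A_j}$) through the hypothesis $\Lambda_{B_i}\subset\pi\Lambda_{A_j}^{\vee}$ to directly read off both memberships, with no case split and no appeal to the earlier lemmas. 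This is self-contained and simpler; the only thing the paper's longer route buys is that it re-uses item~1 and keeps the logical dependence on Lemma~\ref{CrucialLemmaCho} explicit. For item~3 your sketch correctly identifies the engine (the length-one growth and intersection identity from Lemma~\ref{CrucialLemmaCho}), but the clean way to organize it — which the paper uses — is to start from $(\Lambda_{B_i})_k = T_{d_i}(B_i)\subset T_{d_i}(B_j)$, use $\tau$-invariance of the left side to land in $\bigcap_{\ell}\tau^{\ell}(T_{d_i}(B_j))$, then descend via $T_r(B_j)\cap\tau(T_r(B_j))=\tau(T_{r-1}(B_j))$ (valid since $r\leq d_i<d_j$) all the way to $\bigcap_{\ell}\tau^{\ell}(B_j)$, and close with $B_j\cap\tau(B_j)=\pi A_j^{\vee}$ (valid since $d_j>1$). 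This avoids the comparison ``of $B_i$ with each $\tau$-translate of $\pi A_j^{\vee}$'' you gesture at, which is harder to run directly. Your instinct to dualize for the case $c_j<c_i$ is exactly what the paper does.
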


\begin{proof}
1. Observe that 
\begin{equation*}
(\Lambda_{B_i}+\pi\Lambda_{A_j}^{\vee})^{\vee} = \Lambda_{B_i}^{\vee}\cap \pi^{-1}\Lambda_{A_j}.
\end{equation*}
On the one hand, the inclusion $\pi(\Lambda_{B_i}+\pi\Lambda_{A_j}^{\vee})^{\vee} \subset \Lambda_{B_i}+\pi\Lambda_{A_j}^{\vee}$ is obvious since $\pi \Lambda_{B_i}^{\vee} \subset \Lambda_{B_i}$. On the other hand, the inclusion $\Lambda_{B_i}+\pi\Lambda_{A_j}^{\vee} \subset (\Lambda_{B_i}+\pi\Lambda_{A_j}^{\vee})^{\vee}$ is equivalent to 
\begin{multicols}{2}
\begin{enumerate}[label=\alph*.]
\item $\Lambda_{B_i} \subset \Lambda_{B_i}^{\vee}$,
\item $\Lambda_{B_i} \subset \pi^{-1}\Lambda_{A_j}$,
\item $\pi\Lambda_{A_j}^{\vee} \subset \Lambda_{B_i}^{\vee}$,
\item $\pi\Lambda_{A_j}^{\vee} \subset \pi^{-1}\Lambda_{A_j}$.
\end{enumerate}
\end{multicols}
Now, b. is always true, c. is equivalent to b. by duality, d. is always true and a. is equivalent to $\Lambda_{B_i} \in \mathcal L_0$. The other equivalence of 1. is proved similarly. \\
2. Assume that $\Lambda_{B_i} \subset \pi\Lambda_{A_j}^{\vee}$. If $\Lambda_{B_i} \in \mathcal L_0$, then by 1. we have $\pi\Lambda_{A_j}^{\vee} \in \mathcal L_0$, thus $\Lambda_{A_j}\in\mathcal L_1$. Now, assume that $\Lambda_{B_i} \not \in \mathcal L_0$. By Lemma \ref{CrucialLemmaCho}, we must have $\Lambda_{A_i} \in \mathcal L_1$. Then by Lemma \ref{OneVertexLatticeThenManyOther}, since $i<j$ we must have $\Lambda_{A_j} \in \mathcal L_1$ as well. By 1. it follows that $\Lambda_{B_i} \in \mathcal L_0$, which is a contradiction. \\
3. Assume that $d_i < d_j$. The inclusion $B_i \subset B_j$ implies that $T_{s}(B_i) \subset T_{s}(B_j)$ for all $s\geq 1$. Consider $s=d_i$. We have $T_{d_i}(B_i) = (\Lambda_{B_i})_k \subset T_{d_i}(B_j)$. Since $(\Lambda_{B_i})_k$ is $\tau$-invariant, we deduce that 
\begin{equation*}
(\Lambda_{B_i})_k \subset \bigcap_{\ell\in\mathbb Z} \tau^{\ell}(T_{d_i}(B_j)).
\end{equation*}
Since $d_i < d_j$, we know that $T_{d_i}(B_j)\cap \tau(T_{d_i}(B_j)) = \tau(T_{d_i-1}(B_j))$ by Lemma \ref{CrucialLemmaCho}. Thus, we have 
\begin{equation*}
(\Lambda_{B_i})_k \subset \bigcap_{\ell\in\mathbb Z} \tau^{\ell}(T_{d_i-1}(B_j)).
\end{equation*}
By induction, it actually follows that $(\Lambda_{B_i})_k \subset \bigcap_{\ell\in\mathbb Z} \tau^{\ell}(B_j)$. Now, notice that we have 
\begin{align*}
\pi A_j^{\vee} \overset{1}{\subset} B_j, & & \pi A_j^{\vee} \overset{1}{\subset} \tau(B_j).
\end{align*} 
Since $d_j > 1$, we have $B_j \not = \tau(B_j)$. Thus, we must have $B_j\cap \tau(B_j) = \pi A_j^{\vee}$. It follows that 
\begin{equation*}
(\Lambda_{B_i})_k \subset \pi\bigcap_{\ell\in\mathbb Z}\tau^{\ell}(A_j)^{\vee} = \pi(\Lambda_{A_j})_k^{\vee}.
\end{equation*}
Assume now that $c_j < c_i$. Using the inclusion $A_j \subset A_i$, we prove similarly that $(\Lambda_{A_j})_k \subset \pi (\Lambda_{B_i})_k^{\vee}$. Taking duals, the result follows.
\end{proof}

For all $2 \leq i \leq a-1$, by definition we have $\Lambda_{A_i} \not \in \mathcal L_1$. By the previous lemma, it follows that $d_{i-1} \geq d_i$ and that $c_{i-1} \geq c_i$. Similarly, for $b+1\leq i \leq m-1$ we have $\Lambda_{B_i} \not \in \mathcal L_0$, thus one must have $d_i \geq d_{i+1}$ and $c_{i+1} \geq c_i$. Besides, if $\Lambda_{A_i} \not \in \mathcal L_1$ then $c_i \geq d_i$ by Lemma \ref{CrucialLemmaCho}. Likewise if $\Lambda_{B_i} \not \in \mathcal L_0$ then $d_i \geq c_i$. We sum up these inequalities in Table 3. 

\begin{table}[h]
\centering
\begin{tabular}{|c|ccccccccccccccccc|}
\hline 
$i$ & $1$ & & $2$ & & $\ldots$ & & $a-1$ & $a$ & $\ldots$ & $b$ & $b+1$ & & $\ldots$ & & $m-1$ & & $m$\\
\hline
$\Lambda_{B_i}$ & $d_1$ & $\geq$ & $d_2$ & $\geq$ & $\ldots$ & $\geq$ & $d_{a-1}$ & $d_a$ & $\ldots$ & $d_b$ & $d_{b+1}$ & $\geq$ & $\ldots$ & $\geq$ & $d_{m-1}$ & $\geq$ & $d_m$ \\
& \rotatebox{90}{$\geq\;\,$} & & & & & & & & & & & & & & & &  \rotatebox{90}{$\leq\;\,$}\\
$\Lambda_{A_i}$ & $c_1$ & $\leq$ & $c_2$ & $\leq$ & $\ldots$ & $\leq$ & $c_{a-1}$ & $c_a$ & $\ldots$ & $c_b$ & $c_{b+1}$ & $\leq$ & $\ldots$ & $\leq$ & $c_{m-1}$ & $\leq$ & $c_m$  \\
\hline
\end{tabular}
\caption{Inequalities between the $d_i$'s and the $c_i$'s.}
\end{table}

\begin{defi}\label{BTType}
Given a point $(A_m \subset \ldots \subset B_m) \in \mathcal N_{E/F}^{\mathbbm h}(k)$, we attach the subset $I \subset \{0,\ldots,m\}$ which contains 
\begin{itemize}
\item $0$ if and only if $\Lambda_{A_1} \in \mathcal L_1$,
\item $k$ if and only if $\Lambda_{B_m} \in \mathcal L_0$,
\item all the integers $1 \leq i \leq m-1$ such that $\Lambda_{B_i} \subset \pi \Lambda_{A_{i+1}}^{\vee}$.
\end{itemize}
We refer to the set $I$ as the \textit{Bruhat-Tits type} of the point $(A_m \subset \ldots \subset B_m)$. 
\end{defi}

\begin{lem}\label{CrucialLemma}
The Bruhat-Tits type of any point $(A_m \subset \ldots \subset B_m)\in \mathcal N_{E/F}^{\mathbbm h}(k)$ is not empty. 
\end{lem}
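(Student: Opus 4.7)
The proof proceeds by contradiction. Suppose $I = \emptyset$; by the definition of the Bruhat-Tits type this amounts to the three conditions: (a) $\Lambda_{A_1} \notin \mathcal L_1$; (b) $\Lambda_{B_m} \notin \mathcal L_0$; and (c) $\Lambda_{B_i} \not\subset \pi\Lambda_{A_{i+1}}^{\vee}$ for every $1 \leq i \leq m-1$. I would first exploit Lemma \ref{CrucialLemmaCho}(3) at the boundaries: (a) rules out the case $c_1 < d_1$, hence $c_1 \geq d_1$, and forces $\Lambda_{B_1} \in \mathcal L_0$; symmetrically (b) yields $d_m \geq c_m$ and $\Lambda_{A_m} \in \mathcal L_1$. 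Next, the contrapositive of Proposition \ref{ConditionStar}(3) applied to each consecutive pair $(i, i+1)$ gives $d_i \geq d_{i+1}$ and $c_i \leq c_{i+1}$ for all $1 \leq i \leq m-1$.

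Chaining the inequalities then forces the collapse
\begin{equation*}
c_1 \leq c_2 \leq \cdots \leq c_m \leq d_m \leq d_{m-1} \leq \cdots \leq d_1 \leq c_1,
\end{equation*}
so that $c_i = d_i = N$ for a common positive integer $N$ and every $1 \leq i \leq m$. Setting $a$ and $b$ as before Table 1, condition (a) gives $a \geq 2$ and condition (b) gives $b \leq m-1$; the inequality $a \leq b+1$ established earlier yields $a - 1 \in \{1, \ldots, m-1\}$ and the simultaneous properties $\Lambda_{B_{a-1}} \in \mathcal L_0$ (since $a-1 \leq b$), $\Lambda_{A_a} \in \mathcal L_1$ (by definition of $a$), and $\Lambda_{A_{a-1}} \notin \mathcal L_1$ (by minimality of $a$). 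The proof then reduces to contradicting the instance of (c) at $i = a-1$, namely to deducing the inclusion $\Lambda_{B_{a-1}} \subset \pi\Lambda_{A_a}^{\vee}$.

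The crux is therefore the following extraction: from the rigid configuration $c_{a-1} = d_{a-1} = c_a = d_a = N$, together with $\Lambda_{A_{a-1}} \notin \mathcal L_1$, derive $\Lambda_{B_{a-1}} \subset \pi\Lambda_{A_a}^{\vee}$. Since $\pi\Lambda_{A_a,k}^{\vee}$ is $\tau$-stable and $\Lambda_{B_{a-1},k} = T_N(B_{a-1})$ is the smallest $\tau$-stable lattice containing $B_{a-1}$, this reduces to checking that $B_{a-1} \subset \pi\Lambda_{A_a,k}^{\vee} = \bigcap_{j=0}^{N-1}\tau^j(\pi A_a^{\vee})$. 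The natural strategy is to combine the dual relations $\pi A_a^{\vee} \overset{1}{\subset} B_a$ and $\pi B_{a-1}^{\vee} \overset{1}{\subset} A_{a-1}$ with the step increments $T_j \overset{1}{\subset} T_{j+1}$ of Lemma \ref{CrucialLemmaCho}(1)(2) and the failure of the $\mathcal L_1$-isotropy condition at $\Lambda_{A_{a-1}}$, and then to trace $\tau$-translates across the four lattices $A_{a-1}, A_a, B_{a-1}, B_a$ so that each of the intersections $\tau^j(\pi A_a^\vee)$ absorbs $B_{a-1}$.

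I expect the main obstacle to be the careful bookkeeping of these $\tau$-translates and their interaction with duality across the four indices simultaneously, as opposed to just two in the case $m=1$. In particular the extremal configuration $a = b+1$ (Table 2), in which no single index carries vertex lattices of both ranks, is likely to require the most delicate argument and should be the natural generalization of the concluding step in the proofs of Lemma 2.1 of \cite{vw1} and Lemma 2.7 of \cite{cho}.
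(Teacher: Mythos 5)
Your opening deductions are correct and coincide with the first step of the paper's proof: invoking Lemma~\ref{CrucialLemmaCho}(3) at $i=1$ and $i=m$ gives $c_1\ge d_1$, $\Lambda_{B_1}\in\mathcal L_0$, $d_m\ge c_m$, $\Lambda_{A_m}\in\mathcal L_1$, and chaining the contrapositive of Proposition~\ref{ConditionStar}(3) along consecutive pairs indeed forces $c_i=d_i=N$ for all $i$. The gap appears in the second half, where you propose to close the argument by deriving the inclusion $\Lambda_{B_{a-1}}\subset\pi\Lambda_{A_a}^\vee$ at the single transition index $i=a-1$ and defer the entire derivation to an unexecuted ``$\tau$-translate bookkeeping''. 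This localization is not achievable from the data you cite, and the paper does not attempt it.

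Concretely, the containment you aim for is equivalent to $B_{a-1}\subset\bigcap_j\tau^j(\pi A_a^\vee)$ and hence would in particular require the bare inclusion $B_{a-1}\subset\pi A_a^\vee$. The paper's proof instead establishes exactly the opposite: from the emptiness hypothesis it first shows that $B_i\not\subset\pi A_{i+1}^\vee$ for \emph{every} $i$ (because any such inclusion would propagate, via $\pi A_{i+1}^\vee+\pi\tau(A_{i+1}^\vee)=\tau(B_{i+1})$, to $(\Lambda_{B_i})_k\subset\pi(\Lambda_{A_{i+1}})_k^\vee$). From this negative fact at all indices it deduces the global identities $B_m=\pi A_m^\vee+B_1$ and $A_1=\pi B_1^\vee+A_m$, and finally produces a contradiction at a \emph{boundary} index ($i=1$ or $i=m$, not $a-1$) via a length count on the index-one chain $\pi A_1^\vee\subset\pi A_1^\vee+A_m\subset B_1\cap B_m^\vee\subset B_1$. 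Your extra input $\Lambda_{A_{a-1}}\not\in\mathcal L_1$ only records that the hermitian form on $T_N(A_{a-1})$ fails to be divisible by $\pi$; it does not supply a route to the containment $B_{a-1}\subset\pi A_a^\vee$ without reproducing the global propagation. So the missing step is not ``careful bookkeeping at four lattices'': the contradiction genuinely lives at the boundary, and the decisive input is the conjunction of all the failed inclusions, not just the one at $a-1$.
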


\begin{proof}
Towards a contradiction, let us consider a point $(A_m \subset \ldots \subset B_m)\in \mathcal N_{E/F}^{\mathbbm h}(k)$ whose Bruhat-Tits type is empty. In other words, we have $a>1$, $b<m$ and for all $1\leq i \leq m-1$, we have $\Lambda_{B_i} \not \subset \pi \Lambda_{A_{i+1}}^{\vee}$. In particular, it follows that all the integers $d_i$'s and $c_i$'s are equal. Let us write $t \geq 1$ for their common value.\\
Assume that for some $1\leq i \leq m-1$, we have $B_i \subset \pi A_{i+1}^{\vee}$. In particular, we have $t>1$. It follows that 
\begin{equation*}
(\Lambda_{B_i})_k = T_t(B_i) \subset \pi A_{i+1}^{\vee} + \pi \tau(A_{i+1}^{\vee}) + \ldots + \pi \tau^{t-1}(A_{i+1}^{\vee}).
\end{equation*}
Observe that $\pi A_{i+1}^{\vee} \overset{1}{\subset} \tau(B_{i+1})$ and that $\pi \tau(A_{i+1}^{\vee}) \overset{1}{\subset} \tau(B_{i+1})$. Since $A_{i+1} \not = \tau(A_{i+1})$, it follows that 
\begin{equation*}
\pi A_{i+1}^{\vee} + \pi \tau(A_{i+1}^{\vee}) = \tau(B_{i+1}).
\end{equation*}
Thus, we have $(\Lambda_{B_i})_k \subset \tau(T_{t-1}(B_{i+1}))$. Repeating the argument of the proof of point 3. of Proposition \ref{ConditionStar}, we deduce that 
\begin{equation*}
(\Lambda_{B_i})_k \subset \bigcap_{\ell\in\mathbb Z}\tau^{\ell}(T_{t-1}(B_{i+1})) = \bigcap_{\ell\in\mathbb Z} \tau^{\ell}(B_{i+1}) = \pi (\Lambda_{A_{i+1}})_k^{\vee},
\end{equation*}
which is a contradiction.\\
Thus, we have $B_i \not \subset \pi A_{i+1}^{\vee}$ for all $1\leq i \leq m-1$. Since $\pi A_{i+1}^{\vee} \overset{1}{\subset} B_{i+1}$, it follows that $B_{i+1} = B_i + \pi A_{i+1}^{\vee}$. In particular, by induction we have
\begin{equation*}
B_m = \pi A_m^{\vee} + B_1.
\end{equation*}
Likewise, one proves that $A_{i+1} \not \subset \pi B_i^{\vee}$ for all $1\leq i \leq m-1$. It follows that 
\begin{equation*}
A_1 = \pi B_1^{\vee} + A_m. 
\end{equation*} 
Consider the following chain of inclusions 
\begin{equation*}
\pi A_1^{\vee} \overset{x}{\subset} \pi A_1^{\vee} + A_m \overset{y}{\subset} B_1 \cap B_m^{\vee} \overset{z}{\subset} B_1,
\end{equation*}
where $x,y,z \geq 0$ are integers. Since the total index is $1$, we must have $x+y+z = 1$. Thus, we always have $x=0$ or $z=0$.\\
If $z=0$ then $B_1 \subset B_m^{\vee}$. Since $B_m = \pi A_m^{\vee} + B_1$, it follows that $B_m \subset B_m^{\vee}$. Recall that $b < m$, meaning that $\Lambda_{B_m} \not \in \mathcal L_0$. In particular, $t>1$. We then deduce that 
\begin{equation*}
(\Lambda_{B_m})_k = T_t(B_m) \subset B_m^{\vee} + \tau(B_m^{\vee}) + \ldots + \tau^{t-1}(B_m^{\vee}) = \pi^{-1}\tau(T_{t-1}(A_m)).
\end{equation*}
By the same argument as above, it follows that 
\begin{equation*}
(\Lambda_{B_m})_k \subset \pi^{-1} \bigcap_{\ell\in\mathbb Z} \tau^{\ell}(A_m) = \bigcap_{\ell\in\mathbb Z} \tau^{\ell}(B_m^{\vee}) = (\Lambda_{B_m})_k^{\vee},
\end{equation*}
from which we deduce that $\Lambda_{B_m} \in \mathcal L_0$, a contradiction.\\
If $x=0$, we have $A_m \subset \pi A_1^{\vee}$ from which we deduce $A_1 \subset \pi A_1^{\vee}$. Likewise, it follows that $\Lambda_{A_1} \in \mathcal L_1$, leading to a contradiction. This concludes the proof. 
\end{proof}

\subsection{Subsets $\mathcal N_{I,\mathbf{\Lambda}}^{\mathbbm h}(k)$ of $\mathcal N_{E/F}^{\mathbbm h}(k)$} \label{Section2.4}

We define subsets of $\mathcal N_{E/F}^{\mathbbm h}(k)$ as follows. 

\begin{defi}\label{DefinitionBTIndex}
A \textit{Bruhat-Tits index} is a pair $(I,\mathbf{\Lambda})$ where $I$ is a non-empty subset of $\{0,\ldots,m\}$ such that 
\begin{itemize}
\item if $h_1 = 0$ then $0 \not \in I$,
\item if $h_m = n$ then $m \not\in I$,
\end{itemize}
and $\mathbf{\Lambda}$ is a collection of vertex lattices as follows:
\begin{itemize}
\item for $i\in I\setminus \{0\}$, let $\Lambda_0^{i} \in \mathcal L_0^{\geq h_{i}+1}$,
\item for $j\in I\setminus \{m\}$, let $\Lambda_1^{j} \in \mathcal L_1^{\geq n-h_{j+1}+1}$.
\end{itemize}  
Furthermore, if we write $0 \leq i_1 < \ldots < i_s \leq m$ for the elements of $I$, we impose the following condition.
\begin{equation*}
\begin{array}{lccccccccccccc}
\underline{\text{If } i_1 \not = 0 \text{ and } i_s \not = m:} & \Lambda_0^{i_1} & \subset & \pi\Lambda_1^{i_1\vee} & \subset & \Lambda_0^{i_2} & \subset & \ldots & \subset & \pi \Lambda_1^{i_{s-1}\vee} & \subset & \Lambda_0^{i_s} & \subset & \pi\Lambda_1^{i_s\vee}.\\
\underline{\text{If } i_1 = 0 \text{ and } i_s \not = m:} & & & \pi\Lambda_1^{0\vee} & \subset & \Lambda_0^{i_2} & \subset & \ldots & \subset & \pi \Lambda_1^{i_{s-1}\vee} & \subset & \Lambda_0^{i_s} & \subset & \pi\Lambda_1^{i_s\vee}.\\
\underline{\text{If } i_1 \not = 0 \text{ and } i_s = m:}& \Lambda_0^{i_1} & \subset & \pi\Lambda_1^{i_1\vee} & \subset & \Lambda_0^{i_2} & \subset & \ldots & \subset & \pi\Lambda_1^{i_{s-1}\vee} & \subset & \Lambda_0^{m}. & &\\
\underline{\text{If } i_1 = 0 \text{ and } i_s = m:}& & & \pi\Lambda_1^{0\vee} & \subset & \Lambda_0^{i_2} & \subset & \ldots & \subset & \pi\Lambda_1^{i_{s-1}\vee} & \subset & \Lambda_0^{m}. & &
\end{array}
\end{equation*}
\end{defi}

\begin{defi}\label{DefinitionBTStrata}
Let $(I,\mathbf{\Lambda})$ be a Bruhat-Tits index. Let us write $0 \leq i_1 < \ldots < i_s \leq m$ for the elements of $I$. We define $\mathcal N_{I,\mathbf{\Lambda}}^{\mathbbm h}(k)$ as the subset of $\mathcal N_{E/F}^{\mathbbm h}(k)$ consisting of all the points $(A_m \subset \ldots \subset B_m)$ such that
\begin{itemize}
\item If $i_1 \not = 0$, we have 
\begin{center}
\begin{tikzcd}[column sep=small, row sep=small]
\pi(\Lambda_0^{i_1})_k^{\vee} \arrow[r,symbol=\subset] & \pi B_{i_1}^{\vee} \arrow[d,symbol=\subset, outer sep=2pt,"1"] \arrow[r,symbol=\subset] & \ldots \arrow[r,symbol=\subset] & \pi B_1^{\vee} \arrow[d,symbol=\subset, outer sep=2pt,"1"] \arrow[r,symbol=\subset] & \pi A_1^{\vee} \arrow[d,symbol=\subset, outer sep=2pt,"1"] \arrow[r,symbol=\subset] & \ldots \arrow[r,symbol=\subset] & \pi A_{i_1}^{\vee} \arrow[d,symbol=\subset, outer sep=2pt,"1"] & {} \\
{} & A_{i_1} \arrow[r,symbol=\subset] & \ldots \arrow[r,symbol=\subset] & A_1 \arrow[r,symbol=\subset] & B_1 \arrow[r,symbol=\subset] & \ldots \arrow[r,symbol=\subset] & B_{i_1}  \arrow[r,symbol=\subset] & (\Lambda_0^{i_1})_k
\end{tikzcd}
\end{center} 

\item If $i_s \not = m$, we have 
\begin{center}
\begin{tikzcd}[column sep=small, row sep=small]
\pi^2(\Lambda_1^{i_s})_k^{\vee} \arrow[r,symbol=\subset] & \pi^2 A_{i_s+1}^{\vee} \arrow[d,symbol=\subset, outer sep=2pt,"1"] \arrow[r,symbol=\subset] & \ldots \arrow[r,symbol=\subset] & \pi^2 A_m^{\vee} \arrow[d,symbol=\subset, outer sep=2pt,"1"] \arrow[r,symbol=\subset] & \pi B_m^{\vee} \arrow[d,symbol=\subset, outer sep=2pt,"1"] \arrow[r,symbol=\subset] & \ldots \arrow[r,symbol=\subset] & \pi B_{i_s+1}^{\vee} \arrow[d,symbol=\subset, outer sep=2pt,"1"] & {} \\
{} & \pi B_{i_s+1} \arrow[r,symbol=\subset] & \ldots \arrow[r,symbol=\subset] & \pi B_m \arrow[r,symbol=\subset] & A_m \arrow[r,symbol=\subset] & \ldots \arrow[r,symbol=\subset] & A_{i_s+1}  \arrow[r,symbol=\subset] & (\Lambda_1^{i_s})_k
\end{tikzcd}
\end{center}

\item For all $1 \leq j \leq s-1$, we have \\
\hspace*{-3.3cm}
\begin{tikzcd}[column sep=small, row sep=small]
\pi^2(\Lambda_1^{i_j})_k^{\vee} \arrow[r,symbol=\subset] & \pi^2 A_{i_j+1}^{\vee} \arrow[d,symbol=\subset, outer sep=2pt,"1"] \arrow[r,symbol=\subset] & \ldots \arrow[r,symbol=\subset] & \pi^2 A_{i_{j+1}}^{\vee} \arrow[d,symbol=\subset, outer sep=2pt,"1"] & {} \\
{} & \pi B_{i_j+1} \arrow[r,symbol=\subset] & \ldots \arrow[r,symbol=\subset] & \pi B_{i_{j+1}} \arrow[r,symbol=\subset] & \pi(\Lambda_0^{i_{j+1}})_k
\end{tikzcd}
\begin{tikzcd}[column sep=small, row sep=small]
\pi(\Lambda_0^{i_{j+1}})_k^{\vee} \arrow[r,symbol=\subset] & \pi B_{i_{j+1}}^{\vee} \arrow[d,symbol=\subset, outer sep=2pt,"1"] \arrow[r,symbol=\subset] & \ldots \arrow[r,symbol=\subset] & \pi B_{i_j+1}^{\vee} \arrow[d,symbol=\subset, outer sep=2pt,"1"] & {} \\
{} & A_{i_{j+1}} \arrow[r,symbol=\subset] & \ldots \arrow[r,symbol=\subset] & A_{i_j+1} \arrow[r,symbol=\subset] & (\Lambda_1^{i_j})_k
\end{tikzcd}
\end{itemize}
\end{defi}

\begin{prop}\label{RationalPointsCoverEverything}
We have 
\begin{equation*}
\mathcal N_{E/F}^{\mathbbm h}(k) = \bigcup_{I,\mathbf{\Lambda}}\mathcal N_{I,\mathbf{\Lambda}}^{\mathbbm h}(k),
\end{equation*}
where $(I,\mathbf{\Lambda})$ run over all the Bruhat-Tits indices.
\end{prop}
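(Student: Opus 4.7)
The plan is to construct, for a given point $(A_m \subset \ldots \subset B_m) \in \mathcal N_{E/F}^{\mathbbm h}(k)$, an explicit Bruhat-Tits index $(I,\mathbf{\Lambda})$ whose associated subset contains the point. The natural candidate is to take $I$ to be the Bruhat-Tits type of the point (Definition \ref{BTType}) and to set $\Lambda_0^i := \Lambda_{B_i}$ for $i \in I \setminus \{0\}$ and $\Lambda_1^j := \Lambda_{A_{j+1}}$ for $j \in I \setminus \{m\}$.

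First I would invoke the ``crucial Lemma'' (Lemma \ref{CrucialLemma}) to guarantee that $I$ is non-empty. The boundary conditions on $I$ (namely $0 \notin I$ when $h_1 = 0$ and $m \notin I$ when $h_m = n$) follow from the Remark preceding Proposition \ref{ConditionStar}. Next I would check that the proposed $\Lambda_0^i$ and $\Lambda_1^j$ belong to the expected ambient categories: for an interior index $i \in I \setminus \{0,m\}$ the defining property $\Lambda_{B_i} \subset \pi\Lambda_{A_{i+1}}^{\vee}$ combined with point (2) of Proposition \ref{ConditionStar} yields $\Lambda_{B_i} \in \mathcal L_0$ and $\Lambda_{A_{i+1}} \in \mathcal L_1$ simultaneously; the two boundary cases $0 \in I$ and $m \in I$ are direct from Definition \ref{BTType}.

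Second I would establish the chain of inclusions demanded by Definition \ref{DefinitionBTIndex}. The ``inner'' inclusions $\Lambda_0^{i_j} \subset \pi\Lambda_1^{i_j\vee}$ are precisely the content of $i_j \in I$. The ``bridging'' inclusions $\pi\Lambda_1^{i_j\vee} \subset \Lambda_0^{i_{j+1}}$ come from the telescoping chain
\begin{equation*}
\pi\Lambda_{A_{i_j+1}}^{\vee} \subset \pi A_{i_j+1}^{\vee} \subset B_{i_j+1} \subset B_{i_{j+1}} \subset \Lambda_{B_{i_{j+1}}},
\end{equation*}
using $A_{i_j+1} \subset \Lambda_{A_{i_j+1}}$ (hence its dual) and the basic inclusion relations from Proposition \ref{PointsRZSpaceOverFields}; the boundary cases $i_1 = 0$ or $i_s = m$ are treated by the obvious analogue. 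At this stage I would also check the type lower bounds $t(\Lambda_0^i) \geq h_i + 1$ and $t(\Lambda_1^j) \geq n - h_{j+1} + 1$; these reduce to the non-emptiness criterion for $\mathcal N^{h}_{\Lambda_0}(k)$ and $\mathcal N^{h}_{\Lambda_1}(k)$ in the maximal parahoric setting (recalled after Definition \ref{BTStrataForMaximalParahoric}), applied to the pair $(A_i \overset{h_i}{\subset} B_i)$, whose membership in the corresponding $\mathcal N^{h_i}_{\Lambda_{B_i}}(k)$ or $\mathcal N^{h_{j+1}}_{\Lambda_{A_{j+1}}}(k)$ follows from $B_i \subset \Lambda_{B_i}$ and $A_{j+1} \subset \Lambda_{A_{j+1}}$.

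Finally I would verify that the point satisfies the three families of diagrams in Definition \ref{DefinitionBTStrata}. All of the vertical length-one inclusions are restatements of the defining conditions of $\mathcal N_{E/F}^{\mathbbm h}(k)$ from Proposition \ref{PointsRZSpaceOverFields}; the horizontal inclusions among the $A_i$'s and $B_i$'s are part of the data; and the ``closing'' inclusions into the lattices of $\mathbf{\Lambda}$ (such as $B_{i_1} \subset (\Lambda_0^{i_1})_k$ or $A_{i_s+1} \subset (\Lambda_1^{i_s})_k$) are immediate from the definition of $\Lambda_{B_i}$ and $\Lambda_{A_j}$ as stabilizations of the $T_d$ and $T_c$ sequences. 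The main obstacle is not really conceptual but rather the careful bookkeeping of the boundary cases ($i_1 = 0$, $i_s = m$, or both); once the generic interior case is treated, these reduce to omitting the relevant half of each chain and adjusting the indexing accordingly.
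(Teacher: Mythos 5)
Your proposal is correct and follows exactly the paper's own argument: take $I$ to be the Bruhat-Tits type of the point (non-empty by Lemma \ref{CrucialLemma}) and set $\Lambda_0^i := \Lambda_{B_i}$, $\Lambda_1^j := \Lambda_{A_{j+1}}$; the paper dispatches the verification that this yields a Bruhat-Tits index containing the point with ``by construction,'' whereas you usefully spell out the chain inclusions, the type lower bounds, and the boundary cases.
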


\begin{proof}
Let $(A_m \subset \ldots \subset B_m)\in \mathcal N_{E/F}^{\mathbbm h}(k)$. Let $I$ be its Bruhat-Tits type. For $i\in I\setminus \{0\}$ and $j\in I\setminus\{m\}$, we put $\Lambda_0^i := \Lambda_{B_i}$ and $\Lambda_1^{j} := \Lambda_{A_{j+1}}$. Let $\mathbf{\Lambda}$ be the collection of all the lattices defined this way. By construction, $(I,\mathbf{\Lambda})$ is a Bruhat-Tits index, and $\mathcal N_{I,\mathbf{\Lambda}}^{\mathbbm h}(k)$ contains the point $(A_m \subset \ldots \subset B_m)$.
\end{proof}

\begin{ex}\label{BTIndexMaximalParahoric}
Assume that $m=1$ so that $\mathbbm h$ consists of a single integer $0\leq h := h_1 \leq n$. A Bruhat-Tits index $(I,\mathbf{\Lambda})$ can be of three sorts:
\begin{itemize}
\item \underline{if $h \not = n$:} $I = \{1\}$ and $\mathbf{\Lambda} = \{\Lambda_0\}$ where $\Lambda_0 \in \mathcal L_0^{\geq h+1}$,
\item \underline{if $h \not = 0$:} $I = \{0\}$ and $\mathbf{\Lambda} = \{\Lambda_1\}$ where $\Lambda_1 \in \mathcal L_1^{\geq n-h+1}$,
\item \underline{if $0<h<n$:} $I = \{0,1\}$ and $\mathbf{\Lambda} = \{\Lambda_0,\Lambda_1\}$ where $\Lambda_0 \in \mathcal L_0^{\geq h+1}, \Lambda_1 \in \mathcal L_1^{\geq n-h+1}$ and $\pi\Lambda_1^{\vee} \subset \Lambda_0$.
\end{itemize}
The set $\mathcal N_{I,\mathbf{\Lambda}}^{\mathbbm h}(k)$ coincide respectively with $\mathcal N_{\Lambda_0}^{h}(k)$, $\mathcal N_{\Lambda_1}^h(k)$ and $\mathcal N_{\Lambda_0,\Lambda_1}^h(k)$ as introduced in Definition \ref{BTStrataForMaximalParahoric}, when $(I,\mathbf{\Lambda})$ runs over the three cases described above. 
\end{ex}

\begin{defi}\label{InclusionBTIndices}
Let $(I,\mathbf{\Lambda})$ and $(I',\mathbf{\Lambda'})$ be two sets Bruhat-Tits indices. We say that $(I',\mathbf{\Lambda'})$ is contained in $(I,\mathbf{\Lambda})$, and we write $(I',\mathbf{\Lambda'}) \subset (I,\mathbf{\Lambda})$ if the following conditions are satisfied:
\begin{itemize}
\item $I \subset I'$,
\item $\forall i \in I\setminus\{0\}$ and $\forall j\in I\setminus\{m\}$, we have $\Lambda_0^{\prime i} \subset \Lambda_0^{i}$ and $\Lambda_1^{\prime j} \subset \Lambda_1^{j}$.
\end{itemize}
\end{defi}

It is clear that $\subset$ defines a partial order on the set of all Bruhat-Tits indices.

\begin{lem}\label{CompletionBTType}
Let $(I,\mathbf{\Lambda})$ be a Bruhat-Tits index. Then there exists a Bruhat-Tits index $(I',\mathbf{\Lambda'}) \subset (I,\mathbf{\Lambda})$ such that 
\begin{equation*}
I' = \begin{cases}
\{0,\ldots , m\} & \text{if } h_1 \not = 0 \text{ and } h_m \not = n,\\
\{1,\ldots , m\} & \text{if } h_1 = 0 \text{ and } h_m \not = n,\\
\{0, \ldots , m-1\} & \text{if } h_1 \not = 0 \text{ and } h_m = n,\\
\{1, \ldots , m-1\} & \text{if } h_1 = 0 \text{ and } h_m = n.
\end{cases}
\end{equation*}
\end{lem}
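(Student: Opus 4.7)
The plan is to argue by induction on $|I' \setminus I|$, reducing to the case of adding a single admissible index $i_* \notin I$. It then suffices to construct vertex lattices $\Lambda_0^{\prime i_*}$ (if $i_* \neq 0$) and $\Lambda_1^{\prime i_*}$ (if $i_* \neq m$) satisfying the type constraints of Definition \ref{DefinitionBTIndex} and fitting compatibly into the existing chain, leaving the other vertex lattices of $\mathbf{\Lambda}$ unchanged.

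The main case is when $i_*$ lies strictly between two consecutive elements $i_j < i_{j+1}$ of $I$. I would set $L := \pi \Lambda_1^{i_j \vee}$ and $L' := \Lambda_0^{i_{j+1}}$; both lie in $\mathcal{L}_0$ with $L \subset L'$, and directly from the definition of a Bruhat-Tits index one gets
\[
t(L) = n - t(\Lambda_1^{i_j}) \leq h_{i_j+1} - 1 \leq h_{i_*} - 1, \qquad t(L') \geq h_{i_{j+1}} + 1 \geq h_{i_*+1} + 1.
\]
Applying Proposition \ref{SubOverVertexLattices} parametrizes vertex lattices $M \in \mathcal{L}_0$ with $L \subset M \subset L'$ as coisotropic subspaces of the finite hermitian space $V_{L'}^0 = L' / \pi L^{\prime \vee}$ containing the subspace attached to $L$. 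Since any enlargement of a coisotropic subspace is again coisotropic, I can select $M_0 \subset M_1$ in this range with prescribed types $t(M_0) = h_{i_*} + 1$ and $t(M_1) = h_{i_*+1} - 1$; these values are attainable precisely because $h_{i_*+1} \geq h_{i_*} + 2$ (since all $h_i$ share the same parity and are pairwise distinct) and their parity is consistent with Lemma \ref{ParityType}. Setting $\Lambda_0^{\prime i_*} := M_0$ and $\Lambda_1^{\prime i_*} := \pi M_1^{\vee}$, a short computation verifies $\Lambda_1^{\prime i_*} \in \mathcal{L}_1^{\geq n - h_{i_*+1} + 1}$ and $\pi \Lambda_1^{\prime i_* \vee} = M_1$, so all chain and type conditions of Definition \ref{DefinitionBTIndex} are met.

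The remaining cases, where $i_*$ lies below $\min I$ or above $\max I$ (in particular the endpoint insertions $i_* = 0$ when $h_1 \neq 0$, and $i_* = m$ when $h_m \neq n$), would be handled by the same mechanism with the interpolation truncated on one side: one finds a vertex lattice inside $\Lambda_0^{i_1}$ of sufficiently small type, or one containing $\pi \Lambda_1^{i_s \vee}$ of sufficiently large type, via the corresponding part of Proposition \ref{SubOverVertexLattices}. The dimension inequalities needed for the relevant coisotropic subspaces to exist reduce exactly to the admissibility conditions $h_1 \geq 1$ or $h_m \leq n - 1$, both of which hold by hypothesis.

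The main obstacle I expect is confirming that vertex lattices of all prescribed types exist inside the available envelopes at each step; this ultimately rests on the key inequality $h_{i+1} - h_i \geq 2$ between consecutive components of $\mathbbm{h}$, together with the maximal Witt index of nondegenerate hermitian forms over $\mathbb{F}_{q^2}$. Iterating the single-index extension then yields the desired Bruhat-Tits index $(I', \mathbf{\Lambda'}) \subset (I, \mathbf{\Lambda})$.
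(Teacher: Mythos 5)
Your proposal is correct and follows essentially the same path as the paper's proof: reduce to the hermitian space $V^0_{\Lambda_0^{i_{j+1}}}$ via Proposition \ref{SubOverVertexLattices}, identify intermediate vertex lattices with coisotropic subspaces, and observe that the needed chain of dimensions is achievable because $h_{i+1}-h_i \geq 2$ and the endpoint parity constraints make the minimal dimension $\lceil d/2 \rceil$ attainable. The only cosmetic difference is that you phrase the filling-in as an induction on $|I'\setminus I|$ adding one index at a time, whereas the paper fills each gap (and each end segment) in a single pass; the underlying computation is identical.
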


\begin{proof}
For all $i \in I \setminus\{0\}$ and $j\in I\setminus \{m\}$, we define $\Lambda_0^{'i} := \Lambda_0^i$ and $\Lambda_1^{'j} := \Lambda_1^j$. Let us write $0 \leq i_1 < \ldots < i_s \leq m$ for the elements of $I$. First, assume that there is some $1 \leq j \leq s-1$ such that $i_{j}+1 < i_{j+1}$. For $i_j+1 \leq i \leq i_{j+1}-1$, we must find vertex lattices $\Lambda_0^{\prime i} \in \mathcal L_{0}^{\geq h_i+1}$ and $\Lambda_1^{\prime i} \in \mathcal L_1^{\geq n-h_{i+1}+1}$ which fit inside the following chain of inclusions 
\begin{equation*}
\pi\Lambda_1^{i_j\vee} \subset \Lambda_0^{\prime i_j+1} \subset \pi(\Lambda_1^{\prime i_j+1})^{\vee} \subset \ldots \subset \Lambda_0^{\prime i_{j+1}-1} \subset \pi(\Lambda_1^{\prime i_{j+1}-1})^{\vee} \subset \Lambda_0^{i_{j+1}}.
\end{equation*}
Consider the $\mathbb F_{q^2}$-hermitian space $V := V_{\Lambda_0^{i_{j+1}}}^{0} = \Lambda_0^{i_{j+1}}/\pi\Lambda_0^{i_{j+1}\vee}$ as defined in Section \ref{Section2.2}. We write $d := \dim(V) = t(\Lambda_0^{i_{j+1}}) \geq h_{i_{j+1}}+1$. By Proposition \ref{SubOverVertexLattices}, the space $V$ contains a distinguished subspace $U := \pi \Lambda_1^{i_j\vee}/\pi\Lambda_0^{i_{j+1}\vee}$ which satisfies $U^{\perp} \subset U$ and $\dim(U) = \tfrac{d+n-t(\Lambda_1^{i_j})}{2}$. Indeed, notice that $\pi\Lambda_1^{i_j\vee} \in \mathcal L_0$ and that $t(\pi\Lambda_1^{i_j\vee}) = n-t(\Lambda_1^{i_j})$. Any subspace $U \subset U' \subset V$ will satisfy $U^{\prime \perp} \subset U^{\perp} \subset U \subset U'$, and thus define a unique vertex lattice $\pi\Lambda_1^{i_j\vee} \subset \Lambda' \subset \Lambda_0^{i_{j+1}}$ of type $t(\Lambda') = 2\dim(U') - d$. Therefore, we are reduced to finding subspaces $U_0^{\prime i}, U_1^{\prime i} \subset V$ for all $i_j+1\leq i \leq i_{j+1}-1$, such that $\dim(U_0^{\prime i}) \geq \tfrac{d+h_i+1}{2}$ and $\dim(U_1^{\prime i}) \leq \tfrac{d+h_{i+1}-1}{2}$, which fit inside the following chain of inclusions 
\begin{equation*}
U \subset U_0^{\prime i_j+1} \subset U_1^{\prime i_j+1} \subset \ldots \subset U_0^{\prime i_{j+1}-1} \subset U_1^{\prime i_{j+1}-1} \subset V.
\end{equation*}
Observe that $d > \tfrac{d+h_{i_{j+1}-1}+1}{2}$ and that $\dim(U) < \tfrac{d+h_{i_j+2}-1}{2}$, so that the constraints on the dimensions of $U_0^{\prime i}$ and of $U_1^{\prime i}$ are not absurd. Moreover, since $h_i \leq h_{i+1}-2$ we have 
\begin{equation*}
\frac{d+h_i+1}{2} \leq \frac{d+h_{i+1}-1}{2}.
\end{equation*}
The existence of such subspaces is now straightforward.\\
For now, we have constructed a Bruhat-Tits index of the form $(I',\mathbf{\Lambda'}) \subset (I,\mathbf{\Lambda})$ where $I' = \{i_1,\ldots ,i_s\}$. To proceed further, we distinguish four cases. \\

\underline{If $h_1 \not = 0$:} Assume that $i_1 > 0$. One must find vertex lattices $\Lambda_0^{\prime i} \in \mathcal L_0^{\geq h_i+1}$ for $1 \leq i \leq i_1-1$, and $\Lambda_1^{\prime j} \in \mathcal L_1^{\geq n-h_{j+1}+1}$ for $0 \leq j \leq i_1-1$, which fit inside the following chain of inclusions 
\begin{equation*}
\pi(\Lambda_1^{\prime 0})^{\vee} \subset \Lambda_0^{\prime 1} \subset \pi (\Lambda_1^{\prime 1})^{\vee} \subset \ldots \subset \Lambda_0^{\prime i_1-1} \subset \pi (\Lambda_1^{\prime i_1-1})^{\vee} \subset \Lambda_0^{i_1}.
\end{equation*} 
As before, let us consider the hermitian space $V := V_{\Lambda_0^{i_1}}^0$ of dimension $d := t(\Lambda_0^{i_1}) \geq h_{i_1}+1$. Finding $\Lambda_1^{\prime 0}$ amounts to exhibit a subspace $U \subset V$ such that $U^{\perp} \subset U$ and $\dim(U) \leq \tfrac{d+h_1-1}{2}$. Observe that we have
\begin{equation*}
\frac{d+h_1-1}{2} \geq \left\lceil\frac{d}{2}\right\rceil.
\end{equation*} 
Indeed, if $d$ is even then $h_1$ is odd, so that $h_1 \geq 1$, and if $d$ is odd, $h_1$ is even and non-zero so that $h_1 \geq 2$. Thus, any totally isotropic subspace of dimension $\left\lceil \tfrac{d}{2} \right\rceil$ will do. Exhibiting the remaining $\Lambda_0^{\prime i}, \Lambda_1^{\prime j}$ for $1 \leq i,j \leq i_1-1$ can now be done the same way as above.\\

\underline{If $h_1 = 0$:} Assume that $i_1 > 1$. We must find vertex lattices $\Lambda_0^{\prime i} \in \mathcal L_0^{\geq h_i+1}$ and $\Lambda_1^{\prime j} \in \mathcal L_1^{\geq n - h_{j+1}+1}$ for $1 \leq j \leq i_1-1$, which fit inside the following chain of inclusions
\begin{equation*}
\Lambda_0^{\prime 1} \subset \pi (\Lambda_1^{\prime 1})^{\vee} \subset \ldots \subset \Lambda_0^{\prime i_1-1} \subset \pi (\Lambda_1^{\prime i_1-1})^{\vee} \subset \Lambda_0^{i_1}.
\end{equation*} 
Again, finding $\Lambda_0^{\prime 1}$ amounts to exhibiting a subspace $U \subset V := V_{\Lambda_0^{i_1}}^0$ of dimension $\dim(U) \geq \tfrac{d+1}{2}$, such that $U^{\perp} \subset U$, where $d := \dim(V) \geq h_{i_1}+1$. Since $\tfrac{d+1}{2} \geq \left\lceil\tfrac{d}{2}\right\rceil$, such a subspace exists. Finding the remaining vertex lattices can be done as above. \\

At this stage, we have built a vertex lattice of the form $(I',\mathbf{\Lambda'})$ where $I' = \{0,\ldots,i_s\}$ if $h_1 \not = 0$, and $I' = \{1,\ldots,i_s\}$ if $h_1 = 0$. To extend $I'$ to the right, we proceed as above by distinguishing whether $h_m$ is zero or not. We omit the details.
\end{proof}

\begin{lem}\label{MinimalOrbitType}
Let $(I,\mathbf{\Lambda})$ be a Bruhat-Tits index. Then there exists a Bruhat-Tits index $(I,\mathbf{\Lambda'}) \subset (I,\mathbf{\Lambda})$ such that for all $i\in I\setminus\{0\}$ and $j\in I\setminus\{m\}$, we have $t(\Lambda_0^{\prime i}) = h_i+1$ and $t(\Lambda_1^{\prime j}) = n-h_{j+1}+1$.
\end{lem}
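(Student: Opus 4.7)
The plan is to build $(I,\mathbf{\Lambda'})$ inductively by exploiting the fact that the defining chain of $(I,\mathbf{\Lambda})$ lives entirely in $\mathcal L_0$. Indeed, for any $\Lambda_1\in\mathcal L_1$ one has $\pi\Lambda_1^{\vee}\in\mathcal L_0$, so (in the case $i_1\neq 0$ and $i_s\neq m$, for concreteness) the defining inclusions of Definition \ref{DefinitionBTIndex} assemble into a single chain
$$\Lambda_0^{i_1}\subset \pi\Lambda_1^{i_1\vee}\subset \Lambda_0^{i_2}\subset\ldots\subset \Lambda_0^{i_s}\subset \pi\Lambda_1^{i_s\vee}$$
in $\mathcal L_0$. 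The prescribed target types are $t(\Lambda_0^{\prime i_j})=h_{i_j}+1$ and $t(\pi\Lambda_1^{\prime i_j\vee})=n-t(\Lambda_1^{\prime i_j})=h_{i_j+1}-1$, which by Lemma \ref{ParityType} all have the correct parity and are weakly increasing along the chain since $h_{i_j+1}\geq h_{i_j}+2$ and $h_{i_{j+1}}\geq h_{i_j+1}$.

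The main tool is the following \emph{interpolation principle}: given $M_1\subset M_2$ in $\mathcal L_0$ and an integer $t$ of the correct parity with $t(M_1)\leq t\leq t(M_2)$, there exists $M\in\mathcal L_0$ with $M_1\subset M\subset M_2$ and $t(M)=t$. By Proposition \ref{SubOverVertexLattices}(1), sublattices of $M_2$ in $\mathcal L_0$ correspond bijectively to subspaces $U\subset V_{M_2}^0$ with $U^\perp\subset U$; and $M_1$ corresponds to some $U_1$ of dimension $\tfrac{t(M_2)+t(M_1)}{2}$ satisfying $U_1^\perp\subset U_1$. Any overspace $U\supset U_1$ automatically inherits the isotropy condition, since $U^\perp\subset U_1^\perp\subset U_1\subset U$, so it suffices to pick $U\supset U_1$ of dimension $\tfrac{t(M_2)+t}{2}$, which is elementary.

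Equipped with this, the main induction runs over $j=1,\ldots,s$ with the invariant that at the end of step $j$ the partial chain $\Lambda_0^{\prime i_1}\subset\pi\Lambda_1^{\prime i_1\vee}\subset\ldots\subset\pi\Lambda_1^{\prime i_j\vee}$ has been constructed at the prescribed types and satisfies $\pi\Lambda_1^{\prime i_j\vee}\subset\Lambda_0^{i_{j+1}}$ (vacuous when $j=s$). At step $j+1$, apply the interpolation principle first to produce $\Lambda_0^{\prime i_{j+1}}$ between $\pi\Lambda_1^{\prime i_j\vee}$ and $\Lambda_0^{i_{j+1}}$ of type $h_{i_{j+1}}+1$, then to produce $\pi\Lambda_1^{\prime i_{j+1}\vee}$ between $\pi\Lambda_1^{i_{j+1}\vee}$ and $\Lambda_0^{i_{j+2}}$ of type $h_{i_{j+1}+1}-1$. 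The required link $\Lambda_0^{\prime i_{j+1}}\subset\pi\Lambda_1^{\prime i_{j+1}\vee}$ is then automatic, via
$$\Lambda_0^{\prime i_{j+1}}\subset\Lambda_0^{i_{j+1}}\subset\pi\Lambda_1^{i_{j+1}\vee}\subset\pi\Lambda_1^{\prime i_{j+1}\vee}.$$
Note also that $\Lambda_1^{\prime i_{j+1}}\subset\Lambda_1^{i_{j+1}}$ follows by dualising $\pi\Lambda_1^{i_{j+1}\vee}\subset\pi\Lambda_1^{\prime i_{j+1}\vee}$. The boundary cases $i_1=0$ or $i_s=m$ are handled uniformly by dropping the outermost interpolation at the affected end.

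The main (and essentially only) obstacle is the bookkeeping of type inequalities at each interpolation step: one must check, for instance, that $t(\pi\Lambda_1^{i_{j+1}\vee})=n-t(\Lambda_1^{i_{j+1}})\leq h_{i_{j+1}+1}-1$ (from $\Lambda_1^{i_{j+1}}\in\mathcal L_1^{\geq n-h_{i_{j+1}+1}+1}$) and $h_{i_{j+1}+1}-1\leq h_{i_{j+2}}+1\leq t(\Lambda_0^{i_{j+2}})$ (using $i_{j+1}+1\leq i_{j+2}$), and analogous inequalities for the $\Lambda_0$-interpolation. Once these are verified, the induction closes and produces the desired index $(I,\mathbf{\Lambda'})\subset(I,\mathbf{\Lambda})$.
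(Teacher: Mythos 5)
Your proof is correct and follows essentially the same approach as the paper: both rest on Proposition \ref{SubOverVertexLattices} and work inward through the chain, building a sublattice of each prescribed minimal type while keeping the chain of inclusions intact. The only organizational difference is that the paper constructs each pair $(\Lambda_1^{\prime i_t},\Lambda_0^{\prime i_{t+1}})$ simultaneously via nested subspaces of the quotient $W = \Lambda_0^{i_{t+1}}/\pi\Lambda_1^{i_t\vee}$, whereas you abstract the single step into an interpolation principle and apply it once per lattice, using the previously built $\pi\Lambda_1^{\prime i_j\vee}$ as the lower bound for the next $\Lambda_0^{\prime i_{j+1}}$ --- a clean but mathematically equivalent reformulation.
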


\begin{proof}
Let us write $0 \leq i_1 < \ldots < i_s \leq m$ for the elements of $i$. If $m = 1$, one may choose any vertex lattice $\Lambda_0^{\prime i_1} \subset \Lambda_0^{i_1}$ of orbit type $h_{i_1}+1$ (if $i_1 \not = 0$) and any vertex lattice $\Lambda_1^{\prime i_1} \subset \Lambda_1^{i_1}$ of orbit type $n-h_{i_1+1}+1$ (if $i_1 \not = m$). The existence of such vertex lattices is guaranteed by Proposition \ref{SubOverVertexLattices}, and the inclusion $\Lambda_0^{\prime i_1} \subset \pi(\Lambda_1^{\prime i_1})^{\vee}$ is automatic. \\
Let us now assume that $m\geq 2$, and let $1 \leq t \leq m-1$. One must find vertex lattices $\Lambda_1^{\prime i_t} \subset \Lambda_1^{i_t}$ of orbit type $n-h_{i_t+1}+1$, and $\Lambda_0^{\prime i_{t+1}} \subset \Lambda_0^{i_{t+1}}$ of orbit type $h_{i_{t+1}}+1$ such that $\pi(\Lambda_1^{\prime i_t})^{\vee} \subset \Lambda_0^{\prime i_{t+1}}$. Let us consider the $\mathbb F_{q^2}$-vector space 
\begin{equation*}
W := \Lambda_0^{i_{t+1}}/\pi\Lambda_1^{i_t \vee},
\end{equation*}
which is isomorphic to the quotient of $V := V_{\Lambda_0^{i_{t+1}}}^{0} = \Lambda_0^{i_{t+1}}/\pi\Lambda_0^{i_{t+1}\vee}$ by the subspace $U := \pi\Lambda_1^{i_t \vee} / \pi\Lambda_0^{i_{t+1}\vee}$. Any vector subspace $W' \subset W$ of dimension $\dim(W') = d$ corresponds to a subspace $U \subset U' \subset V$ such that $U^{\prime \perp} \subset U^{\perp} \subset U \subset U'$, and we have $\dim(U') = \dim(U) + d$. By Proposition \ref{SubOverVertexLattices}, we know that $\dim(U) = \tfrac{t(\Lambda_0^{i_{t+1}})+n-t(\Lambda_1^{i_t})}{2}$. Therefore, we have 
\begin{equation*}
\dim(W) = \dim(V) - \dim(U) = \frac{t(\Lambda_0^{i_{t+1}}) + t(\Lambda_1^{i_t}) - n }{2} \geq 0.
\end{equation*}
One may choose two subspaces $W'' \subset W' \subset W$ such that $\dim(W'') = \tfrac{h_{i_t+1}-1+t(\Lambda_1^{i_t})-n}{2} \geq 0$ and $\dim(W') = \tfrac{h_{i_{t+1}}+1+t(\Lambda_1^{i_t})-n}{2} \leq \dim(W)$. From what preceeds and according to Proposition \ref{SubOverVertexLattices}, such a choice gives rise to two vertex lattices $\Lambda'',\Lambda' \in \mathcal L_0$ such that $t(\Lambda'') = h_{i_{t}+1} - 1$ and $t(\Lambda') = h_{i_{t+1}}+1$, satisfying $\pi\Lambda_1^{i_t\vee} \subset \Lambda'' \subset \Lambda' \subset \Lambda_0^{i_{t+1}}$. By taking $\Lambda_0^{\prime i_{t+1}} := \Lambda'$ and $\Lambda_1^{\prime i_t} := \pi\Lambda^{\prime\prime \vee}$, we are done.
\end{proof}

\begin{corol}\label{NotEmpty}
Let $(I,\mathbf{\Lambda})$ be a Bruhat-Tits index. Then $\mathcal N_{I,\mathbf{\Lambda}}^{\mathbbm h}(k) \not = \emptyset$.
\end{corol}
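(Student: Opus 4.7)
The plan is to reduce to a canonical Bruhat-Tits index and then exhibit a point by explicit construction. As a preliminary, I would first check that if $(I',\mathbf{\Lambda'}) \subset (I,\mathbf{\Lambda})$, then $\mathcal N_{I',\mathbf{\Lambda'}}^{\mathbbm h}(k) \subset \mathcal N_{I,\mathbf{\Lambda}}^{\mathbbm h}(k)$: this is a direct inspection of the chains in Definition \ref{DefinitionBTStrata}, since the constraints $B_i \subset (\Lambda_0^i)_k$ and $A_{i+1} \subset (\Lambda_1^i)_k$ for $i \in I$ follow from their primed analogues (which appear inside the chains for $(I',\mathbf{\Lambda'})$ because $I \subset I'$) together with the lattice inclusions $\Lambda_0^{\prime i} \subset \Lambda_0^i$ and $\Lambda_1^{\prime j} \subset \Lambda_1^j$. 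Combining Lemmas \ref{CompletionBTType} and \ref{MinimalOrbitType}, I may therefore replace $(I,\mathbf{\Lambda})$ by a refinement in which $I$ is the maximal subset of $\{0,\ldots,m\}$ allowed by Definition \ref{DefinitionBTIndex}, and $t(\Lambda_0^i) = h_i+1$, $t(\Lambda_1^j) = n-h_{j+1}+1$ for all relevant $i,j$.

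In this canonical situation I would define the point $(A_m \subset \ldots \subset B_m)$ by setting $B_i := (\Lambda_0^i)_k$ and $A_i := (\Lambda_1^{i-1})_k$ whenever both sides are defined, with the boundary adjustments: if $h_1 = 0$ then $A_1 = B_1 := (\Lambda_0^1)_k$, otherwise $A_1 := (\Lambda_1^0)_k$ and $B_1 := (\Lambda_0^1)_k$; if $h_m = n$ then $A_m := (\Lambda_1^{m-1})_k$ and $B_m := \pi^{-1}(\Lambda_1^{m-1})_k$, otherwise $A_m := (\Lambda_1^{m-1})_k$ and $B_m := (\Lambda_0^m)_k$.

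The heart of the verification is the refined chain
\begin{equation*}
\pi \Lambda_0^{i\vee} \overset{1}{\subset} \Lambda_1^{i-1} \overset{h_i - 1}{\subset} \pi \Lambda_1^{i-1\vee} \overset{1}{\subset} \Lambda_0^i,
\end{equation*}
which follows from the nesting of Definition \ref{DefinitionBTIndex} combined with Proposition \ref{SubOverVertexLattices}: the outer indices are both $1$ because the relevant pairs of lattices lie in the same $\mathcal L_\bullet$ with types differing by $2$, and the middle index equals $n - t(\Lambda_1^{i-1}) = h_i - 1$. Summing yields $[B_i:A_i] = [\Lambda_0^i : \Lambda_1^{i-1}] = h_i$, which is exactly the relation $A_i \overset{h_i}{\subset} B_i$. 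The remaining conditions of Proposition \ref{PointsRZSpaceOverFields} (the duality inclusions $\pi A_i^\vee \overset{1}{\subset} B_i$, $\pi B_i^\vee \overset{1}{\subset} A_i$ and the nesting $A_m \subset \ldots \subset B_m$) as well as all chains of Definition \ref{DefinitionBTStrata} follow mechanically from this, most of the chain inclusions in fact becoming equalities with the chosen lattices.

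The main obstacle will be the mild bookkeeping for the four boundary subcases ($h_1 = 0$ vs. $h_1 \neq 0$, and $h_m = n$ vs. $h_m \neq n$). I do not expect any substantive difficulty: each case reduces to a minor variant of the single index computation above, supplemented by the symmetric chain $\pi\Lambda_0^{1\vee} \overset{1}{\subset} \Lambda_1^0 \subset \pi\Lambda_1^{0\vee} \overset{1}{\subset} \Lambda_0^1$ when $h_1 \neq 0$, and its mirror at the right when $h_m \neq n$.
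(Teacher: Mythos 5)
Your proposal is correct and follows essentially the same approach as the paper: reduce to the canonical index with $I$ maximal and minimal types via Lemmas \ref{CompletionBTType} and \ref{MinimalOrbitType}, set $A_i := (\Lambda_1^{i-1})_k$ and $B_i := (\Lambda_0^i)_k$ (with the evident boundary adjustments), and invoke Lemma \ref{InclusionBTStrata} to conclude. The index computation along the chain $\pi\Lambda_0^{i\vee}\overset{1}{\subset}\Lambda_1^{i-1}\overset{h_i-1}{\subset}\pi\Lambda_1^{i-1\vee}\overset{1}{\subset}\Lambda_0^i$ matches the paper's observation that $[\Lambda_0^{\prime i}:\pi\Lambda_1^{\prime i-1\vee}]=[\Lambda_1^{\prime i-1}:\pi\Lambda_0^{\prime i\vee}]=1$, merely spelled out with a bit more care.
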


\begin{proof}
By Lemma \ref{CompletionBTType} and Lemma \ref{MinimalOrbitType}, there exists a Bruhat-Tits index $(I',\Lambda') \subset (I,\Lambda)$ such that $I'$ is maximal as a subset of $\{0,\ldots ,m\}$, subject to the constraints of Bruhat-Tits indices, and such that for all $i \in I'\setminus\{0\}$ and $j\in I' \setminus\{m\}$, we have $t(\Lambda_0^{\prime i}) = h_i+1$ and $t(\Lambda_1^{\prime j}) = n-h_{j+1}+1$. Let us first prove that $\mathcal N_{I',\mathbf{\Lambda'}}(k) \not = \emptyset$. First, for $2 \leq i \leq m-1$, if $0 \in I'$ then for $i=1$, and if $m \in I'$ then for $i=m$ as well, one must find lattices $A_i\overset{h_i}{\subset} B_i \subset \mathbb N_{0,k}$ such that 
\begin{align*}
\pi(\Lambda_1^{\prime i-1})^{\vee}_k \subset \pi A_{i}^{\vee} \overset{1}{\subset} B_{i} \subset (\Lambda_0^{\prime i})_k, & & \pi(\Lambda_0^{\prime i})_k^{\vee} \subset \pi B_{i}^{\vee} \overset{1}{\subset} A_{i} \subset (\Lambda_1^{\prime i-1})_k.
\end{align*}
Note that we have $[\Lambda_0^{\prime i}:\pi\Lambda_1^{\prime i-1\vee}] = [\Lambda_1^{\prime i-1} : \pi\Lambda_0^{\prime i \vee}] = 1$. One may take $A_i := (\Lambda_1^{\prime i-1})_k$ and $B_i := (\Lambda_0^{\prime i})_k$. It follows that 
\begin{equation*}
A_i \overset{h_i-1}{\subset} \pi A_i^{\vee} \overset{1}{\subset} B_i,
\end{equation*}
as required.\\
If $0 \not \in I'$, then $h_1 = 0$ and one must find a lattice $B_1$ such that 
\begin{equation*}
\pi(\Lambda_0^1)_k^{\vee} \subset \pi B_1^{\vee} \overset{1}{\subset} B_1 \subset (\Lambda_0^1)_k.
\end{equation*}
Since $t(\Lambda_0^{\prime 1}) = 1$, one may take $B_1 := (\Lambda_0^{\prime 1})_k$. A similar analysis holds when $m \not\in I'$, implying that $h_m = n$. \\
Thus, we have constructed a point $(A_m \subset \ldots \subset B_m) \in \mathcal N_{I',\mathbf{\Lambda'}}(k)$, proving that the right-hand side is not empty. We claim that we have a natural inclusion of sets $\mathcal N_{I',\mathbf{\Lambda'}}(k) \subset \mathcal N_{I,\mathbf{\Lambda}}(k)$. If such an inclusion holds, the proof of the Corollary would be over. We refer to Lemma \ref{InclusionBTStrata} for the proof of this claim.
\end{proof}

\begin{lem}\label{InclusionBTStrata}
Let $(I,\mathbf{\Lambda})$ and $(I',\mathbf{\Lambda'})$ be two Bruhat-Tits indices. If $(I',\mathbf{\Lambda'}) \subset (I,\mathbf{\Lambda})$ then $\mathcal N_{I',\mathbf{\Lambda'}}^{\mathbbm h}(k) \subset \mathcal N_{I,\mathbf{\Lambda}}^{\mathbbm h}(k)$.
\end{lem}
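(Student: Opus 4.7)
The plan is to verify the inclusion diagrams of Definition \ref{DefinitionBTStrata} for $(I,\mathbf{\Lambda})$ directly from the finer diagrams already known for $(I',\mathbf{\Lambda'})$. Fix a point $(A_m \subset \ldots \subset B_m) \in \mathcal N_{I',\mathbf{\Lambda'}}^{\mathbbm h}(k)$ and write $I = \{i_1 < \ldots < i_s\} \subset I' = \{i_1' < \ldots < i_r'\}$. The conditions of Definition \ref{DefinitionBTStrata} split into a possible left boundary diagram (if $i_1 \not = 0$), a possible right boundary diagram (if $i_s \not = m$), and, for each consecutive pair $i_j < i_{j+1}$ in $I$, two central diagrams. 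Each target diagram for $(I,\mathbf{\Lambda})$ will be obtained by concatenating the corresponding diagrams from $(I',\mathbf{\Lambda'})$.

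Consider a consecutive pair $i_j < i_{j+1}$ in $I$, and let $i_j = i'_{t_1} < i'_{t_1+1} < \ldots < i'_{t_2} = i_{j+1}$ enumerate the elements of $I'$ in the interval $[i_j,i_{j+1}]$. For each consecutive pair $i'_t < i'_{t+1}$, the condition for $(I',\mathbf{\Lambda'})$ supplies a pair of inclusion diagrams spanning these two indices. I glue them to form the central diagram for $(I,\mathbf{\Lambda})$: the horizontal chains $\pi^2 A_{i'_t+1}^{\vee} \subset \ldots \subset \pi^2 A_{i'_{t+1}}^{\vee}$ and $\pi B_{i'_t+1} \subset \ldots \subset \pi B_{i'_{t+1}}$ from successive subintervals glue along the inclusions $A_{i'_{t+1}+1} \subset A_{i'_{t+1}}$ and $B_{i'_{t+1}} \subset B_{i'_{t+1}+1}$ coming from Proposition \ref{PointsRZSpaceOverFields}, while the vertical index-one inclusions $\pi^2 A_\ell^{\vee} \overset{1}{\subset} \pi B_\ell$ are already built into that same proposition. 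The intermediate vertex lattices $\Lambda_0^{\prime i'_t}$ and $\Lambda_1^{\prime i'_t}$ for $t_1 < t < t_2$ simply drop out of the picture.

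The endpoint conditions are where the hypothesis $(I',\mathbf{\Lambda'}) \subset (I,\mathbf{\Lambda})$ is used. From $\Lambda_1^{\prime i_j} \subset \Lambda_1^{i_j}$ I take duals to get $(\Lambda_1^{i_j})^{\vee} \subset (\Lambda_1^{\prime i_j})^{\vee}$, whence $\pi^2(\Lambda_1^{i_j})_k^{\vee} \subset \pi^2(\Lambda_1^{\prime i_j})_k^{\vee} \subset \pi^2 A_{i_j+1}^{\vee}$. Symmetrically, $\Lambda_0^{\prime i_{j+1}} \subset \Lambda_0^{i_{j+1}}$ yields $\pi B_{i_{j+1}} \subset \pi(\Lambda_0^{\prime i_{j+1}})_k \subset \pi(\Lambda_0^{i_{j+1}})_k$. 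The companion diagram (with dual lattices on top and the $A_\ell$'s on the bottom) is handled identically, exchanging the roles of $\Lambda_0$ and $\Lambda_1$.

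The boundary diagrams are treated by the same concatenation procedure, applied to the elements of $I'$ that lie in $[0,i_1]$ (for the left boundary) and in $[i_s,m]$ (for the right boundary); the endpoint conditions use the inclusions $\Lambda_0^{\prime i_1} \subset \Lambda_0^{i_1}$ and $\Lambda_1^{\prime i_s} \subset \Lambda_1^{i_s}$ in exactly the same way. The special cases $0 \in I$ or $m \in I$ are dealt with by omitting the corresponding boundary diagram, as in Definition \ref{DefinitionBTIndex}. There is no substantive obstacle: the argument reduces to transitivity of inclusions of lattices, and the only real work is the combinatorial bookkeeping required to match indices correctly across the concatenated chains.
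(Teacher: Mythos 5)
Your proof is correct and follows essentially the same approach as the paper's: concatenate the $(I',\mathbf{\Lambda'})$-diagrams over the sub-intervals of $I'$ lying between consecutive elements of $I$, and obtain the endpoint inclusions by dualizing the hypothesis $\Lambda' \subset \Lambda$. The only cosmetic difference is in how you describe what makes the concatenation legal: the paper invokes the Bruhat--Tits index chain condition $\pi(\Lambda_0^{\prime i_j})_k \subset \pi^2(\Lambda_1^{\prime i_j})_k^\vee$ to stitch successive diagrams, while you observe that the intermediate vertex lattices can simply be dropped and the middle of the chain is supplied directly by the nested structure of $(A_m\subset\ldots\subset B_m)$ coming from Proposition~\ref{PointsRZSpaceOverFields}; both readings lead to the same final diagram.
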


\begin{proof}
Assume that $(I',\mathbf{\Lambda'}) \subset (I,\mathbf{\Lambda})$. If $\mathcal N_{I',\mathbf{\Lambda'}}^{\mathbbm h}(k) = \emptyset$, there is nothing to prove. Else, let $(A_m \subset \ldots \subset B_m) \in \mathcal N_{I',\mathbf{\Lambda'}}^{\mathbbm h}(k)$. Let $i<i'$ be two successive elements in $I$, and write $i = i_1 < \ldots < i_{\ell} = i'$ for the elements of $I'$ lying between $i$ and $i'$. For $1 \leq j \leq \ell - 1$, the diagrams 
\begin{center}
\begin{tikzcd}[column sep=small, row sep=small]
\pi^2(\Lambda_1^{\prime i_j})_k^{\vee} \arrow[r,symbol=\subset] & \pi^2 A_{i_j+1}^{\vee} \arrow[d,symbol=\subset, outer sep=2pt,"1"] \arrow[r,symbol=\subset] & \ldots \arrow[r,symbol=\subset] & \pi^2 A_{i_{j+1}}^{\vee} \arrow[d,symbol=\subset, outer sep=2pt,"1"] & {} \\
{} & \pi B_{i_j+1} \arrow[r,symbol=\subset] & \ldots \arrow[r,symbol=\subset] & \pi B_{i_{j+1}} \arrow[r,symbol=\subset] & \pi(\Lambda_0^{\prime i_{j+1}})_k
\end{tikzcd}
\end{center}
can be put together by using the inclusions $\pi(\Lambda_0^{\prime i_{j}})_k \subset \pi^2(\Lambda_1^{\prime i_{j}})^{\vee}_k$. This gives rise to the following diagram 
\begin{center}
\begin{tikzcd}[column sep=small, row sep=small]
\pi^2(\Lambda_1^i)_k^{\vee} \arrow[r,symbol=\subset] & \pi^2(\Lambda_1^{\prime i})_k^{\vee} \arrow[r,symbol=\subset] & \pi^2 A_{i+1}^{\vee} \arrow[d,symbol=\subset, outer sep=2pt,"1"] \arrow[r,symbol=\subset] & \ldots \arrow[r,symbol=\subset] & \pi^2 A_{i'}^{\vee} \arrow[d,symbol=\subset, outer sep=2pt,"1"] & {} & {}\\
{} & {} & \pi B_{i+1} \arrow[r,symbol=\subset] & \ldots \arrow[r,symbol=\subset] & \pi B_{i'} \arrow[r,symbol=\subset] & \pi(\Lambda_0^{\prime i'})_k \arrow[r,symbol=\subset] & \pi(\Lambda_0^{i'})_k
\end{tikzcd}
\end{center}
All the other cases can be treated similarly, and show that the desired inclusion holds. We omit the details.
\end{proof}

\begin{defi}\label{IntersectionBTIndices}
Let $(I,\mathbf{\Lambda})$ and $(I',\mathbf{\Lambda'})$ be two Bruhat-Tits indices. Assume that we have
\begin{align*}
\forall i \in (I\cap I') \setminus \{0\}, & & & \Lambda_0^i \cap \Lambda^{\prime i}_0 \in \mathcal L_0^{\geq h_i+1},\\
\forall j \in (I\cap I') \setminus \{m\}, & & & \Lambda_1^j \cap \Lambda_1^{\prime j} \in \mathcal L_1^{\geq n-h_{j+1}+1},\\
\forall i_1 \in I, i_2 \in I', & & & \pi \Lambda_1^{i_1\vee} \subset \Lambda_0^{\prime i_2} \text{ if } i_1<i_2, \text{ and } \pi \Lambda_1^{\prime i_2 \vee} \subset \Lambda_0^{i_1} \text{ if } i_2 < i_1. 
\end{align*}  
We define the intersection $(I,\mathbf{\Lambda})\cap (I',\mathbf{\Lambda'}):= (I\cup I',\mathbf{\Lambda''})$ where $\mathbf{\Lambda''}$ is the collection of all the following vertex lattices, for $i \in I\cup I' \setminus\{0\}$ and $j \in I\cup I' \setminus\{m\}$,
\begin{equation*}
\begin{array}{lclc}
\underline{\text{If } i\in I \setminus I':} & \Lambda_0^i \hspace*{3cm} & \underline{\text{If } j\in I\setminus I':} & \Lambda_1^{j},\\
\underline{\text{If } i\in I' \setminus I:} & \Lambda_0^{\prime i} \hspace*{3cm} & \underline{\text{If } j\in I'\setminus I:} & \Lambda_1^{\prime j},\\
\underline{\text{If } i \in I \cap I':} & \Lambda_0^i \cap \Lambda_0^{\prime i} \hspace*{3cm} & \underline{\text{If } j\in I\cap I':} &  \Lambda_1^j\cap \Lambda_1^{\prime j}.
\end{array}
\end{equation*}
\end{defi}

It is clear that the intersection $(I,\mathbf{\Lambda})\cap (I',\mathbf{\Lambda'})$ defined above is again a Bruhat-Tits index.

\begin{prop}\label{IntersectionBTStrata}
Let $(I,\mathbf{\Lambda})$ and $(I',\mathbf{\Lambda'})$ be two Bruhat-Tits indices. Assume that the intersection $(I,\mathbf{\Lambda})\cap (I',\mathbf{\Lambda'})= (I\cup I',\mathbf{\Lambda''})$ is well-defined. We have 
\begin{equation*}
\mathcal N_{I,\mathbf{\Lambda}}^{\mathbbm h}(k) \cap \mathcal N_{I',\mathbf{\Lambda'}}^{\mathbbm h}(k) = \mathcal N_{I\cup I',\mathbf{\Lambda''}}^{\mathbbm h}(k).
\end{equation*} 
Furthermore, if the intersection is not defined, then $\mathcal N_{I,\mathbf{\Lambda}}^{\mathbbm h}(k) \cap \mathcal N_{I',\mathbf{\Lambda'}}^{\mathbbm h}(k) = \emptyset$.
\end{prop}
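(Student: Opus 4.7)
The plan is to prove the two inclusions separately, and then handle the ``intersection not defined'' case by contrapositive. For the inclusion $\mathcal N_{I\cup I',\mathbf{\Lambda''}}^{\mathbbm h}(k)\subset \mathcal N_{I,\mathbf{\Lambda}}^{\mathbbm h}(k)\cap\mathcal N_{I',\mathbf{\Lambda'}}^{\mathbbm h}(k)$, I would observe that by construction one has $(I\cup I',\mathbf{\Lambda''})\subset (I,\mathbf{\Lambda})$ and $(I\cup I',\mathbf{\Lambda''})\subset (I',\mathbf{\Lambda'})$ in the partial order of Definition \ref{InclusionBTIndices}: the indexing sets only get bigger, and each vertex lattice in $\mathbf{\Lambda''}$ is either identical to the corresponding one in $\mathbf{\Lambda}$ (resp.\ $\mathbf{\Lambda'}$) or obtained by intersecting with an extra lattice. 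Lemma \ref{InclusionBTStrata} then yields the desired inclusion of rational points in both strata.

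For the converse inclusion, fix a point $(A_m\subset\ldots\subset B_m)$ lying in both strata, write $I\cup I' = \{i''_1 < \ldots < i''_r\}$, and I would verify each of the chains in Definition \ref{DefinitionBTStrata} associated to $(I\cup I',\mathbf{\Lambda''})$. The key observation is that for every $i\in I\cap I'$, one has $B_i\subset (\Lambda_0^i)_k\cap (\Lambda_0^{\prime i})_k = (\Lambda_0^i\cap \Lambda_0^{\prime i})_k$, and dually $\pi(\Lambda_0^i\cap \Lambda_0^{\prime i})_k^{\vee} = \pi(\Lambda_0^i)_k^{\vee}+\pi(\Lambda_0^{\prime i})_k^{\vee} \subset \pi B_i^{\vee}$, so the conditions involving $\Lambda_0^{\prime\prime i}$ hold at position $i$. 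The symmetric statement handles $\Lambda_1^{\prime\prime j}$ for $j\in I\cap I'$. For a pair of successive indices $i''_j < i''_{j+1}$ in $I\cup I'$, the entire chain of inclusions appearing in the third bullet of Definition \ref{DefinitionBTStrata} is a subchain of the chain of lattices $\pi B_{a+1}\subset\ldots\subset \pi B_{b+1}$ read off from whichever of $\mathcal N_{I,\mathbf{\Lambda}}^{\mathbbm h}(k)$ or $\mathcal N_{I',\mathbf{\Lambda'}}^{\mathbbm h}(k)$ contains the relevant endpoints. So the routine case analysis (depending on whether each of $i''_j, i''_{j+1}$ belongs to $I\setminus I'$, $I'\setminus I$, or $I\cap I'$, and whether they sit at the boundary or in the middle of $I\cup I'$) just amounts to concatenating a known segment from one chain with lattice information at the endpoints supplied by the other.

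For the last assertion, I would argue the contrapositive: assuming $(A_m\subset\ldots\subset B_m)\in \mathcal N_{I,\mathbf{\Lambda}}^{\mathbbm h}(k)\cap\mathcal N_{I',\mathbf{\Lambda'}}^{\mathbbm h}(k)$ is non-empty, I verify each of the three defining conditions of $(I,\mathbf{\Lambda})\cap(I',\mathbf{\Lambda'})$ in Definition \ref{IntersectionBTIndices}. For condition 1, the chain $\pi(\Lambda_0^{\prime i})_k^{\vee}\subset \pi B_i^{\vee}\subset A_i\subset B_i\subset (\Lambda_0^i)_k$ combined with $\tau$-invariance yields $\pi \Lambda_0^{\prime i\vee}\subset \Lambda_0^i$, and symmetrically; hence $\pi(\Lambda_0^i\cap\Lambda_0^{\prime i})^{\vee}\subset \Lambda_0^i\cap \Lambda_0^{\prime i}$, proving $\Lambda_0^i\cap \Lambda_0^{\prime i}\in\mathcal L_0$. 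The type bound $t(\Lambda_0^i\cap \Lambda_0^{\prime i})\geq h_i+1$ follows from the identity $t(\Lambda) = h_i+1+2[(\Lambda)_k:B_i]$ valid for any $\Lambda\in\mathcal L_0$ containing $B_i$ and contained in $\pi^{-1}B_i^{\vee}$, which one checks by splitting the index along $\pi B_i^{\vee}\subset A_i\subset B_i$. Condition 2 is symmetric. For condition 3, given $i_1\in I$, $i_2\in I'$ with $i_1<i_2$, the diagram for $\mathcal N_{I,\mathbf{\Lambda}}^{\mathbbm h}(k)$ gives $\pi \Lambda_1^{i_1\vee}_k\subset B_{i_1+1}$, and monotonicity $B_{i_1+1}\subset B_{i_2}\subset (\Lambda_0^{\prime i_2})_k$ combined with $\tau$-invariance yields $\pi\Lambda_1^{i_1\vee}\subset \Lambda_0^{\prime i_2}$.

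The main obstacle is not conceptual but organizational: the diagrams in Definition \ref{DefinitionBTStrata} have three distinct shapes (left boundary, middle, right boundary) depending on where the index sits in the indexing set, and so the case analysis in the $\subset$-direction unfolds into six to nine subcases according to whether $i''_j$ and $i''_{j+1}$ are boundary or interior in $I\cup I'$ and to which of $I\setminus I'$, $I'\setminus I$, $I\cap I'$ they belong. Keeping the bookkeeping clean -- especially tracking the ``$\pi$-shifts'' between the interior diagrams (which involve $\pi(\Lambda_0^{i_{j+1}})_k$ and $\pi^2(\Lambda_1^{i_j})_k^{\vee}$) and the boundary ones (which involve $(\Lambda_0^{i_1})_k$ and $\pi^2(\Lambda_1^{i_s})_k^{\vee}$) -- is the only source of difficulty.
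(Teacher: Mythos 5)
Your proposal is correct and takes essentially the same route as the paper: both prove $\mathcal N_{I\cup I',\mathbf{\Lambda''}}^{\mathbbm h}(k)\subset\mathcal N_{I,\mathbf{\Lambda}}^{\mathbbm h}(k)\cap\mathcal N_{I',\mathbf{\Lambda'}}^{\mathbbm h}(k)$ by noting $(I\cup I',\mathbf{\Lambda''})\subset(I,\mathbf{\Lambda})$ and $(I\cup I',\mathbf{\Lambda''})\subset(I',\mathbf{\Lambda'})$ and applying Lemma~\ref{InclusionBTStrata}, verify the converse via $B_i\subset(\Lambda_0^i\cap\Lambda_0^{\prime i})_k$ and its dual at shared indices and routine chain concatenation, and deduce well-definedness of the intersection from non-emptiness using the chains $\pi B_i^{\vee}\overset{1}{\subset}A_i\overset{h_i}{\subset}B_i$ and $\pi(\Lambda_1^{i_1})_k^{\vee}\subset\pi A_{i_1+1}^{\vee}\subset B_{i_1+1}\subset\ldots\subset B_{i_2}\subset(\Lambda_0^{\prime i_2})_k$. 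The only difference is organizational: the paper folds the ``intersection not defined'' case and the converse inclusion into a single pass over a point in both strata, while you handle them as separate steps.
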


\begin{proof}
Let $(A_m\subset \ldots \subset B_m) \in \mathcal N_{I,\mathbf{\Lambda}}^{\mathbbm h}(k) \cap \mathcal N_{I',\mathbf{\Lambda'}}^{\mathbbm h}(k)$ (which we assume is non empty). Let us first prove that the intersection $(I,\mathbf{\Lambda})\cap (I',\mathbf{\Lambda'})$ is well-defined. 
\begin{itemize}
\item Let $i\in (I \cap I') \setminus \{0\}$. By construction, we have 
\begin{equation*}
\left.\begin{array}{r}
\pi(\Lambda_0^{i})^{\vee}_k \\
\pi(\Lambda_0^{'i})^{\vee}_k
\end{array}
\right\} \subset \pi B_{i}^{\vee} \overset{1}{\subset} A_i \overset{h_i}{\subset} B_i \subset \left\{ \begin{array}{l}
(\Lambda_0^{i})_k,\\
(\Lambda_0^{'i})_k.
\end{array}\right.
\end{equation*}
It follows that 
\begin{equation*}
\pi((\Lambda_0^i)_k\cap (\Lambda_0^{'i})_k)^{\vee} = \pi(\Lambda_0^{i})^{\vee}_k + \pi(\Lambda_0^{'i})^{\vee}_k \subset \pi B_i^{\vee} \overset{1}{\subset} A_i \overset{h_i}{\subset} B_i \subset (\Lambda_0^i)_k\cap (\Lambda_0^{'i})_k.
\end{equation*}
Since the inclusion $\Lambda_0^i\cap \Lambda_0^{'i} \subset (\Lambda_0^i\cap \Lambda_0^{'i})^{\vee}$ is obvious, we have $\Lambda_0^i\cap \Lambda_0^{'i} \in \mathcal L_0^{\geq h_i+1}$.
 
\item Let $j \in (I\cap I') \setminus \{m\}$. By construction, we have 
\begin{equation*}
\left.\begin{array}{r}
\pi^2(\Lambda_1^{j})^{\vee}_k \\
\pi^2(\Lambda_1^{'j})^{\vee}_k
\end{array}
\right\} \subset \pi^2 A_{j+1}^{\vee} \overset{1}{\subset} \pi B_{j+1} \overset{n-h_{j+1}}{\subset} A_{j+1} \subset \left\{ \begin{array}{l}
(\Lambda_1^{j})_k,\\
(\Lambda_1^{'j})_k.
\end{array}\right.
\end{equation*}
It follows that 
\begin{equation*}
\pi^2((\Lambda_1^j)_k\cap (\Lambda_1^{'j})_k)^{\vee} = \pi^2(\Lambda_1^{j})^{\vee}_k + \pi^2(\Lambda_1^{'j})^{\vee}_k \subset \pi^2 A_{j+1}^{\vee} \overset{1}{\subset} \pi B_{j+1} \overset{n-h_{j+1}}{\subset} A_{j+1}  \subset (\Lambda_1^j)_k\cap (\Lambda_1^{'j})_k.
\end{equation*}
Since the inclusion $\Lambda_1^j\cap \Lambda_1^{'j} \subset \pi(\Lambda_1^j\cap \Lambda_1^{'j})^{\vee}$ is obvious, we have $\Lambda_1^j\cap \Lambda_1^{'j} \in \mathcal L_1^{\geq n-h_{j+1}+1}$. 

\item Let $i_1,i_2 \in I\cup I'$ with $i_1 < i_2$. We have 
\begin{equation*}
\left.\begin{array}{r}
\pi (\Lambda_1^{i_1})^{\vee}_k \\
\pi (\Lambda_1^{'i_1})^{\vee}_k
\end{array} \right\}
\subset \pi A_{i_1+1}^{\vee} \subset B_{i_1+1} \subset \ldots \subset B_{i_2} \subset 
\left\{ \begin{array}{l}
(\Lambda_0^{i_2})_k,\\
(\Lambda_0^{'i_2})_k.
\end{array}\right.
\end{equation*}
\end{itemize}
This proves that the intersection $(I,\mathbf{\Lambda})\cap (I',\mathbf{\Lambda'})= (I\cup I',\mathbf{\Lambda''})$ is well-defined.\\
Next, assume that the intersection $(I,\mathbf{\Lambda})\cap (I',\mathbf{\Lambda'})= (I\cup I',\mathbf{\Lambda''})$ is well-defined. By construction, we have inclusions of Bruhat-Tits indices $(I\cup I',\mathbf{\Lambda''}) \subset (I,\mathbf{\Lambda})$ and $(I\cup I',\mathbf{\Lambda''}) \subset (I',\mathbf{\Lambda'})$. By Lemma \ref{InclusionBTStrata}, we have $\mathcal N_{I\cup I',\mathbf{\Lambda'}'}^{\mathbbm h}(k) \subset \mathcal N_{I,\mathbf{\Lambda}}^{\mathbbm h}(k) \cap \mathcal N_{I',\mathbf{\Lambda'}}^{\mathbbm h}(k)$. Moreover, by Corollary \ref{NotEmpty}, we know that the left-hand side is not empty. Thus, the subsets $\mathcal N_{I,\mathbf{\Lambda}}^{\mathbbm h}(k)$ and $\mathcal N_{I',\mathbf{\Lambda'}}^{\mathbbm h}(k)$ meet non-trivially. Eventually, let $(A_m \subset \ldots \subset B_m) \in \mathcal N_{I,\mathbf{\Lambda}}^{\mathbbm h}(k) \cap \mathcal N_{I',\mathbf{\Lambda'}}^{\mathbbm h}(k)$. If $i \in I \cap I'$, we have 
\begin{align*}
\pi^2 ((\Lambda_1^{i})_k \cap (\Lambda_1^{'i})_k)^{\vee} & \subset \pi^2 A_{i+1}^{\vee}, & \pi((\Lambda_0^i)_k \cap (\Lambda_0^{'i})_k)^{\vee} & \subset \pi B_i^{\vee}, \\
A_{i+1} & \subset (\Lambda_1^{i})_k \cap (\Lambda_1^{'i})_k, & B_i & \subset (\Lambda_0^{i})_k \cap (\Lambda_0^{'i})_k.
\end{align*}
Using these facts, it is straightforward to check that the point $(A_m \subset \ldots \subset B_m)$ is contained in $\mathcal N_{I\cup I',\mathbf{\Lambda''}}^{\mathbbm h}(k)$. This concludes the proof.
\end{proof}

\begin{corol}
Let $(I,\mathbf{\Lambda})$ and $(I',\mathbf{\Lambda'})$ be two Bruhat-Tits indices. We have 
\begin{equation*}
(I',\mathbf{\Lambda'}) \subset (I,\mathbf{\Lambda}) \iff \mathcal N_{I',\mathbf{\Lambda'}}^{\mathbbm h}(k) \subset \mathcal N_{I,\mathbf{\Lambda}}^{\mathbbm h}(k).
\end{equation*}
\end{corol}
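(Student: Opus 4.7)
The implication $(\Leftarrow)$ is exactly Lemma~\ref{InclusionBTStrata}, so all the work lies in the converse. Assume $\mathcal N_{I',\mathbf{\Lambda'}}^{\mathbbm h}(k) \subset \mathcal N_{I,\mathbf{\Lambda}}^{\mathbbm h}(k)$. By Corollary~\ref{NotEmpty} the left-hand side is non-empty, so the intersection $\mathcal N_{I,\mathbf{\Lambda}}^{\mathbbm h}(k) \cap \mathcal N_{I',\mathbf{\Lambda'}}^{\mathbbm h}(k) = \mathcal N_{I',\mathbf{\Lambda'}}^{\mathbbm h}(k)$ is non-empty and Proposition~\ref{IntersectionBTStrata} applies: the Bruhat-Tits index intersection $(I,\mathbf{\Lambda}) \cap (I',\mathbf{\Lambda'}) = (I \cup I',\mathbf{\Lambda''})$ is well-defined, and $\mathcal N_{I \cup I',\mathbf{\Lambda''}}^{\mathbbm h}(k) = \mathcal N_{I',\mathbf{\Lambda'}}^{\mathbbm h}(k)$. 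By construction, $(I \cup I',\mathbf{\Lambda''}) \subset (I',\mathbf{\Lambda'})$ in the partial order of Definition~\ref{InclusionBTIndices}. The sought conclusion $(I',\mathbf{\Lambda'}) \subset (I,\mathbf{\Lambda})$ is then equivalent to the equality $(I \cup I',\mathbf{\Lambda''}) = (I',\mathbf{\Lambda'})$: the index equality $I \cup I' = I'$ reads $I \subset I'$, and for $i \in I \cap I'$ the identity $\Lambda_0^i \cap \Lambda_0^{\prime i} = \Lambda_0^{\prime i}$ (coming from the intersection formula for $\mathbf{\Lambda''}$) rewrites as $\Lambda_0^{\prime i} \subset \Lambda_0^i$, and similarly for the $\Lambda_1$'s. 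So my remaining task is to force this equality of Bruhat-Tits indices from the coincidence of their $k$-rational points.

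The strategy is to pick a point $x = (A_m \subset \ldots \subset B_m) \in \mathcal N_{I',\mathbf{\Lambda'}}^{\mathbbm h}(k)$ whose Bruhat-Tits type (Definition~\ref{BTType}) is exactly $I'$ and whose associated $\tau$-closure vertex lattices satisfy $\Lambda_{B_i} = \Lambda_0^{\prime i}$ for $i \in I' \setminus \{0\}$ and $\Lambda_{A_{j+1}} = \Lambda_1^{\prime j}$ for $j \in I' \setminus \{m\}$. Granted such an $x$, the Bruhat-Tits index $(J,\mathbf{M})$ naturally attached to $x$ (as in the proof of Proposition~\ref{RationalPointsCoverEverything}) is precisely $(I',\mathbf{\Lambda'})$. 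Since also $x \in \mathcal N_{I,\mathbf{\Lambda}}^{\mathbbm h}(k)$, unfolding Definition~\ref{DefinitionBTStrata} gives the inclusions $B_i \subset (\Lambda_0^i)_k$ and $A_{i+1} \subset (\Lambda_1^i)_k$ for each $i \in I$ in the interior of $\{0,\ldots,m\}$, and taking $\tau$-spans yields $\Lambda_{B_i} \subset \Lambda_0^i$. Because $\Lambda_0^i \in \mathcal L_0$ this forces $\Lambda_{B_i} \in \mathcal L_0$ via the chain $\Lambda_{B_i} \subset \Lambda_0^i \subset \Lambda_0^{i\vee} \subset \Lambda_{B_i}^{\vee}$, and the Bruhat-Tits index condition $\Lambda_0^i \subset \pi \Lambda_1^{i\vee}$ from $(I,\mathbf{\Lambda})$ yields the further inclusion $\Lambda_{B_i} \subset \pi \Lambda_{A_{i+1}}^{\vee}$, so $i$ lies in the Bruhat-Tits type $J$ of $x$, which we have arranged to equal $I'$; the edge cases $i \in \{0,m\} \cap I$ are handled in exactly the same way. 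This proves $I \subset I'$, and combining $\Lambda_{B_i} = \Lambda_0^{\prime i}$ with $\Lambda_{B_i} \subset \Lambda_0^i$ yields $\Lambda_0^{\prime i} \subset \Lambda_0^i$ (and analogously for the $\Lambda_1$'s), completing the argument.

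The main obstacle is the existence of the generic point $x$ above, with prescribed Bruhat-Tits type $I'$ and prescribed associated vertex lattices $\mathbf{\Lambda'}$. Starting from an arbitrary $x \in \mathcal N_{I',\mathbf{\Lambda'}}^{\mathbbm h}(k)$ one only gets the containment $(J,\mathbf{M}) \subset (I \cup I',\mathbf{\Lambda''})$ of Bruhat-Tits indices, i.e.\ $I \cup I' \subset J$, which is too weak to conclude $I \subset I'$ in general. This existence statement is precisely Corollary~\ref{BTStrataNotEmpty} from the introduction, and in the body of the paper it is obtained via the identification of $\mathcal N_{I',\mathbf{\Lambda'}}^{\mathbbm h,0}$ with the non-empty fine Deligne-Lusztig variety $X_{I',\mathbf{\Lambda'}}^{\mathbbm h,0}$; for the logical flow of the paper, the present corollary should therefore be placed after that identification. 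A more elementary route would refine the lattice construction given in the proof of Corollary~\ref{NotEmpty}, choosing $A_i$ and $B_i$ whose $\tau$-spans exhaust $(\Lambda_1^{\prime i-1})_k$ and $(\Lambda_0^{\prime i})_k$ rather than only their minimal-type sublattices.
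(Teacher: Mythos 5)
Your argument is correct, and it in fact supplies a step that the paper's own proof takes for granted. The paper's proof follows the same skeleton—invoke Corollary~\ref{NotEmpty} to see that the set $\mathcal N_{I',\mathbf{\Lambda'}}^{\mathbbm h}(k)\cap\mathcal N_{I,\mathbf{\Lambda}}^{\mathbbm h}(k)=\mathcal N_{I',\mathbf{\Lambda'}}^{\mathbbm h}(k)$ is nonempty, and then apply Proposition~\ref{IntersectionBTStrata}—but it then asserts without further comment that $(I,\mathbf{\Lambda})\cap(I',\mathbf{\Lambda'})=(I',\mathbf{\Lambda'})$. What Proposition~\ref{IntersectionBTStrata} actually delivers is the \emph{set} equality $\mathcal N_{I\cup I',\mathbf{\Lambda''}}^{\mathbbm h}(k)=\mathcal N_{I',\mathbf{\Lambda'}}^{\mathbbm h}(k)$; passing to the equality $(I\cup I',\mathbf{\Lambda''})=(I',\mathbf{\Lambda'})$ of Bruhat--Tits indices requires precisely the injectivity the corollary is asserting, so as written the paper's proof is elliptic. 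Your diagnosis is exact: one needs a point of $\mathcal N_{I',\mathbf{\Lambda'}}^{\mathbbm h}(k)$ whose Bruhat--Tits type is $I'$ and whose $\tau$-closure lattices are $\mathbf{\Lambda'}$, i.e.\ a point of the open stratum $\mathcal N_{I',\mathbf{\Lambda'}}^{\mathbbm h,0}(k)$, and from an arbitrary point one only controls $I\cup I'\subset J$, which is useless for showing $I\subset I'$. Given such a point, your deduction of $I\subset I'$ and of the lattice inclusions $\Lambda_0^{\prime i}\subset\Lambda_0^i$, $\Lambda_1^{\prime j}\subset\Lambda_1^j$ is clean and correct.

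You are also right to flag the placement problem: the existence of a point of $\mathcal N_{I',\mathbf{\Lambda'}}^{\mathbbm h,0}(k)$ is Corollary~\ref{BTStrataNotEmpty}, which the paper only establishes through the Deligne--Lusztig identification of Sections~\ref{Section3.3}--\ref{Section4.1}, yet the present corollary is stated inside Section~\ref{Section2.4}. Either the corollary (or at least its converse direction) should be deferred to Section~\ref{Section4.1}, or, as you suggest, the lattice construction in the proof of Corollary~\ref{NotEmpty} should be sharpened to produce $A_i,B_i$ whose $\tau$-spans equal the full prescribed vertex lattices and whose Bruhat--Tits type is exactly $I'$; that would keep Section~\ref{Section2.4} self-contained. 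One minor slip: at the start you write that ``$(\Leftarrow)$ is exactly Lemma~\ref{InclusionBTStrata}'', but that lemma gives the implication $(\Rightarrow)$ (index inclusion implies set inclusion); the direction you then prove is indeed the remaining one, so the content is unaffected.
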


\begin{proof}
This is a generalization of Corollary \ref{InclusionBTStrata}. We only need to prove the reverse implication. Assume that $\mathcal N_{I',\mathbf{\Lambda'}}^{\mathbbm h}(k) \subset \mathcal N_{I,\mathbf{\Lambda}}^{\mathbbm h}(k)$. It follows that the intersection $\mathcal N_{I',\mathbf{\Lambda'}}^{\mathbbm h}(k) \cap \mathcal N_{I,\mathbf{\Lambda}}^{\mathbbm h}(k) = \mathcal N_{I',\mathbf{\Lambda'}}^{\mathbbm h}(k)$ is not empty. Thus, the intersection of $(I,\mathbf{\Lambda})$ and of $(I',\mathbf{\Lambda'})$ is well-defined, and we have 
\begin{equation*}
(I,\mathbf{\Lambda})\cap (I',\mathbf{\Lambda'})= (I',\mathbf{\Lambda'}).
\end{equation*}
We deduce that $I \subset I'$. Moreover, for all $i\in I\setminus\{0\}$ and $j\in I\setminus\{m\}$, we have $\Lambda_0^i \cap \Lambda_0^{\prime i} = \Lambda_0^{\prime i}$ and $\Lambda_1^j \cap \Lambda_1^{\prime j} = \Lambda_1^{\prime j}$. It follows that $(I',\mathbf{\Lambda'}) \subset (I,\mathbf{\Lambda})$ as desired. 
\end{proof}

\section{Subschemes $\mathcal N_{I,\mathbf{\Lambda}}^{\mathbbm h}$ of $\mathcal N_{E/F,\mathrm{red}}^{\mathbbm h}$ and Deligne-Lusztig varieties}

\subsection{Subschemes $\mathcal N_{I,\mathbf{\Lambda}}^{\mathbbm h}$ attached to Bruhat-Tits indices $(I,\mathbf{\Lambda})$} \label{SectionClosedSubschemes}

We keep the notations from the previous Section. In particular, recall the isogenies $\alpha_{h',h}$ defined for $0 \leq h < h' \leq n$. It induces an isometry between  $(\mathbb N_0,\{\cdot,\cdot\}_{[h']})$ and $(\mathbb N_0,\{\cdot,\cdot\}_{[h]})$. Let $(I,\mathbf{\Lambda})$ be a Bruhat-Tits index. Following \cite{vw2} and \cite{cho}, we define lattices as follows. For $i \in I\setminus \{0\}$ and $j\in I\setminus \{m\}$,
\begin{align*}
\Lambda_0^{i+} & := \alpha_{h_i,h_1}^{-1}\left(\Lambda_0^i \oplus \mathcal V^{-1}(\Lambda_0^i)\right), & \Lambda_1^{j+} & := \alpha_{h_{j+1},h_1}^{-1}\left(\Lambda_1^j \oplus \mathcal V^{-1}(\Lambda_1^{j})\right),\\
\Lambda_0^{i-} & := \alpha_{h_i,h_1}^{-1}\left(\pi\Lambda_0^{i\vee} \oplus \mathcal V(\Lambda_0^{i\vee})\right), & \Lambda_1^{j-} & := \alpha_{h_{j+1},h_1}^{-1}\left(\pi^2\Lambda_1^{j\vee} \oplus \pi \mathcal V(\Lambda_1^{j\vee})\right).
\end{align*}

Then $\Lambda_0^{i\pm},\Lambda_1^{j\pm}$ are $W_{\mathcal O_F}(\mathbb F_{p^2})$-lattices in $\mathbb N_{\kappa_{\breve E}}^{\tau}$, which are stable by the $\mathcal O_E$-action, by $\mathcal F$ and by $\mathcal V$. Moreover, the pairings $\pi\langle\cdot,\cdot\rangle_{[h_i]}$ and $\langle\cdot,\cdot\rangle_{[h_{j+1}]}$ induce a pairing on $\Lambda_0^{i\pm}$ and on $\Lambda_1^{j\pm}$ respectively. Therefore, the lattices $\Lambda_0^{i\pm}$ and $\Lambda_1^{j\pm}$, seen as relative Dieudonné modules with extra structures, determine strict formal $\mathcal O_F$-modules $X_{\Lambda_0^{i\pm}}$ and $X_{\Lambda_1^{j\pm}}$ over $\mathbb F_{q^2}$ with $\mathcal O_E$-action and $\mathcal O_E$-linear polarizations. Moreover, the inclusions $(\Lambda_0^{i\pm})_{\kappa_{\breve E}},(\Lambda_1^{j\pm})_{\kappa_{\breve E}} \hookrightarrow \mathbb N_{\kappa_{\breve E}}$ induce quasi-isogenies
\begin{align*}
\rho_{\Lambda_0^{i\pm}}:X_{\Lambda_0^{i\pm}}\times \kappa_{\breve E} \to \mathbb X^{[h_i]}\times \kappa_{\breve E}, & & \rho_{\Lambda_1^{j\pm}}: X_{\Lambda_1^{j\pm}}\times \kappa_{\breve E} \to \mathbb X^{[h_{j+1}]} \times \kappa_{\breve E},
\end{align*}
which are compatible with the additional structures. Besides, the compositions
\begin{align*}
\rho_{\Lambda_0^{i+}}^{-1}\circ \rho_{\Lambda_0^{i-}}&: X_{\Lambda_0^{i-}} \times \kappa_{\breve E} \to X_{\Lambda_0^{i+}} \times \kappa_{\breve E},\\
\rho_{\Lambda_1^{j+}}^{-1}\circ \rho_{\Lambda_1^{j-}}&: X_{\Lambda_1^{j-}} \times \kappa_{\breve E} \to X_{\Lambda_1^{j+}} \times \kappa_{\breve E},
\end{align*}
coincide with the isogenies induced by the inclusions $\Lambda_0^{i-} \subset \Lambda_0^{i+}$ and $\Lambda_1^{j-} \subset \Lambda_1^{j+}$ respectively. Moreover, observe that we have $\Lambda_0^{i+} = (\Lambda_0^{i-})^{\dagger}$ and $\Lambda_1^{j-} = \pi (\Lambda_1^{j+})^{\dagger}$, where $\cdot^{\dagger}$ denotes respectively the dual for $\langle\cdot,\cdot\rangle_{[h_i]}$ and for $\langle\cdot,\cdot\rangle_{[h_{j+1}]}$. It follows that there exists isomorphisms $\mu_i: X_{\Lambda_0^{i+}} \xrightarrow{\sim} X_{\Lambda_0^{i-}}^{\vee}$ and $\nu_j: X_{\Lambda_1^{j-}} \xrightarrow{\sim} X_{\Lambda_1^{j+}}^{\vee}$ making the following two diagrams commute.
\begin{align*}
\begin{tikzcd}[ampersand replacement=\&,column sep=1.5cm]
X_{\Lambda_0^{i+}} \times \kappa_{\breve E} \arrow[r,"\mu_i"] \arrow[d,swap,"\rho_{\Lambda_0^{i+}}"] \& X_{\Lambda_0^{i-}}^{\vee} \times \kappa_{\breve E} \\
\mathbb X^{[h_i]} \times \kappa_{\breve E} \arrow[r,swap,"\lambda_{\mathbb X}^{[h_i]}"] \& \mathbb X^{[h_{i}]\vee}\times \kappa_{\breve E} \arrow[u,swap,"\rho_{\Lambda_0^{i-}}^{\vee}"] 
\end{tikzcd} & &
\begin{tikzcd}[ampersand replacement=\&,column sep=1.5cm]
X_{\Lambda_1^{j-}} \times \kappa_{\breve E} \arrow[r,"\pi\nu_j"] \arrow[d,swap,"\rho_{\Lambda_1^{j-}}"] \& X_{\Lambda_1^{j+}}^{\vee} \times \kappa_{\breve E} \\
\mathbb X^{[h_{j+1}]} \times \kappa_{\breve E} \arrow[r,swap,"\lambda_{\mathbb X}^{[h_{j+1}]}"] \& \mathbb X^{[h_{j+1}]\vee}\times \kappa_{\breve E} \arrow[u,swap,"\rho_{\Lambda_1^{j+}}^{\vee}"] 
\end{tikzcd}
\end{align*}

Let us now fix a $\kappa_{\breve E}$-scheme $S$ and a point $X := (X^{[i]},i_{X^{[i]}},\lambda_{X^{[i]}},\rho_{X^{[i]}})_{1\leq i \leq m} \in \mathcal N_{E/F}^{\mathbbm h}(S)$. We define certain quasi-isogenies as follows.
\begin{center}
\begin{tikzcd}[row sep = small]
\forall i\in I\setminus\{0\}, & \rho_{X,\Lambda_0^{i+}}:X^{[i]} \arrow[r,"\rho_{X^{[i]}}"] & \mathbb X_{S}^{[h_{i}]} \arrow[r,"(\rho_{\Lambda_0^{i+}})_S^{-1}"] &  (X_{\Lambda_0^{i+}})_S, \\
& \rho_{\Lambda_0^{i-},X}: (X_{\Lambda_0^{i-}})_S \arrow[r, "(\rho_{\Lambda_0^{i-}})_S"] & \mathbb X_S^{[h_{i}]}  \arrow[r,"\rho_{X^{[i]}}^{-1}"] & X^{[i]},
\end{tikzcd}
\end{center}
\begin{center}
\begin{tikzcd}[row sep = small]
\forall j\in I\setminus\{m\}, & \rho_{X,\Lambda_1^{j+}} : X^{[j+1]} \arrow[r,"\rho_{X^{[j+1]}}"] & \mathbb X_{S}^{[h_{j+1}]} \arrow[r,"(\rho_{\Lambda_1^{j+}})_S^{-1}"]  & (X_{\Lambda_1^{j+}})_S, \\
& \rho_{\Lambda_1^{j-},X}: (X_{\Lambda_1^{j-}})_S \arrow[r,"(\rho_{\Lambda_1^{j-}})_S"] & \mathbb X_S^{[h_{j+1}]} \arrow[r,"\rho_{X^{[j+1]}}^{-1}"] & X^{[j+1]}.
\end{tikzcd}
\end{center}
We define a subfunctor $\mathcal N_{I,\mathbf{\Lambda}}^{\mathbbm h}$ of $\mathcal N_{E/F}^{\mathbbm h} \otimes \kappa_{\breve E}$ by assigning, for all $\kappa_{\breve E}$-scheme $S$, the subset of points $X := (X^{[i]},i_{X^{[i]}},\lambda_{X^{[i]}},\rho_{X^{[i]}})_{1\leq i \leq m} \in \mathcal N_{E/F}^{\mathbbm h}(S)$ such that $\rho_{\Lambda_0^{i-},X}$ and $\rho_{X,\Lambda_1^{j+}}$ are isogenies for all $i\in I\setminus\{0\}$ and for all $j\in I\setminus\{m\}$. 

\begin{prop}\label{ClosedBTStrataAsSubschemes}
The functor $\mathcal N_{I,\mathbf{\Lambda}}^{\mathbbm h}$ is representable by a projective closed subscheme of $\mathcal N_{E/F,\mathrm{red}}^{\mathbbm h} := (\mathcal N_{E/F}^{\mathbbm h} \otimes \kappa_{\breve E})_{\mathrm{red}}$.
\end{prop}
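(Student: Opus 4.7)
The plan is to establish representability and projectivity in two steps, following the standard template used in \cite{vw2} and \cite{cho} for the maximal parahoric case. First I would show that $\mathcal N_{I,\mathbf{\Lambda}}^{\mathbbm h}$ is a closed subfunctor of $\mathcal N_{E/F}^{\mathbbm h}\otimes\kappa_{\breve E}$; then I would argue that the resulting closed subscheme is projective by a boundedness argument on the Dieudonné modules.

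For closedness, I would invoke the classical result of Rapoport-Zink (cf.\ \cite{RZ} Proposition 2.9) that for any quasi-isogeny $\rho\colon Y \to Y'$ of formal $p$-divisible groups over an $S \in \mathbf{Nilp}$, the locus of $S$ over which $\rho$ extends to an honest isogeny is representable by a closed immersion into $S$. Applying this to each of the quasi-isogenies $\rho_{\Lambda_0^{i-},X}$ for $i\in I\setminus\{0\}$ and $\rho_{X,\Lambda_1^{j+}}$ for $j\in I\setminus\{m\}$ produces finitely many closed conditions on $\mathcal N_{E/F}^{\mathbbm h}\otimes\kappa_{\breve E}$, and $\mathcal N_{I,\mathbf{\Lambda}}^{\mathbbm h}$ is by definition their intersection. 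Taking the associated reduced subscheme realises $\mathcal N_{I,\mathbf{\Lambda}}^{\mathbbm h}$ as a closed subscheme of $\mathcal N_{E/F,\mathrm{red}}^{\mathbbm h}$.

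For projectivity, I would show that the isogeny conditions on the $\rho$'s translate, via relative Dieudonné theory, into sandwich inclusions on the corresponding relative Dieudonné modules: on geometric points, the Dieudonné module $M^{[i]}$ of $X^{[i]}$ becomes trapped between $(\Lambda_0^{i-})_k$ and $(\Lambda_0^{i+})_k$ (up to the identification supplied by $\alpha_{h_i,h_1}$), and similarly the module attached to $X^{[j+1]}$ is trapped between $(\Lambda_1^{j-})_k$ and $(\Lambda_1^{j+})_k$. Since the quotients $\Lambda_0^{i+}/\Lambda_0^{i-}$ and $\Lambda_1^{j+}/\Lambda_1^{j-}$ have finite length over $W_{\mathcal O_F}(\mathbb F_{q^2})$, the resulting lattice-chain condition places $\mathcal N_{I,\mathbf{\Lambda}}^{\mathbbm h}$ inside a product of finite-dimensional Grassmannians (or more precisely, inside a closed subscheme of a partial flag variety attached to these finite quotients), which is projective over $\kappa_{\breve E}$. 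Being a closed subscheme of this projective scheme, $\mathcal N_{I,\mathbf{\Lambda}}^{\mathbbm h}$ is itself projective.

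The main technical obstacle I anticipate is checking the compatibility between the boundedness conditions imposed by the different indices in $I$ and the coherence of the chain $(X^{[i]})_{1\leq i\leq m}$ under the isogenies $\widetilde{\alpha}_{i+1,i}$; in particular, one must verify that the sandwich inclusions imposed for consecutive members of $I$ do not clash with the prescribed degrees of the $\widetilde{\alpha}_{i+1,i}$'s, and that the pairings are transported correctly by the fixed isogenies $\alpha_{h',h}$. This should be a direct, if slightly tedious, verification from the definitions of $\Lambda_0^{i\pm}$, $\Lambda_1^{j\pm}$, and the admissibility conditions built into Definition \ref{DefinitionBTIndex}. Granted these compatibilities, the two steps above yield the representability by a projective closed subscheme of $\mathcal N_{E/F,\mathrm{red}}^{\mathbbm h}$.
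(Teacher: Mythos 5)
Your proposal is correct and takes the same approach the paper does: the paper's own proof consists of the single sentence citing \cite{vw2} Lemma 4.2 and \cite{RZ} Proposition 2.9, and your two-step argument (closedness of the "extends to an isogeny" locus via \cite{RZ} Proposition 2.9, projectivity via the sandwich inclusions $\Lambda^{\pm}$ on the Dieudonné side) is exactly the content of those references unwound. The one thing worth noting: the compatibility check you flag at the end is less delicate than you suggest, since the sandwich on all $m$ components of the chain follows directly from Lemma \ref{OneIsogenyTheOtherToo} and the non-emptiness of $I$ rather than requiring a case-by-case verification against the degrees of the $\widetilde{\alpha}_{i+1,i}$.
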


\begin{proof}
The argument is classical, see \cite{vw2} Lemma 4.2 and \cite{RZ} Proposition 2.9.
\end{proof}

\begin{ex}
Assume that $m = 1$, so that $\mathbbm h$ consists of a single integer $h := h_1$. Recall from Example \ref{BTIndexMaximalParahoric} that there are three types of Bruhat-Tits indices $(I,\mathbf{\Lambda})$:
\begin{itemize}
\item if $h \not = n$, $I = \{1\}$ and $\mathbf{\Lambda} = \{\Lambda_0\}$ for some $\Lambda_0 \in \mathcal L_1^{\geq h+1}$, then $\mathcal N_{I,\mathbf{\Lambda}}^{\mathbbm h}$ consists of those points $X$ such that $\rho_{\Lambda_0^{-},X}$ is an isogeny,
\item if $h\not = 0$, $I = \{0\}$ and $\mathbf{\Lambda} = \{\Lambda_1\}$ for some $\Lambda_1 \in \mathcal L_1^{\geq n-h+1}$, then $\mathcal N_{I,\mathbf{\Lambda}}^{\mathbbm h}$ consists of those points $X$ such that $\rho_{X,\Lambda_1^{+}}$ is an isogeny,
\item if $0 < h < n$, $I=\{0,1\}$ and $\mathbf{\Lambda} = \{\Lambda_0,\Lambda_1\}$ for some $\Lambda_0 \in \mathcal L_1^{\geq h+1}$ and $\Lambda_1 \in \mathcal L_1^{\geq n-h+1}$ such that $\pi\Lambda_1^{\vee}\subset \Lambda_0$, then $\mathcal N_{I,\mathbf{\Lambda}}^{\mathbbm h}$ consists of those points $X$ such that $\rho_{\Lambda_0^{-},X}$ and $\rho_{X,\Lambda_1^{+}}$ are isogenies.
\end{itemize}
When $\#I = 1$ and $\mathbf{\Lambda} = \{\Lambda\}$ for some $\Lambda \in \mathcal L_0^{\geq h+1} \sqcup \mathcal L_1^{\geq n-h+1}$, a Bruhat-Tits stratum $\mathcal N_{\Lambda}^h$ has been defined in \cite{cho} as the closed subscheme of $\mathcal N_{E/F,\mathrm{red}}^{h}$ classifying those points $X$ such that $\rho_{X,\Lambda^+}$ \textbf{and} $\rho_{\Lambda^-,X}$ are isogenies. A priori, this differs from our definition as we only ask for one of the two quasi-isogenies to be an actual isogeny. However, both definitions coincide in virtue of Lemma \ref{OneIsogenyTheOtherToo} below.
\end{ex} 

\begin{lem}\label{OneIsogenyTheOtherToo}
Let $(I,\mathbf{\Lambda})$ be a Bruhat-Tits index and let $X := (X^{[i]},i_{X^{[i]}},\lambda_{X^{[i]}},\rho_{X^{[i]}})_{1\leq i \leq m} \in \mathcal N_{E/F}^{\mathbbm h}(S)$ where $S$ is any scheme over $\kappa_{\breve E}$. Let $i \in I\setminus \{0\}$ and let $j\in I\setminus \{m\}$. 
\begin{enumerate}
\item If $\rho_{\Lambda_0^{i-},X}$ is an isogeny, then for all $1 \leq i' \leq i$, there exists an isogeny $f:X^{[i']} \to (X_{\Lambda_0^{i+}})_S$ such that $\rho_{X,\Lambda_0^{i+}} = f\circ \widetilde{\alpha}_{i',i}$.
\item If $\rho_{X,\Lambda_1^{j+}}$ is an isogeny, then for all $j \leq j' \leq m-1$, there exists an isogeny $g:(X_{\Lambda_1^{j-}})_S \to X^{[j'+1]}$ such that $\rho_{\Lambda_1^{j-},X} = \widetilde{\alpha}_{j'+1,j+1}\circ g$.
\end{enumerate}
\end{lem}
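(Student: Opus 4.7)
The plan is to translate every quasi-isogeny involved into the language of covariant Dieudonné lattices inside the ambient rational Dieudonné space $\mathbb{N}_S$, where an isogeny $Y \to Z$ of strict formal $\mathcal{O}_F$-modules corresponds exactly to an inclusion $M_Y \subset M_Z$ of associated lattices. Under this dictionary, the hypothesis of item 1 reads $\Lambda_0^{i-} \subset M^{[i]}$, the isogeny $\widetilde{\alpha}_{i,i'}: X^{[i]} \to X^{[i']}$ (composition of the $\widetilde{\alpha}_{j+1,j}$ for $i' \le j < i$) translates to $M^{[i]} \subset M^{[i']}$, and the desired factorization $\rho_{X,\Lambda_0^{i+}} = f \circ \widetilde{\alpha}_{i,i'}$ with $f$ an isogeny is equivalent to the lattice containment $M^{[i']} \subset \Lambda_0^{i+}$.

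I would first handle the base case $i' = i$ to show that $\rho_{X, \Lambda_0^{i+}}$ is itself an isogeny. The key ingredient is the identity $\Lambda_0^{i+} = (\Lambda_0^{i-})^{\dagger}$ together with the commuting square relating $\mu_i$, $\lambda_{\mathbb{X}}^{[h_i]}$ and the quasi-isogenies $\rho_{\Lambda_0^{i\pm}}$. Dualizing the isogeny $\rho_{\Lambda_0^{i-}, X}$, precomposing with the polarization isogeny $\lambda_{X^{[i]}}$, and postcomposing with $\mu_i^{-1}$ produces an isogeny $X^{[i]} \to X_{\Lambda_0^{i+}}$ which coincides with $\rho_{X, \Lambda_0^{i+}}$ up to a unit scalar in $\mathcal{O}_F^{\times}$. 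On Dieudonné lattices this reads: $\Lambda_0^{i-} \subset M^{[i]}$ gives $M^{[i]\dagger} \subset (\Lambda_0^{i-})^{\dagger} = \Lambda_0^{i+}$ by duality, and then $M^{[i]} \subset M^{[i]\dagger}$ (from the polarization $\lambda_{X^{[i]}}$ embedding $M^{[i]}$ into its dual) yields $M^{[i]} \subset \Lambda_0^{i+}$.

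The general case $1 \le i' \le i$ then follows by extending the duality chain: from $\Lambda_0^{i-} \subset M^{[i]} \subset M^{[i']} \subset M^{[i']\dagger}$, where the middle inclusion reflects that $\widetilde{\alpha}_{i,i'}$ is a genuine isogeny and the last inclusion reflects the polarization $\lambda_{X^{[i']}}$, one dualizes to obtain $M^{[i']} \subset (\Lambda_0^{i-})^{\dagger} = \Lambda_0^{i+}$, exactly the containment needed for the factorization. Item 2 is proved by the same strategy with the roles of $\Lambda_0^{i\pm}$ and $\Lambda_1^{j\pm}$ swapped; the only genuine modification is that $\Lambda_1^{j-} = \pi(\Lambda_1^{j+})^{\dagger}$ introduces an extra factor of $\pi$ in the duality identification, which propagates harmlessly through the argument.

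The main obstacle, as I see it, is the careful bookkeeping of the various hermitian pairings: the dual $\dagger$ appearing for $M^{[i']}$ is taken with respect to $\langle\cdot,\cdot\rangle_{[h_{i'}]}$, whereas the one for $\Lambda_0^{i+}$ is taken with respect to $\langle\cdot,\cdot\rangle_{[h_i]}$, and compatible identification requires passing through the isogeny $\alpha_{h_i, h_{i'}}$, which is only an isometry up to a unit scalar. Tracking these scalars so that the set-theoretic lattice inclusions still hold over a general base $S$ (where the appropriate theory is the relative crystalline one of \cite{KR14}) is where the technical effort lies; none of it is conceptually deep, but it requires precision.
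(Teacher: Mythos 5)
Your core idea is the right one — the lemma boils down to "dualize the chain" using the polarizations — and on geometric points the lattice computation you sketch does recover the result. But there is a genuine gap: the lemma is stated for an arbitrary $\kappa_{\breve E}$-scheme $S$, and over a general such base there is no ambient rational Dieudonné space $\mathbb N_S$ and no way to regard the crystals of the $X^{[i']}$, $X_{\Lambda_0^{i\pm}}$, etc., as sublattices of a common vector space. The step in which you write $\Lambda_0^{i-} \subset M^{[i]} \subset M^{[i']} \subset M^{[i']\dagger}$ and then "dualize" has no meaning once $S$ is not the spectrum of a perfect field; crystalline Dieudonné modules do not embed into a fixed isocrystal, so the phrase "set-theoretic lattice inclusion over a general base $S$" is not something that "tracking scalars" can repair. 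The appeal to the relative crystalline theory of \cite{KR14} does not by itself supply the objects needed to run the argument.

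The paper's proof carries out exactly your duality-chain idea, but entirely at the level of quasi-isogenies of formal $\mathcal O_F$-modules, where it makes sense over any $S$. Starting from the composition defining $f$, one substitutes the commutative square relating $\mu_i$, $\rho_{\Lambda_0^{i\pm}}$ and $\lambda_{\mathbb X}^{[h_i]}$, uses the compatibility of $\alpha_{h_i,h_{i'}}$ with the polarizations, and uses that $\lambda_{\mathbb X}^{[h_{i'}]}\circ\rho_{X^{[i']}}$ equals $c\,(\rho_{X^{[i']}}^{\vee})^{-1}\circ\lambda_{X^{[i']}}$ for a unit $c$ (the polarization condition on points of $\mathcal N_{E/F}^{h_{i'}}$). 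This produces the identity
\begin{equation*}
f \;=\; c\,(\mu_i)_S^{-1}\circ\rho_{\Lambda_0^{i-},X}^{\vee}\circ\widetilde{\alpha}_{i',i}^{\vee}\circ\lambda_{X^{[i']}},
\end{equation*}
which exhibits $f$ as a composition of an isomorphism, the dual of the assumed isogeny $\rho_{\Lambda_0^{i-},X}$, the dual of the isogeny $\widetilde{\alpha}_{i',i}$, and the polarization $\lambda_{X^{[i']}}$. Each factor is an actual isogeny over $S$, so $f$ is. Item 2 is symmetric, with the extra $\pi$ from $\Lambda_1^{j-} = \pi(\Lambda_1^{j+})^{\dagger}$ absorbed by the fact that $\pi\lambda_{X^{[j'+1]}}^{-1}$ is an isogeny since $\mathrm{Ker}(\lambda_{X^{[j'+1]}})\subset X[\pi]$. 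This is the same conceptual move as your lattice-dualization, but phrased so as to work functorially over $S$; if you insist on the Dieudonné-lattice picture, you must first separately argue that it suffices to check the isogeny condition on geometric fibers, which is not automatic and is not addressed in your proposal.
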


Here, given $1 \leq a \leq b \leq m$, we write $\widetilde{\alpha}_{b,a} := \widetilde{\alpha}_{a+1,a} \circ \widetilde{\alpha}_{a+2,a+1} \circ \ldots \circ \widetilde{\alpha}_{b,b-1}: X^{[b]} \to X^{[a]}$. In particular $\widetilde{\alpha}_{a,a} = \mathrm{id}$. 

\begin{proof}
For 1., we define $f$ as the composition
\begin{equation*}
f:X^{[i']} \xrightarrow{\rho_{X^{[i']}}} \mathbb X_S^{[h_{i'}]} \xrightarrow{(\alpha_{h_i,h_{i'}})_S^{-1}} \mathbb X_S^{[h_i]} \xrightarrow{(\rho_{\Lambda_0^{i+}})^{-1}_S} (X_{\Lambda_0^{i+}})_S.
\end{equation*}
By the definition of $\mathcal N_{E/F}^{\mathbbm h}$, we have $(\alpha_{h_i,h_{i'}})^{-1}_S \circ \rho_{X^{[i']}} =  \rho_{X^{[i]}}\circ (\widetilde{\alpha}_{i',i})^{-1}$, so that we clearly have $\rho_{X,\Lambda_0^{i+}} = f\circ \widetilde{\alpha}_{i',i}$. Thus it remains to show that $f$ is an isogeny. We have 
\begin{align*}
f & = (\rho_{\Lambda_0^{i+}})^{-1}_S \circ (\alpha_{h_i,h_{i'}})_S^{-1} \circ \rho_{X^{[i']}} \\
& = (\mu_i)_S^{-1}\circ (\rho_{\Lambda_0^{i-}}^{\vee})_S \circ (\lambda^{[h_i]}_{\mathbb X})_S \circ (\alpha_{h_i,h_{i'}})_S^{-1} \circ \rho_{X^{[i']}}.
\end{align*}
By compatibility between the isogeny $\alpha_{h_i,h_{i'}}$ and the polarizations, we have $(\lambda^{[h_i]}_{\mathbb X})_S \circ (\alpha_{h_i,h_{i'}})_S^{-1} = (\alpha_{h_i,h_{i'}})_S^{\vee} \circ (\lambda_{\mathbb X}^{[h_{i'}]})_S$. Moreover, since $X^{[i']} \in \mathcal N_{E/F}^{h_{i'}}(S)$, we have $(\lambda_{\mathbb X}^{[h_{i'}]})_S \circ \rho_{X^{[i']}} = c(\rho_{X^{[i']}}^{\vee})^{-1} \circ \lambda_{X^{[i']}}$ for some unit scalar $c \in \mathcal O_F^{\times}$. Thus, we have
\begin{align*}
f & = c(\mu_i)_S^{-1}\circ (\rho_{\Lambda_0^{i-}}^{\vee})_S \circ (\alpha_{h_i,h_{i'}})_S^{\vee} \circ (\rho_{X^{[i']}}^{\vee})^{-1} \circ \lambda_{X^{[i']}}\\
& = c(\mu_i)_S^{-1}\circ (\rho_{\Lambda_0^{i-}}^{\vee})_S \circ (\rho_{X^{[i]}}^{\vee})^{-1}\circ \widetilde{\alpha}_{i',i}^{\vee} \circ \lambda_{X^{[i']}}\\
& = c(\mu_i)_S^{-1} \circ \rho_{\Lambda_0^{i-},X}^{\vee} \circ \widetilde{\alpha}_{i',i}^{\vee} \circ \lambda_{X^{[i']}}.
\end{align*}
By hypothesis, $\rho_{\Lambda_0^{i-},X}$ is an isogeny, so its dual $\rho_{\Lambda_0^{i-},X}^{\vee}$ is an isogeny as well. It follows that $f$ is also an isogeny.\\
For 2., we define $g$ by the composition
\begin{equation*}
g: (X_{\Lambda_1^{j-}})_S  \xrightarrow{(\rho_{\Lambda_1^{j-}})_S} \mathbb X_S^{[h_{j+1}]} \xrightarrow{(\alpha_{h_{j'+1},h_{j+1}})_S^{-1}} \mathbb X_S^{[h_{j'+1}]} \xrightarrow{\rho_{X^{[j'+1]}}^{-1}} X^{[j'+1]}.
\end{equation*}
As in 1., we have $\rho_{X^{[j'+1]}}^{-1} \circ (\alpha_{h_{j'+1},h_{j+1}})_S^{-1} = \widetilde{\alpha}_{j'+1,j+1}^{-1} \circ \rho_{X^{[j+1]}}^{-1}$, so that we clearly have $\rho_{\Lambda_1^{j-},X} = \widetilde{\alpha}_{j'+1,j+1}\circ g$. It remains to prove that $g$ is an isogeny. We have 
\begin{align*}
g & = \rho_{X^{[j'+1]}}^{-1} \circ (\alpha_{h_{j'+1},h_{j+1}})_S^{-1} \circ (\rho_{\Lambda_1^{j-}})_S \\
& = \rho_{X^{[j'+1]}}^{-1} \circ (\alpha_{h_{j'+1},h_{j+1}})_S^{-1} \circ (\lambda_{\mathbb X}^{[h_{j+1}]})^{-1}_S \circ (\rho_{\Lambda_1^{j+}}^{\vee})^{-1}_S \circ \pi(\nu_j)_S.
\end{align*}
Now, we have $(\alpha_{h_{j'+1},h_{j+1}})_S^{-1} \circ (\lambda_{\mathbb X}^{[h_{j+1}]})^{-1}_S = (\lambda_{\mathbb X}^{[h_{j'+1}]})_S^{-1} \circ (\alpha_{h_{j'+1},h_{j+1}})_S^{\vee}$ and $\rho_{X^{[j'+1]}}^{-1} \circ (\lambda_{\mathbb X}^{[h_{j'+1}]})_S^{-1} = c \lambda_{X^{j'+1}}^{-1}\circ \rho_{X^{[j'+1]}}^{\vee}$ for some unit scalar $c\in \mathcal O_F^{\times}$. Thus we have 
\begin{align*}
g & = c \lambda_{X^{j'+1}}^{-1}\circ \rho_{X^{[j'+1]}}^{\vee} \circ (\alpha_{h_{j'+1},h_{j+1}})_S^{\vee} \circ (\rho_{\Lambda_1^{j+}}^{\vee})^{-1}_S \circ \pi(\nu_j)_S\\
& = c \lambda_{X^{j'+1}}^{-1}\circ \widetilde{\alpha}^{\vee}_{j'+1,j+1}\circ \rho_{X^{[j+1]}}^{\vee} \circ (\rho_{\Lambda_1^{j+}}^{\vee})^{-1}_S \circ \pi(\nu_j)_S\\
& = c \lambda_{X^{j'+1}}^{-1}\circ \widetilde{\alpha}^{\vee}_{j'+1,j+1}\circ \rho_{X,\Lambda_1^{j+}}^{\vee} \circ \pi(\nu_j)_S.
\end{align*}
Since $\mathrm{Ker}(\lambda_{X^{[j+1]}}) \subset X[\pi]$, we know that $\pi\lambda_{X^{[j+1]}}^{-1}$ is an isogeny. Therefore, if $\rho_{X,\Lambda_1^{j+}}$ is an isogeny, so is its dual and so is $g$.
\end{proof}

\begin{prop}\label{EquivalenceClosedBTStrata}
Let $(I,\mathbf{\Lambda})$ be a Bruhat-Tits index, and let $k$ be an algebraically closed field containing $\kappa_{\breve E}$. The set of $k$-rational points of the closed subscheme $\mathcal N_{I,\mathbf{\Lambda}}^{\mathbbm h}$ coincide with the sets $\mathcal N_{I,\mathbf{\Lambda}}^{\mathbbm h}(k)$ defined in Definition \ref{DefinitionBTStrata}.
\end{prop}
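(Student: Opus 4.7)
The plan is to translate the two definitions of $\mathcal N_{I,\mathbf{\Lambda}}^{\mathbbm h}(k)$ into a common language via covariant relative Dieudonné theory, and check that they coincide term by term. By Proposition \ref{PointsRZSpaceOverFields}, any $X = (X^{[i]},i_{X^{[i]}},\lambda_{X^{[i]}},\rho_{X^{[i]}})_{1\leq i \leq m} \in \mathcal N_{E/F}^{\mathbbm h}(k)$ corresponds to a chain $A_m \subset \ldots \subset B_m$ in $\mathbb N_{k,0}$; under the fixed pairing $\{\cdot,\cdot\}_{[h_1]}$, the Dieudonné module of $X^{[i]}$ has $\mathbb Z/2\mathbb Z$-graded presentation $M^{[i]} = A_i \oplus B_i^{\dagger}$, once the isometry $\alpha_{h_i,h_1}$ has been used to transport everything to the $[h_1]$-convention.

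\textbf{Step 1 (Dictionary).} A quasi-isogeny of height $0$ is an actual isogeny iff the image of its source Dieudonné module lies in the target. Thus $\rho_{\Lambda_0^{i-},X}$ is an isogeny iff $(\Lambda_0^{i-})_k \subset M^{[i]}$, and $\rho_{X,\Lambda_1^{j+}}$ is an isogeny iff $M^{[j+1]} \subset (\Lambda_1^{j+})_k$. Reading these inclusions componentwise using the definitions of $\Lambda_0^{i\pm}$ and $\Lambda_1^{j\pm}$, and applying the identity $\pi L^{\vee} = \mathcal F(L^{\dagger})$ to pass between $\vee$ and $\dagger$, the first condition becomes
\begin{equation*}
\pi(\Lambda_0^i)_k^{\vee} \subset \pi B_i^{\vee} \quad \text{and} \quad A_i \subset (\Lambda_0^i)_k,
\end{equation*}
while the second becomes
\begin{equation*}
\pi^2(\Lambda_1^j)_k^{\vee} \subset \pi B_{j+1} \quad \text{and} \quad A_{j+1} \subset (\Lambda_1^j)_k.
\end{equation*}

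\textbf{Step 2 (Propagation).} When $i_1 \in I\setminus\{0\}$, Lemma \ref{OneIsogenyTheOtherToo}(1) produces for each $1 \leq i' \leq i_1$ an isogeny $X^{[i']} \to X_{\Lambda_0^{i_1+}}$ factoring $\rho_{X,\Lambda_0^{i_1+}}$ through $\widetilde{\alpha}_{i',i_1}$. Translating to Dieudonné modules, this yields $\pi(\Lambda_0^{i_1})^{\vee}_k \subset \pi B^{\vee}_{i'}$ and $A_{i'} \subset (\Lambda_0^{i_1})_k$ for all such $i'$. Combined with the chains $\pi A_{i'}^{\vee} \overset{1}{\subset} B_{i'}$, $\pi B_{i'}^{\vee} \overset{1}{\subset} A_{i'}$ forced by Proposition \ref{PointsRZSpaceOverFields}, this reconstructs precisely the first diagram of Definition \ref{DefinitionBTStrata}. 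The symmetric argument with Lemma \ref{OneIsogenyTheOtherToo}(2) for $i_s \in I\setminus\{m\}$ gives the second diagram. For consecutive elements $i_j < i_{j+1}$ in $I$, applying both parts of Lemma \ref{OneIsogenyTheOtherToo} simultaneously to the indices $i_j < i \leq i_{j+1}$ produces the two intermediate diagrams; the compatibility condition $\pi\Lambda_1^{i_j\vee} \subset \Lambda_0^{i_{j+1}}$ built into the notion of Bruhat-Tits index ensures the diagrams glue coherently.

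\textbf{Step 3 (Converse).} Conversely, if a $k$-point $(A_m \subset \ldots \subset B_m)$ satisfies the full system of inclusions in Definition \ref{DefinitionBTStrata}, then restricting to the indices $i = i_t$ (resp.\ $j = i_t$) gives in particular the inclusions $A_{i_t} \subset (\Lambda_0^{i_t})_k$ with $\pi(\Lambda_0^{i_t})_k^{\vee} \subset \pi B_{i_t}^{\vee}$ (resp.\ the analogous pair for $\Lambda_1^{i_t}$), which by the dictionary of Step 1 are exactly the conditions that $\rho_{\Lambda_0^{i_t-},X}$ (resp.\ $\rho_{X,\Lambda_1^{i_t+}}$) be an isogeny. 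Hence $X$ lies in $\mathcal N_{I,\mathbf{\Lambda}}^{\mathbbm h}(k)$ in the scheme-theoretic sense.

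\textbf{Main obstacle.} The only nontrivial piece is the careful bookkeeping in Step 1: one has to keep track of how the isometry $\alpha_{h_i,h_1}$, the grading $\mathbb N = \mathbb N_0 \oplus \mathbb N_1$, and the relation between the dual $\dagger$ (for $\langle\cdot,\cdot\rangle_{[h_i]}$) and the dual $\vee$ (for $\{\cdot,\cdot\}_{[h_1]}$) interact, in order to rewrite the naive Dieudonné inclusion as a pair of conditions purely on $A_i, B_i$ and $\Lambda_0^i, \Lambda_1^j$. Once this dictionary is established as in Step 1, Steps 2 and 3 reduce to a diagram chase using Lemma \ref{OneIsogenyTheOtherToo} and the chain of indices $\pi A_i^{\vee} \overset{1}{\subset} B_i$, $\pi B_i^{\vee} \overset{1}{\subset} A_i$.
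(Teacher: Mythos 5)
Your overall strategy — translate both sides into Dieudonné theory via Proposition~\ref{PointsRZSpaceOverFields}, use Theorem~\ref{RationalPointsMaximalParahoric} to recognize the isogeny condition as an inclusion of lattices, decompose into $\mathbb Z/2\mathbb Z$-graded pieces, and transfer between $\dagger$ and $\vee$ via $\pi L^{\vee} = \mathcal F(L^{\dagger})$ — is exactly the approach of the paper, so the core idea is right.

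Two points deserve flagging. First, the conditions you quote in Step~1 are not literally what the graded decomposition produces: for the first case, reading $(\Lambda_0^{i-})_k \subset M_i$ component-by-component gives $\pi(\Lambda_0^i)_k^{\vee} \subset A_i$ (degree~0) and $\mathcal V((\Lambda_0^i)_k^{\vee}) \subset B_i^{\dagger}$ (degree~1), the latter simplifying to $B_i \subset (\Lambda_0^i)_k$ via $B_i^{\dagger} = \mathcal V(B_i^{\vee})$ and $\tau$-invariance. Your pair \{$\pi(\Lambda_0^i)_k^{\vee} \subset \pi B_i^{\vee}$, $A_i \subset (\Lambda_0^i)_k$\} happens to be equivalent to this (the first is the $\tau$-invariant dual of $B_i \subset (\Lambda_0^i)_k$, which in turn subsumes the degree-$0$ condition and your second condition), but presenting it as the direct output of the componentwise reading is imprecise and risks confusion; the paper explicitly singles out $B_i \subset (\Lambda_0^i)_k$ as the net content of both components. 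Second, Step~2 is unnecessary: Lemma~\ref{OneIsogenyTheOtherToo} is not needed here. Once you know $B_i \subset (\Lambda_0^i)_k$ for $i\in I\setminus\{0\}$ and $A_{j+1} \subset (\Lambda_1^j)_k$ for $j\in I\setminus\{m\}$, all the intermediate inclusions appearing in the diagrams of Definition~\ref{DefinitionBTStrata} follow automatically from the chain structure guaranteed by Proposition~\ref{PointsRZSpaceOverFields} (e.g.\ $B_{i'} \subset B_{i_1} \subset (\Lambda_0^{i_1})_k$ for $i' < i_1$), so there is nothing to propagate and the paper proceeds directly without invoking that lemma. Your Step~3 and the factorization through Theorem~\ref{RationalPointsMaximalParahoric} are fine.
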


\begin{proof}
Let $(A_m \subset \ldots \subset B_m) \in \mathcal N_{E/F}^{\mathbbm h}(k)$ be a point. For all $1\leq i \leq m$, let $M_i := \alpha_{h_i,h_1}^{-1}(A_1 \oplus B_1^{\dagger})$ be the lattice of $\mathbb N_{k}$ corresponding to the point $( \alpha_{h_i,h_1}^{-1}(A_i) \overset{h_i}{\subset} \alpha_{h_i,h_1}^{-1}(B_i)) \in \mathcal N_{E/F}^{h_i}(k)$ as in Theorem \ref{RationalPointsMaximalParahoric}, where $\cdot^{\dagger}$ denotes the dual with respect to $\langle\cdot,\cdot\rangle_{[h_1]}$. By construction, for $i\in I\setminus\{0\}$ and for $j \in I\setminus\{m\}$, we have 
\begin{align*}
\rho_{\Lambda_0^{i-},X} \text{ is an isogeny} & \iff (\Lambda_0^{i-})_k \subset M_i,\\
\rho_{X,\Lambda_1^{j+}} \text{ is an isogeny} & \iff M_{j+1} \subset (\Lambda_1^{j+})_k.
\end{align*}
The condition $(\Lambda_0^{i-})_k \subset M_i$ is equivalent to $\pi(\Lambda_0^{i})_k^{\vee} \subset A_i$ and $\mathcal V\left((\Lambda_0^i)_k^{\vee}\right) \subset B_i^{\dagger}$. Observe that $B_i^{\dagger} = \mathcal V(B_i^{\vee})$. Since $(\Lambda_0^i)_k$ is $\tau$-invariant, the second inclusion is equivalent to $B_i \subset (\Lambda_0^i)_k$. On the other hand, since $\pi B_i^{\vee} \subset A_i$, the first inclusion is a consequence of the second.\\
Likewise, the condition $M_{j+1} \subset (\Lambda_1^{j+})_k$ is equivalent to $A_{j+1} \subset (\Lambda_1^{j})_k$ and $B_{j+1}^{\dagger} \subset \mathcal V^{-1}\left((\Lambda_1^{j})_k\right)$. The second inclusion is equivalent to $B_{j+1}^{\vee} \subset \mathcal V^{-2}\left((\Lambda_1^{j})_k\right)$. Since $\mathcal V^{-2} = \pi^{-1}\tau$ and since $(\Lambda_1^{j})_k$ is $\tau$-invariant, it is equivalent to $\pi B_{j+1}^{\vee} \subset (\Lambda_1^{j})_k$. Now, $\pi B_{j+1}^{\vee} \subset A_{j+1}$, so that the second inclusion is actually a consequence of the first. To sum up, we have proved that
\begin{align*}
\rho_{\Lambda_0^{i-},X} \text{ is an isogeny} & \iff B_i \subset (\Lambda_0^i)_k,\\
\rho_{X,\Lambda_1^{j+}} \text{ is an isogeny} & \iff A_{j+1} \subset (\Lambda_1^j)_k.
\end{align*}
All together, these conditions are equivalent to requiring that $(A_m \subset \ldots \subset B_m)$ belongs to $\mathcal N_{I,\mathbf{\Lambda}}^{\mathbbm h}(k)$ as defined in Definition \ref{DefinitionBTStrata}.
\end{proof}

\subsection{Deligne-Lusztig varieties}
\subsubsection{Coarse, parabolic and fine Deligne-Lusztig varieties}  \label{GeneralitiesDLVarieties}

In this section, we recall some generalities on Deligne-Lusztig varieties. Our references are \cite{dl}, \cite{dm} and \cite{dmbook}. The notations here are independent on the rest of the paper. Let $p$ be a prime number and let $q$ be a power of $p$. Let $\overline{\mathbb F_q}$ denote an algebraic closure of $\mathbb F_q$, and let $\mathbf G$ be a connected reductive group over $\overline{\mathbb F_q}$. Let $F: \mathbf G \to \mathbf G$ be a Frobenius morphism inducing an $\mathbb F_q$-rational structure on $\mathbf G$. Given any $F$-stable subgroup $\mathbf H \subset \mathbf G$, we write $H := \mathbf H^{F}$ for the subgroup of elements fixed by $F$. Let $(\mathbf T,\mathbf B)$ be a pair consisting of an $F$-stable maximal torus $\mathbf T$, contained in an $F$-stable Borel subgroup $\mathbf B$. Such a pair is unique up to $G$-conjugation. It induces a Coxeter system $(\mathbf W,\mathbf S)$ where $\mathbf W$ is the Weyl group attached to $\mathbf T$, and where $\mathbf S$ is the set of simple reflections determined by $(\mathbf T,\mathbf B)$. The Frobenius $F$ induces an action on $\mathbf W$ which preserves $\mathbf S$. Let $\ell$ denote the length function on $\mathbf W$ with respect to $\mathbf S$. For a subset $I \subset \mathbf S$, we write $\mathbf P_I,\mathbf U_I,\mathbf L_I$ respectively for the standard parabolic subgroup of type $I$, for its unipotent radical and for its unique Levi complement containing $\mathbf T$. We also denote by $\mathbf W_I$ the parabolic subgroup of $\mathbf W$ generated by $I$. We write $\ell(\mathbf W_I)$ for the maximal length of all the elements of $\mathbf W_I$. We write ${}^I\mathbf W$ (resp. $\mathbf W^{I}$) for the set of elements $w\in \mathbf W$ which are $I$-reduced (resp. reduced-$I$), ie. satisfying the relation $\ell(vw) = \ell(v) + \ell(w)$ (resp. $\ell(wv) = \ell(w) + \ell(v)$) for all $v \in \mathbf W_I$. Given two subsets $I,I' \subset \mathbf S$, we also write ${}^I\mathbf W^{I'} := {}^I\mathbf W \cap \mathbf W^{I'}$. 

\begin{defi}\label{CoarseDLVariety}
Let $\mathbf P \subset \mathbf G$ be a parabolic subgroup. The associated \textit{coarse Deligne-Lusztig variety} is defined by 
\begin{equation*}
X_{\mathbf P} := \{g\mathbf P \in \mathbf G/\mathbf P \,|\, g^{-1}F(g) \in \mathbf PF(\mathbf P)\}. 
\end{equation*} 
\end{defi}

The variety $X_{\mathbf P}$ is defined over $\mathbf F_{q^{\delta}}$, where $\delta$ is the smallest positive integer such that $F^{\delta}(\mathbf P) = \mathbf P$. It is equipped with a left action of $G$. Given two subsets $I,I' \subset \mathbf S$, recall the generalized Bruhat decomposition 
\begin{equation*}
\mathbf P_I \backslash \mathbf G / \mathbf P_{I'} = \bigsqcup_{w \in {}^I\mathbf W^{I'}} \mathbf P_I \backslash \mathbf P_I w \mathbf P_{I'} / \mathbf P_{I'} \simeq \mathbf W_I \backslash \mathbf W / \mathbf W_{I'}. 
\end{equation*}
This can be used to give an alternative parametrization of Deligne-Lusztig varieties. Namely, given $w \in {}^I\mathbf W^{F(I)}$, we define 
\begin{equation*}
X_I(w) := \{ g\mathbf P_I \in \mathbf G / \mathbf P_I \,|\, g^{-1}F(g) \in \mathbf P_I w \mathbf P_{F(I)}\}.
\end{equation*}
To go from one description to the other, let us fix a parabolic subgroup $\mathbf P \subset \mathbf G$. Let $I \subset \mathbf G$ be the unique subset such that $\mathbf P$ is conjugate to $\mathbf P_I$. Let $h \in \mathbf G$ such that $\mathbf P = {}^{h}\mathbf P_I$. By the Bruhat decomposition, there exists a unique $w\in {}^I\mathbf W^{F(I)}$ such that $h^{-1}F(h) \in P_IwP_{F(I)}$. Then the map $g\mathbf P \mapsto gh\mathbf P_I$ defines a $G$-equivariant isomorphism $X_{\mathbf P} \xrightarrow{\sim} X_{I}(w)$.

\begin{defi}
A (parabolic) \textit{Deligne-Lusztig variety} is a coarse Deligne-Lusztig variety $X_{\mathbf P}$ such that the parabolic subgroup $\mathbf P$ contains an $\mathbb F_q$-rational Levi complement $\mathbf L \subset \mathbf P$, ie. satisfying $F(\mathbf L) = \mathbf L$.
\end{defi}

If the Deligne-Lusztig variety is written as $X_{\mathbf P} \simeq X_I(w)$, the condition that $\mathbf P$ contains a rational Levi complement is equivalent to the equation 
\begin{equation}\label{RationalLeviCondition}
I = wF(I)w^{-1}.
\end{equation}

\begin{ex}
If $I = \emptyset$, then the condition \eqref{RationalLeviCondition} is always satisfied for any $w\in \mathbf W$. In this case, we call $X(w) := X_{\emptyset}(w)$ a \textit{classical Deligne-Lusztig variety}. These are the varieties originally introduced in \cite{dl}. Explicitly, we have 
\begin{equation*}
X(w) = \{g\mathbf B \in \mathbf G/\mathbf B \,|\, g^{-1}F(g) \in \mathbf B w \mathbf B\}.
\end{equation*}
\end{ex}

According to \cite{hoeve} Lemma 2.1.3 and \cite{bonnafe}, we have the following. 

\begin{prop}\label{DimDLVar}
Let $I \subset \mathbf S$ and let $w \in {}^I\mathbf W^{F(I)}$. The coarse Deligne-Lusztig variety $X_I(w)$ is smooth and purely of dimension 
\begin{equation*}
\dim(X_I(w)) = \ell(w) + \ell(\mathbf W_{F(I)}) - \ell(\mathbf W_{I \cap {}^wF(I)}).
\end{equation*}
The variety $X_I(w)$ is reducible if and only if $W_I w$ is contained in a parabolic subgroup $W_J$ for some proper subset $J \subsetneq \mathbf S$ with $F(J) = J$.
\end{prop}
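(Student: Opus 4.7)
The plan is to realize $X_I(w)$ as the quotient by a free right $\mathbf P_I$-action of the Lang preimage $\widetilde X_I(w) := L^{-1}(\mathbf P_I \dot w \mathbf P_{F(I)})$, where $L:\mathbf G\to\mathbf G$ is the Lang map $g\mapsto g^{-1}F(g)$, and to deduce the three assertions from standard properties of this presentation. The Lang map is smooth and surjective (even \'etale, as it is a right $\mathbf G^F$-torsor). Because $L(gp) = p^{-1}L(g)F(p)$ and the double cell $\mathbf P_I\dot w\mathbf P_{F(I)}$ is left $\mathbf P_I$- and right $F(\mathbf P_I) = \mathbf P_{F(I)}$-stable, the projection $\mathbf G\twoheadrightarrow \mathbf G/\mathbf P_I$ restricts to a principal $\mathbf P_I$-bundle $\widetilde X_I(w)\twoheadrightarrow X_I(w)$. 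This immediately yields the smoothness of $X_I(w)$ and the identity $\dim X_I(w) = \dim(\mathbf P_I\dot w\mathbf P_{F(I)}) - \dim \mathbf P_I$.

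For the dimension formula, I would compute $\dim(\mathbf P_I\dot w\mathbf P_{F(I)})$ by decomposing it into $\mathbf B$-double cosets,
$$\mathbf P_I\dot w\mathbf P_{F(I)} = \bigsqcup_{u\in \mathbf W_I w\mathbf W_{F(I)}} \mathbf B\dot u\mathbf B,$$
and using the classical identity $\dim(\mathbf B\dot u\mathbf B) = \dim \mathbf B + \ell(u)$. The hypothesis $w\in {}^I\mathbf W^{F(I)}$ ensures that the longest element of the double coset $\mathbf W_I w\mathbf W_{F(I)}$ has length $\ell(w) + \ell(\mathbf W_I) + \ell(\mathbf W_{F(I)}) - \ell(\mathbf W_{I\cap wF(I)w^{-1}})$, which is a standard combinatorial identity for double cosets in Coxeter groups. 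Combined with $\dim \mathbf P_I = \dim \mathbf B + \ell(\mathbf W_I)$, subtracting gives the asserted formula.

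For the irreducibility criterion, since $X_I(w)$ is smooth and equidimensional, irreducibility coincides with connectedness. The idea is to lift to $\mathbf G/\mathbf B$ via the projection $\pi:\mathbf G/\mathbf B\to \mathbf G/\mathbf P_I$: letting $v$ denote the length-maximal representative of the double coset $\mathbf W_I w\mathbf W_{F(I)}$, the restriction of $\pi$ gives a surjection $X(v)\to X_I(w)$ with geometrically connected (indeed rational) fibers, which reduces the question to the irreducibility of the classical Deligne-Lusztig variety $X(v)$. By the original theorem of Deligne-Lusztig, this holds if and only if $v$ is not contained in any proper $F$-stable standard parabolic $\mathbf W_J$. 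The main obstacle I anticipate is the final translation: one must show that $v\in \mathbf W_J$ for some proper $F$-stable $J$ is equivalent to $\mathbf W_I w \subset \mathbf W_J$ as in the statement. This should follow from the observation that, for $v$ maximal in its $(\mathbf W_I,\mathbf W_{F(I)})$-double coset, the inclusion $v\in \mathbf W_J$ forces both $I \subset J$ and $F(I)\subset J$ (so that multiplying $v$ by long elements of $\mathbf W_I$ and $\mathbf W_{F(I)}$ stays inside $\mathbf W_J$), and hence the entire double coset, and in particular $\mathbf W_I w$, lies in $\mathbf W_J$.
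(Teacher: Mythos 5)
The paper does not actually prove this proposition; it is stated as a recollection with citations to \cite{hoeve} Lemma 2.1.3 and \cite{bonnafe}. Your proposal therefore supplies an argument where the paper offers none. The torsor picture is correct: $\widetilde X_I(w):=L^{-1}(\mathbf P_I\dot w\mathbf P_{F(I)})$ is smooth of dimension $\dim(\mathbf P_I\dot w\mathbf P_{F(I)})$ because $L$ is finite \'etale, the projection to $X_I(w)$ is a Zariski-locally trivial $\mathbf P_I$-bundle (by the $\mathbf P_I\times\mathbf P_{F(I)}$-stability of the double cell and the equivariance $L(gp)=p^{-1}L(g)F(p)$), and the cell decomposition with the standard length formula for the longest element of a double coset then gives exactly the stated dimension. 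One small slip: $L$ is a \emph{left} $\mathbf G^F$-torsor, since $L(hg)=L(g)$ for $h\in\mathbf G^F$, not a right one.

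The irreducibility step is where you are underselling the difficulty. You invoke, without argument, that $\pi|_{X(v)}:X(v)\to X_I(w)$ is surjective with geometrically connected rational fibers; both facts hold (the fibers are Deligne--Lusztig varieties of maximal length for the Levi quotient $\mathbf L_I$, hence nonempty and rational), but neither is immediate, and even granting them, deducing connectedness of $X(v)$ from connectedness of $X_I(w)$ needs more than a surjection with connected fibers — one must also know that $\pi|_{X(v)}$ is open or smooth. A cleaner route uses only what you have already set up: $L^{-1}(\mathbf B\dot v\mathbf B)$ is open in the smooth equidimensional $\widetilde X_I(w)$, and its complement $L^{-1}(\mathbf P_I\dot w\mathbf P_{F(I)}\setminus\mathbf B\dot v\mathbf B)$ has strictly smaller dimension (as $\mathbf B\dot v\mathbf B$ is the dense cell and $L$ is finite \'etale), so it meets every connected component of $\widetilde X_I(w)$ in a dense open; since $L^{-1}(\mathbf B\dot v\mathbf B)\to X(v)$ is a $\mathbf B$-torsor and $\widetilde X_I(w)\to X_I(w)$ is a $\mathbf P_I$-torsor with $\mathbf B,\mathbf P_I$ connected, all four spaces have the same number of connected components, and smoothness plus equidimensionality converts this into the equivalence of irreducibility for $X(v)$ and $X_I(w)$. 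Your final translation between the condition $v\in\mathbf W_J$ and $\mathbf W_Iw\subset\mathbf W_J$ for proper $F$-stable $J$ is correct: $v\in\mathbf W_J$ forces $\mathbf W_I\subset\mathbf W_J$ since the longest element of $\mathbf W_I$ is a prefix of a reduced word for $v$ and $\mathbf W_J$ is closed under subwords, whence $I\subset J$, $F(I)\subset F(J)=J$, and then the whole double coset lies in $\mathbf W_J$; the converse is immediate. (The attribution of the connectedness criterion for $X(v)$ to the original Deligne--Lusztig paper is not quite right — that paper treats Coxeter elements — but that is only a bibliographic quibble.)
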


In particular, if $X_I(w)$ is a parabolic Deligne-Lusztig variety, then $\dim(X_I(w)) = \ell(w)$. We recall the following definition from \cite{he}. 

\begin{defi}
Let $I \subset \mathbf S$ and $w \in {}^I\mathbf W$. The associated \textit{fine Deligne-Lusztig variety} is defined by 
\begin{equation*}
X_{I}\{w\} := \{g\mathbf P_I \in \mathbf G/\mathbf P_I \,|\, g^{-1}F(g) \in \mathbf P_I \cdot_{F} \mathbf B w \mathbf B\},
\end{equation*}
where $\cdot_{F}$ denotes the $F$-twisted conjugation, ie. $x\cdot_F y := xyF(x)^{-1}$ for all $x,y \in \mathbf G$.
\end{defi}

In other words, $X_{I}\{w\}$ is the image of the classical Deligne-Lusztig variety $X(w)$ under the natural map $\mathbf G / \mathbf B \to \mathbf G / \mathbf P_I$. 

\begin{ex}\label{FineParabolicDLVariety}
Assume that $w \in {}^I\mathbf W$ and that $I = wF(I)w^{-1}$. Then $w$ is reduced-$F(I)$ and we have $X_{I}\{w\} = X_I(w)$. This follows from \cite{he} Section 3. 
\end{ex}

The fine Deligne-Lusztig varieties define a stratification of the partial flag variety 
\begin{equation*}
\mathbf G/\mathbf P_I = \bigsqcup_{w \in {}^I\mathbf W} X_I\{w\},
\end{equation*}
and the closure of a stratum can be described via the following partial order on ${}^I\mathbf W$. For $w,w' \in {}^I\mathbf W$, we write $w' \leq_{I,F} w$ if and only if $uw'F(u)^{-1} \leq w$ for some $u \in \mathbf W_I$, where $\leq$ denotes the usual Bruhat order on $\mathbf W$. The following statement is \cite{he} Theorem 3.1.

\begin{theo}\label{ClosureFineDLVariety}
For $I \subset \mathbf S$ and ${}^I \mathbf W$, we have 
\begin{equation*}
\overline{X_{I}\{w\}} = \bigsqcup_{\substack{w' \in {}^I\mathbf W\\ w' \leq_{I,F} w}} X_{I}\{w'\},
\end{equation*}
where the closure is taken in the partial flag variety $\mathbf G/\mathbf P_I$.
\end{theo}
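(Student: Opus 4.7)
The proof I would propose proceeds via the projection $\pi:\mathbf G/\mathbf B \to \mathbf G/\mathbf P_I$, which is projective (its fibres are isomorphic to the projective variety $\mathbf P_I/\mathbf B$), hence proper and in particular closed. By the very definition of $X_I\{w\}$ as the image of $X(w)$ under $\pi$, we have $X_I\{w\} = \pi(X(w))$. Applying $\pi$ to the classical Deligne--Lusztig closure relation $\overline{X(w)} = \bigsqcup_{w' \leq w} X(w')$ from \cite{dl}, and using that $\pi$ commutes with closure, yields
\begin{equation*}
\overline{X_I\{w\}} \;=\; \pi(\overline{X(w)}) \;=\; \bigcup_{w' \leq w} \pi(X(w')).
\end{equation*}

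The heart of the proof is then a combinatorial lemma: for each $w' \in \mathbf W$, the image $\pi(X(w'))$ lies in a single stratum $X_I\{v\}$ with $v \in {}^I\mathbf W$, and in fact coincides with it. The rule assigning $v$ to $w'$ is governed by the $F$-twisted action of $\mathbf W_I$ on $\mathbf W$ defined by $u\cdot_F w' := uw'F(u)^{-1}$: namely $\pi(X(w')) = X_I\{v\}$ precisely when $w'$ lies in the $F$-twisted $\mathbf W_I$-orbit of $v$, or equivalently when $\mathbf Bw'\mathbf B$ lies in the $F$-twisted $\mathbf P_I$-orbit of $\mathbf Bv\mathbf B$. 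To establish this I would decompose $\mathbf P_I = \bigsqcup_{u \in \mathbf W_I}\mathbf Bu\mathbf B$ and combine the $\mathbf B$-biinvariance of Bruhat cells with the standard refinement relations $\mathbf Bs\mathbf B\cdot\mathbf Bx\mathbf B \subseteq \mathbf Bsx\mathbf B \cup \mathbf Bx\mathbf B$ to describe $\mathbf P_I\cdot_F\mathbf Bv\mathbf B$ as a union of Bruhat cells, matching them to the $F$-twisted orbit of $v$.

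With this lemma in hand, the condition that $v \in {}^I\mathbf W$ appears as $\pi(X(w'))$ for some $w'\leq w$ translates to the existence of an element of the $F$-twisted $\mathbf W_I$-orbit of $v$ which is $\leq w$ in the Bruhat order, which is exactly the defining condition $v \leq_{I,F} w$. Disjointness of the resulting union then follows from the partition $\mathbf G/\mathbf P_I = \bigsqcup_{v\in{}^I\mathbf W}X_I\{v\}$, itself a consequence of the generalised Bruhat decomposition recalled in Section \ref{GeneralitiesDLVarieties}. The main obstacle I anticipate is the combinatorial lemma: controlling precisely which Bruhat cells appear in the $F$-twisted $\mathbf P_I$-orbit of $\mathbf Bv\mathbf B$ requires delicate bookkeeping with reduced expressions and with the interaction of $F$ and the Weyl group combinatorics, and is essentially the technical core of He's original argument in \cite{he}.
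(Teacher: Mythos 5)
The paper gives no proof of this statement: it is quoted verbatim as Theorem 3.1 of \cite{he}, so there is no internal argument to compare yours against. Judged on its own terms, your outline gets the set-up right (the projection $\pi:\mathbf G/\mathbf B\to\mathbf G/\mathbf P_I$ is proper, $X_I\{w\}=\pi(X(w))$, hence $\overline{X_I\{w\}}=\pi(\overline{X(w)})=\bigcup_{x\le w}\pi(X(x))$ using the classical Deligne--Lusztig closure), but the combinatorial lemma you propose to close the argument is false, and this is not a small cosmetic issue.

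Your lemma asserts that for every $x\in\mathbf W$ the image $\pi(X(x))$ coincides with a single stratum $X_I\{v\}$, and that the assignment $x\mapsto v$ is governed by the $F$-twisted $\mathbf W_I$-action $u\cdot_F x=uxF(u)^{-1}$: namely $\pi(X(x))=X_I\{v\}$ precisely when $x$ lies in the $F$-twisted $\mathbf W_I$-orbit of $v$. Both halves fail already for $\mathbf G=\mathrm{GL}_3$ split (so $F$ acts trivially on $\mathbf W=S_3$) with $I=\{s_1\}$, where $\mathbf G/\mathbf P_I$ is the Grassmannian of $2$-planes and ${}^I\mathbf W=\{1,\,s_2,\,s_2s_1\}$.
\begin{itemize}
\item Take $x=s_1$. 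Geometrically, $\pi(X(s_1))$ consists of the $2$-planes $V$ admitting a complete flag $(L\subset V)$ with $V=F(V)$ and $L\ne F(L)$; since every rational $2$-plane over $\overline{\mathbb F_q}$ contains a non-rational line, $\pi(X(s_1))=X_I\{1\}$. But the $F$-twisted $\mathbf W_I$-orbit of $1$ is $\{uF(u)^{-1}:u\in\mathbf W_I\}=\{1\}$, so $s_1$ does not lie in it. Your ``if and only if'' is therefore false, and moreover the orbit $\{s_1\}$ contains no element of ${}^I\mathbf W$ at all, so the proposed rule does not even produce a $v$ to compare against.
\item Take $x=w_0=s_1s_2s_1$. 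A $2$-plane $V$ lies in $\pi(X(w_0))$ iff there is a line $L\subset V$ with $L\ne F(L)$, $L\not\subset F(V)$, $F(L)\not\subset V$, $V\ne F(V)$; given any $V$ with $V\ne F(V)$ one just avoids the two lines $V\cap F(V)$ and $F^{-1}(V\cap F(V))$, so $\pi(X(w_0))=\{V:V\ne F(V)\}=X_I\{s_2\}\sqcup X_I\{s_2s_1\}$. Thus $\pi(X(x))$ need not be a single stratum.
\end{itemize}
The second example also shows that your ``equivalently when $\mathbf Bw'\mathbf B$ lies in the $F$-twisted $\mathbf P_I$-orbit of $\mathbf Bv\mathbf B$'' reformulation is not what happens: $\mathbf Bw_0\mathbf B$ meets two distinct pieces $\mathbf P_I\cdot_F\mathbf Bv\mathbf B$.

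The actual combinatorics is governed not by the naive $F$-twisted $\mathbf W_I$-orbit but by the Bédard recursion $(I_n,w_n)_{n\ge 0}$ and its limit $(I_\infty,w_\infty)$, already recalled in the paper: it is $w_\infty$, not an orbit representative, that identifies the stratum, and establishing $\overline{X_I\{w\}}=\bigsqcup_{w'\le_{I,F}w}X_I\{w'\}$ requires He's machinery of minimal-length elements in double cosets (showing in particular that $\mathbf P_I\cdot_F\mathbf BxB=\mathbf P_I\cdot_F\mathbf B(sxF(s)^{-1})\mathbf B$ under suitable length inequalities for $s\in I$) rather than a one-shot orbit count. You explicitly flag that the lemma is ``essentially the technical core of He's original argument,'' which is honest, but the lemma as you state it is not a correct formulation of that core, so the proposal as written has a genuine gap.
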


Given a subset $I \subset \mathbf S$, we define $T(I)$ as the set of all sequences $(I_n,w_n)_{n\geq 0}$ satisfying $I_0 = I$ and
\begin{align*}
\forall n\geq 0, w_{n} \in {}^{I_{n}}\mathbf W^{F(I_{n})}, & & I_{n+1} := I_n \cap {}^{w_n} F(I_n), & & w_{n+1} \in \mathbf W_{I_{n+1}}w_n\mathbf W_{F(I_n)}.
\end{align*}
According to \cite{bedard} Proposition I.9, any sequence $(I_n,w_n)_{n \geq 0}$ stabilizes for $n$ large enough to some pair $(I_{\infty},w_{\infty})$. In particular, we have 
\begin{equation*}
I_{\infty} = {}^{w_{\infty}}F(I_{\infty}) \text{ and } w_{\infty} \in {}^{I_{\infty}}\mathbf W ^{F(I_{\infty})}.
\end{equation*}
Moreover, we have $w_{\infty} \in {}^I \mathbf W$ and the mapping $(I_n,w_n)_{n \geq 0} \mapsto w_{\infty}$ defines a bijection $T(I) \xrightarrow{\sim} {}^I\mathbf W$. The following statement is \cite{bedard} Proposition 12. 

\begin{prop}
Let $I \subset \mathbf S$ and $w \in {}^I\mathbf W$. Let $(I_n,w_n)_{n\geq 0} \in T(I)$ be the unique sequence such that $w_{\infty} = w$. 
\begin{enumerate}
\item For $n \geq 0$, the morphism 
\begin{equation*}
X_{I_{n+1}}\{w_{n+1}\} \to X_{I_{n}}\{w_{n+1}\},
\end{equation*}
induced by the natural projection $\mathbf G/\mathbf P_{I_{n+1}} \to \mathbf G/\mathbf P_{I_{n}}$, is an isomorphism.
\item We have 
\begin{equation*}
X_{I}\{w\} \xleftarrow{\sim} X_{I_{\infty}}\{w_{\infty}\} = X_{I_{\infty}}(w). 
\end{equation*}
\end{enumerate}
\end{prop}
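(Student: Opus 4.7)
The plan is to prove (1) first by analyzing the fibers of the projection $\mathbf G/\mathbf P_{I_{n+1}}\to \mathbf G/\mathbf P_{I_n}$, and then to deduce (2) by iterating (1) along the B\'edard sequence, together with the identification of the stable term with a parabolic Deligne-Lusztig variety provided by Example \ref{FineParabolicDLVariety}.

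For (1), the morphism $\pi_n: X_{I_{n+1}}\{w_{n+1}\}\to X_{I_n}\{w_{n+1}\}$ is well-defined because $\mathbf P_{I_{n+1}}\subset \mathbf P_{I_n}$ implies $\mathbf P_{I_{n+1}}\cdot_F \mathbf Bw_{n+1}\mathbf B\subset \mathbf P_{I_n}\cdot_F \mathbf Bw_{n+1}\mathbf B$, so the defining condition on $g^{-1}F(g)$ is stronger upstairs. For surjectivity on points, pick $g\mathbf P_{I_n}\in X_{I_n}\{w_{n+1}\}$ and decompose $g^{-1}F(g) = p\cdot_F y$ with $p\in \mathbf P_{I_n}$ and $y\in\mathbf Bw_{n+1}\mathbf B$. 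Replacing $g$ by $gp$ (which does not change its class modulo $\mathbf P_{I_n}$), one obtains $g^{-1}F(g) = y\in \mathbf Bw_{n+1}\mathbf B\subset \mathbf P_{I_{n+1}}\cdot_F \mathbf Bw_{n+1}\mathbf B$, so that $g\mathbf P_{I_{n+1}}$ is a preimage in $X_{I_{n+1}}\{w_{n+1}\}$.

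For injectivity, two preimages $g\mathbf P_{I_{n+1}}$ and $gq\mathbf P_{I_{n+1}}$ with $q\in \mathbf P_{I_n}$ translate the defining condition into the requirement that $q$ twisted-$F$-stabilizes the Bruhat cell $\mathbf Bw_{n+1}\mathbf B$ modulo $\mathbf P_{I_{n+1}}$. Proving that this stabilizer is contained in $\mathbf P_{I_{n+1}}$ is the main obstacle. The crucial combinatorial input is the identity
\begin{equation*}
I_n \cap w_{n+1} F(I_n) w_{n+1}^{-1} \;=\; I_n \cap w_n F(I_n) w_n^{-1} \;=\; I_{n+1},
\end{equation*}
which follows from $w_{n+1}\in\mathbf W_{I_{n+1}} w_n \mathbf W_{F(I_n)}$ together with the fact that $\mathbf W_{I_{n+1}}$ preserves $I_{n+1}$. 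Combining this with the length-additive decomposition $w_{n+1}=uw_nv$ (with $u\in\mathbf W_{I_{n+1}}$, $v\in\mathbf W_{F(I_n)}$) and the fine Bruhat decomposition of $\mathbf P_{I_n}$, one concludes that the stabilizer coincides with $\mathbf P_{I_{n+1}}$. The upgrade from bijection on points to a scheme-theoretic isomorphism follows from the smoothness of both sides (a consequence of Proposition \ref{DimDLVar}).

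For (2), iterate (1) from $n=0$ up to $n=N-1$ for $N$ large enough that the B\'edard sequence has stabilized, i.e.~$(I_N,w_N)=(I_\infty,w_\infty)$. At each step, the right-hand variety $X_{I_n}\{w_{n+1}\}$ is identified with $X_{I_n}\{w_n\}$ by a double-coset computation: using $w_{n+1}\in \mathbf W_{I_{n+1}}w_n\mathbf W_{F(I_n)}\subset \mathbf W_{I_n}w_n\mathbf W_{F(I_n)}$, length-additivity and the distinguished character of $w_n\in {}^{I_n}\mathbf W^{F(I_n)}$, one verifies $\mathbf P_{I_n}\cdot_F \mathbf Bw_{n+1}\mathbf B = \mathbf P_{I_n}\cdot_F \mathbf Bw_n\mathbf B$. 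Chaining the isomorphisms from (1) with these identifications yields $X_{I_\infty}\{w_\infty\}\simeq X_{I_0}\{w_0\}$, and the same type of argument applied to $w\in \mathbf W_I w_0 \mathbf W_{F(I)}$ gives $X_{I_0}\{w_0\}=X_I\{w\}$. The remaining equality $X_{I_\infty}\{w_\infty\}=X_{I_\infty}(w)$ is a direct instance of Example \ref{FineParabolicDLVariety}: stabilization forces $I_\infty = w_\infty F(I_\infty) w_\infty^{-1}$ together with $w_\infty\in {}^{I_\infty}\mathbf W^{F(I_\infty)}$, which is precisely the rational-Levi condition \eqref{RationalLeviCondition}.
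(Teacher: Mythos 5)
The paper does not prove this statement; it is quoted verbatim as \cite{bedard} Proposition 12 and used as a black box, so there is no ``paper proof'' to compare against. Assessed on its own terms, your sketch contains two genuine errors, one in each point.

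For Point 1, the claimed ``crucial combinatorial input''
\begin{equation*}
I_n \cap w_{n+1} F(I_n) w_{n+1}^{-1} = I_{n+1}
\end{equation*}
is false, and the offered justification --- that ``$\mathbf W_{I_{n+1}}$ preserves $I_{n+1}$'' --- is not a true statement about Coxeter groups: a standard parabolic $\mathbf W_J$ does not normalize $J$ as a set of reflections except in degenerate cases. Concretely, take $\mathbf W = S_5$ with the unitary Frobenius $F(s_i) = s_{5-i}$, $I_0 = \{s_1,s_2\}$, $F(I_0) = \{s_3,s_4\}$, and $w = w_\infty = s_3s_2s_4s_3 \in {}^{I_0}\mathbf W$. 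The B\'edard sequence is $w_0 = s_3s_2$, $I_1 = \{s_2\}$, $w_1 = s_3s_2s_4$, $I_2 = \emptyset$, $w_2 = w_\infty$. Then $w_1 F(I_0) w_1^{-1} = \{(2\;5),(3\;5)\}$ contains no simple reflection, so $I_0 \cap w_1 F(I_0) w_1^{-1} = \emptyset$, while $I_1 = \{s_2\}$. Since the stabilizer computation you outline rests on this identity, the injectivity part of Point 1 is not established.

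For Point 2, the assertion $\mathbf P_{I_n}\cdot_F \mathbf Bw_{n+1}\mathbf B = \mathbf P_{I_n}\cdot_F \mathbf Bw_n\mathbf B$, hence $X_{I_n}\{w_{n+1}\} = X_{I_n}\{w_n\}$, contradicts He's stratification theorem (the paper's equation displaying $\mathbf G/\mathbf P_I$ as a disjoint union over $w\in{}^I\mathbf W$): one checks that $w_{n+1}$ is always $I_n$-reduced (it is a left divisor of the $I_0$-reduced element $w_\infty$), so when $w_n \neq w_{n+1}$ the two varieties $X_{I_n}\{w_n\}$ and $X_{I_n}\{w_{n+1}\}$ are \emph{disjoint} nonempty strata, not equal. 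In the example above with $n=0$, $X_{\{s_1,s_2\}}\{s_3s_2\}$ and $X_{\{s_1,s_2\}}\{s_3s_2s_4\}$ are distinct strata. More fundamentally, the chaining you propose cannot work literally as stated: Point 1 provides isomorphisms $X_{I_{n+1}}\{w_{n+1}\}\to X_{I_n}\{w_{n+1}\}$ in which the Weyl group element changes with $n$, and these do not line up to give $X_{I_\infty}\{w_\infty\}\to X_{I_0}\{w_\infty\}$ without a further argument relating the projection of $X(w_\infty)$ at each level to the variety appearing in Point 1. That comparison is precisely what B\'edard's and He's actual proofs supply, and it is not a formal consequence of what you have written. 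Your identification of the stable term via Example \ref{FineParabolicDLVariety} is correct, but the rest needs the genuine combinatorial input from \cite{bedard} and \cite{he}.
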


Point 2. is just a repeated iteration of the isomorphism of Point 1, combined with the statement of Example \ref{FineParabolicDLVariety}. In other words, fine Deligne-Lusztig varieties are just parabolic Deligne-Lusztig varieties associated to a smaller parameter $I$. We point out, as a consequence, that a fine Deligne-Lusztig variety $X_I\{w\}$ is smooth of pure dimension $\ell(w)$. 

\subsubsection{Example: the general linear group}

Let $V$ be a finite dimensional vector space over $\mathbb F_q$. Let $d:= \dim(V)$ and let $\mathbf G = \mathrm{GL}(V_{\overline{\mathbb F_q}})$ equipped with the standard Frobenius morphism $F:f \mapsto \Phi\circ f \circ \Phi^{-1}$, where $\Phi := \mathrm{id}\otimes \sigma$ is the operator on $V_{\overline{\mathbb F_q}} = V \otimes_{\mathbb F_q} \overline{\mathbb F_q}$ acting via $\sigma:x\mapsto x^q$ on the scalars. We have $G := \mathbf G^F = \mathrm{GL}(V)$. Fix a complete flag 
\begin{equation*}
\mathcal F: \{0\} = \mathcal F_0 \subset \mathcal F_1 \subset \ldots \subset \mathcal F_{d-1} \subset \mathcal F_{d} = V,
\end{equation*} 
where $\dim(\mathcal F_i) = i$ for all $0 \leq i \leq d$. The flag $\mathcal F$ determines an $F$-stable Borel subgroup $\mathbf B := \mathrm{Stab}(\mathcal F) \subset \mathbf G$ and an $F$-stable maximal torus $\mathbf T \subset \mathbf B$. Let $(\mathbf W, \mathbf S)$ be the associated Coxeter system, with $\mathbf W \simeq S_d$ and $\mathbf S \simeq \{s_1,\ldots, s_{d-1}\}$ where $s_i$ is the transposition permuting $i$ and $i+1$. The Frobenius acts trivially on $W$. Let $\mathbf d := (d_1,\ldots ,d_k)$ be a $k$-tuple of positive integers such that $d_1 + \ldots + d_k = d$, where $k\geq 1$. A partial flag of type $\mathbf d$ is a sequence 
\begin{equation*}
\mathcal G: \{0\} = \mathcal G_0 \subset \mathcal G_1 \subset \ldots \subset \mathcal G_{m-1} \subset \mathcal G_k = V,
\end{equation*} 
where $\dim(\mathcal G_{i}/\mathcal G_{i-1}) = d_i$ for all $1 \leq i \leq m$. A basis $e := (e_1,\ldots ,e_d)$ of $V$ is said to be adapted to the partial flag $\mathcal G$ of type $\mathbf d$ if, for all $1 \leq i \leq m$, the vectors $(e_1,\ldots , e_{d_1 + \ldots + d_i})$ form a basis of $\mathcal G_i$. Given two partial flags $\mathcal G$ and $\mathcal G'$ (which may be of different types), there exists a permutation $w\in \mathbf W \simeq S_d$ and a basis $(e_1,\ldots, e_d)$ which is adapted to $\mathcal G$, and such that $(e_{w(1)},\ldots,e_{w(d)})$ is adapted to $\mathcal G'$. We say that the flags $\mathcal G$ and $\mathcal G'$ are in relative position $w$. Given $I \subset \mathbf S$, write $\mathbf S\setminus I = \{s_{i_1},\ldots , s_{i_r}\}$ with $0 \leq r \leq d-1$ and $1 \leq i_1 < \ldots < i_r \leq d-1$. Define a tuple $\mathbf d_I := (d_1,\ldots,d_{r+1})$ where $d_1 := i_1, d_{r+1} = d-i_r$ and $d_j = i_j - i_{j-1}$ for all $2\leq j \leq r$. For $w\in {}^I\mathbf W^I$, the coarse Deligne-Lusztig variety $X_I(w)$ is defined over $\mathbb F_q$, and for any field extension $k/\mathbb F_q$, its $k$-rational points are given by 
\begin{equation*}
 \hspace*{-0.3cm} X_I(w)(k) = \{\text{partial flags } \mathcal G \text{ of type } \mathbf d_I \text{ in } V_k \,|\, \mathcal G \text{ and } \Phi(\mathcal G) \text{ are in relative position } w\},
\end{equation*}
where $V_k := V \otimes_{\mathbb F_q} k$. More generally, let $R$ be any $\mathbb F_q$-algebra. By a flag of type $\mathbf d_I$ in $V_R= V\otimes_{\mathbb F_{q}} R$, we mean an increasing chain of (finite locally free) locally direct summands of the $R$-module $V_R$, whose ranks increase by increments of $d_1,\ldots , d_{r+1}$. Following \cite{vw1}, we say that two flags $\mathcal F$ and $\mathcal G$, of type respectively $I$ and $J$, are in \textbf{standard position} if all the submodules $\mathcal F_i + \mathcal G_j$ are locally direct summands of $V_R$. In such a situation, one can define the relative position of $\mathcal F$ and of $\mathcal G$ as a global section $w$ of the constant sheaf ${}^{I}\underline{\mathbf W}^{J}$ on $\mathrm{Spec}(R)$, such that $\mathcal F$ and $\mathcal G$ are locally in relative position $w$. The $R$-rational points of $X_I(w)$ are given by 
\begin{equation*}
\hspace*{-0.5cm} X_I(w)(R) = \left\{\text{partial flags } \mathcal G \text{ of type } \mathbf d_I \text{ in } V_R \,\middle|\, \begin{array}{c}
\mathcal G \text{ and } \Phi(\mathcal G) \text{ are in standard position,} \\
\text{and in relative position } w
\end{array} \right\},
\end{equation*}
for $I \subset \mathbf S$ and $w \in {}^{I}\mathbf W^{I}$.

\subsubsection{Example: the fake unitary case} \label{DL:FakeUnitary}


Given a vector space $V$ over a field extension $k/\mathbb F_{q}$, we write $V^{(q)} := V \otimes_{k,\sigma} k$ where $\sigma:k\to k$ is the arithmetic Frobenius $x\mapsto x^q$. Given an endomorphism $f:V\to V$, we write $f^{(q)}$ for the induced endomorphism of $V^{(q)}$.\\
Let $V_1,V_2$ be two finite dimensional vector spaces over $\mathbb F_{q^2}$ of the same dimension. Let $d := \dim(V_1) = \dim(V_2)$, and let $B:V_1 \times V_2^{(q)}	 \rightarrow \mathbb F_{q^2}$ be an $\mathbb F_{q^2}$-bilinear perfect pairing. Let $\mathbf G := \mathrm{GL}(V_{1,\overline{\mathbb F_{q^2}}}) \times \mathrm{GL}(V_{2,\overline{\mathbb F_{q^2}}})$, and equip $\mathbf G$ with the $\mathbb F_q$-structure given by the Frobenius morphism 
\begin{align*}
F: \mathbf G & \longrightarrow \mathbf G,\\
(u,v) & \longmapsto ((v^{-1})^{(q),*},(u^{-1})^{*,(q)}).
\end{align*} 
Here, $(\,\cdot\,)^*$ denotes the adjoint endomorphism with respect to $B$. Observe that $F^2(u,v) = (u^{(q^2)},v^{(q^2)})$, and that we have $F(u,v) = (u,v)$ if and only if $u = u^{(q^2)}$ and $v = (u^{-1})^{*,(q)}$. Thus, the mapping $(u,v) \mapsto u$ defines an isomorphism
\begin{equation*}
G := \mathbf G^F \xrightarrow{\sim} \mathrm{GL}(V_1).
\end{equation*}
Let $\mathcal F^1$ be a complete flag in $V_1$. Then $\mathcal F^2 := (\mathcal F^1)^{\perp,(q)}$ is a complete flag in $V_2$, where $(\cdot)^{\perp}$ denotes the orthogonal complement with respect to $B$. The stabilizer $\mathbf B := \mathrm{Stab}(\mathcal F^1) \times \mathrm{Stab}(\mathcal F^2)$ is an $F$-stable Borel subgroup of $\mathbf G$, containing a maximal $F$-stable torus $\mathbf T$. Let $(\mathbf W,\mathbf S)$ be the associated Coxeter system. We have $\mathbf W \simeq S_d \times S_d$ and $\mathbf S \simeq \{(s_i,\mathrm{id}),(\mathrm{id},s_i)\}_{1\leq i \leq d-1}$. Moreover, the Frobenius acts on $\mathbf W$ via $F(w_1,w_2) = (w_0w_2w_0,w_0w_1w_0)$. Here, $w_0 \in S_d$ is the longest element, defined by $w_0(i) := d+1-i$. Notice that $F^2 = \mathrm{id}$ on $\mathbf W$. Given $I \subset \mathbf S$, write 
\begin{equation*}
\mathbf S \setminus I = \{(s_{i_1},\mathrm{id}),\ldots , (s_{i_r},\mathrm{id}),(\mathrm{id},s_{j_1},),\ldots,(\mathrm{id},s_{j_{r'}})\},
\end{equation*}
where $0 \leq r,r' \leq d-1$ and $1\leq i_1 < \ldots < i_r \leq d-1,$ and $1 \leq j_1 < \ldots < j_{r'} \leq d-1$. Define also $\mathbf d_I := (\mathbf d_I^1,\mathbf d_I^2)$ where $\mathbf d_I^1 := (d_1^1,\ldots, d_{r+1}^1)$ and $\mathbf d_I^2 = (d_1^2,\ldots , d_{r'+1}^2)$, with $d_k^1 := i_{k+1}-i_k$ and $d_k^2 := j_{k+1} - j_k$. Eventually, let $w = (w_1,w_2) \in {}^I\mathbf W^{F(I)}$. The coarse Deligne-Lusztig variety $X_I(w)$ is at least defined over $\mathbb F_{q^2}$, and for any field extension $k / \mathbb F_{q^2}$, its $k$ rational points are given by 
\begin{equation*}
\hspace*{-1cm} X_I(w)(k) = \left\{(\mathcal G^1,\mathcal G^2) \,\middle|\, \begin{array}{c}
\mathcal G^1 \text{ is a flag of type } \mathbf d_I^{1} \text{ in } V_{1,k}, \text{ in relative position } w_1 \text{ with }(\mathcal G^2)^{(q),\perp},\\
\mathcal G^2 \text{ is a flag of type } \mathbf d_I^{2} \text{ in } V_{2,k}, \text{ in relative position } w_2 \text{ with } (\mathcal G^1)^{\perp,(q)}.
\end{array}
\right\}.
\end{equation*}
If $R$ is an $\mathbb F_{q^2}$-algebra, we note that the orthogonal complement of a locally direct summand of $V_{1,R}$ or of $V_{2,R}^{(q)}$ is respectively a locally direct summand of $V_{2,R}^{(q)}$ or of $V_{1,R}$. Thus, the notion of orthogonal flag makes sense in this context, and $X_I(w)(R)$ can be described just as in the case of a field.

\subsubsection{Example: the unitary group} \label{DL:Unitary}

Let $V$ be a finite dimensional vector space over $\mathbb F_{q^2}$, equipped with a perfect $\mathbb F_{q^2}/\mathbb F_q$-hermitian form $(\cdot,\cdot)$. Let $d := \dim(V)$. If $k$ is a field extension of $\mathbb F_{q^2}$, extend $(\cdot,\cdot)$ to $V_k$ via the formula 
\begin{equation*}
(v\otimes x,w\otimes y) := xy^{\sigma}(v,w) \in k,
\end{equation*}
where $\sigma:x\mapsto x^q$. One may also think of $(\cdot,\cdot)$ as a perfect bilinear pairing $V\times V^{(q)} \to \mathbb F_{q^2}$, allowing us to recover notations from the previous fake unitary case with $V_1 = V_2 = V$. Let $\mathbf G := \mathrm{GL}(V_{\overline{\mathbb F_q}})$, and equip $\mathbf G$ with the $\mathbb F_q$-structure induced by the Frobenius morphism 
\begin{align*}
F: \mathbf G & \longrightarrow \mathbf G,\\
u & \longmapsto (u^{-1})^{*,(q)}.
\end{align*} 
We have $F^2(u) = u^{(q^2)}$ for all $u$, and 
\begin{equation*}
G := \mathbf G^F \simeq \mathrm{U}(V,(\cdot,\cdot)).
\end{equation*} 
Let us fix a complete flag $\mathcal F^0$ in $V$ such that $\mathcal F^0 = (\mathcal F^0)^{\perp,(q)}$. It defines a Borel subgroup $\mathbf B := \mathrm{Stab}(\mathcal F^0) \subset \mathbf G$ which is $F$-stable, and contains an $F$-stable maximal torus $\mathbf T$. Let $(\mathbf W,\mathbf S)$ be the associated Coxeter system. Then $\mathbf W \simeq S_d$ and $\mathbf S \simeq \{s_1,\ldots ,s_{d-1}\}$. The Frobenius acts on $\mathbf W$ via $F(w) = w_0ww_0$. In particular, we have $F(s_i) = s_{d-i}$ and $F^2 = \mathrm{id}$ on $\mathbf W$. Given $I \subset \mathbf S$, write $\mathbf S\setminus I = \{s_{i_1},\ldots , s_{i_r}\}$ with $0 \leq r \leq d-1$ and $1 \leq i_1 < \ldots < i_r \leq d-1$. Define a tuple $\mathbf d_I := (d_1,\ldots,d_{r+1})$ just as in the $\mathrm{GL}$ case. For $w\in {}^I\mathbf W^{F(I)}$, the coarse Deligne-Lusztig variety $X_I(w)$ is defined at least over $\mathbb F_{q^2}$, and for any field extension $k/\mathbb F_{q^2}$, its $k$-rational points are given by 
\begin{equation*}
 \hspace*{-0.3cm} X_I(w)(k) = \{\text{partial flags } \mathcal G \text{ of type } \mathbf d_I \text{ in } V_k \,|\, \mathcal G \text{ and } \mathcal G^{\perp,(q)} \text{ are in relative position } w\}.
\end{equation*}
The same description holds more generally for $X_I(w)(R)$ where $R$ is any $\mathbb F_{q^2}$-algebra. 

\subsubsection{Some combinatorial lemmas}

In this section, we put together some combinatorial lemmas related to the symmetric group, that we will refer to in later proofs. Given $1 \leq i \leq n-1$, we will always write $s_i$ for the transposition $(i \;\; i+1) \in S_n$. We write $\mathbf S := \{s_1,\ldots ,s_{n-1}\}$. We denote by $\ell$ the length function with respect to the $s_i$'s. 

\begin{lem}\label{UniqueDecompositionPermutation}
Let $0 \leq k < n$ and let $\sigma \in S_n$ such that 
\begin{equation*}
\sigma(\{1,\ldots ,k\}) \subset \{1,\ldots ,k+1\}.
\end{equation*}
Then there exists unique permutations $\sigma_1,\sigma_2,\tau \in S_n$ such that $\sigma = \tau\sigma_1\sigma_2$, satisfying the following conditions:
\begin{enumerate}[noitemsep,nolistsep]
\item $(\sigma_1)_{|\{k+1,\ldots,n\}} = \mathrm{id}$,
\item $(\sigma_2)_{|\{1,\ldots , k\}} = \mathrm{id}$,
\item $\tau$ is $\mathrm{id}$ if $\sigma^{-1}(k+1)>k$, and a transposition otherwise.
\end{enumerate}
\end{lem}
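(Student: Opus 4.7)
The plan is to construct the decomposition explicitly by case analysis on whether $k+1$ appears in $\sigma(\{1,\ldots,k\})$, and then argue uniqueness by showing that the constraints (1), (2), (3) force each factor.

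First, I would consider the case $\sigma^{-1}(k+1)>k$. Then $\sigma(\{1,\ldots,k\})$ is a subset of $\{1,\ldots,k\}$ of cardinality $k$, hence equal to $\{1,\ldots,k\}$. Thus $\sigma$ preserves the partition $\{1,\ldots,k\}\sqcup\{k+1,\ldots,n\}$, and I can set $\tau:=\mathrm{id}$, with $\sigma_1$ (resp.\ $\sigma_2$) the permutation which agrees with $\sigma$ on $\{1,\ldots,k\}$ (resp.\ $\{k+1,\ldots,n\}$) and is the identity elsewhere. In the complementary case $\sigma^{-1}(k+1)\le k$, the image $\sigma(\{1,\ldots,k\})$ contains $k+1$ and is missing exactly one element $m\in\{1,\ldots,k\}$. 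The transposition $\tau:=(m\;\;k+1)$ then maps $\{1,\ldots,k\}$ bijectively onto $\sigma(\{1,\ldots,k\})$, so $\tau^{-1}\sigma$ preserves $\{1,\ldots,k\}$; I then define $\sigma_1,\sigma_2$ from $\tau^{-1}\sigma$ as in the previous case. In both cases conditions (1), (2), (3) are clear from the construction, and one verifies $\sigma=\tau\sigma_1\sigma_2$ directly.

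For uniqueness, I would note that conditions (1) and (2) imply that $\sigma_1\sigma_2$ preserves $\{1,\ldots,k\}$, so any decomposition $\sigma=\tau\sigma_1\sigma_2$ satisfying (1) and (2) must satisfy $\tau(\{1,\ldots,k\})=\sigma(\{1,\ldots,k\})$. If $\sigma^{-1}(k+1)>k$, this set equals $\{1,\ldots,k\}$ and the only $\tau\in\{\mathrm{id}\}\cup\{\text{transpositions}\}$ with $\tau(\{1,\ldots,k\})=\{1,\ldots,k\}$ satisfying condition (3) is $\tau=\mathrm{id}$. If $\sigma^{-1}(k+1)\le k$, then $\sigma(\{1,\ldots,k\})=\{1,\ldots,k+1\}\setminus\{m\}$ for a unique $m\in\{1,\ldots,k\}$, and the only transposition sending $\{1,\ldots,k\}$ to this set is $(m\;\;k+1)$. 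Hence $\tau$ is forced, and then $\sigma_1$ and $\sigma_2$ are determined as the restrictions of $\tau^{-1}\sigma$ to $\{1,\ldots,k\}$ and $\{k+1,\ldots,n\}$ extended by the identity.

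I do not expect a real obstacle: the content is purely combinatorial bookkeeping about how a permutation restricted to an initial segment can fail to preserve that segment by at most one element. The only subtle point is making explicit that, in case 3, the transposition is forced to be of the form $(m\;\;k+1)$ with $m$ the missing element, which is the reason the decomposition is unique rather than merely existing.
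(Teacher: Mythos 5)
Your proof is correct and takes essentially the same route as the paper: both define $\tau$ as the transposition exchanging $k+1$ with the unique element of $\{1,\ldots,k\}$ missing from $\sigma(\{1,\ldots,k\})$ (or $\mathrm{id}$ when no such element exists), and deduce uniqueness from the observation that conditions (1) and (2) force $\tau(\{1,\ldots,k\})=\sigma(\{1,\ldots,k\})$. The only cosmetic difference is that the paper treats both cases uniformly by introducing the unique $x\in\{1,\ldots,k+1\}$ with $\sigma^{-1}(x)>k$, whereas you split into two explicit cases; the content is identical.
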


\begin{proof}
The statement is trivially true if $k=0$, so that we may assume $k\geq 1$. By the hypothesis, there exists a unique $x\in \{1,\ldots ,k+1\}$ such that $\sigma^{-1}(x) > k$. We define $\tau = (x \;\; k+1)$. Note that $\tau = \mathrm{id}$ if $x = k+1$, and $\tau$ is a transposition otherwise. We claim that 
\begin{equation*}
\tau\sigma(\{1,\ldots ,k\}) = \{1,\ldots , k\}.
\end{equation*}
Indeed, for $1 \leq i \leq k$ such that $i \not = \sigma^{-1}(k+1)$, we have $\sigma(i) \leq k$ and $\sigma(i) \not = x$. Thus, $\tau\sigma(i) = \sigma(i) \in \{1,\ldots , k\}$. Moreover, if $\sigma^{-1}(k+1) \leq k$, then $x \leq k$ and we have $\tau\sigma(\sigma^{-1}(k+1)) = x \in \{1,\ldots , k\}$. This proves the claim. We can now define $\sigma_1$ and $\sigma_2$ as follows. 
\begin{equation*}
\sigma_1(i) = \begin{cases}
\tau\sigma(i) & \text{if } 1 \leq i \leq k,\\
i & \text{if } k+1 \leq i \leq n,
\end{cases}
\text{          and          }
\sigma_2(i) = \begin{cases}
i & \text{if } 1 \leq i \leq k,\\
\tau\sigma(i) & \text{if } k+1 \leq i \leq n.
\end{cases}
\end{equation*}
Then we have $\sigma = \tau\sigma_1\sigma_2$ as desired.\\
We now prove unicity. Assume that $\sigma = \tau\sigma_1\sigma_2$ where $\tau,\sigma_1$ and $\sigma_2$ satisfy the conditions 1, 2 and 3 above. The identity $\tau\sigma = \sigma_1\sigma_2$ implies $\sigma_1(i) = \tau\sigma(i)$ for all $1\leq i \leq k$, and that $\sigma_2(i) = \tau\sigma(i)$ for all $k+1\leq i \leq n$. It remains to determine $\tau$. Assume that $\tau$ is a transposition, so that $x \leq k$. We have $\tau\sigma(\{1,\ldots,k\}) = \{1,\ldots ,k\}$. Since $\sigma^{-1}(k+1) \leq k$, it follows that $\tau$ has the form $(a \;\; k+1)$ for some $a \leq k$. Moreover, since $\sigma^{-1}(x) > k$, we must have $\tau(x) > k$. Therefore $a=x$ and $\tau = (x \; \; k+1)$ as desired. 
\end{proof}

\begin{prop}\label{FineDLVartietiesInUnitary}
Let $r\geq 1$ and let $0 \leq k_1 < \ldots < k_r < n-1$. Let $I := \mathbf S \setminus \{s_{k_1+1},s_{k_2+1},\ldots,s_{k_r+1}\}$. Let $\sigma \in S_n$ be $I$-reduced and such that for all $1\leq j \leq r$, we have 
\begin{equation*}
\sigma(\{1,\ldots,k_{j}\}) \subset \{1,\ldots,k_j+1\}. 
\end{equation*}
Then $\sigma$ can be uniquely written as $\sigma = w_1\ldots w_r$ where, for each $1 \leq i \leq r-1$, we have $w_i = s_{k_i+1}s_{k_i+2}\ldots s_{k_i+t_i}$ for some $0 \leq t_i \leq k_{i+1}-k_i$, and $w_r = s_{k_r+1}\ldots s_{k_r+t_r}$ for some $0 \leq t_r \leq n-1-k_r$.
\end{prop}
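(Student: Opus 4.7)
The plan is to proceed by induction on $r$, using Lemma \ref{UniqueDecompositionPermutation} as the main tool for splitting $\sigma$.

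For the base case $r = 1$, I would observe that $I = \mathbf S \setminus \{s_{k_1+1}\}$, so $I$-reducedness forces $\sigma^{-1}$ to be strictly increasing on each of the two blocks $\{1, \ldots, k_1+1\}$ and $\{k_1+2, \ldots, n\}$. Combined with the condition $\sigma(\{1, \ldots, k_1\}) \subset \{1, \ldots, k_1+1\}$, this uniquely pins down $\sigma$: the increasing values $\sigma^{-1}(1) < \ldots < \sigma^{-1}(k_1+1)$ must occupy $\{1, \ldots, k_1\}$ plus exactly one additional position $p_1 > k_1$, so that $\sigma$ acts as the identity on $\{1, \ldots, k_1\}$ (when $k_1 \geq 1$) and $p_1 = \sigma^{-1}(k_1+1)$. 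The increasing property on the second block then forces $\sigma$ to be the cycle $k_1+1 \to k_1+2 \to \ldots \to p_1 \to k_1+1$, giving uniquely $\sigma = s_{k_1+1} \ldots s_{k_1+t_1}$ with $t_1 = p_1 - k_1 - 1$.

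For the inductive step $r \geq 2$, I would apply Lemma \ref{UniqueDecompositionPermutation} with $k = k_r$ to obtain $\sigma = \tau \sigma_1 \sigma_2$ uniquely. In the easier sub-case $\sigma^{-1}(k_r+1) > k_r$ (so $\tau = \mathrm{id}$), bijectivity forces $\sigma(\{1, \ldots, k_r\}) = \{1, \ldots, k_r\}$, and $\sigma_1, \sigma_2$ coincide respectively with $\sigma$ on $\{1, \ldots, k_r\}$ and on $\{k_r+1, \ldots, n\}$, each extended by the identity. I would verify that $\sigma_1$ satisfies the proposition's hypotheses for the shorter tuple $(k_1, \ldots, k_{r-1})$, so by induction it uniquely decomposes as $w_1 \ldots w_{r-1}$; the condition that $\sigma_1$ fixes $\{k_r+1, \ldots, n\}$ automatically gives $t_{r-1} \leq k_r - k_{r-1} - 1$, well within the bound $k_r - k_{r-1}$. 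Meanwhile, $\sigma_2$ restricted to $\{k_r+1, \ldots, n\}$ falls under the base case and equals $w_r = s_{k_r+1} \ldots s_{k_r+t_r}$ for some $0 \leq t_r \leq n-1-k_r$.

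The main obstacle lies in the harder sub-case $\sigma^{-1}(k_r+1) \leq k_r$, where $\tau = (a \;\; k_r+1)$ for the unique $a \in \{1, \ldots, k_r\}$ with $\sigma^{-1}(a) > k_r$. Exploiting the strict monotonicity of $\sigma^{-1}$ on each block of $I$, I would show that $a$ must sit at the top of its block, hence $a = k_j+1$ for some $1 \leq j \leq r-1$. Combining with hypothesis $j$ then forces $\sigma(\{1, \ldots, k_j\}) = \{1, \ldots, k_j\}$, so that $\sigma$ factors as a commuting product of a permutation of $\{1, \ldots, k_j\}$ and a permutation of $\{k_j+1, \ldots, n\}$; a further use of monotonicity of $\sigma^{-1}$ on the blocks of $I$ lying inside $\{k_j+2, \ldots, n\}$ then identifies the second factor as a single long cycle $k_j+1 \to k_j+2 \to \ldots \to q \to k_j+1$ with $q > k_r$. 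The first factor satisfies the hypotheses for $(k_1, \ldots, k_{j-1})$ and, by induction, decomposes as $w_1 \ldots w_{j-1}$ (empty product if $j = 1$), while the cycle factor is directly read off as $w_j w_{j+1} \ldots w_r$ with $t_l = k_{l+1} - k_l$ maximal for $l = j, \ldots, r-1$ and $t_r = q - k_r - 1$. Uniqueness of the tuple $(t_1, \ldots, t_r)$ in all cases follows from the unicity clause in Lemma \ref{UniqueDecompositionPermutation}.
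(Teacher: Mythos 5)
Your proposal uses the same key ingredient as the paper (Lemma~\ref{UniqueDecompositionPermutation} applied at $k=k_r$, followed by induction on $r$), and the base case matches. Where you diverge is in the inductive step: you fork on whether $\tau$ is trivial, and the $\tau\neq\mathrm{id}$ branch takes you through a substantially longer structural analysis. The paper avoids this fork entirely. Its observation is that $\tau\sigma_1$, restricted to $\{1,\ldots,k_r+1\}$, is $I'$-reduced and satisfies the set-inclusion hypotheses for the shorter tuple $(k_1,\ldots,k_{r-1})$ \emph{whether or not} $\tau$ is trivial (your intermediate fact that $a=k_j+1$ for some $j<r$ is exactly what makes $\tau\sigma_1$ still $I'$-reduced, since $s_a\notin I'$); one then applies induction to $\tau\sigma_1$ directly, with no need to identify cycle shapes. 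Two points to tighten in your version: first, the claim that the second factor in the hard sub-case is a single long cycle does not follow from monotonicity of $\sigma^{-1}$ on the $I$-blocks alone --- you also need the hypotheses $\sigma(\{1,\ldots,k_{j'}\})\subset\{1,\ldots,k_{j'}+1\}$ for $j'>j$ to show that the same ``escaped'' position $q$ works at every level $j'$, pinning down $\sigma$ on $\{k_j+1,\ldots,n\}$. Second, when you invoke the induction hypothesis for $\sigma_1$ (resp.\ the restriction to $\{1,\ldots,k_j\}$), the ambient symmetric group must be $S_{k_r+1}$ (resp.\ $S_{k_j+1}$), not $S_{k_r}$ (resp.\ $S_{k_j}$): the proposition requires the strict inequality $k_{r-1}<N-1$, and this can fail for $N=k_r$ when $k_{r-1}=k_r-1$. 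Extending by the identity on one extra point (as the paper does explicitly) repairs this. With those repairs your argument is correct, but the paper's uniform treatment is markedly shorter.
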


We note that if $k_1 = 0$, the condition $\sigma(\emptyset) \subset \{1\}$ is trivially true.

\begin{proof}
By induction on $r$, first let us assume that $r=1$. According to Lemma \ref{UniqueDecompositionPermutation}, we can write $\sigma = \tau\sigma_1\sigma_2$ where $\sigma_2$ is generated by $s_{k_1+1},\ldots ,s_{n-1}$, and where $\tau\sigma_1$ is generated by $s_1,\ldots s_{k_1}$. Since $\sigma$ is $I$-reduced, one may check that $\tau\sigma_1$ is $\{s_1,\ldots , s_{k_1}\}$-reduced and that $\sigma_2$ is $\{s_{k_1+2},\ldots ,s_{n-1}\}$-reduced. It follows that $\tau\sigma_1 = \mathrm{id}$ and that $\sigma_2 = \sigma = s_{k_1+1}\ldots s_{k_1+t_1}$ for some $0\leq t_1 \leq n-1-k_1$ as required.\\
Let us now assume that the statement holds for $r-1$, where $r\geq 2$. Let $\sigma$ be as in the Proposition. By Lemma \ref{UniqueDecompositionPermutation} with respect to $k = k_r$, we can decompose $\sigma = \tau\sigma_1\sigma_2$ where $\sigma_2$ is generated by $s_{k_r+1},\ldots ,s_{n-1}$, and $\tau\sigma_1$ is generated by $s_1,\ldots,s_{k_r}$. Again, since $\sigma$ is $I$-reduced, one may check that $\sigma_2$ is $\{s_{k_r+2},\ldots,s_{n-1}\}$-reduced. It follows that $\sigma_2 = w_{r} = s_{k_{r}+1}\ldots s_{k_r+t_r}$ for some $0 \leq t_r \leq n-1-k_r$. Moreover, $\tau\sigma_1$ is $I'$-reduced, where $I' := I \setminus \{s_{k_r+2},\ldots,s_{n-1}\}$. Eventually, for all $i \leq k_r$ we have $\sigma_2(i) = i$, so that $\sigma(i) = (\tau\sigma_1)(i)$. It follows that for all $1 \leq j \leq r-1$, we have 
\begin{equation*}
\tau\sigma_1(\{1,\ldots,k_j\}) \subset \{1,\ldots ,k_j+1\}.
\end{equation*} 
Thus, the restriction of $\tau\sigma_1$ to $\{1,\ldots ,k_r+1\}$ satisfies all the hypotheses of the Proposition with respect to $k_1,\ldots ,k_{r-1}$. By induction, the proof is over.
\end{proof}

Denote by $F$ the automorphism of $S_n$ defined by $\sigma \mapsto F(\sigma) = w_0\sigma w_0$, where $w_0:i\mapsto n+1-i$ is the longest element of $S_n$ with respect to $\mathbf S$. For $I \subset \mathbf S$, recall the order $\leq_{I,F}$ on ${}^IS_{n}$ which we defined in Section \ref{GeneralitiesDLVarieties}.

\begin{prop}\label{StratificationFineDLVarietiesUnitary}
Let $r\geq 0$ and let $0 \leq k_1 < \ldots < k_r < n-1$. Assume that $k_i = n-1-k_{r+1-i}$ for all $1 \leq i \leq r$ if $k_1 \not = 0$, and that $k_i = n-1-k_{r+2-i}$ for all $2 \leq i \leq r$ if $k_1 = 0$. Let $I := \mathbf S \setminus \{s_{k_1+1},\ldots,s_{k_r+1}\}$, and let $\sigma \in S_n$ be $I$-reduced and such that 
\begin{equation*}
\sigma \leq_{I,F} s_{k_1+1} s_{k_1+2} \ldots s_{n-1}.
\end{equation*}
Then $\sigma$ can be uniquely written as $\sigma = w_1\ldots w_r$ where, for each $1 \leq i \leq r-1$, we have $w_i = s_{k_i+1}s_{k_i+2}\ldots s_{k_i+t_i}$ for some $0 \leq t_i \leq k_{i+1}-k_i$, and $w_r = s_{k_r+1}\ldots s_{k_r+t_r}$ for some $0 \leq t_r \leq n-1-k_r$.
\end{prop}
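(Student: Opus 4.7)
The plan is to reduce to Proposition \ref{FineDLVartietiesInUnitary}, which already provides the desired unique decomposition under the stronger containment hypothesis
\[
\sigma(\{1,\ldots,k_j\}) \subset \{1,\ldots,k_j+1\} \qquad \text{for all } 1 \leq j \leq r.
\]
So the task reduces to deducing these containments from the single assumption $\sigma \leq_{I,F} w$, where $w := s_{k_1+1}s_{k_1+2}\ldots s_{n-1}$. The case $r=0$ is vacuous and will be ignored.

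First I would compute $w$ as an actual permutation of $\{1,\ldots,n\}$: reading the product from right to left gives $w(i) = i$ for $i \leq k_1$, $w(i) = i+1$ for $k_1+1 \leq i \leq n-1$, and $w(n) = k_1+1$. In particular, $w(\{1,\ldots,k_j\}) \subset \{1,\ldots,k_j+1\}$ for each $1 \leq j \leq r$. Such ``initial segment'' inclusions are preserved downward in the Bruhat order: by the classical rank characterization, if $\tau' \leq w$ in $S_n$ and $w(\{1,\ldots,a\}) \subset \{1,\ldots,b\}$, then the same inclusion holds for $\tau'$. Unpacking the definition of $\leq_{I,F}$, there exists $u \in \mathbf{W}_I$ such that $\tau := u\sigma F(u)^{-1} \leq w$, hence $\tau(\{1,\ldots,k_j\}) \subset \{1,\ldots,k_j+1\}$ for every $j$.

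It remains to transfer this containment from $\tau$ to $\sigma = u^{-1}\tau F(u)$. The parabolic $\mathbf{W}_I$ is generated by the $s_i$ with $i \notin \{k_1+1,\ldots,k_r+1\}$, so every element of $\mathbf{W}_I$ stabilises each of the ``$I$-blocks'' $\{1,\ldots,k_1+1\}, \{k_1+2,\ldots,k_2+1\}, \ldots , \{k_r+2,\ldots,n\}$ setwise; in particular $u^{-1}(\{1,\ldots,k_j+1\}) = \{1,\ldots,k_j+1\}$. The main obstacle is the analogous stabilisation for $F(u)$, namely $F(u)(\{1,\ldots,k_j\}) = \{1,\ldots,k_j\}$. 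Since $F(u) = w_0 u w_0$, this is equivalent to $u$ stabilising the mirror set $\{n+1-k_j,\ldots,n\}$; as $u \in \mathbf{W}_I$, this in turn is equivalent to $n-k_j$ being a breakpoint of the $I$-partition, i.e.\ an element of $\{0, k_1+1, \ldots, k_r+1, n\}$. This is precisely where the symmetry hypothesis enters: if $k_1 \neq 0$, the relation $k_j = n-1-k_{r+1-j}$ gives $n-k_j = k_{r+1-j}+1$; if $k_1 = 0$, then $j=1$ yields $n-k_j = n$, while for $j \geq 2$ the relation $k_j = n-1-k_{r+2-j}$ gives $n-k_j = k_{r+2-j}+1$. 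In every case $n-k_j$ is a breakpoint, and $u$ does stabilise the required mirror set.

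Chaining the three preservation facts together yields
\[
\sigma(\{1,\ldots,k_j\}) = u^{-1}\bigl(\tau(F(u)(\{1,\ldots,k_j\}))\bigr) = u^{-1}\bigl(\tau(\{1,\ldots,k_j\})\bigr) \subset \{1,\ldots,k_j+1\}.
\]
Since $\sigma$ is $I$-reduced by hypothesis, Proposition \ref{FineDLVartietiesInUnitary} then applies and produces the unique decomposition $\sigma = w_1 \ldots w_r$ with the prescribed shape and ranges for the $t_i$'s. The delicate step in executing this plan is the symmetry-based verification that $F(u)$ stabilises the initial segments $\{1,\ldots,k_j\}$; everything else is a formal consequence of the Bruhat-order rank criterion and the block structure of $\mathbf{W}_I$.
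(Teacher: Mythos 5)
Your proof is correct and follows essentially the same strategy as the paper: both reduce to Proposition \ref{FineDLVartietiesInUnitary} by deducing the containments $\sigma(\{1,\ldots,k_j\})\subset\{1,\ldots,k_j+1\}$ from $\sigma\leq_{I,F}w$, using that $u\in\mathbf W_I$ stabilises $\{1,\ldots,k_j+1\}$ and $F(u)\in\mathbf W_{F(I)}$ stabilises $\{1,\ldots,k_j\}$. The only cosmetic differences are that you derive the inclusion for $u\sigma F(u)^{-1}$ from the Bruhat-order rank criterion rather than writing it explicitly as an increasing subword $s_{i_1}\cdots s_{i_t}$ of $w$, and you unpack the symmetry computation showing $F(I)=\mathbf S\setminus\{s_{k_1},\ldots,s_{k_r}\}$ (covering $k_1=0$ as well) which the paper delegates to the remark after the statement.
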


The hypothesis on the $k_i$'s is imposed so that $F(I) = \mathbf S \setminus \{s_{k_1},\ldots ,s_{k_r}\}$ when $k_1 \not = 0$, and $F(I) = \mathbf S \setminus \{s_{k_2},\ldots s_{k_r}, s_{n-1}\}$ when $k_1 = 0$. 

\begin{proof}
We only prove the case $k_1 \not = 0$ By hypothesis, there is some $u \in (S_n)_I$ such that $u\sigma F(u)^{-1} \leq s_{k_1+1}\ldots s_{n-1}$. Let $k_1+1\leq i_1 < \ldots < i_t \leq n-1$ be such that $u\sigma F(u)^{-1} = s_{i_1}\ldots s_{i_t}$. Since $u \in (S_n)_I$ and $F(u) \in (S_n)_{F(I)}$, we have $u(\{1,\ldots , k_{i}+1\}) = \{1,\ldots ,k_{i}+1\}$ and $F(u)(\{1,\ldots,k_{i}\}) = \{1,\ldots,k_{i}\}$ for all $1 \leq i \leq r$. It follows that 
\begin{align*}
\sigma(\{1,\ldots,k_i\}) & = \sigma F(u)^{-1}(\{1,\ldots,k_i\}) \\
& = u^{-1}s_{i_1}\ldots s_{i_t}(\{1,\ldots ,k_i\}) \\
& \subset u^{-1}(\{1,\ldots , k_{i}+1\}) \\ 
& = \{1,\ldots, k_i+1\}.
\end{align*}
By Proposition \ref{FineDLVartietiesInUnitary}, $\sigma$ can be decomposed as $\sigma = w_1\ldots w_r$ as desired.
\end{proof}

For the next Proposition, consider the Coxeter group $S_n \times S_n$ with simple reflections $\mathbf S \sqcup \mathbf S$ and Frobenius action $F(\sigma_1,\sigma_2) := (w_0\sigma_2 w_0,w_0\sigma_1 w_0)$. 

\begin{prop}\label{StratificationFineDLVarietiesFakeUnitary}
Let $r,r'\geq 0$, let $0 \leq k_1 < \ldots < k_r < n-1$ and let $0 \leq k'_1 < \ldots < k'_{r'} < n-1$. Assume that we are in one of the four following cases:
\begin{enumerate}
\item $r = r'$ and $k_1=k'_1 = 0$, in which case we assume that $k_i = n-1-k'_{r+2-i}$ for all $2 \leq i \leq r$,
\item $r = r'$ and $k_1,k'_1 > 0$, in which case we assume that $k_i = n-1-k'_{r+1-i}$ for all $1 \leq i \leq r$,
\item $r' = r-1$, $k_1 = 0$ and $k'_1 >0$, in which case we assume that $k_i = n-1-k'_{r+1-i}$ for all $2 \leq i \leq r$,
\item $r = r'-1$, $k_1>0$ and $k'_1 = 0$, in which case we assume that $k_i = n-1-k'_{r+2-i}$ for all $1 \leq i \leq r$.
\end{enumerate}
Let $I_1 := \mathbf S \setminus \{s_{k_1+1},\ldots ,s_{k_r+1}\}$ and let $I_2 := \mathbf S \setminus \{s_{k'_1+1},\ldots ,s_{k'_{r'}+1}\}$. Let $\sigma = (\sigma_1,\sigma_2) \in S_n\times S_n$ be $(I_1\sqcup I_2)$-reduced and such that 
\begin{equation*}
\sigma \leq_{I_1\sqcup I_2,F} (s_{k_1+1}\ldots s_{n-1},s_{k'_1+1} \ldots s_{n-1}).
\end{equation*}
Then $\sigma_1$ and $\sigma_2$ can be uniquely written as $\sigma_1 = w_1\ldots w_r$ and $\sigma_2 = w_1'\ldots w_{r'}'$ where, for each $1 \leq i \leq r-1$ and each $1 \leq j \leq r'-1$, we have $w_i = s_{k_i+1}\ldots s_{k_i+t_i}$ and $w_j' = s_{k'_j+1}\ldots s_{k'_j+t_j'}$ for some $0 \leq t_i \leq k_{i+1}-k_i$ and $0 \leq t'_j \leq k'_{j+1} - k'_{j}$, and where $w_r = s_{k_r+1} \ldots s_{k_r+t_r}$ and $w'_{r'} = s_{k'_{r'}}\ldots s_{k'_{r'}+t'_{r'}}$ for some $0 \leq t_r \leq n-1-k_r$ and $0 \leq t_{r'}' \leq n-1-k'_{r'}$.
\end{prop}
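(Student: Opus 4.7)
The plan is to mimic the strategy of Proposition \ref{StratificationFineDLVarietiesUnitary}, reducing the problem to an application of Proposition \ref{FineDLVartietiesInUnitary} on each factor $\sigma_1$ and $\sigma_2$ separately. First I will unpack the hypothesis $\sigma \leq_{I_1\sqcup I_2,F} (s_{k_1+1}\ldots s_{n-1}, s_{k'_1+1}\ldots s_{n-1})$. By definition this yields $u_1 \in (S_n)_{I_1}$ and $u_2 \in (S_n)_{I_2}$ such that $(u_1,u_2)\cdot(\sigma_1,\sigma_2)\cdot F((u_1,u_2))^{-1}$ is bounded above by $(s_{k_1+1}\ldots s_{n-1}, s_{k'_1+1}\ldots s_{n-1})$ in the product Bruhat order. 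Using $F((u_1,u_2)) = (w_0 u_2 w_0, w_0 u_1 w_0)$ and the fact that the elements below $s_{k_1+1}\ldots s_{n-1}$ (an already reduced expression) are precisely the increasing subproducts $s_{i_1}\ldots s_{i_t}$ with $k_1+1 \leq i_1 < \ldots < i_t \leq n-1$, I will set $\omega_1 := u_1\sigma_1 w_0 u_2^{-1}w_0 = s_{i_1}\ldots s_{i_t}$ and, analogously, $\omega_2 := u_2\sigma_2 w_0 u_1^{-1}w_0 = s_{j_1}\ldots s_{j_{t'}}$ with $k'_1+1 \leq j_1 < \ldots < j_{t'} \leq n-1$.

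In order to invoke Proposition \ref{FineDLVartietiesInUnitary} on $\sigma_1$, I need to verify $\sigma_1(\{1,\ldots,k_i\}) \subset \{1,\ldots,k_i+1\}$ for each $i$ with $k_i \geq 1$ (the case $k_i = 0$ being vacuous). From the identity $\sigma_1 = u_1^{-1}\omega_1 w_0 u_2 w_0$ this splits into three independent verifications. The key input is the symmetry built into the hypothesis: in each of the four cases listed, whenever $k_i \geq 1$ one has $n-1-k_i = k'_\ell$ for some $\ell \in \{1,\ldots,r'\}$ (checking this is straightforward bookkeeping). Since $u_2 \in (S_n)_{I_2}$ preserves $\{1,\ldots,k'_\ell+1\} = \{1,\ldots,n-k_i\}$, it also preserves the complementary set $\{n-k_i+1,\ldots,n\}$, and therefore $w_0 u_2 w_0$ preserves $\{1,\ldots,k_i\}$. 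Moreover $u_1^{-1}$ obviously preserves $\{1,\ldots,k_i+1\}$. Thus everything reduces to the purely combinatorial statement that $\omega_1(\{1,\ldots,k_i\}) \subset \{1,\ldots,k_i+1\}$.

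The main point is then this combinatorial claim about $\omega_1$. A product $s_{i_1}\ldots s_{i_t}$ with strictly increasing indices factors as a product of disjoint cycles indexed by the maximal runs of consecutive integers inside $\{i_1,\ldots,i_t\}$: a run $[a,a+b-1]$ contributes the forward shift cycle $(a, a+1, \ldots, a+b)$. For any $k \geq k_1$, each such cycle either lies entirely in $\{1,\ldots,k\}$ and is preserved as a set, or lies entirely in $\{k+2,\ldots,n\}$ and contributes nothing to the image, or straddles $k$, in which case it sends $\{a,\ldots,k\}$ onto $\{a+1,\ldots,k+1\}$. In every case the image of $\{1,\ldots,k\}$ is contained in $\{1,\ldots,k+1\}$. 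The symmetric argument, exchanging the roles of the two factors and using $u_1 \in (S_n)_{I_1}$ to ensure that $w_0 u_1 w_0$ preserves each $\{1,\ldots,k'_j\}$, yields the analogous inclusion $\sigma_2(\{1,\ldots,k'_j\}) \subset \{1,\ldots,k'_j+1\}$.

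With these inclusions in hand, Proposition \ref{FineDLVartietiesInUnitary} immediately produces the existence and uniqueness of the decompositions $\sigma_1 = w_1\ldots w_r$ and $\sigma_2 = w'_1\ldots w'_{r'}$ with the prescribed ranges for the $t_i$'s and $t'_j$'s, completing the proof. The main obstacle, as in Proposition \ref{StratificationFineDLVarietiesUnitary}, is really the case-by-case bookkeeping needed to check the symmetry relation $n-1-k_i \in \{k'_1,\ldots,k'_{r'}\}$ in each of the four situations; once this is done, the combinatorial analysis of $\omega_1$ and $\omega_2$ proceeds uniformly.
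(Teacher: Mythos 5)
Your proof is correct and follows the same route the paper intends: apply the hypothesis to get $u_1\sigma_1 w_0 u_2^{-1} w_0$ as a subword of $s_{k_1+1}\cdots s_{n-1}$, use $u_1\in (S_n)_{I_1}$ and the symmetry $n-1-k_i\in\{k'_1,\ldots,k'_{r'}\}$ (so that $w_0 u_2 w_0$ preserves $\{1,\ldots,k_i\}$) to reduce to the containment $\sigma_1(\{1,\ldots,k_i\})\subset\{1,\ldots,k_i+1\}$, and then invoke Proposition~\ref{FineDLVartietiesInUnitary} componentwise — which is exactly the argument of Proposition~\ref{StratificationFineDLVarietiesUnitary} that the paper says to repeat. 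One tiny imprecision: in the cycle-decomposition step the case ``cycle lies entirely in $\{k+2,\ldots,n\}$'' should read ``$\{k+1,\ldots,n\}$'' (a cycle $(a,\ldots,a+b)$ with $a=k+1$ is still disjoint from $\{1,\ldots,k\}$), but this does not affect the conclusion.
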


The assumptions on the $k_i$ and the $k'_i$ are imposed so that we have 
\begin{equation*}
F(I_2) = \begin{cases}
\mathbf S \setminus \{s_{k_2},\ldots, s_{k_r}, s_{n-1}\} & \text{if } k_1=k'_1=0\\
\mathbf S \setminus \{s_{k_1},\ldots, s_{k_r}\} & \text{if } k_1,k'_1 > 0,\\
\mathbf S \setminus \{s_{k_2},\ldots,s_{k_r}\} & \text{if } k_1=0,k'_1>0,\\
\mathbf S \setminus \{s_{k_1},\ldots , s_{k_r},s_{n-1}\} & \text{if } k_1>0, k'_1=0.
\end{cases}
\end{equation*}
We can write down $F(I_1)$ similarly as well. 

\begin{proof}
In each case, one may treat $\sigma_1$ and $\sigma_2$ in the same way as in Proposition \ref{StratificationFineDLVarietiesUnitary}. We omit the details.
\end{proof}

\subsection{The varieties $X_{I,\mathbf{\Lambda}}^{\mathbbm h}$}\label{Section3.3}

We recover the notations of Sections \ref{Section2.3} and \ref{Section2.4}. Let $(I,\mathbf{\Lambda})$ be a Bruhat-Tits index. Write $0 \leq i_1 < \ldots < i_s \leq m$ for the elements of $I$, where $s\geq 1$. We partition $\mathbf{\Lambda}$ and $\mathbbm h$ as follows:
\begin{enumerate}
\item for $1 \leq j \leq s-1$, define $\mathbf{\Lambda}_j := \{\Lambda_1^{i_j},\Lambda_0^{i_{j+1}}\}$ and $\mathbbm h_j := (h_{i_j+1},\ldots,h_{i_{j+1}})$,
\item if $i_1 \not = 0$, define $\mathbf{\Lambda}_0 := \{\Lambda_0^{i_1}\}$ and $\mathbbm h_0 := (h_1,\ldots,h_{i_1})$,
\item if $i_s \not = m$, define $\mathbf{\Lambda}_s := \{\Lambda_1^{i_s}\}$ and $\mathbbm h_s := (h_{i_s+1},\ldots ,h_m)$.
\end{enumerate}
By doing so, it is clear that $\mathbf{\Lambda}$ is the disjoint union of all the $\mathbf{\Lambda}_j$'s and that $\mathbbm h$ is the concatenation of the $\mathbbm h_j$'s.\\

\underline{If $i_1 \not = 0$:} consider the hermitian space $V_{\Lambda_0^{i_1}}^0 := \Lambda_0^{i_1}/\pi(\Lambda_0^{i_1})^{\vee}$ over $\mathbb F_{q^2}$, as defined in Section \ref{Section2.2}. We consider Deligne-Lusztig varieties with respect to the unitary group $\mathrm U(V_{\Lambda_0^{i_1}}^{0},\{\cdot,\cdot\})$ as in Section \ref{DL:Unitary}. Note that $\dim(V_{\Lambda_0^{i_1}}^{0}) = t(\Lambda_0^{i_1}) = 2(l-1) + h_{i_1} + 1$ for some $l \geq 1$. Consider the Weyl group $\mathbf W \simeq S_{2(l-1)+h_{i_1}+1}$ and the set of simple reflections $\mathbf S = \{s_1,\ldots ,s_{2(l-1)+h_{i_1}}\}$. We consider $J \subset \mathbf S$ such that $\mathbf d_J$ corresponds to flags of the following type.
\begin{equation*}
\{0\} \overset{l}{\subset} U_{i_1} \overset{\Delta h_{i_1-1}}{\subset} U_{i_1-1} \subset \ldots \overset{\Delta h_1}{\subset} U_1 \overset{h_1}{\subset} W_1 \overset{\Delta h_1}{\subset} \ldots \subset W_{i_1-1} \overset{\Delta h_{i_1-1}}{\subset} W_{i_1} \overset{l-1}{\subset} V_{\Lambda_0^{i_1}}^0
\end{equation*}
In other words, we have
\begin{equation*}
J = \mathbf S \setminus \{s_{l},s_{l+\frac{h_{i_1}-h_{i_1-1}}{2}},\ldots , s_{l+\frac{h_{i_1}-h_1}{2}},s_{l + \frac{h_{i_1}+h_1}{2}}, \ldots , s_{l+\frac{h_{i_1}+h_{i_1-1}}{2}},s_{l+h_{i_1}}\}.
\end{equation*}
Note that if $h_1 = 0$ then $s_{l+\frac{h_{i_1}-h_1}{2}} = s_{l + \frac{h_{i_1}+h_1}{2}}$, and if $l=1$ then the last term ``$s_{1+h_{i_1}}$'' does not exist. In particular, we have 
\begin{equation*}
r := \#(\mathbf S \setminus J) = \begin{cases}
2i_1 & \text{if } h_1 > 0 \text{ and } l>1,\\
2 i_1 - 1 & \text{if } (h_1 = 0 \text{ and } l>1) \text{ or } (h_1 > 0 \text{ and } l=1),\\
2 i_1 - 2 & \text{if } h_1 = 0 \text{ and } l=1.
\end{cases}
\end{equation*}
To simplify the notations, we write $0 \leq k_1 < \ldots < k_r < 2(l-1)+h_1$ for the indices such that
\begin{equation*}
J = \mathbf S \setminus \{s_{k_1+1},s_{k_2+1},\ldots , s_{k_r+1}\}.
\end{equation*}
For instance, $k_1 = l-1$, $k_2 = l-1+\Delta h_{i_1-1}$, and so on. Note that $k_r = l+h_{i_1}-1$ if $l > 1$, and $k_r = h_{i_1}-\Delta h_{i_1-1}$ if $l=1$. We observe that $F(s_{k_i+1}) = s_{k_{r+1-i}}$ for all $1 \leq i \leq r$ when $k_1 \not = 0$, and that $F(s_{k_i+1}) = s_{k_{r+2-i}}$ for all $2 \leq i \leq r$ when $k_1 = 0$. We define
\begin{equation*}
Y_{\mathbf{\Lambda}_0}^{\mathbbm h_0} := X_J\{s_{l}s_{l+1} \ldots s_{2(l-1) + h_{i_1}}\},
\end{equation*}
and $X_{\mathbf{\Lambda}_0}^{\mathbbm h_0} := \overline{Y_{\mathbf{\Lambda}_0}^{\mathbbm h_0}}$, where the closure is taken inside the flag variety of type $J$.

\begin{prop}\label{SmoothnessDLVar}
The variety $X_{\mathbf{\Lambda}_0}^{\mathbbm h_0}$ is projective, smooth and geometrically irreducible of dimension $l+h_{i_1}-1$.
\end{prop}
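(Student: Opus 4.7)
The plan is to prove each of the four claims --- projectivity, smoothness, geometric irreducibility, and dimension --- in turn. Projectivity is immediate: by construction, $X_{\mathbf{\Lambda}_0}^{\mathbbm h_0}$ is the Zariski closure of $Y_{\mathbf{\Lambda}_0}^{\mathbbm h_0}$ inside the partial flag variety $\mathbf{G}/\mathbf{P}_J$ of the unitary group $\mathrm{U}(V_{\Lambda_0^{i_1}}^0, \{\cdot,\cdot\})$, which is itself projective. For the dimension and smoothness of the open stratum $Y_{\mathbf{\Lambda}_0}^{\mathbbm h_0} = X_J\{w\}$ with $w := s_l s_{l+1} \cdots s_{2(l-1)+h_{i_1}}$ (an element of length $\ell(w) = l+h_{i_1}-1$), I would invoke the proposition at the end of Section \ref{GeneralitiesDLVarieties}: the fine Deligne-Lusztig variety $X_J\{w\}$ is isomorphic, via the natural projection from a smaller partial flag variety, to a parabolic Deligne-Lusztig variety $X_{I_\infty}(w_\infty)$ arising from the B\'edard sequence of $w$. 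Proposition \ref{DimDLVar} then yields smoothness and pure dimension $\ell(w_\infty) = \ell(w) = l+h_{i_1}-1$.

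For geometric irreducibility, I would apply the reducibility criterion in Proposition \ref{DimDLVar}: one must show that $W_{I_\infty} \cdot w_\infty$ is not contained in any proper $F$-stable parabolic subgroup $W_{J'}$ of $\mathbf{W} \simeq S_n$ with $n := 2(l-1)+h_{i_1}+1$. The reduced expression for $w$ uses the simple reflections $s_l, s_{l+1}, \ldots, s_{n-1}$; applying $F$ (which acts by $s_i \mapsto s_{n-i}$) yields $s_1, \ldots, s_{n-l}$. Together with the generators of $W_{J}$, which include every $s_i$ except $s_{k_1+1}, \ldots, s_{k_r+1}$, these cover the full set $\mathbf{S}$, so the smallest $F$-stable parabolic containing $W_{I_\infty} \cdot w_\infty$ is the full group $\mathbf{W}$, and irreducibility of the fine Deligne-Lusztig variety follows. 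Since the closure of an irreducible variety is irreducible, $X_{\mathbf{\Lambda}_0}^{\mathbbm h_0}$ is irreducible too.

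The main obstacle is establishing smoothness of the \emph{closure} $X_{\mathbf{\Lambda}_0}^{\mathbbm h_0}$ in the partial flag variety, not just of the open part. A valuable piece of evidence is the combinatorial description obtained from Proposition \ref{StratificationFineDLVarietiesUnitary}: by Theorem \ref{ClosureFineDLVariety} combined with that proposition, the fine Deligne-Lusztig strata of $X_{\mathbf{\Lambda}_0}^{\mathbbm h_0}$ are in bijection with tuples $(t_1, \ldots, t_r)$ in the product $[0, k_2-k_1] \times \cdots \times [0, k_r - k_{r-1}] \times [0, n-1-k_r]$, and the stratum corresponding to $(t_1, \ldots, t_r)$ has dimension $t_1 + \cdots + t_r$. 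A telescoping computation shows that the top-dimensional stratum has dimension $n-1-k_1 = l + h_{i_1} - 1$. This cell structure is formally identical to the standard Schubert decomposition of a product of projective spaces $\mathbb{P}^{k_2-k_1} \times \cdots \times \mathbb{P}^{n-1-k_r}$. The plan is therefore to construct an explicit isomorphism between $X_{\mathbf{\Lambda}_0}^{\mathbbm h_0}$ and such a product --- or, if more accurate, an iterated $\mathbb{P}^{d}$-bundle built from the same combinatorial data --- by reading off projective coordinates from the successive intersections of the partial flag with its Frobenius-twisted orthogonal flag. Establishing this isomorphism is the substantive geometric content and would immediately deliver smoothness (and would also re-confirm irreducibility and dimension).
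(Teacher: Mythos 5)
Your treatment of projectivity, dimension, and irreducibility is essentially correct and runs parallel to the paper, though the paper phrases irreducibility through the coarse Deligne-Lusztig variety $X_J(w')$ (for $w'$ the shortest element of $\mathbf W_J w \mathbf W_{F(J)}$) rather than through the B\'edard pair $(I_\infty, w_\infty)$. Both routes work; note however that your irreducibility argument is slightly muddled in invoking ``the generators of $W_J$'' --- the criterion in Proposition \ref{DimDLVar} involves $W_{I_\infty}w_\infty$, and $I_\infty$ may well be strictly smaller than $J$. The argument is salvageable because $\mathrm{supp}(w)\cup F(\mathrm{supp}(w))$ already equals $\mathbf S$ (one checks $\{s_l,\ldots,s_{n-1}\}\cup\{s_1,\ldots,s_{n-l}\}=\mathbf S$ using $n=2l-1+h_{i_1}\geq 2l-1$), so no parabolic containment constraints beyond the support of $w$ are needed.

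The smoothness argument, however, has a genuine and fatal gap. You correctly identify that smoothness of the \emph{closure} is the substantive point, but the proposed strategy --- exhibiting an explicit isomorphism between $X_{\mathbf\Lambda_0}^{\mathbbm h_0}$ and a product of projective spaces $\mathbb P^{k_2-k_1}\times\cdots\times\mathbb P^{n-1-k_r}$, or an iterated $\mathbb P^d$-bundle --- cannot succeed, because no such isomorphism exists in general. Consider the smallest nontrivial case: $m=1$, $\mathbbm h=(0)$, $i_1=1$, and $\Lambda_0^1$ a vertex lattice of type $3$ inside a hermitian space of dimension $n=3$. Here $l=2$, $r=1$, and your proposed model is $\mathbb P^1$. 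But $X_{\mathbf\Lambda_0}^{\mathbbm h_0}$ is the closure of the Deligne-Lusztig variety for the twisted Coxeter element of $\mathrm U_3$, which by Vollaard's computation is the Fermat (Hermitian) curve $x_0^{q+1}+x_1^{q+1}+x_2^{q+1}=0$, a smooth projective curve of genus $q(q-1)/2 > 0$. This is not $\mathbb P^1$, nor is it any iterated projective bundle. The observation that the fine Deligne-Lusztig strata of $X_{\mathbf\Lambda_0}^{\mathbbm h_0}$ are indexed by the same poset as the Schubert cells of a product of projective spaces is a red herring: these strata are not affine cells (already in the rank-one case, the one-dimensional stratum is an irreducible affine curve of positive genus and the zero-dimensional stratum has $q^3+1$ points, not one), so matching stratum dimensions does not constrain the isomorphism type. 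The paper instead establishes smoothness of the closure via the result of Görtz, He and Nie (\cite{cox24}) that $\overline{X_J(w')}$ is \emph{smoothly equivalent} to a Schubert variety in the full flag variety for the longest element of $\mathbf W_J w' \mathbf W_{F(J)}$, and then verifies that this explicitly written permutation avoids the patterns $(3412)$ and $(4231)$, invoking the Lakshmibai--Sandhya criterion. This is the mechanism your proof is missing.
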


\begin{proof}
We have $\dim(X_{\mathbf{\Lambda}_0}^{\mathbbm h_0}) = \ell(s_{l}s_{l+1} \ldots s_{2(l-1) + h_{i_1}}) = l+h_{i_1}-1$. In order to prove the smoothness of $X_{\mathbf{\Lambda}_0}^{\mathbbm h_0}$ and, in doing so, irreducibility as well, we follow the approach of \cite{cox24} Sections 7.1 and 7.2. There, a priori only cases of Coxeter type are considered, while our setting is in the more general case of fully Hodge-Newton decomposable type. For this reason, we choose to repeat the arguments here.\\
Let us write $w := s_{l}s_{l+1}\ldots s_{2(l-1) + h_{i_1}}$ and let $w' := s_{l}s_{l+1} \ldots s_{l-1 + h_{i_1}}$, so that $w'$ is the shortest element of $\mathbf W_J w \mathbf W_{F(J)}$. In particular $w' \in {}^J\mathbf W^{F(J)}$. We show that $X_{\mathbf{\Lambda}_0}^{\mathbbm h_0} = \overline{X_J(w')}$. Clearly we have $Y_{\mathbf{\Lambda}_0}^{\mathbbm h_0} \subset X_J(w')$, so that we also have an inclusion of closures. Moreover, the coarse Deligne-Lusztig variety $X_J(w')$ is irreducible by Proposition \ref{DimDLVar} since no proper $F$-stable parabolic subgroup of $\mathbf W$ contains $\mathbf W_Jw'$. Thus $\overline{X_J(w')}$ is irreducible as well, and to prove the equality it is enough to show that $\dim(X_J(w')) = \dim(Y_{\mathbf{\Lambda}_0}^{\mathbbm h_0}) = \ell(w)$. We know that 
\begin{equation*}
\dim(X_J(w')) = \ell(w') + \ell(\mathbf W_{F(J)}) - \ell(\mathbf W_{J\cap {}^{w'}F(J)}),
\end{equation*}
and one may check that $\ell(\mathbf W_{F(J)}) - \ell(\mathbf W_{J\cap {}^{w'}F(J)}) = l-1$. Since $\ell(w) = \ell(w') + l-1$, the result follows.\\
So far we have proved that $X_{\mathbf{\Lambda}_0}^{\mathbbm h_0} = \overline{X_J(w')}$. In particular, $X_{\mathbf{\Lambda}_0}^{\mathbbm h_0}$ is geometrically irreducible. According to \cite{cox24}, the variety $\overline{X_J(w')}$ is smoothly equivalent to the Schubert variety in the complete flag variety for the longest element of $\mathbf W_J w' \mathbf W_{F(J)}$. Since $w' \in {}^J\mathbf W^{F(J)}$, this longest element can be written as $xw'y$ where $y$ is the longest element of $\mathbf W_{F(J)}$ and $x$ is the longest element of $\mathbf W_J \cap \mathbf W^{F(J)}w^{\prime -1} = \mathbf W_J \cap \mathbf W^{J\cap {}^{w'}F(J)}$. One may check that $x = s_1\ldots s_{l-1}$, and the element $xw'y$ can be written as 
\begin{equation*}
\hspace{-1.7cm} \left(
\begin{array}{cccccccccccccc}
1 & 2 & \ldots & k_1 & k_1 + 1 & \ldots & k_2 & \ldots & k_r & k_r+1 & k_r+2 & \ldots & t(\Lambda_0^{i_1})-1 & t(\Lambda_0^{i_1}) \\
k_1+1 & k_1 & \ldots & 2 & k_2 + 1 & \ldots & k_1+2 & \ldots & k_{r-1}+2 & t(\Lambda_0^{i_1}) & t(\Lambda_0^{i_1}) - 1 & \ldots & k_r + 2 & 1
\end{array} 
\right)
\end{equation*}
This permutation avoids the patterns $(3412)$ and $(4231)$, thus the associated Schubert variety is smooth according to \cite{Schubert} Theorem 8.1.1.
\end{proof}

\begin{prop}\label{DecompositionDLVarU0}
We have 
\begin{equation*}
X_{\mathbf{\Lambda}_0}^{\mathbbm h_0} = \bigsqcup_{w_1,\ldots,w_r} X_J\{w_1w_2\ldots w_r\} 
\end{equation*}
where $w_1,\ldots , w_r$ run over all the permutations of the form $w_i = s_{k_i+1}s_{k_i+2}\ldots s_{k_i+t_i}$ for some $0 \leq t_i \leq k_{i+1}-k_i$ when $1\leq i \leq r-1$, and where $w_r = s_{k_r+1}\ldots s_{k_r+t_r}$ for some $0 \leq t_r \leq 2(l-1) + h_{i_1}-k_r$.
\end{prop}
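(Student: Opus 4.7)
The plan is to combine Theorem \ref{ClosureFineDLVariety} with the combinatorial Proposition \ref{StratificationFineDLVarietiesUnitary} applied to the parameters at hand.

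First, I would unwind the definition: since $X_{\mathbf{\Lambda}_0}^{\mathbbm h_0} = \overline{Y_{\mathbf{\Lambda}_0}^{\mathbbm h_0}} = \overline{X_J\{w\}}$, where $w := s_{l}s_{l+1}\ldots s_{2(l-1)+h_{i_1}}$, Theorem \ref{ClosureFineDLVariety} gives
\begin{equation*}
X_{\mathbf{\Lambda}_0}^{\mathbbm h_0} \;=\; \bigsqcup_{\substack{w' \in {}^J\mathbf{W}\\ w' \leq_{J,F} w}} X_J\{w'\}.
\end{equation*}
So the task reduces to identifying the set of $J$-reduced elements $w' \in {}^J\mathbf{W}$ satisfying $w' \leq_{J,F} w$ and showing they are exactly the products $w_1 w_2 \ldots w_r$ described in the statement.

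Next I would verify that the hypotheses of Proposition \ref{StratificationFineDLVarietiesUnitary} are met. Here $n = t(\Lambda_0^{i_1}) = 2(l-1) + h_{i_1} + 1$, the integers $0 \leq k_1 < \ldots < k_r < n-1$ are precisely those introduced just before the definition of $Y_{\mathbf{\Lambda}_0}^{\mathbbm h_0}$, and by construction $k_1+1 = l$ and $n-1 = 2(l-1) + h_{i_1}$, so that $w = s_{k_1+1}s_{k_1+2}\ldots s_{n-1}$ has exactly the form required by that Proposition. The symmetry condition on the $k_i$'s (either $k_i = n-1-k_{r+1-i}$ when $k_1 \neq 0$, or the shifted version when $k_1 = 0$) is a direct verification from the explicit list
\begin{equation*}
J = \mathbf S \setminus \bigl\{s_{l}, s_{l+\Delta h_{i_1-1}}, \ldots, s_{l+\frac{h_{i_1}-h_1}{2}}, s_{l+\frac{h_{i_1}+h_1}{2}}, \ldots, s_{l+h_{i_1}-\Delta h_{i_1-1}}, s_{l+h_{i_1}}\bigr\},
\end{equation*}
which is symmetric around the midpoint $l + h_{i_1}/2$; since the $F$-action on $\mathbf S$ sends $s_j$ to $s_{n-j}$, this translates exactly into the required relation between the $k_i$. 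One must handle the small cases $l=1$ and $h_1=0$ separately to confirm that the relevant index ``$s_{l+h_{i_1}}$'' or collision $s_{l+\frac{h_{i_1}-h_1}{2}} = s_{l+\frac{h_{i_1}+h_1}{2}}$ still produces the right symmetric structure.

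With the hypotheses verified, Proposition \ref{StratificationFineDLVarietiesUnitary} directly gives that any $J$-reduced $\sigma$ with $\sigma \leq_{J,F} w$ admits a unique decomposition $\sigma = w_1 w_2 \ldots w_r$ with $w_i = s_{k_i+1}s_{k_i+2}\ldots s_{k_i+t_i}$ for $0 \leq t_i \leq k_{i+1}-k_i$ (resp.\ $0 \leq t_r \leq n-1-k_r = 2(l-1)+h_{i_1}-k_r$). Conversely, each such product lies in ${}^J\mathbf{W}$ and is $\leq_{J,F} w$ (as it is even $\leq w$ in the Bruhat order, being a subword of the reduced expression $s_{k_1+1}\ldots s_{n-1}$). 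Substituting into the closure decomposition above yields the claimed disjoint union.

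The main obstacle is purely bookkeeping: one must carefully match the dictionary between the indices $(l, h_1, \ldots, h_{i_1})$ appearing in $J$ and the abstract parameters $(k_1, \ldots, k_r)$ of Proposition \ref{StratificationFineDLVarietiesUnitary}, and check the symmetry condition in each of the three cases for $r$ ($r = 2i_1$, $r=2i_1-1$, $r=2i_1-2$) corresponding to whether $h_1 = 0$ and/or $l = 1$. Once the indexing is pinned down, the proof is immediate from the two cited results.
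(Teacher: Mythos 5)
Your proof is correct and follows exactly the route the paper takes: the paper's proof is the single line ``This is a direct application of Theorem \ref{ClosureFineDLVariety} and Proposition \ref{StratificationFineDLVarietiesUnitary}.'' You have usefully filled in the two implicit checks needed to make that application rigorous, namely the verification that the $k_i$'s satisfy the symmetry hypothesis of Proposition \ref{StratificationFineDLVarietiesUnitary} (which the paper notes just before Definition \ref{BTStratumDLVarU0}), and the converse inclusion that each product $w_1\cdots w_r$ is indeed $J$-reduced and $\leq_{J,F}$ the top element, which Proposition \ref{StratificationFineDLVarietiesUnitary} alone does not provide.
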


\begin{proof}
This is a direct application of Theorem \ref{ClosureFineDLVariety} and Proposition \ref{StratificationFineDLVarietiesUnitary}. 
\end{proof}

For future reference in Section \ref{Section4.1}, we introduce the following subvariety. 

\begin{defi}\label{BTStratumDLVarU0}
Let $r_0 := r-1$ if $l>1$ and $r_0 := r$ if $l=1$, so that we have $k_{r_0} = l + h_{i_1} - \Delta h_{i_1-1} - 1$. We define a subvariety
\begin{equation*}
X_{\mathbf{\Lambda}_0}^{\mathbbm h_0,0} := \bigsqcup_{w_1,\ldots ,w_{r_0}} X_{J}\{w_1\ldots w_{r_0}s_{l+h_{i_1}}\ldots s_{2(l-1) + h_{i_1}}\} \hookrightarrow X_{\mathbf{\Lambda}_0}^{\mathbbm h_0},
\end{equation*}
where $w_1,\ldots,w_{r_0}$ are as in Proposition \ref{DecompositionDLVarU0} and such that 
\begin{itemize}
\item $t_i+t_{r_0+1-i} \geq \Delta h_{i_1-i}$ for all $1 \leq i \leq i_1-1$,
\item $2t_{i_1} \geq h_1$ if $h_1 \not = 0$.
\end{itemize}
\end{defi}

\begin{prop}
Let $k$ be a field extension of $\mathbb F_{q^2}$. We have 
\begin{equation*}
\hspace{-2cm} X_{\mathbf{\Lambda}_0}^{\mathbbm h_0}(k) = \left\{\begin{array}{c}
\{0\} \subset U_{i_1} \subset \ldots \subset W_{i_1} \subset (V_{\Lambda_0^{i_1}}^{0})_k \\
\text{partial flags of type } \mathbf d_J
\end{array} \middle| 
\begin{tikzcd}[column sep=small, row sep=small]
 W_{i_1}^{\perp} \arrow[d,symbol=\subset, outer sep=2pt,"1"] \arrow[r,symbol=\subset] & \ldots \arrow[r,symbol=\subset] & W_1^{\perp} \arrow[d,symbol=\subset, outer sep=2pt,"1"] \arrow[r,symbol=\subset] & U_1^{\perp} \arrow[d,symbol=\subset, outer sep=2pt,"1"] \arrow[r,symbol=\subset] & \ldots \arrow[r,symbol=\subset] & U_{i_1}^{\perp} \arrow[d,symbol=\subset, outer sep=2pt,"1"] \\
U_{i_1} \arrow[r,symbol=\subset] & \ldots \arrow[r,symbol=\subset] & U_1 \arrow[r,symbol=\subset] & W_1 \arrow[r,symbol=\subset] & \ldots \arrow[r,symbol=\subset] & W_{i_1} 
\end{tikzcd} 
\right\}.
\end{equation*}
\end{prop}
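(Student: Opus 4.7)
The plan is to use $X_{\mathbf{\Lambda}_0}^{\mathbbm h_0} = \overline{X_J(w')}$ with $w' = s_l s_{l+1} \cdots s_{l-1+h_{i_1}}$ from the proof of Proposition \ref{SmoothnessDLVar}. I would introduce the closed subscheme $Z \subset \mathbf{G}/\mathbf{P}_J$ (for $\mathbf{G} := \mathrm{GL}((V_{\Lambda_0^{i_1}}^{0})_{\overline{\mathbb{F}_{q^2}}})$) defined by the orthogonality conditions $W_j^\perp \subset U_j$ (equivalently $U_j^\perp \subset W_j$, by involutivity of $\cdot^\perp$) for each $1 \leq j \leq i_1$; a dimension count using type $\mathbf{d}_J$ gives $\dim U_j^\perp = \dim W_j - 1$ and $\dim W_j^\perp = \dim U_j - 1$, so these inclusions automatically have index one, matching the diagram's vertical arrows.

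To show $X_{\mathbf{\Lambda}_0}^{\mathbbm h_0}(k) \subset Z(k)$, it suffices by closedness of $Z$ to check the condition on the dense open $X_J(w')$. By Section \ref{DL:Unitary}, a $k$-point is a flag $\mathcal{G}$ with $(\mathcal{G},\mathcal{G}^{\perp,(q)})$ in relative position $w'$; since $w'$ is a ``shift'' that only non-trivially permutes indices between positions $l$ and $l+h_{i_1}$, working with a basis adapted to both flags yields $U_j^\perp \subset W_j$ for every $j$. Conversely, I would identify $Z$ with $X_{\mathbf{\Lambda}_0}^{\mathbbm h_0}$ by stratifying $Z$ according to the ranks of the intersections $U_j \cap W_{j'}^\perp$ and matching these strata with the fine Deligne-Lusztig pieces $X_J\{w_1 \cdots w_r\}$ from Proposition \ref{DecompositionDLVarU0}; alternatively, a direct parameter count starting from a totally isotropic $U_{i_1}$ and successively extending inward shows $\dim Z = l+h_{i_1}-1 = \dim X_{\mathbf{\Lambda}_0}^{\mathbbm h_0}$, which combined with irreducibility (Proposition \ref{SmoothnessDLVar}) and the inclusion above forces equality.

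The main obstacle is concretely translating the ``relative position $w'$'' condition into the orthogonality relations, because $\mathcal{G}$ has type $\mathbf{d}_J$ while $\mathcal{G}^{\perp,(q)}$ has the reversed type $\mathbf{d}_{F(J)}$, so the adapted bases for the two flags must be matched step-by-step through $w'$. Once this dictionary is in place, both inclusions, hence the claimed equality of $k$-points, follow.
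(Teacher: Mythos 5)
Your ``matching the strata'' branch is essentially the paper's proof, repackaged as two separate inclusions. The paper does both directions at once: for a partial flag $\mathcal G = (U_{i_1}\subset\cdots\subset W_{i_1})$ of type $\mathbf d_J$, lift to a complete flag $\mathcal F$ and let $w\in{}^J\mathbf W$ be the relative position of $\mathcal F$ with $\mathcal F^\perp$; then the orthogonality conditions $W_i^\perp\subset U_i$ (equivalently $U_i^\perp\subset W_i$) for all $i$ are equivalent to $w(\{1,\ldots,k_i\})\subset\{1,\ldots,k_i+1\}$ for all $1\le i\le r$, and Proposition~\ref{FineDLVartietiesInUnitary} says exactly that these $w$ are the products $w_1\cdots w_r$ enumerated in Proposition~\ref{DecompositionDLVarU0}. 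Hence the set on the RHS is literally $\bigsqcup_{w_1,\ldots,w_r} X_J\{w_1\cdots w_r\}(k)=X_{\mathbf\Lambda_0}^{\mathbbm h_0}(k)$. Once you have this dictionary, the detour through $\overline{X_J(w')}$, the dense open $X_J(w')$, and the cyclic structure of $w'$ is unnecessary; the combinatorial lemma already handles every stratum uniformly. So if you flesh out the stratum-matching branch, you will end up needing Proposition~\ref{FineDLVartietiesInUnitary} and will have rediscovered the paper's argument.

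Your ``alternatively, a direct parameter count'' branch has a genuine gap. From $X_{\mathbf\Lambda_0}^{\mathbbm h_0}\subset Z$, $\dim Z = l+h_{i_1}-1 = \dim X_{\mathbf\Lambda_0}^{\mathbbm h_0}$, and irreducibility of $X_{\mathbf\Lambda_0}^{\mathbbm h_0}$, you cannot conclude $Z=X_{\mathbf\Lambda_0}^{\mathbbm h_0}$: nothing rules out $Z$ having further irreducible components of the same dimension. You would additionally need to prove $Z$ is irreducible, which is not apparent from its defining orthogonality equations — and the natural way to prove it (write $Z$ as a union of fine Deligne--Lusztig strata whose closures all sit inside $\overline{X_J(w')}$) is precisely the stratification argument the dimension count was meant to avoid.
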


\begin{proof}
By definition of fine Deligne-Lusztig varieties, a partial flag $\mathcal G$ of type $\mathbf d_J$ lies in $X_J\{w\}(k)$ for some $w\in {}^J\mathbf W$, if and only if there exists a complete flag 
\begin{equation*}
\mathcal F: \{0\} = \mathcal F_0 \subset \mathcal F_1 \subset \ldots \subset \mathcal F_{t_0^{i_1}-1} \subset \mathcal F_{t_0^{i_1}} = (V_{\Lambda_0^{i_1}}^{0})_k,
\end{equation*}
which is of relative position $w$ with respect to $\mathcal F^{\perp}$, and such that removing the terms $\mathcal F_i$ for all $i$ such that $s_i \in J$ results in the original partial flag $\mathcal G$. Let us write 
\begin{equation*}
\mathcal G: \{0\} \subset U_{i_1} \subset \ldots \subset W_{i_1} \subset (V_{\Lambda_0^{i_1}}^{0})_k,
\end{equation*}
and let $\mathcal F$ be a complete flag which lifts $\mathcal G$ as above. Denote by $w \in {}^J\mathbf W$ the relative position of $\mathcal F$ and $\mathcal F^{\perp}$. The partial flag $\mathcal G$ belongs to the set on the RHS of the Proposition if and only if we have the inclusions $W_i^{\perp} \subset U_i$ and $U_i^{\perp} \subset W_i$. These, in turn, are equivalent to the conditions 
\begin{equation*}
w(\{1,\ldots ,k_i\}) \subset \{1,\ldots ,k_{i}+1\},
\end{equation*}
for all $1 \leq i \leq r$. By Proposition \ref{FineDLVartietiesInUnitary}, this is equivalent to $w$ being of the form $w=w_1w_2\ldots w_r$ as specified in Proposition \ref{DecompositionDLVarU0}. In other words, we have proved that the set on the RHS coincides with $\bigsqcup_{w_1,\ldots,w_r} X_J\{w_1w_2\ldots w_r\}(k) = X_{\mathbf{\Lambda}_0}^{\mathbbm h_0}(k)$, which concludes the proof.
\end{proof}

\underline{If $i_s \not = m$:} consider the hermitian space $V_{\Lambda_1^{i_s}}^{0} := \Lambda_1^{i_s}/\pi^2(\Lambda_1^{i_s})^{\vee}$. Similarly to the previous paragraph, we consider Deligne-Lusztig varieties with respect to the unitary group $\mathrm U(V_{\Lambda_1^{i_s}}^0,\{\cdot,\cdot\})$. We have $\dim(V_{\Lambda_1^{i_s}}^0) = t(\Lambda_1^{i_s}) = 2(l-1)+(n-h_{i_s+1})+1$ for some $l\geq 1$. Consider the Weyl group $\mathbf W \simeq S_{2(l-1)+(n-h_{i_s+1})+1}$ and the set of simple reflections $\mathbf S = \{s_1,\ldots ,s_{2(l-1)+(n-h_{i_s+1})}\}$. We consider $J \subset \mathbf S$ such that $\mathbf d_J$ corresponds to flags of the following type.
\begin{equation*}
\hspace{-1cm} \{0\} \overset{l}{\subset} W_{1} \overset{\Delta h_{i_s+1}}{\subset} \ldots \overset{\Delta h_{m-1}}{\subset} W_{k-i_s} \overset{n-h_m}{\subset} U_{k-i_s} \overset{\Delta h_{m-1}}{\subset} \ldots \overset{\Delta h_{i_s+1}}{\subset} U_{1} \overset{l-1}{\subset} V_{\Lambda_1^{i_s}}^0
\end{equation*}
Let $r := \#(\mathbf S \setminus J)$ and for simplicity, let us write 
\begin{equation*}
J = \mathbf S \setminus \{s_{k_1+1},\ldots , s_{k_r+1}\},
\end{equation*}
for some $0 \leq k_1 < \ldots < k_r < 2(l-1)+(n-h_{i_s+1})$. We define
\begin{equation*}
Y_{\mathbf{\Lambda}_s}^{\mathbbm h_s} := X_J\{s_{l}s_{l+1} \ldots s_{2(l-1)+(n-h_{i_s+1})}\},
\end{equation*}
and $X_{\mathbf{\Lambda}_s}^{\mathbbm h_s} := \overline{Y_{\mathbf{\Lambda}_s}^{\mathbbm h_s}}$, where the closure is taken inside the flag variety of type $J$. Just as for the variety $X_{\mathbf{\Lambda}_0}^{\mathbbm h_0}$, the following statements hold. 

\begin{prop}\label{SmoothnessDLVar2}
The variety $X_{\mathbf{\Lambda}_s}^{\mathbbm h_s}$ is projective, smooth and geometrically irreducible of dimension $l+(n-h_{i_s+1})-1$.
\end{prop}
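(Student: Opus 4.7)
The plan is to mirror the proof of Proposition \ref{SmoothnessDLVar} almost verbatim, since the construction of $X_{\mathbf{\Lambda}_s}^{\mathbbm h_s}$ is formally identical to that of $X_{\mathbf{\Lambda}_0}^{\mathbbm h_0}$ with the hermitian space $V_{\Lambda_1^{i_s}}^0$ of dimension $2(l-1)+(n-h_{i_s+1})+1$ replacing $V_{\Lambda_0^{i_1}}^0$, and with $n-h_{i_s+1}$ playing the role of $h_{i_1}$. Write $w := s_l s_{l+1}\ldots s_{2(l-1)+(n-h_{i_s+1})}$, so by construction $Y_{\mathbf{\Lambda}_s}^{\mathbbm h_s} = X_J\{w\}$ is a fine Deligne-Lusztig variety, smooth of pure dimension $\ell(w) = l+(n-h_{i_s+1})-1$. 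Projectivity follows because the closure is taken inside a (projective) partial flag variety.

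Next, let $w' := s_l s_{l+1}\ldots s_{l-1+(n-h_{i_s+1})}$ be the shortest element of the double coset $\mathbf W_J w \mathbf W_{F(J)}$, so $w' \in {}^J\mathbf W^{F(J)}$ and clearly $Y_{\mathbf{\Lambda}_s}^{\mathbbm h_s} \subset X_J(w')$. I would then argue that $X_{\mathbf{\Lambda}_s}^{\mathbbm h_s} = \overline{X_J(w')}$: no proper $F$-stable parabolic subgroup of $\mathbf W$ contains $\mathbf W_J w'$ (the symmetry of $J$ under $F$ combined with the reflections occurring in $w'$ forces any such parabolic to be the whole group), so by Proposition \ref{DimDLVar} the variety $X_J(w')$ is irreducible. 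To upgrade the inclusion to equality, compute
\begin{equation*}
\dim X_J(w') = \ell(w') + \ell(\mathbf W_{F(J)}) - \ell(\mathbf W_{J\cap {}^{w'}F(J)}),
\end{equation*}
and check, exactly as in the proof of Proposition \ref{SmoothnessDLVar}, that $\ell(\mathbf W_{F(J)})-\ell(\mathbf W_{J\cap {}^{w'}F(J)}) = l-1$, so the dimension equals $\ell(w)$, matching $\dim Y_{\mathbf{\Lambda}_s}^{\mathbbm h_s}$. Hence $\overline{X_J(w')}$ and $X_{\mathbf{\Lambda}_s}^{\mathbbm h_s}$ have the same dimension and the latter is contained in the former, which is irreducible, so they coincide and $X_{\mathbf{\Lambda}_s}^{\mathbbm h_s}$ is geometrically irreducible.

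For smoothness, I would invoke the same result of \cite{cox24}: $\overline{X_J(w')}$ is smoothly equivalent to the Schubert variety in the complete flag variety attached to the longest element of $\mathbf W_J w' \mathbf W_{F(J)}$. Since $w' \in {}^J\mathbf W^{F(J)}$, this longest element factors as $xw'y$ with $y$ the longest element of $\mathbf W_{F(J)}$ and $x$ the longest element of $\mathbf W_J \cap \mathbf W^{J\cap {}^{w'}F(J)}$. Checking case by case that $x = s_1 \ldots s_{l-1}$ as before, the resulting permutation has exactly the same shape as the one written explicitly in the proof of Proposition \ref{SmoothnessDLVar} (only the numerical parameters change), and in particular still avoids the patterns $(3412)$ and $(4231)$. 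By \cite{Schubert} Theorem 8.1.1, the associated Schubert variety is smooth, and hence so is $X_{\mathbf{\Lambda}_s}^{\mathbbm h_s}$.

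The only genuine obstacle is bookkeeping: one must verify that the combinatorial identities on lengths and the explicit form of the longest double coset representative really do transfer from the $\Lambda_0^{i_1}$ setting to the $\Lambda_1^{i_s}$ setting, including the boundary cases $h_m = n$ versus $h_m < n$ and $l = 1$ versus $l > 1$, which affect the precise description of $J$ (parallel to the dichotomies $h_1 = 0$/$h_1 > 0$ and $l=1$/$l>1$ in the first proposition). Once those case distinctions are aligned with the ones already handled, the pattern-avoidance check and the irreducibility argument go through unchanged.
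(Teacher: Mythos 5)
Your proposal is correct and follows exactly the approach the paper intends: the paper gives no explicit proof of Proposition \ref{SmoothnessDLVar2}, stating only that it holds ``just as for the variety $X_{\mathbf{\Lambda}_0}^{\mathbbm h_0}$,'' i.e.\ by repeating the proof of Proposition \ref{SmoothnessDLVar} with $n-h_{i_s+1}$ in place of $h_{i_1}$ and with the boundary dichotomy $h_m = n$ vs.\ $h_m < n$ replacing $h_1 = 0$ vs.\ $h_1 > 0$. You have identified the right substitutions, the right double-coset representatives $w$ and $w'$, and the right invocation of \cite{cox24} together with the pattern-avoidance criterion of \cite{Schubert}, which is precisely the argument the author leaves implicit.
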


\begin{prop}\label{DecompositionDLVarUs}
We have 
\begin{equation*}
X_{\mathbf{\Lambda}_s}^{\mathbbm h_s} = \bigsqcup_{w_1,\ldots,w_r} X_J\{w_1w_2\ldots w_r\},
\end{equation*}
where $w_1,\ldots , w_r$ run over all the permutations of the form $w_i = s_{k_i+1}s_{k_i+2}\ldots s_{k_i+t_i}$ for some $0 \leq t_i \leq k_{i+1}-k_i$ when $1\leq i \leq r-1$, and $w_r = s_{k_r+1}\ldots s_{k_r+t_r}$ for some $0 \leq t_r \leq 2(l-1)+(n-h_{i_s+1})-k_r$.
\end{prop}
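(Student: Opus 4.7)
The plan is to follow exactly the strategy used for Proposition \ref{DecompositionDLVarU0}, since the setup for $X_{\mathbf{\Lambda}_s}^{\mathbbm h_s}$ is structurally parallel: the ambient hermitian space $V_{\Lambda_1^{i_s}}^0$ has dimension $d := 2(l-1)+(n-h_{i_s+1})+1$, and $Y_{\mathbf{\Lambda}_s}^{\mathbbm h_s}$ is the fine Deligne-Lusztig variety attached to the Coxeter-like element $w_{\max} := s_l s_{l+1}\cdots s_{d-1}$. Theorem \ref{ClosureFineDLVariety} then produces at once
\[
X_{\mathbf{\Lambda}_s}^{\mathbbm h_s} = \overline{Y_{\mathbf{\Lambda}_s}^{\mathbbm h_s}} = \bigsqcup_{\substack{w' \in {}^J\mathbf W\\ w' \leq_{J,F} w_{\max}}} X_J\{w'\},
\]
so what remains is to enumerate the elements $w' \in {}^J\mathbf W$ lying below $w_{\max}$.

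For this I invoke Proposition \ref{StratificationFineDLVarietiesUnitary}, which applies in the unitary setting provided the indices $(k_1,\ldots,k_r)$ attached to $J$ satisfy the required palindromic relation. The flag type cut out by $\mathbf S\setminus J$,
\[
\{0\}\overset{l}{\subset} W_1\overset{\Delta h_{i_s+1}}{\subset}\cdots\overset{\Delta h_{m-1}}{\subset} W_{m-i_s}\overset{n-h_m}{\subset} U_{m-i_s}\overset{\Delta h_{m-1}}{\subset}\cdots\overset{\Delta h_{i_s+1}}{\subset} U_1\overset{l-1}{\subset} V_{\Lambda_1^{i_s}}^0,
\]
is manifestly self-dual, and a direct computation of the transition positions gives $k_1 = l-1$ at one end and $k_r = l+(n-h_{i_s+1})-1$ at the other (when $l>1$), with $k_i + k_{r+1-i} = d-1$ throughout, which is exactly the hypothesis of Proposition \ref{StratificationFineDLVarietiesUnitary} in the case $k_1\neq 0$. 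When $l=1$ the top symbol $s_{1+(n-h_{i_s+1})}$ disappears and $k_1 = 0$, and the same computation yields the shifted symmetry $k_i+k_{r+2-i}=d-1$ for $2\leq i \leq r$, which is the second case of that proposition.

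With the hypotheses of Proposition \ref{StratificationFineDLVarietiesUnitary} verified, I apply it to $w_{\max} = s_{k_1+1}\cdots s_{d-1}$ to conclude that every $w' \in {}^J\mathbf W$ with $w' \leq_{J,F} w_{\max}$ admits a unique factorization $w' = w_1\cdots w_r$ of the precise form asserted in the proposition, namely $w_i = s_{k_i+1}\cdots s_{k_i+t_i}$ with $0 \leq t_i \leq k_{i+1}-k_i$ for $i<r$, and $w_r = s_{k_r+1}\cdots s_{k_r+t_r}$ with $0 \leq t_r \leq d-1-k_r = 2(l-1)+(n-h_{i_s+1})-k_r$. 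Plugging this into the disjoint union above yields the claimed decomposition.

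The only real bookkeeping concern, and therefore the most delicate step, is checking that the symmetry relation among the $k_i$'s survives the degenerate configurations $l=1$ and $h_m=n$ (where the flag type collapses an outer or middle slot, so that $r$ drops accordingly). These are the exact analogues of the degeneracies already accounted for in the proof of Proposition \ref{DecompositionDLVarU0}, and a case-by-case check confirms that in each variant one lands in one of the two scenarios covered by Proposition \ref{StratificationFineDLVarietiesUnitary}, so no new ingredient is required.
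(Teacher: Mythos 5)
Your proposal is correct and follows exactly the approach the paper intends: the paper gives no explicit proof for this proposition (it remarks that the statements for $X_{\mathbf{\Lambda}_s}^{\mathbbm h_s}$ hold ``just as for the variety $X_{\mathbf{\Lambda}_0}^{\mathbbm h_0}$''), and the proof of Proposition~\ref{DecompositionDLVarU0} is precisely an application of Theorem~\ref{ClosureFineDLVariety} together with Proposition~\ref{StratificationFineDLVarietiesUnitary}. Your verification of the palindromic symmetry of the breakpoints $k_i$, including the degenerate cases $l=1$ (which lands in the $k_1=0$ branch of Proposition~\ref{StratificationFineDLVarietiesUnitary}) and $h_m=n$ (which merely drops $r$ by one while staying in the $k_1\neq 0$ branch when $l>1$), is exactly the bookkeeping the paper leaves to the reader.
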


For future reference, we introduce the following subvariety.

\begin{defi}\label{BTStratumDLVarUs}
Let $r_0 := r-1$ if $l>1$ and $r_0 := r$ if $l=1$, so that we have $k_{r_0} = l + (n-h_{i_s+1}) - \Delta h_{i_s+1} - 1$. We define a subvariety
\begin{equation*}
X_{\mathbf{\Lambda}_s}^{\mathbbm h_s,0} := \bigsqcup_{w_1,\ldots,w_{r_0}} X_J\{w_1\ldots w_{r_0}s_{l+(n-h_{i_s+1})}\ldots s_{2(l-1)+(n-h_{i_s+1})}\} \hookrightarrow X_{\mathbf{\Lambda}_s}^{\mathbbm h_s},
\end{equation*}
where $w_1,\ldots,w_{r_0}$ are as in Proposition \ref{DecompositionDLVarUs} and such that
\begin{itemize}
\item $t_i+t_{r_0+1-i} \geq \Delta h_{i_s+i}$ for all $1\leq i \leq m-1-i_s$,
\item $2t_{k-i_s} \geq n-h_m$ if $h_m \not = n$. 
\end{itemize}
\end{defi}

\begin{prop}
Let $k$ be a field extension of $\mathbb F_{q^2}$. We have 
\begin{equation*}
\hspace{-2cm} X_{\mathbf{\Lambda}_s}^{\mathbbm h_s}(k) = \left\{\begin{array}{c}
\{0\} \subset W_{1} \subset \ldots \subset U_{1} \subset (V_{\Lambda_1^{i_s}}^{0})_k \\
\text{partial flags of type } \mathbf d_J
\end{array} \middle| 
\begin{tikzcd}[column sep=small, row sep=small]
 U_{1}^{\perp} \arrow[d,symbol=\subset, outer sep=2pt,"1"] \arrow[r,symbol=\subset] & \ldots \arrow[r,symbol=\subset] & U_{k-i_s}^{\perp} \arrow[d,symbol=\subset, outer sep=2pt,"1"] \arrow[r,symbol=\subset] & W_{k-i_s}^{\perp} \arrow[d,symbol=\subset, outer sep=2pt,"1"] \arrow[r,symbol=\subset] & \ldots \arrow[r,symbol=\subset] & W_{1}^{\perp} \arrow[d,symbol=\subset, outer sep=2pt,"1"] \\
W_{1} \arrow[r,symbol=\subset] & \ldots \arrow[r,symbol=\subset] & W_{k-i_s} \arrow[r,symbol=\subset] & U_{k-i_s} \arrow[r,symbol=\subset] & \ldots \arrow[r,symbol=\subset] & U_{1} 
\end{tikzcd} 
\right\}.
\end{equation*}
\end{prop}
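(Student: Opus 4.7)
The plan is to mirror the proof of the analogous statement for $X_{\mathbf{\Lambda}_0}^{\mathbbm h_0}$, working in the hermitian space $V_{\Lambda_1^{i_s}}^0$ instead of $V_{\Lambda_0^{i_1}}^0$. The key point is that fine Deligne-Lusztig varieties, by definition, are images of classical Deligne-Lusztig varieties (attached to the Borel) under the projection to the partial flag variety: a partial flag $\mathcal G$ of type $\mathbf d_J$ lies in $X_J\{w\}(k)$ for some $w \in {}^J\mathbf W$ if and only if there exists a complete flag $\mathcal F$ refining $\mathcal G$ (in the sense that deleting the terms $\mathcal F_i$ with $s_i \in J$ yields $\mathcal G$), such that $\mathcal F$ and $\mathcal F^{\perp,(q)}$ are in relative position $w$.

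First, I would fix an arbitrary partial flag
\[
\mathcal G : \{0\} \subset W_1 \subset \ldots \subset U_1 \subset (V_{\Lambda_1^{i_s}}^0)_k
\]
of type $\mathbf d_J$, together with a complete refinement $\mathcal F$ of relative position $w \in {}^J\mathbf W$ with its orthogonal $\mathcal F^{\perp,(q)}$. The chain of inclusions displayed on the right-hand side of the statement translates, at the level of $\mathcal F$, into inclusions $W_j^{\perp} \subset U_j$ and $U_j^{\perp} \subset W_j$ for all $1\leq j \leq k-i_s$; these are exactly the conditions on $\mathcal G$ appearing in the claim. The step where the combinatorics enters is the translation of such orthogonality inclusions into the condition
\[
w(\{1,\ldots,k_i\}) \subset \{1,\ldots,k_i+1\}, \qquad 1 \leq i \leq r,
\]
using the definition of relative position through an adapted basis, together with the fact that the indices $k_i$ are precisely the positions where the dimension jumps in the partial flag $\mathcal G$.

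Once this translation is in place, Proposition \ref{FineDLVartietiesInUnitary} applied to the $I$-reduced element $w$ (with the set of indices $k_1 < \ldots < k_r$ just introduced) yields a unique factorization $w = w_1w_2\ldots w_r$ of the prescribed form. Together with Proposition \ref{DecompositionDLVarUs}, this shows that the partial flags $\mathcal G$ satisfying the orthogonality conditions in the statement are exactly the $k$-points of
\[
\bigsqcup_{w_1,\ldots,w_r} X_J\{w_1w_2\ldots w_r\}(k) \;=\; X_{\mathbf{\Lambda}_s}^{\mathbbm h_s}(k),
\]
which concludes the proof.

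The main obstacle, though not really an obstacle in substance, is purely bookkeeping: one must carefully check that the numerical pattern of the jumps in $\mathbf d_J$ (built from the tuple $\mathbbm h_s$, with possible special behavior when $l=1$ or when $h_m=n$) matches up with the indices $k_1,\ldots,k_r$ so that Proposition \ref{FineDLVartietiesInUnitary} applies verbatim. Since the proof for $X_{\mathbf{\Lambda}_0}^{\mathbbm h_0}$ already handled the symmetric case, I would simply indicate that the same argument applies mutatis mutandis and omit the routine combinatorial verification.
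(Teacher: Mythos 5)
Your proof is correct and follows exactly the approach the paper uses for the analogous $X_{\mathbf{\Lambda}_0}^{\mathbbm h_0}$ case (the paper omits this proof precisely because it is a routine adaptation): translate the orthogonality inclusions into the condition $w(\{1,\ldots,k_i\}) \subset \{1,\ldots,k_i+1\}$ on the relative position of a refining complete flag, apply Proposition \ref{FineDLVartietiesInUnitary} to get the prescribed factorization of $w$, and compare with Proposition \ref{DecompositionDLVarUs}. No gap; the "mutatis mutandis" bookkeeping you defer is exactly what the paper itself elides.
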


\underline{For $1\leq j \leq s-1$:} consider the hermitian space $V_{\Lambda_1^{i_j}}^{0} := \Lambda_1^{i_j}/\pi^2(\Lambda_1^{i_j})^{\vee}$ over $\mathbb F_{q^2}$, as defined in Section \ref{Section2.2}. Let $V_1 := \pi\Lambda_0^{i_{j+1}}/\pi^2(\Lambda_1^{i_j})^{\vee}$ and $V_2 := \Lambda_1^{i_j}/\pi(\Lambda_0^{i_{j+1}})^{\vee}$, so that $V_1$ is a subspace of $V_{\Lambda_1^{i_j}}^{0}$ and $V_2$ is a quotient of it. The spaces $V_1$ and $V_2$ are of the same dimension $d$, where
\begin{equation*}
d := \dim(V_1) = \dim(V_2) = \frac{t(\Lambda_1^{i_j})-(n-t(\Lambda_0^{i_{j+1}}))}{2} > 0.
\end{equation*}
Write $t(\Lambda_1^{i_j}) = 2(l_1-1)+(n-h_{i_{j}+1})+1$ and $t(\Lambda_0^{i_{j+1}}) = 2(l_0-1)+h_{i_{j+1}} + 1$ for some $l_0,l_1 \geq 1$. Then, we can rewrite $d$ as 
\begin{equation*}
d = (l_0+l_1-1) + \frac{h_{i_{j+1}}-h_{i_j+1}}{2}.
\end{equation*}
Moreover, the hermitian pairing on $V_{\Lambda_1^{i_j}}^0$ induces a perfect bilinear pairing $B:V_1\times V_2^{(q)} \to \mathbb F_{q^2}$. We will consider Deligne-Lusztig for $\mathrm{GL}(V_1)$ in the context of the fake unitary case, as in Section \ref{DL:FakeUnitary}. Consider the Weyl group $\mathbf W := S_d \times S_d$ and the set of simple reflections $\mathbf S = \{(s_i,\mathrm{id}),(\mathrm{id},s_i), 1\leq i \leq d-1\}$. We consider $J = J_1 \sqcup J_2 \subset \mathbf S$ such that $\mathbf d_J$ corresponds to flags of the following type. 
\begin{align*}
& \{0\} \overset{l_1}{\subset} W_{1} \overset{\Delta h_{i_j+1}}{\subset} W_2 \subset \ldots \overset{\Delta h_{i_{j+1}-1}}{\subset} W_{i_{j+1}-i_j} \overset{l_0-1}{\subset} V_1, \\
& \{0\} \overset{l_0}{\subset} U_{i_{j+1}-i_j} \overset{\Delta h_{i_{j+1}-1}}{\subset} U_{i_{j+1}-i_j-1} \subset \ldots \overset{\Delta h_{i_j+1}}{\subset} U_{1} \overset{l_1-1}{\subset} V_2.
\end{align*}

To simplify the notations, let us write 
\begin{align*}
J_1 = \mathbf S \setminus \{s_{k_1+1},\ldots ,s_{k_r+1}\}, & & J_2 = \mathbf S \setminus \{s_{k_1'+1},\ldots ,s_{k_{r'}'+1}\},
\end{align*}
for some $0 \leq k_1 < \ldots < k_r < d-1$ and $0 \leq k_1' < \ldots < k_{r'}' < d-1$. We note that $r=r'$ if either $l_0=l_1=1$ either $l_0,l_1>1$, $r=r'-1$ if $l_0=1$ and $l_1>0$, and $r=r'+1$ if $l_0>1$ and $l_1=1$. We define 
\begin{equation*}
Y_{\mathbf{\Lambda}_j}^{\mathbbm h_j} := X_{J}\{(s_{l_1}s_{l_1+1}\ldots s_{d-1},s_{l_0}s_{l_0+1}\ldots s_{d-1})\},
\end{equation*}
and $X_{\mathbf{\Lambda}_j}^{\mathbbm h_j} := \overline{Y_{\mathbf{\Lambda}_j}^{\mathbbm h_j}}$ where the closure is taken inside the flag variety of type $J = J_1 \sqcup J_2$. 

\begin{prop}\label{SmoothnessDLVar3}
The variety $X_{\mathbf{\Lambda}_j}^{\mathbbm h_j}$ is projective, smooth and geometrically irreducible of dimension $l_0+l_1+(h_{i_{j+1}}-h_{i_j+1})-2$.
\end{prop}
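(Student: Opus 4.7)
The strategy will mirror Propositions \ref{SmoothnessDLVar} and \ref{SmoothnessDLVar2}. Projectivity is automatic, as $X_{\mathbf{\Lambda}_j}^{\mathbbm h_j}$ will be closed in the partial flag variety of type $J = J_1 \sqcup J_2$ for $\mathbf{G} = \mathrm{GL}(V_{1,\overline{\mathbb{F}_q}}) \times \mathrm{GL}(V_{2,\overline{\mathbb{F}_q}})$, which is projective. For the dimension, I will compute directly: with $w = (s_{l_1}s_{l_1+1}\cdots s_{d-1},\, s_{l_0}s_{l_0+1}\cdots s_{d-1})$, one has $\ell(w) = (d-l_1)+(d-l_0) = 2d - l_0 - l_1$, which using $d = l_0+l_1-1+\tfrac{h_{i_{j+1}}-h_{i_j+1}}{2}$ becomes exactly $l_0+l_1+(h_{i_{j+1}}-h_{i_j+1})-2$.

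For geometric irreducibility, I will follow the template of Proposition \ref{SmoothnessDLVar}. Let $w' = (w_1',w_2')$ denote the unique shortest element of the double coset $\mathbf{W}_J w \mathbf{W}_{F(J)}$; this coset splits as $(\mathbf{W}_{J_1} w_1 \mathbf{W}_{F(J_2)}) \times (\mathbf{W}_{J_2} w_2 \mathbf{W}_{F(J_1)})$, so $w'$ can be read off factor-by-factor. I then plan to verify that $X_{\mathbf{\Lambda}_j}^{\mathbbm h_j} = \overline{X_J(w')}$ by checking the identity
\begin{equation*}
\ell(w) - \ell(w') = \ell(\mathbf{W}_{F(J)}) - \ell(\mathbf{W}_{J\cap {}^{w'}F(J)}),
\end{equation*}
so that both sides (which are visibly contained in each other) will have the same dimension. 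Geometric irreducibility of $X_J(w')$ will then follow from Proposition \ref{DimDLVar}, since no proper $F$-stable parabolic subgroup of $\mathbf{W} = S_d \times S_d$, necessarily of the form $\mathbf{W}_{K_1}\times \mathbf{W}_{K_2}$ with $F$ swapping the two factors, can contain $\mathbf{W}_J w'$.

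For smoothness, I will invoke \cite{cox24} Sections 7.1--7.2 to see that $\overline{X_J(w')}$ is smoothly equivalent to the Schubert variety in the full flag variety of $\mathbf{G}$ attached to the longest element $xw'y$ of $\mathbf{W}_J w'\mathbf{W}_{F(J)}$, where $y$ is the longest element of $\mathbf{W}_{F(J)}$ and $x$ the longest element of $\mathbf{W}_J \cap \mathbf{W}^{J \cap {}^{w'}F(J)}$. Since $\mathbf{G}$ is a direct product, this Schubert variety will be a product of two Schubert varieties in $\mathrm{GL}_d/B$, each attached to a permutation of exactly the ``staircase'' shape already analyzed in the proof of Proposition \ref{SmoothnessDLVar}; avoidance of the patterns $(3412)$ and $(4231)$ in each factor will then give smoothness via \cite{Schubert} Theorem 8.1.1. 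The main obstacle I expect is the bookkeeping needed to identify $w'$ and the longest element $xw'y$ explicitly inside the product Weyl group; once that is settled, the pattern-avoidance check reduces factor-by-factor to the single-factor analysis already carried out.
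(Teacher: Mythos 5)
Your proposal follows essentially the same route as the paper: compute $\ell(w) = 2d - l_0 - l_1$, identify the shortest element $w' = (w_1',w_2')$ of $\mathbf W_J w \mathbf W_{F(J)}$ factor-by-factor, show $X_{\mathbf\Lambda_j}^{\mathbbm h_j} = \overline{X_J(w')}$ and its irreducibility via Proposition \ref{DimDLVar}, then pass to the Schubert variety for the longest element $xw'y$ and conclude smoothness from pattern avoidance in each $S_d$-factor. The observation that the double coset and the Schubert variety split as products over the two $\mathrm{GL}_d$-factors, with each factor having the same ``staircase'' shape as in Proposition \ref{SmoothnessDLVar}, is exactly the content of the paper's verification.
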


\begin{proof}
Let us write $w = (w_1,w_2) := (s_{l_1}s_{l_1+1}\ldots s_{d-1},s_{l_0}s_{l_0+1}\ldots s_{d-1})$. The dimension of $X_{\mathbf{\Lambda}_j}^{\mathbbm h_j}$ is just $\ell(w) = \ell(w_1) + \ell(w_2)$. We prove smoothness and irreducibility by the same method as Proposition \ref{SmoothnessDLVar}. Let $w_1' := s_{l_1}s_{l_1+1}\ldots s_{d-l_0}$ and $w_2' := s_{l_0}s_{l_0+1}\ldots s_{d-l_1}$, so that $w' = (w_1',w_2')$ is the shortest element of $\mathbf W_J w \mathbf W_{F(J)}$. Since the coarse Deligne-Lusztig variety $X_J(w')$ is irreducible and of dimension $\ell(w)$ (as can be checked by distinguishing cases), the natural inclusion $Y_{\mathbf{\Lambda}_j}^{\mathbbm h_j} \subset X_J(w')$ induces an equality $X_{\mathbf{\Lambda}_j}^{\mathbbm h_j} = \overline{X_{J}(w')}$. Then, the closure $\overline{X_{J}(w')}$ is smoothly equivalent to the Schubert variety for the product $\mathrm{GL}_d \times \mathrm{GL}_d$ in the full flag variety associated to the longest element of $\mathbf W_J w' \mathbf W_{F(J)}$. This element can be written as $xw'y$ where $y$ is the longest element of $\mathbf W_{F(J)}$ and $x$ is the longest element of $\mathbf W_J \cap \mathbf W^{J\cap {}^{w'}F(J)}$. One may check that $x = (x_1,x_2)$ where $x_1 = s_1\ldots s_{l_1-1}$ and $x_2 = s_1\ldots s_{l_0-1}$. If $l_1 > 1$, the permutation $x_1w'_1y_1$ can be written as 
\begin{equation*}
\hspace{-1cm} \left(
\begin{array}{cccccccccccccc}
1 & 2 & \ldots & k_1 & k_1 + 1 & \ldots & k_2 & \ldots & k_r & k_r+1 & k_r+2 & \ldots & d-1 & d \\
k_1+1 & k_1 & \ldots & 2 & k_2 + 1 & \ldots & k_1+2 & \ldots & k_{r-1}+2 & d & d - 1 & \ldots & k_r + 2 & 1
\end{array} 
\right)
\end{equation*}
and if $l_1 = 1$, the same formula holds with $k_1$ replaced by $k_2$, $k_2$ replaced by $k_3$ and so on. The same goes for $x_2w_2'y_2$ with $k_i$ replaced by $k_i'$ for all $i$. These permutations avoid the patterns $(3412)$ and $(4231)$, from which smoothness follows.
\end{proof}

\begin{prop}\label{DecompositionDLVarGLj}
We have 
\begin{equation*}
X_{\mathbf{\Lambda}_j}^{\mathbbm h_j} = \bigsqcup_{\substack{w_1,\ldots ,w_r \\ w_1',\ldots,w_{r'}'}} X_{J}\{(w_1w_2\ldots w_r,w_1'w_2'\ldots w_{r'}')\},
\end{equation*}
where $w_1,\ldots,w_r$ and $w_1',\ldots,w_{r'}'$ run over all the permutations of the form $w_i = s_{k_i+1}\ldots s_{k_i+t_i}$ and $w_j' = s_{k_j'+1}\ldots s_{k_j'+t_j'}$ when $1 \leq i \leq r-1$ and $1\leq j \leq r'-1$ for some $0\leq t_i \leq k_{i+1}-k_i$ and $0 \leq t_j' \leq k_{j+1}' - k_j'$, and where $w_r = s_{k_r+1} \ldots s_{k_r+t_r}$ and $w_{r'} = s_{k'_{r'}+1}\ldots s_{k'_{r'}+t_{r'}'}$ for some $0 \leq t_r \leq n-1-k_r$ and $0 \leq t_{r'}' \leq n-1-k_{r'}'$.
\end{prop}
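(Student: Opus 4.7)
The plan is to follow exactly the same template as the proof of Proposition \ref{DecompositionDLVarU0}, with the combinatorial input now coming from Proposition \ref{StratificationFineDLVarietiesFakeUnitary} rather than Proposition \ref{StratificationFineDLVarietiesUnitary}. By construction $X_{\mathbf{\Lambda}_j}^{\mathbbm h_j} = \overline{Y_{\mathbf{\Lambda}_j}^{\mathbbm h_j}}$ where $Y_{\mathbf{\Lambda}_j}^{\mathbbm h_j} = X_{J}\{w\}$ for $w = (s_{l_1}\ldots s_{d-1},\,s_{l_0}\ldots s_{d-1})$, so Theorem \ref{ClosureFineDLVariety} immediately gives
\[
X_{\mathbf{\Lambda}_j}^{\mathbbm h_j} \;=\; \bigsqcup_{\substack{w'\in {}^J\mathbf W\\ w' \leq_{J,F} w}} X_J\{w'\},
\]
and it only remains to enumerate the elements $w' = (\sigma_1,\sigma_2) \in {}^J\mathbf W$ satisfying $w' \leq_{J,F} w$.

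Next, I would verify that the indexing data $(k_i)_{1\leq i \leq r}$ and $(k'_j)_{1\leq j \leq r'}$ attached to $J = J_1 \sqcup J_2$ fit into one of the four cases of Proposition \ref{StratificationFineDLVarietiesFakeUnitary}. The key observation is that $k_1 = l_1-1$ and $k'_1 = l_0-1$, so $k_1 = 0 \iff l_1 = 1$ and $k'_1 = 0 \iff l_0 = 1$; this gives exactly the four-case dichotomy ($l_0=l_1=1$, $l_0,l_1>1$, $l_0=1<l_1$, $l_1=1<l_0$), matching respectively Cases 1--4 of that proposition. The required palindromic symmetries $k_i = d - 1 - k'_{*}$ are a direct translation of the fact that the flag types in $V_1$ and $V_2$ are mirror images of one another: both are built from the same increments $\Delta h_{i_j+1},\ldots,\Delta h_{i_{j+1}-1}$ but read in opposite orders, together with the central block of sizes $l_0$ and $l_0-1$ (resp.\ $l_1$ and $l_1-1$) at the extremities. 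Unwinding the definitions of $\mathbf d_{J_1}$ and $\mathbf d_{J_2}$ yields the identities on the $k_i,k'_j$ case by case.

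With the hypotheses of Proposition \ref{StratificationFineDLVarietiesFakeUnitary} checked, I apply it to conclude that every $w' = (\sigma_1,\sigma_2) \in {}^J\mathbf W$ with $w' \leq_{J,F} w$ admits a unique decomposition $\sigma_1 = w_1\cdots w_r$ and $\sigma_2 = w'_1\cdots w'_{r'}$ of the asserted shape, with bounds $t_i \leq k_{i+1}-k_i$ (and $t_r \leq d-1-k_r$) and similarly for the $t'_j$. Conversely, any $w'$ of this form is automatically $J$-reduced and satisfies $w' \leq_{J,F} w$ (for instance because each factor $w_i, w'_j$ is an initial segment of one of the two components of $w$, and initial subwords give subexpressions in the Bruhat order, while the $J$-reducedness follows from the support of each $w_i$ being disjoint from the factors of $\mathbf W_J$).

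The main (and essentially only) obstacle is the case analysis in the previous paragraph: one must be careful that the Frobenius $F(\sigma_1,\sigma_2) = (w_0\sigma_2w_0,w_0\sigma_1w_0)$ on $S_d \times S_d$ sends $J_1$ to $J_2$ and vice versa in the precise form demanded by Proposition \ref{StratificationFineDLVarietiesFakeUnitary}, which forces the four different configurations for $F(J)$ listed just before its proof. Once this bookkeeping is done, the statement follows mechanically, exactly as in Proposition \ref{DecompositionDLVarU0}.
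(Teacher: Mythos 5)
Your proposal follows the paper's route exactly: quote Theorem \ref{ClosureFineDLVariety} to expand the closure $\overline{Y_{\mathbf{\Lambda}_j}^{\mathbbm h_j}}$ as a union over $w' \leq_{J,F} w$, then invoke Proposition \ref{StratificationFineDLVarietiesFakeUnitary} to identify these $w'$; the paper's proof is just the one-line citation of those two results, so you have reconstructed it faithfully. Two small slips in your elaboration, neither fatal: you have cases 3 and 4 swapped (the proposition's Case 3 has $k_1=0,\,k'_1>0$, i.e.\ $l_1=1<l_0$, while Case 4 has $k'_1=0,\,k_1>0$, i.e.\ $l_0=1<l_1$, the opposite of what you wrote); and in the converse paragraph, the factors $w_i$ for $i>1$ are \emph{not} initial segments of $s_{l_1}\ldots s_{d-1}$ but rather blocks of a subword, and the supports of the $w_i$ are pairwise disjoint from each other but certainly not disjoint from $J$ (only the leading letter $s_{k_i+1}$ is outside $J$); the correct justification is that $w_1\cdots w_r$ is a subword of the reduced word $s_{l_1}\ldots s_{d-1}$, giving $\sigma \leq w$ hence $\sigma \leq_{J,F} w$ with $u = e$, and that $J$-reducedness is a short direct check on descents.
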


\begin{proof}
This is a direct application of Theorem \ref{ClosureFineDLVariety} and Proposition \ref{StratificationFineDLVarietiesFakeUnitary}.
\end{proof}

For future reference, we introduce the following subvariety.

\begin{defi}\label{BTStratumDLVarGLj}
Let $r_0 := r-1$ if $l_0>1$ and $r_0 := r$ if $l_0=1$, so that we have $k_{r_0} = l_1 + \frac{h_{i_{j+1}-1}-h_{i_j+1}}{2}-1$. Likewise, let $r_0' := r'-1$ if $l_1>1$ and $r_0' := r'$ if $l_1=1$, so that we have $k'_{r'_0} = l_0 + \frac{h_{i_{j+1}}-h_{i_j+2}}{2}-1$. We define a subvariety 
\begin{equation*}
X_{\mathbf{\Lambda}_j}^{\mathbbm h_j,0} := \bigsqcup_{\substack{w_1,\ldots,w_{r_0}\\w'_1,\ldots,w'_{r_0'}}} X_J\{(w_1\ldots w_{r_0}s_{d-l_0+1}\ldots s_{d-1},w'_1\ldots w'_{r_0'}s_{d-l_1+1}\ldots s_{d-1})\} \hookrightarrow X_{\mathbf{\Lambda}_j}^{\mathbbm h_j},
\end{equation*}
where $w_1,\ldots ,w_{r_0}$ and $w'_1,\ldots,w'_{r_0'}$ are as in Proposition \ref{DecompositionDLVarGLj} and such that $t_i + t'_{r_0'+1-i} \geq \Delta h_{i_j+i}$ for all $1\leq i \leq i_{j+1}-i_{j}-1$.
\end{defi}

\begin{prop}
Let $k$ be a field extension of $\mathbb F_{q^2}$. We have 
\begin{equation*}
\hspace{-2.5cm} X_{\mathbf{\Lambda}_j}^{\mathbbm h_j}(k) = \left\{ \begin{array}{c}
\{0\} \subset W_{1} \subset \ldots \subset W_{i_{j+1}-i_j} \subset (V_1)_k,\\
\{0\} \subset U_{i_{j+1}-i_j} \subset \ldots \subset U_1 \subset (V_2)_k,\\
\text{partial flags of type } \mathbf d_J
\end{array} \middle| 
\begin{tikzcd}[column sep=small, row sep=small]
 U_{1}^{\perp} \arrow[d,symbol=\subset, outer sep=2pt,"1"] \arrow[r,symbol=\subset] & \ldots \arrow[r,symbol=\subset] & U_{i_{j+1}-i_j}^{\perp} \arrow[d,symbol=\subset, outer sep=2pt,"1"] \\
W_{1} \arrow[r,symbol=\subset] & \ldots \arrow[r,symbol=\subset] & W_{i_{j+1}-i_j} 
\end{tikzcd} 
\text{ and }
\begin{tikzcd}[column sep=small, row sep=small]
 W_{i_{j+1}-i_j}^{\perp} \arrow[d,symbol=\subset, outer sep=2pt,"1"] \arrow[r,symbol=\subset] & \ldots \arrow[r,symbol=\subset] & W_{1}^{\perp} \arrow[d,symbol=\subset, outer sep=2pt,"1"] \\
U_{i_{j+1}-i_j} \arrow[r,symbol=\subset] & \ldots \arrow[r,symbol=\subset] & U_1 
\end{tikzcd} 
\right\}.
\end{equation*}
\end{prop}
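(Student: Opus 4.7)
The plan is to mirror the argument used in the unitary cases above, namely the proof of the analogous description of $X_{\mathbf{\Lambda}_0}^{\mathbbm h_0}(k)$, adapted to the fake unitary Deligne-Lusztig setup of Section \ref{DL:FakeUnitary}. By definition of the fine Deligne-Lusztig variety, a pair of partial flags $(\mathcal G^1,\mathcal G^2)$ of type $\mathbf d_J = (\mathbf d_{J_1},\mathbf d_{J_2})$ belongs to $X_J\{(w_1,w_2)\}(k)$ for some $(w_1,w_2)\in {}^J\mathbf W$ if and only if there exist complete lifts $\mathcal F^1$ of $\mathcal G^1$ in $(V_1)_k$ and $\mathcal F^2$ of $\mathcal G^2$ in $(V_2)_k$ such that $\mathcal F^1$ is in relative position $w_1$ with $(\mathcal F^2)^{(q),\perp}$ and $\mathcal F^2$ is in relative position $w_2$ with $(\mathcal F^1)^{\perp,(q)}$, where the orthogonals are taken with respect to the perfect pairing $B:V_1\times V_2^{(q)}\to \mathbb F_{q^2}$.

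First I would unpack how the duality inclusions $W_t^{\perp}\overset{1}{\subset} U_{t}$ and $U_t^{\perp}\overset{1}{\subset} W_t$ translate into combinatorial conditions on the pair $(w_1,w_2)$. The key observation is that, under the identification of $\mathcal G^1$ (resp. $\mathcal G^2$) with the subset of $\mathcal F^1$ (resp. $\mathcal F^2$) obtained by keeping only the jumps at the positions $k_i+1$ (resp. $k'_i+1$), the inclusion $W_t^{\perp}\overset{1}{\subset} U_t$ reads as $w_1(\{1,\ldots,k_i\})\subset \{1,\ldots,k_i+1\}$ for each $1\leq i\leq r$, and symmetrically $U_t^{\perp}\overset{1}{\subset} W_t$ reads as $w_2(\{1,\ldots,k'_i\})\subset \{1,\ldots,k'_i+1\}$ for each $1\leq i\leq r'$. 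This is the same mechanism as in the unitary case, applied separately on each factor of $S_d\times S_d$.

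Once this translation is established, I would apply Proposition \ref{FineDLVartietiesInUnitary} independently to $w_1$ (viewed as a $J_1$-reduced element of $S_d$ with the above chain of constraints) and to $w_2$ (with the analogous constraints), which forces $w_1=w_1\cdots w_r$ and $w_2=w'_1\cdots w'_{r'}$ precisely of the shape appearing in Proposition \ref{DecompositionDLVarGLj}. Consequently, the set on the right-hand side of the statement is exactly the union $\bigsqcup X_J\{(w_1\cdots w_r,w'_1\cdots w'_{r'})\}(k)$, which by Proposition \ref{DecompositionDLVarGLj} equals $X_{\mathbf{\Lambda}_j}^{\mathbbm h_j}(k)$.

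The main obstacle is the bookkeeping of the combinatorics: one must verify carefully that the perpendicular lattices defining $V_1$ and $V_2$, and the way the orthogonal of a flag in $V_1$ produces one in $V_2^{(q)}$ (and vice versa), really do convert the inclusions in the displayed diagram into the positional constraints $w_1(\{1,\ldots,k_i\})\subset\{1,\ldots,k_i+1\}$ and $w_2(\{1,\ldots,k'_i\})\subset\{1,\ldots,k'_i+1\}$, with the indices $k_i,k'_i$ matching the types $\mathbf d_{J_1},\mathbf d_{J_2}$ coming from $\mathbbm h_j$. The argument is otherwise formally identical to the unitary case, so no new ingredient beyond Propositions \ref{FineDLVartietiesInUnitary} and \ref{DecompositionDLVarGLj} is needed.
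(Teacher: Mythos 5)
Your plan is exactly the intended argument: the paper gives this proposition without a separate proof because it is the direct adaptation to the fake-unitary setting of the argument used for $X_{\mathbf{\Lambda}_0}^{\mathbbm h_0}$, namely translate the orthogonality inclusions on lifted complete flags into the positional constraints on the (pair of) $I$-reduced permutations, then invoke Proposition \ref{FineDLVartietiesInUnitary} on each factor. One small bookkeeping slip: since $U_t\subset V_2$ and $W_t\subset V_1$, the inclusion $U_t^{\perp}\subset W_t$ lives in $V_1$ and hence constrains the relative position $w_1$ of $\mathcal F^1$ with $(\mathcal F^2)^{(q),\perp}$ (indices $k_i$), while $W_t^{\perp}\subset U_t$ lives in $V_2$ and constrains $w_2$ (indices $k'_i$); your parenthetical has the two swapped, though you correctly flag this as the part requiring careful verification, and fixing it does not change the structure of the argument.
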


Finally, we are ready to define the variety $X_{I,\mathbf{\Lambda}}^{\mathbbm h}$. 

\begin{defi}
Let $(I,\mathbf{\Lambda})$ be a Bruhat-Tits index. Write $0\leq i_1 < \ldots < i_s \leq m$ for the elements of $I$ where $s\geq 1$. We define 
\begin{equation*}
X_{I,\mathbf{\Lambda}}^{\mathbbm h} := \begin{cases}
X_{\mathbf{\Lambda}_0}^{\mathbbm h_0} \times X_{\mathbf{\Lambda}_1}^{\mathbbm h_1} \times \ldots \times X_{\mathbf{\Lambda}_{s-1}}^{\mathbbm h_{s-1}} \times X_{\mathbf{\Lambda}_s}^{\mathbbm h_s} & \text{if } i_1 > 0 \text{ and } i_s < m,\\
X_{\mathbf{\Lambda}_0}^{\mathbbm h_0} \times X_{\mathbf{\Lambda}_1}^{\mathbbm h_1} \times \ldots \times X_{\mathbf{\Lambda}_{s-1}}^{\mathbbm h_{s-1}} \phantom{\times X_{\mathbf{\Lambda}_s}^{\mathbbm h_s}} & \text{if } i_1 > 0 \text{ and } i_s = m,\\
\phantom{X_{\mathbf{\Lambda}_0}^{\mathbbm h_0} \times {}} X_{\mathbf{\Lambda}_1}^{\mathbbm h_1} \times \ldots \times X_{\mathbf{\Lambda}_{s-1}}^{\mathbbm h_{s-1}} \times  X_{\mathbf{\Lambda}_s}^{\mathbbm h_s} & \text{if } i_1 = 0 \text{ and } i_s < m,\\
\phantom{X_{\mathbf{\Lambda}_0}^{\mathbbm h_0} \times {}}X_{\mathbf{\Lambda}_1}^{\mathbbm h_1} \times \ldots \times X_{\mathbf{\Lambda}_{s-1}}^{\mathbbm h_{s-1}} \phantom{\times X_{\mathbf{\Lambda}_s}^{\mathbbm h_s}} & \text{if } i_1 = 0 \text{ and } i_s = m.
\end{cases}
\end{equation*}
We also define $Y_{I,\mathbf{\Lambda}}^{\mathbbm h}$ and $X_{I,\mathbf{\Lambda}}^{\mathbbm h,0}$ similarly, by replacing the $X_{\mathbf{\Lambda}_j}^{\mathbbm h_j}$ with $Y_{\mathbf{\Lambda}_j}^{\mathbbm h_j}$ and with $X_{\mathbf{\Lambda}_j}^{\mathbbm h_j,0}$ respectively.
\end{defi}

\begin{rk}
The variety $Y_{I,\mathbf{\Lambda}}^{\mathbbm h}$ itself is a fine Deligne-Lusztig variety for a certain Levi complement in the group $\mathrm U(\Lambda_1^{i_1}/\pi^2(\Lambda_1^{i_1})^{\vee}) \times \mathrm U(\pi(\Lambda_1^{i_1})^{\vee}/\Lambda_1^{i_1})$. We have open immersions 
\begin{equation*}
Y_{I,\mathbf{\Lambda}}^{\mathbbm h} \hookrightarrow X_{I,\mathbf{\Lambda}}^{\mathbbm h,0} \hookrightarrow X_{I,\mathbf{\Lambda}}^{\mathbbm h},
\end{equation*}
and $Y_{I,\mathbf{\Lambda}}^{\mathbbm h}$ is dense in $X_{I,\mathbf{\Lambda}}^{\mathbbm h}$.
\end{rk}

\begin{prop}\label{BTStrataAreSmooth}
The variety $X_{I,\mathbf{\Lambda}}^{\mathbbm h}$ is projective, smooth and geometrically irreducible.
\end{prop}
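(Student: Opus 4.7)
The plan is essentially immediate from the definitions and the results already established. Recall that by construction,
\begin{equation*}
X_{I,\mathbf{\Lambda}}^{\mathbbm h} = \prod_{j} X_{\mathbf{\Lambda}_j}^{\mathbbm h_j},
\end{equation*}
where the finite product runs over the relevant indices $j$ (possibly including $j=0$ and $j=s$, depending on whether $i_1 = 0$ and $i_s = m$). Each factor appearing in the product is covered by one of Propositions \ref{SmoothnessDLVar}, \ref{SmoothnessDLVar2}, and \ref{SmoothnessDLVar3}, which already assert that $X_{\mathbf{\Lambda}_0}^{\mathbbm h_0}$, $X_{\mathbf{\Lambda}_s}^{\mathbbm h_s}$ and $X_{\mathbf{\Lambda}_j}^{\mathbbm h_j}$ (for $1 \leq j \leq s-1$) are projective, smooth and geometrically irreducible over their respective fields of definition.

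The remaining work is to invoke the standard fact that a finite product of projective (respectively smooth, respectively geometrically irreducible) varieties over a common base field is again projective (respectively smooth, respectively geometrically irreducible). Projectivity follows from the Segre embedding, smoothness from the fact that the product of smooth morphisms is smooth, and geometric irreducibility from the fact that the product of two geometrically irreducible varieties over a field is geometrically irreducible (since the product of their function fields remains an integral domain after base change to an algebraic closure).

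There is no real obstacle here: each $X_{\mathbf{\Lambda}_j}^{\mathbbm h_j}$ is defined over a field containing $\mathbb F_{q^2} \subset \kappa_{\breve E}$, and the product is taken over $\kappa_{\breve E}$ (or any common extension), so the three properties pass to the product without difficulty. Thus the proof reduces to the single sentence: ``$X_{I,\mathbf{\Lambda}}^{\mathbbm h}$ is a finite product of projective, smooth, geometrically irreducible varieties, hence has the same properties.''
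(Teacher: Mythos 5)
Your proof is correct and matches the approach implicit in the paper: Proposition \ref{BTStrataAreSmooth} is stated without a separate proof precisely because it follows immediately from the definition of $X_{I,\mathbf{\Lambda}}^{\mathbbm h}$ as a product together with Propositions \ref{SmoothnessDLVar}, \ref{SmoothnessDLVar2} and \ref{SmoothnessDLVar3}. Nothing to add.
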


The dimension of $X_{I,\mathbf{\Lambda}}^{\mathbbm h}$ can be computed by summing the formulas of Propositions \ref{SmoothnessDLVar}, \ref{SmoothnessDLVar2} and \ref{SmoothnessDLVar3}. 

\begin{ex}
Assume that $m = 1$ and write $h := h_1$. The dimension of $X_{I,\mathbf{\Lambda}}^{\mathbbm h}$ can be computed as follows:
\begin{itemize}
\item if $h \not = n$, $I = \{1\}$ and $\mathbf{\Lambda} = \{\Lambda_0\}$ for some $\Lambda_0 \in \mathcal L_0^{\geq h+1}$, then $\dim(X_{I,\mathbf{\Lambda}}^{\mathbbm h}) = \tfrac{t(\Lambda_0)+h-1}{2}$,
\item if $h \not = 0$, $I = \{0\}$ and $\mathbf{\Lambda} = \{\Lambda_1\}$ for some $\Lambda_1 \in \mathcal L_1^{\geq n-h+1}$, then $\dim(X_{I,\mathbf{\Lambda}}^{\mathbbm h}) = \tfrac{t(\Lambda_1)+(n-h)-1}{2}$,
\item if $0 < h < n$, $I = \{0,1\}$ and $\mathbf{\Lambda} = \{\Lambda_0,\Lambda_1\}$ for some $\Lambda_0 \in \mathcal L_0^{\geq h+1}$ and $\Lambda_1 \in \mathcal L_1^{\geq n-h+1}$ such that $\pi\Lambda_1^{\vee} \subset \Lambda_0$, then $\dim(X_{I,\mathbf{\Lambda}}^{\mathbbm h}) = \tfrac{t(\Lambda_0) + t(\Lambda_1) - n}{2} - 1$. 
\end{itemize}
The formula in the first two cases coincide with those given in \cite{cho} Proposition 3.11. 
\end{ex}

\begin{prop}\label{BijectionOnPoints}
Let $(I,\mathbf{\Lambda})$ be a Bruhat-Tits index and let $k$ be an algebraically closed field containing $\kappa_{\breve E}$. There is a bijection
\begin{equation*}
\mathcal N_{I,\mathbf{\Lambda}}^{\mathbbm h}(k) \simeq X_{I,\mathbf{\Lambda}}^{\mathbbm h}(k).
\end{equation*}
\end{prop}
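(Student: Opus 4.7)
The plan is to construct the bijection by reducing the lattice chain $(A_m \subset \ldots \subset B_m)$ modulo suitable $\tau$-invariant sublattices determined by $\mathbf{\Lambda}$. Following the decomposition $X_{I,\mathbf{\Lambda}}^{\mathbbm h} = \prod_{j} X_{\mathbf{\Lambda}_j}^{\mathbbm h_j}$, I would build a map $\Psi \colon \mathcal N_{I,\mathbf{\Lambda}}^{\mathbbm h}(k) \to X_{I,\mathbf{\Lambda}}^{\mathbbm h}(k)$ piece by piece. For the outer piece at $i_1$ (assuming $i_1 \neq 0$), I reduce the lattices $A_i, B_i$ for $1 \leq i \leq i_1$ modulo $\pi(\Lambda_0^{i_1})_k^{\vee}$ to obtain subspaces $U_i := A_i/\pi(\Lambda_0^{i_1})_k^{\vee}$ and $W_i := B_i/\pi(\Lambda_0^{i_1})_k^{\vee}$ of $(V_{\Lambda_0^{i_1}}^0)_k$; the top diagram in Definition \ref{DefinitionBTStrata} ensures that all these lie between $\pi(\Lambda_0^{i_1})_k^{\vee}$ and $(\Lambda_0^{i_1})_k$, and that the dimensions match the flag type $\mathbf{d}_J$ of Section \ref{Section3.3}. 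The outer piece at $i_s$ is treated symmetrically via $(V_{\Lambda_1^{i_s}}^0)_k$. For each middle piece $1 \leq j \leq s-1$ I use both quotients, by $\pi^2(\Lambda_1^{i_j})_k^{\vee}$ and by $\pi(\Lambda_0^{i_{j+1}})_k^{\vee}$, to extract two flags: one in $(V_1)_k = \pi(\Lambda_0^{i_{j+1}})_k/\pi^2(\Lambda_1^{i_j})_k^{\vee}$ from the lattices $\pi B_{i_j+i}$, and one in $(V_2)_k = (\Lambda_1^{i_j})_k/\pi(\Lambda_0^{i_{j+1}})_k^{\vee}$ from the lattices $A_{i_j+i}$.

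Next I would check that $\Psi$ lands in $X_{I,\mathbf{\Lambda}}^{\mathbbm h}(k)$ by verifying the orthogonality conditions prescribed in Section \ref{Section3.3}. The key point is that the defining inclusions $\pi A_i^{\vee} \overset{1}{\subset} B_i$ and $\pi B_i^{\vee} \overset{1}{\subset} A_i$ from Proposition \ref{PointsRZSpaceOverFields} become, after reduction, $W_i^{\perp} \overset{1}{\subset} U_i$ and $U_i^{\perp} \overset{1}{\subset} W_i$ respectively, where $\perp$ denotes the orthogonal with respect to the induced hermitian form $\{\cdot,\cdot\}$ on $V_{\Lambda_0^{i_1}}^0$ (or, for the middle pieces, the associated bilinear pairing $V_1 \times V_2^{(q)} \to \mathbb F_{q^2}$ in the fake-unitary setup). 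Indeed, for the outer piece one computes $U_i^{\perp} = \pi A_i^{\vee}/\pi(\Lambda_0^{i_1})_k^{\vee}$ and $W_i^{\perp} = \pi B_i^{\vee}/\pi(\Lambda_0^{i_1})_k^{\vee}$, and the hermitian index $1$ follows tautologically from the lattice index $1$; an analogous computation handles the middle pieces.

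Conversely, I would define $\Phi \colon X_{I,\mathbf{\Lambda}}^{\mathbbm h}(k) \to \mathcal N_{I,\mathbf{\Lambda}}^{\mathbbm h}(k)$ by lifting flags back to lattices: the preimage in $(\Lambda_0^{i_1})_k$ of a subspace is automatically a $W_{\mathcal O_F}(k)$-lattice containing $\pi(\Lambda_0^{i_1})_k^{\vee}$, and the orthogonality conditions on the flags translate back to the duality conditions $\pi A_i^{\vee} \overset{1}{\subset} B_i$, $\pi B_i^{\vee} \overset{1}{\subset} A_i$. The inclusion $\pi B_i \subset A_i$ from Proposition \ref{PointsRZSpaceOverFields} follows from $W_i^{\perp} \subset U_i$ combined with $\pi(\Lambda_0^{i_1})_k^{\vee} \subset \pi B_i^{\vee}$. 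The two maps $\Psi$ and $\Phi$ are mutually inverse by construction once the lifts are well-defined.

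The main obstacle will be the bookkeeping required to unify the four extremal cases ($i_1 = 0$ or $i_1 \neq 0$, $i_s = m$ or $i_s \neq m$) and, more substantively, to verify that the two flags attached to each middle piece $1 \leq j \leq s-1$ admit a consistent lift. Concretely, one must check that the lattice lifts of $W_i \subset (V_1)_k$ and $U_i \subset (V_2)_k$ fit into a single ambient chain $A_{i_j+1} \supset \ldots \supset A_{i_{j+1}}$ and $B_{i_j+1} \subset \ldots \subset B_{i_{j+1}}$ of $\mathbb N_{k,0}$ obeying $A_i \subset B_i$ and $\pi B_i \subset A_i$. This comes down to exploiting the inclusion $\pi(\Lambda_1^{i_j})^{\vee} \subset \Lambda_0^{i_{j+1}}$ built into the very definition of a Bruhat-Tits index (Definition \ref{DefinitionBTIndex}), which guarantees that the two vertex lattices used to form the quotients are compatibly nested, so that the two flags see the same underlying lattices $A_{i_j+i}$ and $B_{i_j+i}$ from complementary angles.
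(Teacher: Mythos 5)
Your forward map is exactly the one the paper uses: the same quotients by $\pi(\Lambda_0^{i_1})_k^{\vee}$, $\pi^2(\Lambda_1^{i_s})_k^{\vee}$, and (for the middle pieces) the pair $\pi^2(\Lambda_1^{i_j})_k^{\vee}$ and $\pi(\Lambda_0^{i_{j+1}})_k^{\vee}$ applied to $\pi B_{i_j+i}$ and $A_{i_j+i}$ respectively, so the approach coincides; the paper simply asserts bijectivity ``by construction'' while you spell out the verification and the inverse via lifting. One small correction in your inverse direction: the constraint $\pi B_i \subset A_i$ does not follow from $W_i^\perp \subset U_i$ (which lifts to $\pi B_i^\vee \subset A_i$) together with $\pi(\Lambda_0^{i_1})_k^\vee \subset \pi B_i^\vee$ alone --- those only give $\pi(\Lambda_0^{i_1})_k^\vee \subset A_i$, which is already part of the stratum definition; the missing link is that $\Lambda_0^{i_1} \in \mathcal L_0$, whence $\pi B_i \subset \pi(\Lambda_0^{i_1})_k \subset \pi(\Lambda_0^{i_1})_k^{\vee} \subset A_i$, and similarly for the middle pieces using $\Lambda_1^{i_j} \in \mathcal L_1$.
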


\begin{proof}
Let $(A_m \subset \ldots \subset B_m) \in \mathcal N_{I,\mathbf{\Lambda}}^{\mathbbm h}(k)$. If $i_1 \not = 0$, for $1 \leq i \leq i_1$ we map $A_i$ to $U_i := A_i / \pi(\Lambda_0^{i_1})^{\vee}_k$ and $B_i$ to $W_i := B_i /  \pi(\Lambda_0^{i_1})^{\vee}_k$, thus defining a point $(U_{i_1}\subset \ldots \subset W_{i_1}) \in X_{\mathbf{\Lambda}_0}^{\mathbbm h_0}(k)$. If $i_s \not = m$, for $1 \leq i \leq m-i_s$ we map $\pi B_{i_s+i}$ to $W_i := \pi B_{i_s+i} / \pi^{2}(\Lambda_1^{i_s})^{\vee}_k$ and $A_{i_s+1}$ to $U_i := A_{i_s+i} / \pi^{2}(\Lambda_1^{i_s})^{\vee}_k$, thus defining a point $(W_{1}\subset\ldots\subset U_{1})\in X_{\mathbf{\Lambda}_s}^{\mathbbm h_s}(k)$. Eventually, for $1 \leq j \leq s-1$ and for $1 \leq i \leq i_{j+1}-i_j$, we map $\pi B_{i_j+i}$ to $W_i := \pi B_{i_j+i}/\pi^2(\Lambda_1^{i_j})^{\vee}_k$ and $A_{i_j+i}$ to $U_i := A_{i_j+i} / \pi(\Lambda_0^{i_{j+1}})^{\vee}_k$, thus defining a point $(W_1 \subset \ldots \subset W_{i_{j+1}-i_j}, U_{i_{j+1}-i_j} \subset \ldots \subset U_1) \in X_{\mathbf{\Lambda}_j}^{\mathbbm h_j}(k)$. This mapping is bijective by construction.
\end{proof}

\subsection{The isomorphism $\mathcal N_{I,\mathbf{\Lambda}}^{\mathbbm h} \simeq X_{I,\mathbf{\Lambda}}^{\mathbbm h}\times \kappa_{\breve E}$}

In this section, given a Bruhat-Tits index $(I,\mathbf{\Lambda})$, we build a morphism $f: \mathcal N_{I,\mathbf{\Lambda}}^{\mathbbm h} \to X_{I,\mathbf{\Lambda}}^{\mathbbm h} \times \kappa_{\breve E}$ and prove that it is an isomorphism. The construction of $f$ is now somewhat standard, so that we closely follow \cite{vw2} Section 4.7 and \cite{cho} Section 3.6. For a $\kappa_{E}$-scheme $S$ and a strict formal $\mathcal O_F$-module $X$ over $S$, the Lie algebra of the universal extension of $X$ will be denoted by $D(X)$, as defined in \cite{ACZ}. In general $D(X)$ is a locally free $\mathcal O_S$-module, and if $S = \mathrm{Spec}(k)$ for some perfect field over $\kappa_{E}$, then $D(X) = M(X)/\pi M(X)$ where $M(X)$ is the relative Dieudonné module of $X$. Recall the following statement from \cite{vw2} Corollary 4.7 (which is stated for $p$-divisible groups but works similarly for strict formal $\mathcal O_F$-modules).

\begin{prop}\label{LocallyDirectSummand}
Let $S$ be a $\kappa_E$-scheme and let $\rho_1:X \to Y_1$ and $\rho_2:X\to Y_2$ be two isogenies of strict formal $\mathcal O_F$-modules over $S$, such that $\mathrm{Ker}(\rho_1) \subset \mathrm{Ker}(\rho_2) \subset X[\pi]$. Then $\mathrm{Ker}(D(\rho_1))$ is locally a direct summand of the locally free $\mathcal O_S$-module $\mathrm{Ker}(D(\rho_2))$, and the formation of $\mathrm{Ker}(D(\rho_i))$ for $i=1,2$ is compatible with base change $S'\to S$.
\end{prop}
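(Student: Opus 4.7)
The plan is to reduce the proposition to a single substantive input about the functor $D$: for any isogeny $\rho: X \to Y$ of strict formal $\mathcal O_F$-modules over $S$ whose kernel is contained in $X[\pi]$, the kernel $\mathrm{Ker}(D(\rho))$ is a locally free $\mathcal O_S$-module which is locally a direct summand of $D(X)$, and whose formation commutes with arbitrary base change. In the ACZ formalism (cf. \cite{ACZ}), this is essentially Messing's theorem applied to the universal extensions: the finite flat group scheme $\mathrm{Ker}(\rho)$, being killed by $\pi$, yields after passage to universal extensions and Lie algebras a locally split exact sequence of locally free $\mathcal O_S$-modules. This is precisely the input already used in the maximal parahoric setting of \cite{cho}.

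Granting this input, I would first use the inclusion $\mathrm{Ker}(\rho_1) \subset \mathrm{Ker}(\rho_2)$ to factor $\rho_2 = \rho_{12} \circ \rho_1$, where $\rho_{12}: Y_1 \to Y_2$ is an isogeny with $\mathrm{Ker}(\rho_{12}) = \mathrm{Ker}(\rho_2)/\mathrm{Ker}(\rho_1) \subset Y_1[\pi]$. Functoriality of $D$ then gives $D(\rho_2) = D(\rho_{12}) \circ D(\rho_1)$, hence $\mathrm{Ker}(D(\rho_1)) \subset \mathrm{Ker}(D(\rho_2))$. Applying the key input to $\rho_1$ and to $\rho_2$ (and in fact to $\rho_{12}$ as well, if needed) shows that both $\mathrm{Ker}(D(\rho_1))$ and $\mathrm{Ker}(D(\rho_2))$ are locally free $\mathcal O_S$-modules which are locally direct summands of $D(X)$, and that the formation of each commutes with base change $S' \to S$.

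To conclude, I would invoke a purely algebraic observation: if $K \subset L \subset M$ are $R$-modules with $K$ a direct summand of $M$, then $K$ is automatically a direct summand of $L$; indeed, any retraction $p: M \to K$ of the inclusion $K \hookrightarrow M$ restricts to a retraction $p|_L : L \to K$ of $K \hookrightarrow L$, yielding $L = K \oplus \mathrm{Ker}(p|_L)$. Applied locally on $S$ with $M = D(X)$, $L = \mathrm{Ker}(D(\rho_2))$, and $K = \mathrm{Ker}(D(\rho_1))$, this gives the desired local direct summand property; base change compatibility of the pair then follows from that of each $\mathrm{Ker}(D(\rho_i))$ individually, combined with the fact that the formation of a locally split subobject of a locally free sheaf commutes with pullback.

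The main obstacle is really just the substantive input at the beginning, since once it is in hand the rest is formal linear algebra. However that input is by now classical in the setting of \cite{ACZ} and \cite{vw2} Lemma 4.6, so no difficulty beyond the maximal parahoric case treated in \cite{cho} should arise here.
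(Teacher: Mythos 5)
Your argument is correct and is exactly the one used to establish \cite{vw2} Corollary 4.7 from \cite{vw2} Lemma 4.6, which the paper itself invokes without proof (noting only that the $p$-divisible group statement carries over to strict formal $\mathcal O_F$-modules). The factorization $\rho_2 = \rho_{12}\circ\rho_1$, the key input that $\mathrm{Ker}(D(\rho))$ is a locally free, locally split, base-change compatible subsheaf of $D(X)$ for an isogeny killed by $\pi$, and the elementary observation that a direct summand of $M$ contained in a submodule $L$ is automatically a direct summand of $L$, together reproduce the standard argument faithfully.
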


Let $X=(X^{[i]},i_{X^{[i]}},\lambda_{X^{[i]}},\rho_{X^{[i]}})_{1\leq i \leq m} \in \mathcal N_{I,\mathbf{\Lambda}}^{\mathbbm h}(R)$ for some Bruhat-Tits index $(I,\mathbf{\Lambda})$ and some $\kappa_{\breve E}$-algebra $R$. For each $i \in I\setminus \{0\}$ and for all $j \in I \setminus \{m\}$, the compositions
\begin{center}
\begin{tikzcd}[row sep = small]
(X_{\Lambda_0^{i-}})_R \arrow[r,"\rho_{\Lambda_0^{i-},X}"] & X^{[i]} \arrow[r,"\rho_{X,\Lambda_0^{i+}}"] & (X_{\Lambda_0^{i+}})_R, \\
(X_{\Lambda_1^{j-}})_R \arrow[r,"\rho_{\Lambda_1^{j-},X}"] & X^{[j+1]} \arrow[r,"\rho_{X,\Lambda_1^{j+}}"] & (X_{\Lambda_1^{j+}})_R,
\end{tikzcd}
\end{center}
coincide with the base change to $R$ of the isogenies $\rho_{\Lambda_0^{i-},\Lambda_0^{i+}}$ and $\rho_{\Lambda_1^{j-},\Lambda_1^{j+}}$ induced from Dieudonné theory by the inclusions $\Lambda_0^{i-} \subset \Lambda_0^{i+}$ and $\Lambda_1^{j-} \subset \Lambda_1^{j+}$. Let us define 
\begin{align*}
\mathbb B_{\Lambda_0^{i}} := \mathrm{Ker}(D(\rho_{\Lambda_0^{i-},\Lambda_0^{i+}})) = \Lambda_0^{i+} / \Lambda_0^{i-}, & & \mathbb B_{\Lambda_1^{j}} := \mathrm{Ker}(D(\rho_{\Lambda_1^{j-},\Lambda_1^{j+}})) = \Lambda_1^{j+}/\Lambda_1^{j-}.
\end{align*}
The quotients $\mathbb B_{\Lambda_0^{i}}$ and $\mathbb B_{\Lambda_1^{j}}$ are $\mathbb F_{q^2}$-vector spaces of dimension respectively $2t(\Lambda_0^i)$ and $2t(\Lambda_1^j)$, equipped with perfect $\mathbb F_{q^2}$-valued alternating forms induced by $\pi\langle\cdot,\cdot\rangle_{[h_i]}$ and by $\langle\cdot,\cdot\rangle_{[h_{j+1}]}$ respectively. We write $\cdot^{\dagger'}$ for the orthogonal complement with respect to these forms. For $1 \leq i' \leq i$ and for $j \leq j' \leq m-1$, recall from Proposition \ref{OneIsogenyTheOtherToo} that $\rho_{X,\Lambda_0^{i+}}$ and $\rho_{\Lambda_1^{j-},X}$ factor respectively through $X^{[i']}$ and through $X^{[j'+1]}$ via isogenies, which we denote by $f_{i',i}$ and $g_{j,j'}$. Thus, we have
\begin{center}
\begin{tikzcd}[row sep = small]
(X_{\Lambda_0^{i-}})_R \arrow[r,"\rho_{\Lambda_0^{i-},X}"] & X^{[i]} \arrow[r,"\widetilde{\alpha}_{i,i'}"] & X^{[i']} \arrow[r,"f_{i',i}"] & (X_{\Lambda_0^{i+}})_R, \\
(X_{\Lambda_1^{j-}})_R \arrow[r,"g_{j,j'}"] & X^{[j'+1]} \arrow[r,"\widetilde{\alpha}_{j'+1,j+1}"] & X^{[j+1]} \arrow[r,"\rho_{X,\Lambda_1^{j+}}"] & (X_{\Lambda_1^{j+}})_R.
\end{tikzcd}
\end{center}
We define
\begin{align*}
E^{i',i} := \mathrm{Ker}(D(\widetilde{\alpha}_{i,i'}\circ \rho_{\Lambda_0^{i-},X})), & & F^{j,j'} := \mathrm{Ker}(D(g_{j,j'})).
\end{align*}
According to Proposition \ref{LocallyDirectSummand}, the $R$-modules $E^{i',i}$ and $F^{j,j'}$ are locally free direct summands respectively of $\mathbb B_{\Lambda_0^{i}} \otimes_{\mathbb F_{q^2}} R$ and of $\mathbb B_{\Lambda_1^{j}} \otimes_{\mathbb F_{q^2}} R$. The $\mathcal O_E$-action on all these modules induces compatible decompositions 
\begin{align*}
\mathbb B_{\Lambda_0^{i}} & = (\mathbb B_{\Lambda_0^{i}})_0 \oplus (\mathbb B_{\Lambda_0^{i}})_1, & \mathbb B_{\Lambda_1^{j}} & = (\mathbb B_{\Lambda_1^{j}})_0 \oplus (\mathbb B_{\Lambda_1^{j}})_1,\\
 (\mathbb B_{\Lambda_0^{i}})_0 & = \alpha_{h_i,h_1}^{-1}(\Lambda_0^{i}/\pi\Lambda_0^{i\vee}), & (\mathbb B_{\Lambda_1^{j}})_0 & = \alpha_{h_{j+1},h_1}^{-1}(\Lambda_1^{j}/\pi^2\Lambda_1^{j\vee}),\\
E^{i',i} & = E^{i',i}_0 \oplus E^{i',i}_1, & F^{j,j'} & = F^{j,j'}_0 \oplus F^{j,j'}_1.
\end{align*}
Here, to make notations more readable, we wrote $\alpha_{h_i,h_1}^{-1}(\Lambda_0^{i}/\pi\Lambda_0^{i\vee})$ instead of the quotient $\alpha_{h_i,h_1}^{-1}(\Lambda_0^{i})/\alpha_{h_i,h_1}^{-1}(\pi\Lambda_0^{i\vee})$, etc. The spaces $(\mathbb B_{\Lambda_0^i})_0$ and $(\mathbb B_{\Lambda_1^j})_0$ are equipped with $\mathbb F_{q^2}/\mathbb F_q$-hermitian forms induced respectively by $\pi\{\cdot,\cdot\}_{[h_i]}$ and by $\{\cdot,\cdot\}_{[h_{j+1}]}$. We write $\cdot^{\perp}$ for the orthogonal complement with respect to respect to these forms. 

\begin{ex}\label{CaseOfGeometricFiber}
Assume that $R = m$ is an algebraically closed field over $\kappa_{\breve E}$. Let $(A_m \subset \ldots B_m)$ be the point corresponding to $X$ via the bijection of Proposition \ref{PointsRZSpaceOverFields}. For $1 \leq i' \leq i$ and for $j \leq j' \leq m-1$ we have 
\begin{align*}
E^{i',i}_0 & = \alpha_{h_i,h_1}^{-1}(A_{i'}/\pi\Lambda_0^{i\vee}), & (E^{i',i}_1)^{\dagger'} & = \alpha_{h_i,h_1}^{-1}(B_{i'}/\pi\Lambda_0^{i\vee}),\\
F^{j,j'}_0 & = \alpha_{h_{j+1},h_1}^{-1}(A_{j'+1}/\pi^2\Lambda_1^{j\vee}), & (F^{j,j'}_1)^{\dagger'} & = \alpha_{h_{j+1},h_1}^{-1}(\pi B_{j'+1}/\pi^2\Lambda_1^{j\vee}).
\end{align*}
\end{ex}
Let us go back to the case of a general $\mathbb F_{q^2}$-algebra $R$. In order to define a map $\mathcal N_{I,\mathbf{\Lambda}}^{\mathbbm h}(R) \to X_{I,\mathbf{\Lambda}}^{\mathbbm h}(R)$ we proceed as follows. Write $0 \leq i_1 < \ldots < i_s \leq m$ for the elements of $I$.\\

\underline{If $i_1 \not = 0$:} we have a diagram as follows:
\begin{center}
\begin{tikzcd}[column sep=small, row sep=small]
\{0\} \arrow[r,symbol=\subset] &  W_{i_1}^{\perp} \arrow[d,symbol=\subset] \arrow[r,symbol=\subset] & \ldots \arrow[r,symbol=\subset] & W_1^{\perp} \arrow[d,symbol=\subset] \arrow[r,symbol=\subset] & U_1^{\perp} \arrow[d,symbol=\subset] \arrow[r,symbol=\subset] & \ldots \arrow[r,symbol=\subset] & U_{i_1}^{\perp} \arrow[d,symbol=\subset] & \\
& U_{i_1} \arrow[r,symbol=\subset] & \ldots \arrow[r,symbol=\subset] & U_1 \arrow[r,symbol=\subset] & W_1 \arrow[r,symbol=\subset] & \ldots \arrow[r,symbol=\subset] & W_{i_1} \arrow[r,symbol=\subset] & (\mathbb B_{\Lambda_0^{i_1}})_{0,R}
\end{tikzcd} 
\end{center}
Here, we put $U_{i'} := E_0^{i',i_1}$ and $W_{i'} := (E_1^{i',i_1})^{\dagger'}$ for all $1\leq i' \leq i_1$. All the $U_{i'}$'s and the $W_{i'}$'s are locally direct summands of $(\mathbb B_{\Lambda_0^i})_{0,R}$, thus so are their orthogonal complements. Each inclusion in the diagram can be checked locally via Nakayama's lemma and Example \ref{CaseOfGeometricFiber}. To check the standard position condition, one must show that any sum of the form $W_{j}^{\perp} + U_{j'}$, $W_{j}^{\perp}+W_{j'}$, $U_j^{\perp} + U_{j'}$ and $U_j^{\perp}+W_{j'}$ is a locally direct summand of $(\mathbb B_{\Lambda_0^i})_{0,R}$. This is either trivial by the inclusion relations, or can be checked locally. For instance, if $j' \leq j$ then $U_{j}^{\perp} + W_{j'}$ is locally either equal to $U_{j}^{\perp}$ or to $W_j$, since $[W_j:U_j^{\perp}] = 1$ on geometric points. By taking the image of all these modules via $\alpha_{h_{i_1},h_1}$, we have built a point in $X_{\mathbf{\Lambda}_0}^{\mathbbm h_0}(R)$.\\

\underline{If $i_s \not = m$:} we have a diagram as follows:
\begin{center}
\begin{tikzcd}[column sep=small, row sep=small]
\{0\} \arrow[r,symbol=\subset] & U_{1}^{\perp} \arrow[d,symbol=\subset] \arrow[r,symbol=\subset] & \ldots \arrow[r,symbol=\subset] & U_{m-i_s}^{\perp} \arrow[d,symbol=\subset] \arrow[r,symbol=\subset] & W_{m-i_s}^{\perp} \arrow[d,symbol=\subset] \arrow[r,symbol=\subset] & \ldots \arrow[r,symbol=\subset] & W_{1}^{\perp} \arrow[d,symbol=\subset] & \\
& W_{1} \arrow[r,symbol=\subset] & \ldots \arrow[r,symbol=\subset] & W_{m-i_s} \arrow[r,symbol=\subset] & U_{m-i_s} \arrow[r,symbol=\subset] & \ldots \arrow[r,symbol=\subset] & U_{1} \arrow[r,symbol=\subset] & (\mathbb B_{\Lambda_1^{i_{s}}})_{0,R}
\end{tikzcd} 
\end{center}
Here, we put $W_{j} := (F_1^{i_s,i_s+j-1})^{\dagger'}$ and $U_{j} := (F_0^{i_s,i_s+j-1})$ for all $1 \leq j \leq m-i_s$. Taking image via $\alpha_{h_{i_s+1},h_1}$, one obtains a point in $X_{\mathbf{\Lambda}_s}^{\mathbbm h_s}(R)$.\\

\underline{For $1 \leq j \leq s-1$:} we have 
\begin{equation*}
\Lambda_1^{i_j-} \subset \alpha_{h_{i_{j+1}},h_{i_j+1}}(\pi\Lambda_0^{i_{j+1}+}) \subset \alpha_{h_{i_{j+1}},h_{i_j+1}}(\Lambda_0^{i_{j+1}-}) \subset \Lambda_1^{i_j+}.
\end{equation*}
This allows us to define $V_1^{j} := \alpha_{h_{i_{j+1}},h_{i_j+1}}(\pi\Lambda_0^{i_{j+1}+}) / \Lambda_1^{i_j-}$ and $V_2^{j} := \Lambda_1^{i_j+}/\alpha_{h_{i_{j+1}},h_{i_j+1}}(\Lambda_0^{i_{j+1}-})$. Thus $V_1^j$ is a subspace of $\mathbb B_{\Lambda_1^{i_j}}$ and $V_2^j$ is a quotient of it. Moreover, they both decompose as $V_1^j = (V_1^j)_0 \oplus (V_1^j)_1$ and $V_2^j = (V_2^j)_0 \oplus (V_2^j)_1$ via the $\mathcal O_E$-action in the usual manner. We have a diagram as follows:
\begin{center}
\hspace{-3cm}
\begin{tikzcd}[column sep=small, row sep=small]
\{0\} \arrow[r,symbol=\subset] & U_{1}^{\perp} \arrow[d,symbol=\subset] \arrow[r,symbol=\subset] & \ldots \arrow[r,symbol=\subset] & U_{i_{j+1}-i_j}^{\perp} \arrow[d,symbol=\subset] & \\
& W_{1} \arrow[r,symbol=\subset] & \ldots \arrow[r,symbol=\subset] & W_{i_{j+1}-i_j} \arrow[r,symbol=\subset] & (V_1^j)_{0,R}
\end{tikzcd} 
\begin{tikzcd}[column sep=small, row sep=small]
\{0\} \arrow[r,symbol=\subset] & W_{i_{j+1}-i_j}^{\perp} \arrow[d,symbol=\subset] \arrow[r,symbol=\subset] & \ldots \arrow[r,symbol=\subset] & W_{1}^{\perp} \arrow[d,symbol=\subset] & \\
& U_{i_{j+1}-i_j} \arrow[r,symbol=\subset] & \ldots \arrow[r,symbol=\subset] & U_1 \arrow[r,symbol=\subset] & (V_2^j)_{0,R}
\end{tikzcd} 
\end{center}
Here, we put $W_{i} := (F_1^{i_j,i_j+i-1})^{\dagger'}$ and $U_{i} := F_0^{i_j,i_j+i-1}$ for all $1 \leq i \leq i_{j+1}-i_j$. Taking image via $\alpha_{h_{i_j+1},h_1}$, one obtains a point of $X_{\mathbf{\Lambda}_j}^{\mathbbm h_j}(R)$.\\ 

We have thus successfully defined a morphism $f:\mathcal N_{I,\mathbf{\Lambda}}^{\mathbbm h} \to X_{I,\mathbf{\Lambda}}\times \kappa_{\breve E}$. We shall now prove that it is an isomorphism. 

\begin{theo}\label{IsomorphismWithDLVariety}
The morphism $f:\mathcal N_{I,\mathbf{\Lambda}}^{\mathbbm h} \to X_{I,\mathbf{\Lambda}}^{\mathbbm h}\times \kappa_{\breve E}$ is an isomorphism.
\end{theo}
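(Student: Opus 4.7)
The plan is to follow the strategy of \cite{vw2} Section 4.7 and \cite{cho} Section 3.6, by constructing an explicit inverse $g: X_{I,\mathbf{\Lambda}}^{\mathbbm h}\times \kappa_{\breve E} \to \mathcal N_{I,\mathbf{\Lambda}}^{\mathbbm h}$ via relative Dieudonné theory, and then verifying that $f$ and $g$ are mutually inverse. A Zariski's main theorem style argument would be tempting, since $f$ is a proper bijection onto a smooth (hence normal) target by Propositions \ref{BijectionOnPoints} and \ref{BTStrataAreSmooth}, but in positive characteristic this does not suffice to rule out a radicial factor, so an explicit inverse is the cleanest route.

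First, I would describe $g$ on $R$-valued points for a $\kappa_{\breve E}$-algebra $R$. Starting from a point of $X_{I,\mathbf{\Lambda}}^{\mathbbm h}(R)$, I pull the flags $U_\bullet, W_\bullet$ on the hermitian spaces $(\mathbb B_{\Lambda_0^{i_1}})_{0,R}$, $(\mathbb B_{\Lambda_1^{i_s}})_{0,R}$, and on the pairs $(V_1^j)_{0,R},(V_2^j)_{0,R}$ back to $\mathcal O_E$-stable locally direct summands $E^{i',i}$ and $F^{j,j'}$ of $\mathbb B_{\Lambda_0^i}\otimes_{\mathbb F_{q^2}} R$ and $\mathbb B_{\Lambda_1^j}\otimes_{\mathbb F_{q^2}} R$, using the $\mathbb Z/2\mathbb Z$-grading and orthogonal complementation with respect to $\cdot^{\dagger'}$. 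Through the equivalence of categories between nilpotent displays and strict formal $\mathcal O_F$-modules from \cite{ACZ}, each such locally direct summand determines, by reversing the constructions in Section \ref{SectionClosedSubschemes}, a quotient $X^{[i]}$ of $(X_{\Lambda_0^{i-}})_R$ (for $i\in I\setminus\{0\}$) or of $(X_{\Lambda_1^{j-}})_R$ (for $j\in I\setminus\{m\}$), together with a quasi-isogeny to the framing object, an $\mathcal O_E$-action, and a polarization satisfying the signature condition $(1,n-1)$.

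Second, I would verify that the chain of isogenies $\widetilde\alpha_{i+1,i}:X^{[i+1]}\to X^{[i]}$ required by the moduli problem exists and is compatible with all additional structures. On the lattice side this amounts to the nested chain $A_m\subset\ldots\subset A_1\subset B_1\subset\ldots\subset B_m$ predicted by Proposition \ref{PointsRZSpaceOverFields}, which can be read off from the flag data exactly as in the proof of Proposition \ref{BijectionOnPoints}. The inclusion conditions among the $\Lambda_0^{i_j}$ and $\pi\Lambda_1^{i_j \vee}$ in Definition \ref{DefinitionBTIndex}, combined with the standard-position requirements built into the various factors $X_{\mathbf{\Lambda}_j}^{\mathbbm h_j}$, ensure that the lattices reconstructed from the different flags glue coherently into a single chain. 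For intermediate indices $i_j<i<i_{j+1}$ not labelled by a vertex lattice, one uses the pair of flags on $(V_1^j)_{0,R},(V_2^j)_{0,R}$ to produce the needed $E^{i',i}$ and $F^{j,j'}$ uniquely. Applying the equivalence of \cite{ACZ} once more yields the required $X^{[i]}$ and the isogenies $\widetilde\alpha_{i+1,i}$.

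Finally, functoriality in $R$ shows that $g$ is a morphism of schemes, and by construction it factors through the closed subscheme $\mathcal N_{I,\mathbf{\Lambda}}^{\mathbbm h}$. Since $\mathcal N_{I,\mathbf{\Lambda}}^{\mathbbm h}$ is a closed subscheme of the reduced scheme $\mathcal N_{E/F,\mathrm{red}}^{\mathbbm h}$ and $X_{I,\mathbf{\Lambda}}^{\mathbbm h}\times\kappa_{\breve E}$ is reduced by Proposition \ref{BTStrataAreSmooth}, the identities $f\circ g=\mathrm{id}$ and $g\circ f=\mathrm{id}$ may be checked on geometric points using Proposition \ref{BijectionOnPoints}. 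The main obstacle will be the bookkeeping required to glue the flag data on each factor of $X_{I,\mathbf{\Lambda}}^{\mathbbm h}$ into a single coherent chain of lattices across all four cases of Definition \ref{DefinitionBTIndex}. The identifications via $\alpha_{h_i,h_1}$, the explicit choice of $\Lambda_0^{i\pm}$ and $\Lambda_1^{j\pm}$, and the inclusion conditions defining Bruhat-Tits indices are precisely calibrated to make this gluing work, but the verification requires a careful uniform treatment of the intermediate indices and of the boundary cases $i_1=0$ and $i_s=m$.
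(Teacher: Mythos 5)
Your approach differs from the paper's. The paper does use Zariski's main theorem, and your objection to the naive ZMT argument (universal homeomorphism does not rule out a radicial factor) is correct but not fatal to that route: the paper patches exactly this gap by invoking $\mathcal O_F$-windows as in \cite{ACZ} to show that $f$ is bijective on $k$-points for \emph{every} field $k$, not merely algebraically closed ones. Bijectivity on arbitrary fields eliminates any nontrivial residue field extension at the generic point, so $f$ is birational. Combined with finiteness (which follows from properness plus quasi-finiteness) and normality of the target, ZMT then gives the isomorphism. So the part of your argument where you dismiss ZMT as unworkable is the point of divergence; the paper keeps it and supplements it with the windows argument to establish birationality.

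Your alternative route — constructing an explicit inverse $g$ via the display/windows equivalence, in the spirit of \cite{vw2} Theorem 4.8 — is in principle sound, but as written it has two gaps. First, the actual construction of $g$ (reassembling a chain of polarized strict formal $\mathcal O_F$-modules with compatible isogenies and a quasi-isogeny to the framing object from the flag data) is a substantial piece of work, and you only sketch it; the signature, polarization and chain-compatibility conditions all need verification, and you acknowledge this but do not resolve it. Second, and more seriously, checking $g\circ f = \mathrm{id}$ on geometric points requires knowing a priori that the source $\mathcal N_{I,\mathbf{\Lambda}}^{\mathbbm h}$ is reduced; being a closed subscheme of the reduced scheme $\mathcal N_{E/F,\mathrm{red}}^{\mathbbm h}$ does \emph{not} imply reducedness (a closed subscheme of a reduced scheme can be non-reduced). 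You would either need to establish reducedness of $\mathcal N_{I,\mathbf{\Lambda}}^{\mathbbm h}$ independently, or argue as follows: once $f\circ g=\mathrm{id}$ is verified (here the target $X_{I,\mathbf{\Lambda}}^{\mathbbm h}$ \emph{is} reduced by smoothness, so geometric-point verification suffices), $g$ is a closed immersion that is also surjective on geometric points, hence an isomorphism onto the underlying reduced scheme, and then one must still argue that $\mathcal N_{I,\mathbf{\Lambda}}^{\mathbbm h}$ carries no nilpotents. In fact the windows argument in the paper is doing double duty here, as it effectively controls all field-valued points, reducedness included.
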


\begin{proof}
The proof is classical, we refer to \cite{vw2} Theorem 4.8 and to \cite{cho} Theorem 3.14. According to Proposition \ref{BijectionOnPoints}, the morphism $f$ induces a bijection on $k$-rational points, where $k$ is an algebraically closed field containing $\kappa_{\breve E}$. Thus, $f$ is universally bijective. Now $\mathcal N_{I,\mathbf{\Lambda}}^{\mathbbm h}$ is proper and $X_{I,\mathbf{\Lambda}}^{\mathbbm h}$ is separated, so that $f$ is proper, hence it is a universal homeomorphism. By using the theory of $\mathcal O_F$-windows as in \cite{ACZ}, one may prove that $f$ actually defines on bijection on $k$-rational points for every arbitrary field $k$ containing $\kappa_{\breve E}$ (we omit the details but we refer to \cite{cho} Section 3.5 for an account on $\mathcal O_F$-windows, and how they apply here). Thus $f$ is birational. Thus $f$ finite and birational while $X_{I,\mathbf{\Lambda}}^{\mathbbm h}$ is normal, so that $f$ is an isomorphism by Zariski's main theorem. 
\end{proof}

\begin{corol}
The variety $\mathcal N_{I,\mathbf{\Lambda}}^{\mathbbm h}$ is smooth and geometrically irreducible.
\end{corol}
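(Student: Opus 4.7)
The plan is extremely short because essentially all the work has already been done in Theorem \ref{IsomorphismWithDLVariety} and Proposition \ref{BTStrataAreSmooth}. First, I would invoke Theorem \ref{IsomorphismWithDLVariety} to identify $\mathcal N_{I,\mathbf{\Lambda}}^{\mathbbm h}$ with the base change $X_{I,\mathbf{\Lambda}}^{\mathbbm h} \times_{\mathrm{Spec}(\mathbb F_{q^2})} \mathrm{Spec}(\kappa_{\breve E})$. Since smoothness is preserved under arbitrary base change and Proposition \ref{BTStrataAreSmooth} already asserts that $X_{I,\mathbf{\Lambda}}^{\mathbbm h}$ is smooth (over its field of definition, which is contained in $\kappa_{\breve E}$), the smoothness of $\mathcal N_{I,\mathbf{\Lambda}}^{\mathbbm h}$ follows immediately.

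For geometric irreducibility, I would argue as follows. Proposition \ref{BTStrataAreSmooth} states that $X_{I,\mathbf{\Lambda}}^{\mathbbm h}$ is geometrically irreducible, i.e.\ remains irreducible after any field extension of its field of definition. Since $\kappa_{\breve E} = \overline{\mathbb F_{q^2}}$ is such an extension, the base change $X_{I,\mathbf{\Lambda}}^{\mathbbm h}\times \kappa_{\breve E}$ is irreducible. A further base change to any algebraically closed field over $\kappa_{\breve E}$ preserves irreducibility for the same reason (geometric irreducibility is stable under further field extensions), so $\mathcal N_{I,\mathbf{\Lambda}}^{\mathbbm h}$ is geometrically irreducible.

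There is really no obstacle here: this corollary is a purely formal consequence of Theorem \ref{IsomorphismWithDLVariety} together with the detailed analysis of the Deligne-Lusztig side carried out in Propositions \ref{SmoothnessDLVar}, \ref{SmoothnessDLVar2} and \ref{SmoothnessDLVar3} (which feed into Proposition \ref{BTStrataAreSmooth}). The heavy lifting — explicitly constructing the isomorphism $f$ via Dieudonné theory, applying Zariski's main theorem, and verifying smoothness of the Schubert varieties through the $(3412)$ and $(4231)$ pattern avoidance — has all been completed in the preceding sections.
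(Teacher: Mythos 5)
Your proposal is correct and follows exactly the route the paper intends: the corollary is a direct consequence of the isomorphism $\mathcal N_{I,\mathbf{\Lambda}}^{\mathbbm h} \simeq X_{I,\mathbf{\Lambda}}^{\mathbbm h}\times \kappa_{\breve E}$ of Theorem \ref{IsomorphismWithDLVariety} together with Proposition \ref{BTStrataAreSmooth}, since smoothness and geometric irreducibility are stable under base change to a field extension. The paper in fact gives no written proof for this corollary, treating it as self-evident, and your spelled-out justification matches what is implicit there.
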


The dimension of $\mathcal N_{I,\mathbf{\Lambda}}^{\mathbbm h}$ can be computed as explained in the comment following Proposition \ref{BTStrataAreSmooth}.

\section{The Bruhat-Tits stratification}
\subsection{Bruhat-Tits stratification and combinatorial properties} \label{Section4.1}

Recall the notions of inclusion and intersection for Bruhat-Tits indices, see Definition \ref{InclusionBTIndices} and Definition \ref{IntersectionBTIndices}.

\begin{prop}\label{CombinatoricsBTStratification}
Let $(I,\mathbf{\Lambda})$ and $(I',\mathbf{\Lambda'})$ be two Bruhat-Tits indices. 
\begin{enumerate}
\item We have $(I',\mathbf{\Lambda'}) \subset (I,\mathbf{\Lambda}) \iff \mathcal N_{I',\mathbf{\Lambda'}}^{\mathbbm h} \subset \mathcal N_{I,\mathbf{\Lambda}}^{\mathbbm h}$. 
\item We have $\mathcal N_{I',\mathbf{\Lambda'}}^{\mathbbm h} \cap \mathcal N_{I,\mathbf{\Lambda}}^{\mathbbm h} \not = \emptyset$ if and only if the intersection $(I',\mathbf{\Lambda'}) \cap (I,\mathbf{\Lambda}) = (I\cup I',\mathbf{\Lambda''})$ is well-defined. In this case, we have $\mathcal N_{I',\mathbf{\Lambda'}}^{\mathbbm h} \cap \mathcal N_{I,\mathbf{\Lambda}}^{\mathbbm h} = N_{I\cup I',\mathbf{\Lambda''}}^{\mathbbm h}$.
\item For every algebraically closed field $k$ containing $\kappa_{\breve E}$, we have 
\begin{equation*}
\mathcal N_{E/F}^{\mathbbm h}(k) = \bigcup_{I,\mathbf{\Lambda}} \mathcal N_{I,\mathbf{\Lambda}}^{\mathbbm h}(k),
\end{equation*}
where $(I,\mathbf{\Lambda})$ run over all the Bruhat-Tits indices.
\end{enumerate}
\end{prop}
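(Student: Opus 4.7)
The plan is to bootstrap each of the three assertions from the corresponding statement at the level of $k$-rational points, all of which have already been established in Section 2. The essential observation is that by Proposition \ref{ClosedBTStrataAsSubschemes} each $\mathcal N_{I,\mathbf{\Lambda}}^{\mathbbm h}$ is a reduced closed subscheme of the reduced, locally-of-finite-type $\kappa_{\breve E}$-scheme $\mathcal N_{E/F,\mathrm{red}}^{\mathbbm h}$. Hence inclusions between such subschemes, and equalities between their (scheme-theoretic or equivalently set-theoretic) intersections, can be tested on $k$-rational points for any algebraically closed field $k$ containing $\kappa_{\breve E}$. By Proposition \ref{EquivalenceClosedBTStrata}, these rational points are precisely the sets introduced in Definition \ref{DefinitionBTStrata}, so that the set-theoretic work of Section 2.4 applies verbatim.

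For part (1), the forward implication is an immediate upgrade of Lemma \ref{InclusionBTStrata} from $k$-points to subschemes. For the reverse, suppose $\mathcal N_{I',\mathbf{\Lambda'}}^{\mathbbm h} \subset \mathcal N_{I,\mathbf{\Lambda}}^{\mathbbm h}$ and take $k$-points: then $\mathcal N_{I',\mathbf{\Lambda'}}^{\mathbbm h}(k) \subset \mathcal N_{I,\mathbf{\Lambda}}^{\mathbbm h}(k)$, and this intersection is non-empty by Corollary \ref{NotEmpty}, so the argument of the corollary following Proposition \ref{IntersectionBTStrata} shows that the Bruhat-Tits index intersection is well-defined and equals $(I',\mathbf{\Lambda'})$, giving $(I',\mathbf{\Lambda'}) \subset (I,\mathbf{\Lambda})$. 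For part (2), assume first that $(I,\mathbf{\Lambda}) \cap (I',\mathbf{\Lambda'}) = (I\cup I',\mathbf{\Lambda''})$ is well-defined; by construction $(I\cup I',\mathbf{\Lambda''}) \subset (I,\mathbf{\Lambda})$ and $(I\cup I',\mathbf{\Lambda''}) \subset (I',\mathbf{\Lambda'})$, so part (1) yields closed immersions $\mathcal N_{I\cup I',\mathbf{\Lambda''}}^{\mathbbm h} \hookrightarrow \mathcal N_{I,\mathbf{\Lambda}}^{\mathbbm h} \cap \mathcal N_{I',\mathbf{\Lambda'}}^{\mathbbm h}$, and Proposition \ref{IntersectionBTStrata} upgrades this to an equality after checking on $k$-points. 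Conversely, if the subscheme intersection is non-empty, it has a $k$-point for some algebraically closed $k$, whence Proposition \ref{IntersectionBTStrata} guarantees that the intersection of Bruhat-Tits indices is well-defined. Part (3) is an immediate consequence of Proposition \ref{RationalPointsCoverEverything} combined with Proposition \ref{EquivalenceClosedBTStrata}.

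The only delicate point is the handling of the intersection in part (2): one should interpret $\mathcal N_{I,\mathbf{\Lambda}}^{\mathbbm h} \cap \mathcal N_{I',\mathbf{\Lambda'}}^{\mathbbm h}$ as the intersection of reduced closed subschemes (equivalently, as the reduced subscheme structure on the underlying closed subset), so that the determination by $k$-points is automatic and no subtleties with nilpotents intervene. Since the entire stratification lives inside the \emph{reduced} special fiber, this is the natural convention throughout, and no genuine obstacle arises; the proof is therefore essentially a compilation of the set-theoretic results of Section 2.4, translated to the subscheme level via the reducedness of $\mathcal N_{E/F,\mathrm{red}}^{\mathbbm h}$.
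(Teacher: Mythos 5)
Your proposal is correct and follows essentially the same route as the paper, whose proof consists of the one-line observation that points (1) and (2) are scheme-theoretic upgrades of Lemma \ref{InclusionBTStrata} and Proposition \ref{IntersectionBTStrata} while point (3) restates Proposition \ref{RationalPointsCoverEverything}. One small inaccuracy in your write-up: Proposition \ref{ClosedBTStrataAsSubschemes} only asserts that $\mathcal N_{I,\mathbf{\Lambda}}^{\mathbbm h}$ is a closed subscheme of $\mathcal N_{E/F,\mathrm{red}}^{\mathbbm h}$, and a closed subscheme of a reduced scheme need not itself be reduced. The reducedness you use to pass from $\overline{k}$-points to scheme-theoretic inclusions and equalities actually comes from Theorem \ref{IsomorphismWithDLVariety} and its Corollary, which identify $\mathcal N_{I,\mathbf{\Lambda}}^{\mathbbm h}$ with the smooth variety $X_{I,\mathbf{\Lambda}}^{\mathbbm h}\times\kappa_{\breve E}$. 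With that citation corrected, your argument — including the sensible convention of reading the intersection in part (2) as the reduced intersection — is a faithful unpacking of the paper's terse proof.
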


Points 1. and 2. are just scheme-theoretic upgrades of Proposition \ref{InclusionBTStrata} and Proposition \ref{IntersectionBTStrata}. Point 3. is the same as Proposition \ref{RationalPointsCoverEverything}.\\
Given a Bruhat-Tits index $(I,\mathbf{\Lambda})$, we define
\begin{equation*}
\mathcal N_{I,\mathbf{\Lambda}}^{\mathbbm h,0} := \mathcal N_{I,\mathbf{\Lambda}}^{\mathbbm h} \setminus \bigcup_{(I',\mathbf{\Lambda'}) \subsetneq (I,\mathbf{\Lambda})} \mathcal N_{I',\mathbf{\Lambda'}}^{\mathbbm h}.
\end{equation*} 

Recall the notion of Bruhat-Tits type of a point $(A_m \subset \ldots \subset B_m)\in \mathcal N_{E/F}^{\mathbbm h}(k)$, cf. Definition \ref{BTType}, as well as the notation $\Lambda_{A_i}$ and $\Lambda_{B_i}$ from Section 2.3.

\begin{lem}\label{RationalPointsBTStrata}
Let $k$ be an algebraically closed field containing $\kappa_{\breve E}$ and let $(I,\mathbf{\Lambda})$ be a Bruhat-Tits index. Let $(A_m \subset \ldots \subset B_m) \in \mathcal N_{E/F}^{\mathbbm h}(k)$. The following statements are equivalent. 
\begin{enumerate}
\item $(A_m\subset \ldots \subset B_m) \in \mathcal N_{I,\mathbf{\Lambda}}^{\mathbbm h,0}(k)$,
\item $I$ is the Bruhat-Tits type of $(A_m\subset \ldots \subset B_m)$, and for all $i\in I\setminus \{0\}$ and $j\in I\setminus \{m\}$, we have $\Lambda_0^{i} = \Lambda_{B_i}$ and $\Lambda_1^{j} = \Lambda_{A_{j+1}}$. 
\end{enumerate}
\end{lem}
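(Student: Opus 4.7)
The central observation is that $\Lambda_{B_i}$ (resp.\ $\Lambda_{A_{j+1}}$) is \emph{the smallest} $W_{\mathcal O_F}(\mathbb F_{q^2})$-lattice $\Lambda \subset C$ such that $B_i \subset \Lambda_k$ (resp.\ $A_{j+1} \subset \Lambda_k$): any $\tau$-invariant $W_{\mathcal O_F}(k)$-lattice containing $B_i$ must contain $T_d(B_i) = (\Lambda_{B_i})_k$ for $d$ large, and $\tau$-invariant lattices in $\mathbb N_{k,0}$ correspond bijectively to lattices in $C$. For $(2) \Rightarrow (1)$, I would first check that setting $\Lambda_0^i := \Lambda_{B_i}$ and $\Lambda_1^j := \Lambda_{A_{j+1}}$ really defines a Bruhat-Tits index: the conditions $\Lambda_{B_i}\in \mathcal L_0$, $\Lambda_{A_{i+1}}\in \mathcal L_1$ come from Proposition \ref{ConditionStar} applied to the BT-type condition $\Lambda_{B_i} \subset \pi\Lambda_{A_{i+1}}^{\vee}$; the type bounds $t(\Lambda_{B_i})\geq h_i+1$ and $t(\Lambda_{A_{i+1}})\geq n-h_{i+1}+1$ are read off from the length computation $\pi(\Lambda_{B_i})_k^{\vee} \subset \pi B_i^{\vee} \overset{1}{\subset} A_i \overset{h_i}{\subset} B_i \subset (\Lambda_{B_i})_k$; the chain inclusions $\Lambda_0^{i_{j+1}} \subset \pi\Lambda_1^{i_{j+1}\vee}$ are exactly the BT-type conditions, and the skipped inclusions $\pi\Lambda_1^{i_j\vee} \subset \Lambda_0^{i_{j+1}}$ follow by $\tau$-invariance of $\pi(\Lambda_{A_{i_j+1}})_k^{\vee} \subset \pi A_{i_j+1}^{\vee} \subset B_{i_j+1} \subset B_{i_{j+1}} \subset (\Lambda_{B_{i_{j+1}}})_k$. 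Membership of the point in $\mathcal N_{I,\mathbf{\Lambda}}^{\mathbbm h}(k)$ is then immediate from Definition \ref{DefinitionBTStrata}, since every required inclusion reduces to $B_i \subset (\Lambda_{B_i})_k$ or $A_{j+1} \subset (\Lambda_{A_{j+1}})_k$.

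For the strict minimality, suppose $(I',\mathbf{\Lambda'}) \subsetneq (I,\mathbf{\Lambda})$ and the point were in $\mathcal N_{I',\mathbf{\Lambda'}}^{\mathbbm h}(k)$. If $I \subsetneq I'$, pick $i' \in I'\setminus I$; the defining diagrams give $B_{i'} \subset (\Lambda_0^{\prime i'})_k$ and $A_{i'+1} \subset (\Lambda_1^{\prime i'})_k$, whence by minimality $\Lambda_{B_{i'}} \subset \Lambda_0^{\prime i'}$ and $\Lambda_{A_{i'+1}} \subset \Lambda_1^{\prime i'}$; combined with $\Lambda_0^{\prime i'} \subset \pi\Lambda_1^{\prime i'\vee}$ this yields $\Lambda_{B_{i'}} \subset \pi\Lambda_{A_{i'+1}}^{\vee}$, so $i' \in I$ by Definition \ref{BTType}, a contradiction (and the cases $i'=0,m$ are handled verbatim). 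If instead $I = I'$ but some $\Lambda_0^{\prime i} \subsetneq \Lambda_0^i = \Lambda_{B_i}$, the minimality of $\Lambda_{B_i}$ gives $\Lambda_{B_i} \subset \Lambda_0^{\prime i}$, forcing equality, a contradiction; similarly for the $\Lambda_1^{\prime j}$.

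For $(1) \Rightarrow (2)$, let $I^*$ denote the BT type of the point and set $\Lambda_0^{*i}:= \Lambda_{B_i}$, $\Lambda_1^{*j}:= \Lambda_{A_{j+1}}$; by Lemma \ref{CrucialLemma} $I^* \neq \emptyset$, and by the first paragraph $(I^*,\mathbf{\Lambda}^*)$ is a valid Bruhat-Tits index whose associated subscheme contains the point. The same minimality arguments as above, applied in reverse, show $(I^*,\mathbf{\Lambda}^*) \subset (I,\mathbf{\Lambda})$: from $B_i \subset (\Lambda_0^i)_k$ and $A_{j+1} \subset (\Lambda_1^j)_k$ we get $\Lambda_0^{*i} \subset \Lambda_0^i$ and $\Lambda_1^{*j} \subset \Lambda_1^j$ for all $i \in I\setminus\{0\}$, $j \in I\setminus\{m\}$, and the chain-to-BT-type argument shows $I \subset I^*$. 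Since the point lies simultaneously in $\mathcal N_{I^*,\mathbf{\Lambda}^*}^{\mathbbm h}(k)$ and in $\mathcal N_{I,\mathbf{\Lambda}}^{\mathbbm h,0}(k)$, the very definition of the open stratum forces $(I^*,\mathbf{\Lambda}^*) = (I,\mathbf{\Lambda})$, which is condition (2).

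The only real subtlety is the bookkeeping of the four edge cases depending on whether $0 \in I$ or $m \in I$ (equivalently $h_1 = 0$ or $h_m = n$), but Definition \ref{BTType} and the diagrams of Definition \ref{DefinitionBTStrata} are set up symmetrically enough that the generic argument carries over without modification.
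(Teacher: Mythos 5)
Your proof is correct, and for the forward direction $(2)\Rightarrow(1)$ it follows the paper's strategy: take a hypothetical $(I',\mathbf{\Lambda'})\subsetneq(I,\mathbf{\Lambda})$ whose closed stratum contains the point, and show this forces $(I',\mathbf{\Lambda'})=(I,\mathbf{\Lambda})$. You make the \emph{minimality} of $\Lambda_{B_i}$ and $\Lambda_{A_{j+1}}$ (as the smallest $\tau$-stable overlattices of $B_i$ and $A_{j+1}$) an explicit organizing principle; the paper uses the same fact but keeps it implicit, and instead of reducing every edge case to a minimality bound, it runs the separate chains $\pi^2(\Lambda_1^{\prime 0})^{\vee}_k\subset\pi^2A_1^{\vee}\subset\cdots\subset A_1\subset(\Lambda_1^{\prime 0})_k$ for $i'=0$ and the analogous one for $i'=m$. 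Your treatment of those two cases is somewhat compressed (``handled verbatim''), but the claimed reduction does go through: $\Lambda_{A_1}\subset\Lambda_1^{\prime 0}\in\mathcal L_1$ gives $(\Lambda_{A_1})_k\subset(\Lambda_1^{\prime 0})_k\subset\pi(\Lambda_1^{\prime 0})_k^{\vee}\subset\pi(\Lambda_{A_1})_k^{\vee}$, so $\Lambda_{A_1}\in\mathcal L_1$, and similarly $\Lambda_{B_m}\in\mathcal L_0$.

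Where you genuinely diverge from the paper is the converse $(1)\Rightarrow(2)$. The paper dispatches it in one line by invoking disjointness of the (open) strata, which in that form is not quite self-evident and in fact has a typo (``the sets $\mathcal N_{I,\mathbf{\Lambda}}^{\mathbbm h}(k)$ for varying $(I,\mathbf{\Lambda})$ are mutually disjoint'' should read $\mathcal N_{I,\mathbf{\Lambda}}^{\mathbbm h,0}(k)$, and even then relies on Proposition~\ref{IntersectionBTStrata}). Your version is more self-contained: you build the canonical index $(I^*,\mathbf{\Lambda}^*)$ from the point (nonempty by Lemma~\ref{CrucialLemma}), verify $(I^*,\mathbf{\Lambda}^*)\subset(I,\mathbf{\Lambda})$ directly via the same minimality chains, and then the very definition of $\mathcal N_{I,\mathbf{\Lambda}}^{\mathbbm h,0}$ forces equality. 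This avoids quoting a disjointness statement that itself requires the intersection machinery to establish, at the cost of repeating the chain arguments once more; both proofs buy the same conclusion, but yours is the more robust bookkeeping.
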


\begin{proof}
Assume 2. By construction, it is clear that $(I,\mathbf{\Lambda})$, as specified in the statement, is a Bruhat-Tits index and that $(A_m\subset \ldots \subset B_m) \in \mathcal N_{I,\mathbf{\Lambda}}^{\mathbbm h}(k)$. Now assume that $(A_m\subset \ldots \subset B_m) \in \mathcal N_{I',\mathbf{\Lambda'}}^{\mathbbm h}(k)$ for some Bruhat-Tits index $(I',\mathbf{\Lambda'}) \subsetneq (I,\mathbf{\Lambda})$. Thus, we have $I \subset I'$ and for all $i \in I\setminus \{0\}$ and $j \in I \setminus \{m\}$, we have $\Lambda_0^{'i} \subset \Lambda_{B_i}$ and $\Lambda_1^{'j} \subset \Lambda_{A_{j+1}}$. However, we also have $B_i \subset (\Lambda_0^{'i})_k$ and $A_{j+1} \subset (\Lambda_1^{'j})_k$ by definition of $\mathcal N_{I',\mathbf{\Lambda'}}^{\mathbbm h}$. This implies that $\Lambda_0^{'i} = \Lambda_{B_i}$ and $\Lambda_1^{'j} = \Lambda_{A_{j+1}}$ for all $i$ and $j$ as above. In particular, we must have $I \subsetneq I'$. Let us fix $i \in I' \setminus I$. \\
If $i = 0$, let $i'$ be the minimum of $I' \setminus \{0\}$. Note that $i'$ exists since $\# I' \geq 2$. By construction, we have 
\begin{equation*}
\pi^2(\Lambda_1^{\prime 0})^{\vee}_k \subset \pi^2 A_1^{\vee} \subset \pi(\Lambda_0^{\prime i'})_k \subset \pi(\Lambda_0^{\prime i'})_k^{\vee} \subset A_1 \subset (\Lambda_1^{\prime 0})_k.
\end{equation*}
It follows that $\pi^2 \Lambda_{A_1}^{\vee} \subset \Lambda_{A_1}$ so that $\Lambda_{A_1} \in \mathcal L_1$, which contradicts $0 \not \in I$. \\
If $i = m$, let $i'$ be the maximum of $I' \setminus \{m\}$. By construction we have 
\begin{equation*}
\pi(\Lambda_0^{\prime m})^{\vee}_k \subset \pi B_m^{\vee} \subset (\Lambda_1^{\prime i'})_k \subset \pi(\Lambda_1^{\prime i'})_k^{\vee} \subset B_m \subset (\Lambda_0^{\prime m})_k.
\end{equation*}
It follows that $\pi \Lambda_{B_m}^{\vee} \subset \Lambda_{B_m}$ so that $\Lambda_{B_m} \in \mathcal L_0$, which contradicts $m \not\in I$.\\
If $0<i<m$, we have 
\begin{equation*}
B_i \subset (\Lambda_0^{\prime i})_k \subset \pi(\Lambda_1^{\prime i})_k \subset \pi A_{i+1}^{\vee},
\end{equation*} 
from which it follows that $\Lambda_{B_i} \subset \pi\Lambda_{A_{i+1}}^{\vee}$, which contradicts $i \not \in I$. Therefore, we have proved $(A_m\subset \ldots \subset B_m) \in \mathcal N_{I,\mathbf{\Lambda}}^{\mathbbm h,0}(k)$.\\
The implication 1. $\implies$ 2. follows from the reverse implication, given that the sets $\mathcal N_{I,\mathbf{\Lambda}}^{\mathbbm h}(k)$ for varying $(I,\mathbf{\Lambda})$ are mutually disjoint.
\end{proof}

\begin{theo}
The isomorphism $f:\mathcal N_{I,\mathbf{\Lambda}}^{\mathbbm h} \xrightarrow{\sim} X_{I,\mathbf{\Lambda}}^{\mathbbm h} \times \kappa_{\breve E}$ induces an isomorphism $\mathcal N_{I,\mathbf{\Lambda}}^{\mathbbm h,0} \xrightarrow{\sim} X_{I,\mathbf{\Lambda}}^{\mathbbm h,0} \times \kappa_{\breve E}$.
\end{theo}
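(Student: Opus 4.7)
The plan is as follows. Both $\mathcal N_{I,\mathbf{\Lambda}}^{\mathbbm h,0}$ and $X_{I,\mathbf{\Lambda}}^{\mathbbm h,0}\times \kappa_{\breve E}$ are open subschemes of the isomorphic schemes $\mathcal N_{I,\mathbf{\Lambda}}^{\mathbbm h}$ and $X_{I,\mathbf{\Lambda}}^{\mathbbm h}\times \kappa_{\breve E}$: openness on the $\mathcal N$ side is by definition, and on the $X$ side it follows by inspecting the conditions of Definitions \ref{BTStratumDLVarU0}, \ref{BTStratumDLVarUs} and \ref{BTStratumDLVarGLj}, which, once translated via Theorem \ref{ClosureFineDLVariety}, cut out an open union of fine Deligne--Lusztig strata inside the $X_{\mathbf{\Lambda}_j}^{\mathbbm h_j}$. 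Since $f$ is already an isomorphism of reduced schemes of finite type, it therefore suffices to check that $f$ matches the two opens on geometric points, i.e.\ that for every algebraically closed $k/\kappa_{\breve E}$ and every $(A_m \subset \ldots \subset B_m) \in \mathcal N_{I,\mathbf{\Lambda}}^{\mathbbm h}(k)$, the characterization of $\mathcal N_{I,\mathbf{\Lambda}}^{\mathbbm h,0}(k)$ given by Lemma \ref{RationalPointsBTStrata} is equivalent to the image flag lying in $X_{I,\mathbf{\Lambda}}^{\mathbbm h,0}(k)$.

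The translation goes factor by factor along the decomposition $X_{I,\mathbf{\Lambda}}^{\mathbbm h} = \prod_j X_{\mathbf{\Lambda}_j}^{\mathbbm h_j}$. For the factor $X_{\mathbf{\Lambda}_0}^{\mathbbm h_0}$ (when $i_1 \neq 0$), Proposition \ref{BijectionOnPoints} sends the point to the flag $U_i = A_i/\pi(\Lambda_0^{i_1})_k^{\vee}$, $W_i = B_i/\pi(\Lambda_0^{i_1})_k^{\vee}$ in $(V_{\Lambda_0^{i_1}}^0)_k$. Since $\tau$ acts on $(V_{\Lambda_0^{i_1}}^0)_k$ as the $q^2$-Frobenius, its $\tau$-stable subspaces are exactly the base changes of $\mathbb F_{q^2}$-subspaces, and Proposition \ref{SubOverVertexLattices} identifies vertex sub-lattices $\Lambda' \in \mathcal L_0$ of $\Lambda_0^{i_1}$ with $\mathbb F_{q^2}$-subspaces $U' \subset V_{\Lambda_0^{i_1}}^0$ satisfying $U'^{\perp} \subset U'$. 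Thus $\Lambda_{B_i} \subsetneq \Lambda_0^{i_1}$ for some $i \leq i_1$ is equivalent to $W_i$ being contained in such a proper $\tau$-stable subspace, a condition which in flag-theoretic language amounts to the relative position of the flag with its Frobenius twist factoring through a shorter B\'edard sequence. Via Propositions \ref{DecompositionDLVarU0} and \ref{StratificationFineDLVarietiesUnitary} this is precisely the failure of the inequalities $t_i + t_{r_0+1-i} \geq \Delta h_{i_1-i}$ (and $2t_{i_1} \geq h_1$ when $h_1 > 0$) appearing in Definition \ref{BTStratumDLVarU0}. The presence of the maximal factor $s_{l+h_{i_1}}\ldots s_{2(l-1)+h_{i_1}}$ (when $l > 1$) encodes the additional requirement that $\Lambda_{B_{i_1}}$ be a vertex lattice of rank $0$ with exact type $t(\Lambda_{B_{i_1}}) = t(\Lambda_0^{i_1})$, ruling out the degeneracy where $\Lambda_{B_{i_1}}$ coincides with a vertex lattice of rank different from $0$ (cf.\ the discussion preceding Lemma \ref{ParityType}).

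The analysis for $X_{\mathbf{\Lambda}_s}^{\mathbbm h_s,0}$ is symmetric, controlling $\Lambda_{A_{j'+1}} = \Lambda_1^{i_s}$ by the same mechanism; and for the $\mathrm{GL}$ factors $X_{\mathbf{\Lambda}_j}^{\mathbbm h_j,0}$ (for $1 \leq j \leq s-1$) the two-component flag naturally decouples the constraints into $\Lambda_{A_{i_j+1}} = \Lambda_1^{i_j}$ and $\Lambda_{B_{i_{j+1}}} = \Lambda_0^{i_{j+1}}$ — with the chain condition $\pi\Lambda_1^{i_j\vee} \subset \Lambda_0^{i_{j+1}}$ baked into the definitions of $V_1, V_2$ — and the inequalities of Definition \ref{BTStratumDLVarGLj} arise via Proposition \ref{StratificationFineDLVarietiesFakeUnitary}. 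The condition that $I$ itself be the Bruhat-Tits type, and not a proper superset, is automatic once all the $\Lambda_0^i, \Lambda_1^j$ are correctly identified, since for $1 \leq i \leq m-1$ with $i \notin I$ the relation $\Lambda_{B_i} \not\subset \pi\Lambda_{A_{i+1}}^{\vee}$ is forced by the Bruhat-Tits index structure of $(I,\mathbf{\Lambda})$. The main obstacle is the sheer bookkeeping of the many boundary cases ($h_1 = 0$ vs.\ $h_1 > 0$, $h_m = n$ vs.\ $h_m < n$, $l = 1$ vs.\ $l > 1$ in each unitary factor, and $l_0, l_1$ analogously in each $\mathrm{GL}$ factor), each of which alters the precise form of $J$ and of the inequalities defining $X_{I,\mathbf{\Lambda}}^{\mathbbm h,0}$; but once all cases are carefully checked, the agreement on $k$-points yields $f(\mathcal N_{I,\mathbf{\Lambda}}^{\mathbbm h,0}) = X_{I,\mathbf{\Lambda}}^{\mathbbm h,0}\times \kappa_{\breve E}$ as open subschemes, completing the proof.
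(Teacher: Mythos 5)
Your outline follows the same reduction the paper uses -- pass to geometric points and translate factor by factor -- and the heuristic identification of the $t_i$-inequalities with the Bruhat--Tits type conditions is the right picture. But there are two genuine gaps.

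First, the opening assertion that $X_{I,\mathbf{\Lambda}}^{\mathbbm h,0}$ is open is not justified. Theorem~\ref{ClosureFineDLVariety} describes the closure of a single fine stratum; to conclude that the union appearing in Definitions~\ref{BTStratumDLVarU0}, \ref{BTStratumDLVarUs} and \ref{BTStratumDLVarGLj} is open, you would need the non-obvious combinatorial fact that the complementary set of parameters is downward closed under the twisted Bruhat order $\leq_{J,F}$, and you neither state nor prove this. The paper avoids the issue altogether: since $\mathcal N_{I,\mathbf{\Lambda}}^{\mathbbm h,0}$ is open by construction and $f$ is already an isomorphism, openness of $X_{I,\mathbf{\Lambda}}^{\mathbbm h,0}$ is a \emph{consequence} of the bijection on $k$-points, not a hypothesis you get to use in establishing it. Reversing the logic removes the gap.

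Second, and more seriously, your final sentence claims that the Bruhat--Tits type of the point being exactly $I$ (and not a proper superset) is ``automatic\ldots forced by the Bruhat-Tits index structure of $(I,\mathbf{\Lambda})$'' once the lattices $\Lambda_0^i,\Lambda_1^j$ are correctly identified. This is false as stated. The index $(I,\mathbf{\Lambda})$ imposes no condition on $\Lambda_{B_i}$, $\Lambda_{A_{i+1}}$ for $i\notin I$; indeed the whole purpose of passing from $\mathcal N_{I,\mathbf{\Lambda}}^{\mathbbm h}$ to the open stratum $\mathcal N_{I,\mathbf{\Lambda}}^{\mathbbm h,0}$ is to throw out points whose BT-type strictly contains $I$, and such points certainly exist in $\mathcal N_{I,\mathbf{\Lambda}}^{\mathbbm h}(k)$. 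Excluding $i\notin I$ from the BT-type is precisely what the $t_i$-inequalities encode, and verifying this implication is the substantial half of the paper's proof: for each such $i$ one lifts the partial flag to a complete flag, reads off the equalities $\mathcal F_a=\mathcal F_b^{\perp}$ forced by the relative position $w_1\cdots w_{r_0}s_{\bullet}\cdots s_{\bullet}$, and then uses $\tau$-stability of $T_t(\cdot)$ to show that an inclusion $\Lambda_{B_i}\subset\pi\Lambda_{A_{i+1}}^{\vee}$ would produce an extra inclusion among the flag steps contradicting that relative position. Your sketch leaves this diagrammatic core out entirely, and a careless reader following your ``automatic'' assertion would omit the converse direction of the proof.
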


\begin{proof}
We show that $f$ induces a bijection between $\mathcal N_{I,\mathbf{\Lambda}}^{\mathbbm h,0}(k)$ and $X_{I,\mathbf{\Lambda}}^{\mathbbm h,0}(k)$ for all agebraically closed fields $k$ containing $\kappa_{\breve E}$.\\

First, let $(A_m\subset \ldots \subset B_m) \in \mathcal N_{I,\mathbf{\Lambda}}^{\mathbbm h,0}(k)$. Then $I$ is the Bruhat-Tits type of $(A_m\subset \ldots \subset B_m)$, and we have $\Lambda_0^i = \Lambda_{B_i}$ and $\Lambda_1^j = \Lambda_{A_{j+1}}$ for all $i\in I \setminus \{0\}$ and $j\in I \setminus \{m\}$. Let $c_i$ and $d_i$ be defined as in Section \ref{Section2.3} for all $1 \leq i \leq m$.\\

\underline{If $i_1 \not = 0$:} we use the notations of Section \ref{Section3.3}. Let $\mathcal G = (U_{i_1} \subset \ldots \subset W_{i_1}) \in X_{\mathbf{\Lambda}_0}^{\mathbbm h_0}(k)$ be the partial flag in $(V_{\Lambda_{B_{i_1}}}^0)_k$ corresponding to $(A_{i_1} \subset \ldots \subset B_{i_1})$. Let $w = w_1\ldots w_r \in {}^J\mathbf W$ be such that $\mathcal G \in X_{J}\{w\}(k)$, where the $w_i$'s are as in Proposition \ref{DecompositionDLVarU0}. We have $t(\Lambda_{B_{i_1}}) = 2(l-1) + h_{i_1} + 1$ where $l = d_{i_1}$, and we consider a full flag $\mathcal F$ which lifts $\mathcal G$, so that $\mathcal F \in X(w)(k)$ where $X(w)$ is the classical Deligne-Lusztig variety associated to $w$. If $l>1$ so that $k_r = l+h_{i_1}-1$, assume that $t_r < l-1$ towards a contradiction. In this case we have
\begin{center}\hspace*{-2.2cm}
\begin{tikzcd}[sep = small]
\mathcal F_{2(l-1)+h_{i_1}}^{\perp} \arrow[d,equal] \arrow[r,symbol=\subset] & \ldots \arrow[r,symbol=\subset] & \mathcal F_{l+h_{i_1}}^{\perp} \arrow[d,equal] \arrow[r,symbol=\subset] & \ldots \arrow[r,symbol=\subset] & \mathcal F_{l-1}^{\perp}\arrow[r,symbol=\subset] \arrow[d,equal,"/" marking] \arrow[dr,hook, shorten >= 10pt] & \ldots \arrow[r,symbol=\subset] & \mathcal F_{l-t_r}^{\perp} \arrow[d,equal,"/" marking] \arrow[r,symbol=\subset] \arrow[dr,hook, shorten >= 10pt] & \mathcal F_{l-t_r-1}^{\perp} \arrow[d,equal] \arrow[r,symbol=\subset] & \ldots \arrow[r,symbol=\subset] & \mathcal F_1^{\perp} \arrow[d,equal] \\
\mathcal F_1 \arrow[r,symbol=\subset] & \ldots \arrow[r,symbol=\subset] & \mathcal F_{l-1} \arrow[r,symbol=\subset] & \ldots \arrow[r,symbol=\subset] & \mathcal F_{l+h_{i_1}} \arrow[r,symbol=\subset] & \ldots \arrow[r,symbol=\subset] & \mathcal F_{l+h_{i_1}+t_r-1} \arrow[r,symbol=\subset] & \mathcal F_{l+h_{i_1}+t_r} \arrow[r,symbol=\subset] & \ldots \arrow[r,symbol=\subset] & \mathcal F_{2(l-1)+h_{i_1}} 
\end{tikzcd}
\end{center}
Since $\mathcal F_i = \mathcal F^{\perp}_{2(l-1)+h_{i_1}-i+1}$ for all $1 \leq i \leq l-1$, we have $\mathcal F_i^{\perp} = \tau(\mathcal F_{2(l-1)+h_{i_1}-i+1})$ by taking orthogonal complements. Here $\tau = \mathrm{id}\otimes \sigma^{2}$ on $(V_{\Lambda_{B_{i_1}}}^0)_k = V_{\Lambda_{B_{i_1}}}^0 \otimes_{\mathbb F_{q^2}} k$, so that we have $(U^{\perp})^{\perp} = \tau(U)$ for all subspaces $U \subset (V_{\Lambda_{B_{i_1}}}^0)_k$. From the diagram, we deduce that 
\begin{equation*}
\mathcal F_{l+h_{i_1}+t_r} = \mathcal F_{l+h_{i_1}} + \tau(\mathcal F_{l+h_{i_1}}) + \ldots + \tau^{t_r}(\mathcal F_{l+h_{i_1}}),
\end{equation*}
and that $\mathcal F_{l+h_{i_1}+t_r}$ is $\tau$-stable. Since $\mathcal F_{l+h_{i_1}} = B_{i_1}/\pi(\Lambda_{B_{i_1}})_k$, this implies that $T_{t_r+1}(B_{i_1})$ is $\tau$-stable. Thus, we must have $t_r+1 \geq l$, which is a contradiction. Thus, if $l > 1$ then $t_r = l-1$.\\
Let us return to the case $l\geq 1$, and let $r_0$ be defined as in Definition \ref{BTStratumDLVarU0}. Let $1 \leq i \leq i_1 - 1$ so that $k_i = l + \frac{h_{i_1}-h_{i_1+1-i}}{2}-1$ and $k_{r_0+1-i} = l + \frac{h_{i_1}+h_{i_1-i}}{2}-1$. Assume that $t_i + t_{r_0+1-i} < \Delta h_{i_1-i}$ towards a contradiction. In this case we have
\begin{center}\hspace*{-2.5cm}
\begin{tikzcd}[cramped,sep = small]
\mathcal F_{k_{r_0+1-i}+\Delta h_{i_1-i}}^{\perp} \arrow[r,symbol=\subset] \arrow[d,equal,"/" marking] \arrow[dr,hook, shorten >= 10pt] & \mathcal F_{k_{r_0+1-i}+\Delta h_{i_1-i}-1}^{\perp} \arrow[r,symbol=\subset] \arrow[d,equal,"/" marking] \arrow[dr,hook, shorten >= 10pt] & \ldots \arrow[r,symbol=\subset] & \mathcal F_{k_{r_0+1-i}+\Delta h_{i_1-i}-t_i+1}^{\perp} \arrow[r,symbol=\subset] \arrow[d,equal,"/" marking] \arrow[dr,hook, shorten >= 10pt] & \mathcal F_{k_{r_0+1-i}+\Delta h_{i_1-i}-t_i}^{\perp} \arrow[r,symbol=\subset] \arrow[d,equal] & \ldots \arrow[r,symbol=\subset] & \mathcal F_{k_{r_0+1-i}+1}^{\perp} \arrow[d,equal]\\
\mathcal F_{k_i+1} \arrow[r,symbol=\subset] & \mathcal F_{k_i+2} \arrow[r,symbol=\subset] & \ldots \arrow[r,symbol=\subset] & \mathcal F_{k_i+t_i} \arrow[r,symbol=\subset] & \mathcal F_{k_i+t_i+1} \arrow[r,symbol=\subset] & \ldots \arrow[r,symbol=\subset] & \mathcal F_{k_{i}+\Delta h_{i_1-1}}
\end{tikzcd}
\end{center}
\hspace*{\fill} and \hspace*{\fill}
\begin{center}\hspace*{-1cm}
\begin{tikzcd}[cramped,sep = small]
\mathcal F_{k_{i}+\Delta h_{i_1-i}}^{\perp} \arrow[r,symbol=\subset] \arrow[d,equal,"/" marking] \arrow[dr,hook, shorten >= 10pt] & \mathcal F_{k_{i}+\Delta h_{i_1-i}-1}^{\perp} \arrow[r,symbol=\subset] \arrow[d,equal,"/" marking] \arrow[dr,hook, shorten >= 10pt] & \ldots \arrow[r,symbol=\subset] & \mathcal F_{k_{i}+\Delta h_{i_1-i}-t_{r_0+1-i}+1}^{\perp} \arrow[r,symbol=\subset] \arrow[d,equal,"/" marking] \arrow[dr,hook, shorten >= 10pt] & \mathcal F_{k_{i}+\Delta h_{i_1-i}-t_{r_0+1-i}}^{\perp} \arrow[r,symbol=\subset] \arrow[d,equal] & \ldots \arrow[r,symbol=\subset] & \mathcal F_{k_i+1}^{\perp} \arrow[d,equal]\\
\mathcal F_{k_{r_0+1-i}+1} \arrow[r,symbol=\subset] & \mathcal F_{k_{r_0+1-i}+2} \arrow[r,symbol=\subset] & \ldots \arrow[r,symbol=\subset] & \mathcal F_{k_{r_0+1-i}+t_{r_0+1-i}} \arrow[r,symbol=\subset] & \mathcal F_{k_{r_0+1-i}+t_{r_0+1-i}+1} \arrow[r,symbol=\subset] & \ldots \arrow[r,symbol=\subset] & \mathcal F_{k_{r_0+1-i}+\Delta h_{i_1-1}}
\end{tikzcd}
\end{center}
By hypothesis, we have $k_{i}+\Delta h_{i_1-i}-t_{r_0+1-i} \geq  k_i + t_i + 1$, thus according to the first diagram we have $\mathcal F_{k_{i}+\Delta h_{i_1-i}-j} = \mathcal F_{k_{r_0+1-i}+j+1}^{\perp}$ for all $0 \leq j \leq t_{r_0+1-i}-1$. By the second diagram then, we deduce that 
\begin{equation*}
\mathcal F_{k_{r_0+1-i}+t_{r_0+1-i}+1} = \mathcal F_{k_{r_0+1-i}+1} + \tau(\mathcal F_{k_{r_0+1-i}+1}) + \ldots + \tau^{t_{r_0+1-i}}(\mathcal F_{k_{r_0+1-i}+1}),
\end{equation*}
and that $\mathcal F_{k_{r_0+1-i}+t_{r_0+1-i}+1}$ is $\tau$-stable. Since $\mathcal F_{k_{r_0+1-i}+1} = B_{i_1-i}/\pi(\Lambda_{B_{i_1}})_k^{\vee}$, we deduce that $\mathcal F_{k_{r_0+1-i}+t_{r_0+1-i}+1} = \Lambda_{B_{i_1-i}}/\pi(\Lambda_{B_{i_1}})_k^{\vee}$. On the other hand, We have $k_{r_0+1-i}+t_{r_0+1-i}+1 \leq k_{r_0+1-i} + \Delta h_{i_1-i} - t_{i}$. By the first diagram, we have 
\begin{equation*}
\mathcal F_{k_i + \Delta h_{i_1-i}-t_{r_0+1-i}} = \mathcal F_{k_{r_0+1-i}+t_{r_0+1-i}+1}^{\perp} = \pi\Lambda_{B_{i_1-i}}^{\vee}/\pi(\Lambda_{B_{i_1}})_k^{\vee}.
\end{equation*}
Since $\mathcal F_{k_i+1} \subset \mathcal F_{k_i + \Delta h_{i_1-i}-t_{r_0+1-i}}$ and since we have $\mathcal F_{k_i+1} = A_{i_1-i+1}/\pi(\Lambda_{B_{i_1}})_k^{\vee}$, we deduce that
\begin{equation*}
A_{i_1-i+1} \subset \pi\Lambda_{B_{i_1-i}}^{\vee}.
\end{equation*}
It finally follows that $\Lambda_{A_{i_1-i+1}} \subset \pi\Lambda_{B_{i_1-i}}^{\vee}$ which contradicts the fact that $i_1-i \not \in I$ in regards to the definition of the Bruhat-Tits type.\\
Assume now that $h_1 \not = 0$, and that $2t_{i_1} < h_1$ towards a contradiction. We have $k_{i_1} = l + \frac{h_{i_1}-h_1}{2}$. In this case we have 
\begin{center}
\begin{tikzcd}[cramped,sep = small]
\mathcal F_{k_{i_1}+h_1}^{\perp} \arrow[r,symbol=\subset] \arrow[d,equal,"/" marking] \arrow[dr,hook, shorten >= 10pt] & \mathcal F_{k_{i_1}+h_1-1}^{\perp} \arrow[r,symbol=\subset] \arrow[d,equal,"/" marking] \arrow[dr,hook, shorten >= 10pt] & \ldots \arrow[r,symbol=\subset] & \mathcal F_{k_{i_1}+h_1-t_{i_1}+1}^{\perp} \arrow[r,symbol=\subset] \arrow[d,equal,"/" marking] \arrow[dr,hook, shorten >= 10pt] & \mathcal F_{k_{i_1}+h_1-t_{i_1}}^{\perp} \arrow[r,symbol=\subset] \arrow[d,equal] & \ldots \arrow[r,symbol=\subset] & \mathcal F_{k_{i_1}+1}^{\perp} \arrow[d,equal]\\
\mathcal F_{k_{i_1}+1} \arrow[r,symbol=\subset] & \mathcal F_{k_{i_1}+2} \arrow[r,symbol=\subset] & \ldots \arrow[r,symbol=\subset] & \mathcal F_{k_{i_1}+t_{i_1}} \arrow[r,symbol=\subset] & \mathcal F_{k_{i_1}+t_{i_1}+1} \arrow[r,symbol=\subset] & \ldots \arrow[r,symbol=\subset] & \mathcal F_{k_{i_1}+h_1}
\end{tikzcd}
\end{center}
By hypothesis, we have $k_{i_1}+h_1-t_{i_1} \geq k_{i_1}+t_{i_1}+1$. It follows that 
\begin{equation*}
\mathcal F_{k_{i_1}+t_{i_1}+1} = \mathcal F_{k_{i_1}+1} + \tau(\mathcal F_{k_{i_1}+1}) + \ldots + \tau^{t_{i_1}}(\mathcal F_{k_{i_1}+1}),
\end{equation*}
and that $\mathcal F_{k_{i_1}+t_{i_1}+1}$ is $\tau$-stable. It follows that $\mathcal F_{k_{i_1}+t_{i_1}+1} = \Lambda_{A_1}/\pi(\Lambda_{B_{i_1}})_k^{\vee}$. Besides, by the diagram we have $\mathcal F_{k_{i_1}+t_{i_1}+1} \subset \mathcal F_{k_{i_1}+h_1-t_{i_1}} = \mathcal F_{k_{i_1}+t_{i_1}+1}^{\perp}$, which translates into
\begin{equation*}
\Lambda_{A_1} \subset \pi \Lambda_{A_1}^{\vee}.
\end{equation*}
Therefore $\Lambda_{A_1} \in \mathcal L_1$, which contradicts the fact that $i_1 \not = 0$ in regards to the definition of the Bruhat-Tits type.\\
All in all, we proved that $\mathcal G \in X_{\mathbf{\Lambda}_0}^{\mathbbm h_0,0}(k)$.\\

\underline{If $i_s \not = m$:} this case is identical to the previous case, and one shows that $(\pi B_{i_s+1} \subset \ldots \subset A_{i_s+1})$ defines a point of $X_{\mathbf{\Lambda}_s}^{\mathbbm h_s,0}(k)$. We omit the details.\\

\underline{For $1 \leq j \leq s-1:$} let $\mathcal G = (\mathcal G^1,\mathcal G^2) \in X_{\mathbf{\Lambda}_j}^{\mathbbm h_j}(k)$ be the partial flag in $(V_1)_k = \pi(\Lambda_{B_{i_{j+1}}})_k / \pi^2 (\Lambda_{A_{i_j+1}})^{\vee}_k$ and in $(V_2)_k = (\Lambda_{A_{i_j+1}})_k/\pi(\Lambda_{B_{i_{j+1}}})^{\vee}_k$ corresponding to $(\pi B_{i_j+1} \subset \ldots \subset \pi B_{i_{j+1}})$ and to $(A_{i_{j+1}} \subset \ldots \subset A_{i_j+1})$. Let $(w,w') = (w_1\ldots w_r,w'_1\ldots w'_{r'}) \in {}^J\mathbf W$ be such that $\mathcal G \in X_{J}\{(w,w')\}(k)$. Let $d = (l_0+l_1-1) + \frac{h_{i_{j+1}}-h_{i_j+1}}{2}$ denote the common dimension of $V_1$ and $V_2$, where $l_0 = d_{i_{j+1}}$ and $l_1 = c_{i_j+1}$. Let $\mathcal F = (\mathcal F^1,\mathcal F^2)$ be a full flag which lifts $\mathcal G$, so that $\mathcal F \in X((w,w'))(k)$. Let $r_0$ and $r_0'$ be as in Definition \ref{BTStratumDLVarGLj}.\\
First, if $l_0 > 1$ (resp. $l_1 > 1$), we prove that $t_r = n-1-k_r$ (resp. $t'_{r'} = n-1-k'_{r'}$) exactly as in the case $i_1 \not = 0$, so that we do not repeat the same arguments.\\
Let us go back to the general case $l_0,l_1 \geq 1$, and let $1 \leq i \leq i_{j+1}-i_j-1$. We have $k_i = l_1 + \frac{h_{i_j+i}-h_{i_j+1}}{2}$ and $k'_{r_0'+1-i} = l_0 + \frac{h_{i_{j+1}}-h_{i_j+i+1}}{2}$. Towards a contradiction, let us assume that $t_i+t'_{r_0'+1-i} < \Delta h_{i_j+i}$. In this case we have
\begin{center}\hspace*{-2.5cm}
\begin{tikzcd}[cramped,sep = small]
(\mathcal F_{k'_{r_0'+1-i}+\Delta h_{i_j+i}}^2)^{\perp} \arrow[r,symbol=\subset] \arrow[d,equal,"/" marking] \arrow[dr,hook, shorten >= 10pt] & (\mathcal F_{k'_{r_0'+1-i}+\Delta h_{i_j+i}-1}^2)^{\perp} \arrow[r,symbol=\subset] \arrow[d,equal,"/" marking] \arrow[dr,hook, shorten >= 10pt] & \ldots \arrow[r,symbol=\subset] & (\mathcal F_{k'_{r_0'+1-i}+\Delta h_{i_j+i}-t_i+1}^2)^{\perp} \arrow[r,symbol=\subset] \arrow[d,equal,"/" marking] \arrow[dr,hook, shorten >= 10pt] & (\mathcal F_{k'_{r_0'+1-i}+\Delta h_{i_j+i}-t_i}^2)^{\perp} \arrow[d,equal] \\
\mathcal F_{k_i+1}^1 \arrow[r,symbol=\subset] & \mathcal F_{k_{i}+2}^1 \arrow[r,symbol=\subset] & \ldots \arrow[r,symbol=\subset] & \mathcal F_{k_{i}+t_{i}}^1 \arrow[r,symbol=\subset] & \mathcal F_{k_{i}+t_{i}+1}^1 
\end{tikzcd}
\end{center}
\begin{flushright}\hspace*{12.5cm}
\begin{tikzcd}[cramped,sep = small]
{} \arrow[r,symbol=\subset] & \ldots \arrow[r,symbol=\subset] & (\mathcal F_{k'_{r_0'+1-i}+1}^2)^{\perp} \arrow[d,equal]\\
{} \arrow[r,symbol=\subset] & \ldots \arrow[r,symbol=\subset] & \mathcal F_{k_{i}+\Delta h_{i_j+i}}^1
\end{tikzcd}
\end{flushright}
\hspace*{\fill} and \hspace*{\fill}
\begin{center}\hspace*{-3cm}
\begin{tikzcd}[cramped,sep = small]
(\mathcal F_{k_{i}+\Delta h_{i_j+i}}^1)^{\perp} \arrow[r,symbol=\subset] \arrow[d,equal,"/" marking] \arrow[dr,hook, shorten >= 10pt] & (\mathcal F_{k_{i}+\Delta h_{i_j+i}-1}^1)^{\perp} \arrow[r,symbol=\subset] \arrow[d,equal,"/" marking] \arrow[dr,hook, shorten >= 10pt] & \ldots \arrow[r,symbol=\subset] & (\mathcal F_{k_{i}+\Delta h_{i_j+i}-t'_{r_0'+1-i}+1}^1)^{\perp} \arrow[r,symbol=\subset] \arrow[d,equal,"/" marking] \arrow[dr,hook, shorten >= 10pt] & (\mathcal F_{k_{i}+\Delta h_{i_j+i}-t'_{r_0'+1-i}}^1)^{\perp} \arrow[r,symbol=\subset] \arrow[d,equal] & \ldots \arrow[r,symbol=\subset] & (\mathcal F_{k_i+1}^1)^{\perp} \arrow[d,equal]\\
\mathcal F_{k'_{r_0'+1-i}+1}^2 \arrow[r,symbol=\subset] & \mathcal F_{k'_{r_0'+1-i}+2}^2 \arrow[r,symbol=\subset] & \ldots \arrow[r,symbol=\subset] & \mathcal F_{k'_{r_0'+1-i}+t'_{r_0'+1-i}}^2 \arrow[r,symbol=\subset] & \mathcal F_{k'_{r_0'+1-i}+t'_{r_0'+1-i}+1}^2 \arrow[r,symbol=\subset] & \ldots \arrow[r,symbol=\subset] & \mathcal F_{k'_{r_0'+1-i}+\Delta h_{i_j+i}}^2
\end{tikzcd}
\end{center}

By hypothesis, we have $k_{i}+\Delta h_{i_j+i}-t'_{r_0'+1-i} \geq k_i + t_i +1$. It follows that 
\begin{equation*}
\mathcal F_{k'_{r_0'+1-i}+t'_{r_0'+1-i}+1}^2 = \mathcal F_{k'_{r_0'+1-i}+1}^2 + \tau(\mathcal F_{k'_{r_0'+1-i}+1}^2) + \ldots + \tau^{t'_{r_0'+1-i}}(\mathcal F_{k'_{r_0'+1-i}+1}^2),
\end{equation*}
and that $\mathcal F_{k'_{r_0'+1-i}+t'_{r_0'+1-i}+1}^2$ is $\tau$-stable. Since $\mathcal F_{k'_{r_0'+1-i}+1}^2 = A_{i_j+i+1}/\pi(\Lambda_{B_{i_{j+1}}})_k^{\vee}$, we deduce that $\mathcal F_{k'_{r_0'+1-i}+t'_{r_0'+1-i}+1}^2 = (\Lambda_{A_{i_j+i+1}})_k/\pi(\Lambda_{B_{i_{j+1}}})_k^{\vee}$. Then, since $k'_{r_0'+1-i}+t'_{r_0'+1-i}+1 \leq k'_{r_0'+1-i}+\Delta h_{i_j+i}-t_i$, by the first diagram we have 
\begin{equation*}
\mathcal F_{k_{i}+\Delta h_{i_j+i}-t'_{r_0'+1-i}}^1 = (\mathcal F_{k'_{r_0'+1-i}+t'_{r_0'+1-i}+1}^2)^{\perp} = \pi^2(\Lambda_{A_{i_j+i+1}})_k^{\vee}/\pi^2(\Lambda_{A_{i_j}})_k^{\vee}. 
\end{equation*}
Since $\mathcal F_{k_i+1}^1 = \pi B_{i_j+i}/\pi^2(\Lambda_{A_{i_j}})_k^{\vee} \subset \mathcal F_{k_{i}+\Delta h_{i_j+i}-t'_{r_0'+1-i}}^1$, we deduce that 
\begin{equation*}
B_{i_j+i} \subset \pi(\Lambda_{A_{i_j+i+1}})_k^{\vee},
\end{equation*}
and thus $\Lambda_{B_{i_j+i}} \subset \pi\Lambda_{A_{i_j+i+1}}^{\vee}$. This is a contradiction with the fact that $i_j+i \not \in I$, in regards to the definition of the Bruhat-Tits type.\\
To sum up, we have showed that the image of any point $(A_m\subset \ldots \subset B_m) \in \mathcal N_{I,\mathbf{\Lambda}}^{\mathbbm h,0}(k)$ in $X_{I,\mathbf{\Lambda}}^{\mathbbm h}(k)$ lies in $X_{I,\mathbf{\Lambda}}^{\mathbbm h,0}(k)$. We shall now prove the converse. \\

Let $(A_m\subset \ldots \subset B_m) \in \mathcal N_{I,\mathbf{\Lambda}}^{\mathbbm h}(k)$ be a point such that its image in $X_{I,\mathbf{\Lambda}}^{\mathbbm h}(k)$ lies in $X_{I,\mathbf{\Lambda}}^{\mathbbm h,0}(k)$. Let $I'$ denote its Bruhat-Tits type. We shall prove that $I = I'$ and that for all $i\in I\setminus\{0\}$ and $j \in I\setminus\{m\}$, we have $\Lambda_0^{i} = \Lambda_{B_i}$ and $\Lambda_1^{j} = \Lambda_{A_{j+1}}$. \\

\underline{If $i_1 \not = 0$:} we recover the same notations as in the beginning of the proof, in particular we refer to the same diagrams. Write $t(\Lambda_0^{i_1}) = 2(l-1)+h_{i_1}+1$ for some $l\geq 1$. Any complete flag $\mathcal F$ which lifts $\mathcal G$ in $(V_{\Lambda_{0}^{i_1}}^0)_k$ has relative position $w_1\ldots w_{r_0}s_{l+h_{i_1}} \ldots s_{2(l-1)+h_{i_1}}$ with respect to $\mathcal F^{\perp}$. If $l=1$, we have $\Lambda_{B_{i_1}} = \Lambda_{0}^{i_1}$ by arguing on the dimension. If $l>1$ we have the following diagram.
\begin{center}\hspace*{-1.5cm}
\begin{tikzcd}[sep = small]
\mathcal F_{2(l-1)+h_{i_1}}^{\perp} \arrow[d,equal] \arrow[r,symbol=\subset] & \ldots \arrow[r,symbol=\subset] & \mathcal F_{l+h_{i_1}}^{\perp} \arrow[d,equal] \arrow[r,symbol=\subset] & \ldots \arrow[r,symbol=\subset] & \mathcal F_{l-1}^{\perp}\arrow[r,symbol=\subset] \arrow[d,equal,"/" marking] \arrow[dr,hook, shorten >= 10pt] & \ldots \arrow[r,symbol=\subset] & \mathcal F_{2}^{\perp} \arrow[r,symbol=\subset] \arrow[d,equal,"/" marking] \arrow[dr,hook, shorten >= 10pt] & \mathcal F_1^{\perp} \arrow[d,equal,"/" marking] \arrow[r,symbol=\subset] \arrow[dr,hook, shorten >= 10pt] & (V_{\Lambda_{0}^{i_1}}^0)_k \\
\mathcal F_1 \arrow[r,symbol=\subset] & \ldots \arrow[r,symbol=\subset] & \mathcal F_{l-1} \arrow[r,symbol=\subset] & \ldots \arrow[r,symbol=\subset] & \mathcal F_{l+h_{i_1}} \arrow[r,symbol=\subset] & \ldots \arrow[r,symbol=\subset] & \mathcal F_{2(l-1)+h_{i_1}-1} \arrow[r,symbol=\subset] & \mathcal F_{2(l-1)+h_{i_1}} \arrow[r,symbol=\subset] & (V_{\Lambda_{0}^{i_1}}^0)_k
\end{tikzcd}
\end{center}
By the diagram, it is clear that 
\begin{equation*}
(V_{\Lambda_{0}^{i_1}}^0)_k = \mathcal F_{l+h_{i_1}} + \tau(\mathcal F_{l+h_{i_1}}) + \ldots + \tau^{l-1}(\mathcal F_{l+h_{i_1}}).
\end{equation*}
Since $\mathcal F_{l+h_{i_1}} = B_{i_1}/\pi(\Lambda_{0}^{i_1})^{\vee}_k$, it follows that $T_{l}(B_{i_1}) = (\Lambda_{B_{i_1}})_k$ is $\tau$-stable, and that $\Lambda_{B_{i_1}} = \Lambda_{0}^{i_1}$. \\
Let us go back to the general case, so that $l \geq 1$. Let $1 \leq i \leq i_{1}-1$ and assume, towards a contradiction, that we have $\Lambda_{A_{i_1-i+1}} \subset \pi \Lambda_{B_{i_1-i}}^{\vee}$. Since $\Delta h_{i_1-i}-t_i+1 \leq t_{r_0+1-i}$, we know that 
\begin{equation*}
\mathcal F_{k_{r_0+1-i}+\Delta h_{i_1-i}-t_i+1} = \mathcal F_{k_{r_0+1-i}+1} + \tau(\mathcal F_{k_{r_0+1-i}+1}) + \ldots + \tau^{\Delta h_{i_1-i}-t_i}(\mathcal F_{k_{r_0+1-i}+1}).
\end{equation*}
Besides, from the diagram we have 
\begin{equation*}
\mathcal F_{k_i+t_i+1} = \mathcal F_{k_{r_0+1-i}+\Delta h_{i_1-i}-t_i+1}^{\perp} + \mathcal F_{k_i+1}. 
\end{equation*}
Now, $\mathcal F_{k_i+1} = A_{i_1-i+1}/\pi(\Lambda_0^{i_1})^{\vee}_k$, and since $\mathcal F_{k_{r_0+1-i}+1} = B_{i_1-i}/\pi(\Lambda_0^{i_1})^{\vee}_k$, we have $\mathcal F_{k_{r_0+1-i}+\Delta h_{i_1-i}-t_i+1}^{\perp} = \pi T_{\Delta h_{i_1-i}-t_i+1}(B_{i_1-i})^{\vee}/\pi(\Lambda_0^{i_1})^{\vee}_k$. Since we have 
\begin{equation*}
A_{i_1-i+1} \subset (\Lambda_{A_{i_1-i+1}})_k \subset \pi(\Lambda_{B_{i_1-i}})_k^{\vee} \subset \pi T_{\Delta h_{i_1-i}-t_i+1}(B_{i_1-i})^{\vee},
\end{equation*}
it follows that $\mathcal F_{k_i+1} \subset \mathcal F_{k_{r_0+1-i}+\Delta h_{i_1-i}-t_i+1}^{\perp}$, which is a contradiction. Therefore, we have proved that $i_1-i \not \in I'$ for all $1 \leq i \leq i_{1}-1$. \\
Eventually, assume that $h_1 \not = 0$. Towards a contradiction, assume that $\Lambda_{A_1} \in \mathcal L_1$. Since $h_1 - t_{i_1} \leq t_{i_1}$, we have 
\begin{equation*}
\mathcal F_{k_{i_1}+h_1-t_{i_1}+1} = \mathcal F_{k_{i_1}+1} + \tau(\mathcal F_{k_{i_1}+1}) + \ldots + \tau^{h_1-t_{i_1}}(\mathcal F_{k_{i_1}+1}).
\end{equation*}
Besides, from the diagram we have 
\begin{equation*}
\mathcal F_{k_{i_1}+t_{i_1}+1} = \mathcal F_{k_{i_1}+h_1-t_{i_1}+1}^{\perp} + \mathcal F_{k_{i_1}+1}.
\end{equation*}
But $\mathcal F_{k_{i_1}+1} = A_1/\pi(\Lambda_0^{i_1})^{\vee}_k$ and $\mathcal F_{k_{i_1}+h_1-t_{i_1}+1}^{\perp} = \pi T_{h_1-t_{i_1}+1}(A_1)^{\vee}/\pi(\Lambda_0^{i_1})^{\vee}_k$. Since we have 
\begin{equation*}
A_1 \subset (\Lambda_{A_1})_k \subset \pi (\Lambda_{A_1})_k^{\vee} \subset \pi T_{h_1-t_{i_1}+1}(A_1)^{\vee},
\end{equation*}
it follows that $\mathcal F_{k_{i_1}+1} \subset \mathcal F_{k_{i_1}+h_1-t_{i_1}+1}^{\perp}$ which is absurd. Therefore, we have proved that $0 \not \in I'$.\\

\underline{If $i_s \not = m$:} this case is identical to the previous case. One shows that $\Lambda_{A_{i_s+1}} = \Lambda_1^{i_s}$, that $i_s+i \not \in I'$ for all $1 \leq i \leq m-i_s-1$, and that $m \not\in I'$ when $h_m \not = n$. We omit the details.\\

\underline{For $1 \leq j \leq s-1:$} we recover the notations and diagrams as above. Write $t(\Lambda_0^{i_{j+1}}) = 2(l_0-1) + h_{i_{j+1}} +1$ and $t(\Lambda_1^{i_j}) = 2(l_1-1)+(n-h_{i_j+1})+1$ for some $l_0,l_1 \geq 1$, and let $V_1 = \pi\Lambda_0^{i_{j+1}}/\pi^2(\Lambda_1^{i_j})^{\vee}$ and $V_2 = \Lambda_1^{i_j}/\pi(\Lambda_0^{i_{j+1}})^{\vee}$. Let $d = \dim(V_1) = \dim(V_2)$. Any complete flag $\mathcal F = (\mathcal F^1,\mathcal F^2)$ which lifts $\mathcal G = (\mathcal G^1,\mathcal G^2)$ has relative position $(w_1\ldots w_{r_0}s_{d-l_0+1}\ldots s_{d-1},w'_1\ldots w'_{r_0'}s_{d-l_1+1}\ldots s_{d-1})$ with respect to $\mathcal F^{\perp}$. Just as in the case $i_1 \not = 0$, one proves that $\Lambda_1^{i_j} = \Lambda_{A_{i_j+1}}$ and that $\Lambda_0^{i_{j+1}} = \Lambda_{B_{i_{j+1}}}$.\\
Now, let $1 \leq i \leq i_{j+1}-i_j -1$ and assume, towards a contradiction, that $\Lambda_{B_{i_j+i}} \subset \pi \Lambda_{A_{i_j+i+1}}^{\vee}$. Since $\Delta h_{i_j+i} - t_i \leq t'_{r_0'+1-i}$, we know that 
\begin{equation*}
\mathcal F_{k'_{r_0'+1-i}+\Delta h_{i_j+1}-t_{i}+1}^2 = \mathcal F_{k'_{r_0'+1-i}+1}^2 + \tau(\mathcal F_{k'_{r_0'+1-i}+1}^2)  + \ldots + \tau^{\Delta h_{i_j+1}-t_{i}}(\mathcal F_{k'_{r_0'+1-i}+1}^2).
\end{equation*}
Besides, from the diagram we have
\begin{equation*}
\mathcal F_{k_i+t_i+1}^1 = (\mathcal F_{k'_{r_0'+1-i}+\Delta h_{i_j+1}-t_{i}+1}^2)^{\perp} + \mathcal F_{k_i+1}^1.
\end{equation*}
Now, $\mathcal F_{k_i+1}^1 = \pi B_{i_{j}+i}/\pi^2(\Lambda_1^{i_j})_k^{\vee}$, and since $\mathcal F_{k'_{r_0'+1-i}+1}^2 = A_{i_j+i+1}/\pi(\Lambda_0^{i_{j+1}})_k^{\vee}$, we have $(\mathcal F_{k'_{r_0'+1-i}+\Delta h_{i_j+1}-t_{i}+1}^2)^{\perp} = \pi^2 T_{\Delta h_{i_j+1}-t_{i}+1}(A_{i_j+i+1})^{\vee}/\pi^2(\Lambda_1^{i_j})_k^{\vee}$. Since we have 
\begin{equation*}
\pi B_{i_{j}+i} \subset \pi (\Lambda_{B_{i_{j}+i}})_k \subset \pi^2 (\Lambda_{A_{i_j+i+1}})^{\vee}_k \subset \pi^2 T_{\Delta h_{i_j+1}-t_{i}+1}(A_{i_j+i+1})^{\vee},
\end{equation*}
it follows that $\mathcal F_{k_i+1}^1 \subset (\mathcal F_{k'_{r_0'+1-i}+\Delta h_{i_j+1}-t_{i}+1}^2)^{\perp}$ which is a contradiction. Therefore, we have proved that $i_j+i \not \in I'$ for all $1 \leq i \leq i_{j+1}-i_j -1$.\\

Putting things together, we have proved that for all $i\in I\setminus\{0\}$ and $j \in I\setminus\{m\}$, we have $\Lambda_0^{i} = \Lambda_{B_i}$ and $\Lambda_1^{j} = \Lambda_{A_{j+1}}$. Moreover, we have showed that the complement of $I$ in $\{0,\ldots ,m\}$ is included in the complement of $I'$. In other words, we proved that $I' \subset I$. The reverse inclusion is now obvious from the definition of $I$. 	Therefore, the point $(A_m \subset \ldots \subset B_m)$ belongs to $\mathcal N_{I,\mathbf{\Lambda}}^{\mathbbm h,0}(k)$ and this concludes the proof.
\end{proof}

\begin{corol}\label{BTStrataNotEmpty}
Let $k$ be an algebraically closed field containing $\kappa_{\breve E}$. For every Bruhat-Tits type $(I,\mathbf{\Lambda})$, there exists a point $(A_m \subset \ldots \subset B_m) \in \mathcal N_{I,\mathbf{\Lambda}}^{\mathbbm h,0}(k)$. In particular, $\mathcal N_{I,\mathbf{\Lambda}}^{\mathbbm h,0} \not = \emptyset$.
\end{corol}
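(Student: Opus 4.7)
The plan is to reduce the statement to a non-emptiness assertion for a product of fine Deligne-Lusztig varieties, using the isomorphism $f: \mathcal N_{I,\mathbf{\Lambda}}^{\mathbbm h,0} \xrightarrow{\sim} X_{I,\mathbf{\Lambda}}^{\mathbbm h,0} \times \kappa_{\breve E}$ established in the preceding Theorem. Since $k$ is algebraically closed and contains $\kappa_{\breve E}$, producing a $k$-point in $\mathcal N_{I,\mathbf{\Lambda}}^{\mathbbm h,0}$ is equivalent to producing a $k$-point in $X_{I,\mathbf{\Lambda}}^{\mathbbm h,0}$, and then pulling it back along $f$ yields the desired lattice chain $(A_m \subset \ldots \subset B_m)$.

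To produce a $k$-point in $X_{I,\mathbf{\Lambda}}^{\mathbbm h,0}$, I would invoke the observation recorded in the remark immediately before the statement: namely that the variety $Y_{I,\mathbf{\Lambda}}^{\mathbbm h}$ sits as a (dense) open subscheme of $X_{I,\mathbf{\Lambda}}^{\mathbbm h,0}$. (This open immersion is essentially read off from the definitions of $X_{\mathbf{\Lambda}_j}^{\mathbbm h_j,0}$ in Definitions \ref{BTStratumDLVarU0}, \ref{BTStratumDLVarUs} and \ref{BTStratumDLVarGLj} by taking, in each factor, the maximal choice $t_i = k_{i+1}-k_i$, which makes $w_1 \ldots w_{r_0}$ equal to the prefix $s_{l}s_{l+1}\ldots s_{l+h_{i_1}-1}$ (resp.\ $s_l \ldots s_{d-l_0}$), so that the resulting word is exactly the one appearing in the definition of $Y_{\mathbf{\Lambda}_j}^{\mathbbm h_j}$; the numerical constraints $t_i + t_{r_0+1-i} \geq \Delta h_\bullet$ and $2t_{i_\bullet} \geq h_\bullet$ are then trivially satisfied.) Thus it suffices to exhibit a $k$-point of $Y_{I,\mathbf{\Lambda}}^{\mathbbm h}$.

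Finally, $Y_{I,\mathbf{\Lambda}}^{\mathbbm h}$ is by construction a product of fine Deligne-Lusztig varieties $Y_{\mathbf{\Lambda}_j}^{\mathbbm h_j} = X_{J_j}\{w_j\}$ for explicit $J_j$-reduced elements $w_j$ in the Weyl group of a finite unitary or general linear group over $\mathbb F_{q^2}$. By the general theory recalled in Section \ref{GeneralitiesDLVarieties}, each $X_{J_j}\{w_j\}$ is isomorphic (via the Bédard bijection $T(J_j) \simeq {}^{J_j}\mathbf W$) to a parabolic Deligne-Lusztig variety $X_{J_{j,\infty}}(w_j)$, which is smooth of pure dimension $\ell(w_j)$. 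Its non-emptiness is a classical consequence of Lang--Steinberg: the morphism $g \mapsto g^{-1}F(g)$ is surjective on $\mathbf G$, so every Bruhat cell $\mathbf P_{J_{j,\infty}} w_j \mathbf P_{F(J_{j,\infty})}$ lies in its image, whence $X_{J_{j,\infty}}(w_j)(k) \neq \emptyset$. Taking the product and pulling back via $f^{-1}$ then yields the desired point in $\mathcal N_{I,\mathbf{\Lambda}}^{\mathbbm h,0}(k)$. I do not anticipate any genuine obstacle: all the hard work — the identification $\mathcal N_{I,\mathbf{\Lambda}}^{\mathbbm h,0} \simeq X_{I,\mathbf{\Lambda}}^{\mathbbm h,0}\times \kappa_{\breve E}$ and the combinatorial verification of the open immersion $Y_{I,\mathbf{\Lambda}}^{\mathbbm h} \hookrightarrow X_{I,\mathbf{\Lambda}}^{\mathbbm h,0}$ — is already contained in the preceding Theorem and remark.
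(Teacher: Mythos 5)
Your proposal is correct and takes essentially the same approach the paper leaves implicit: the corollary follows from the preceding theorem together with the open (dense) immersion $Y_{I,\mathbf{\Lambda}}^{\mathbbm h} \hookrightarrow X_{I,\mathbf{\Lambda}}^{\mathbbm h,0}$ and the non-emptiness of fine Deligne-Lusztig varieties over an algebraically closed field.
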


\begin{defi}
The locally closed subvarieties $\mathcal N_{I,\mathbf{\Lambda}}^{\mathbbm h,0}$, where $(I,\mathbf{\Lambda})$ runs over all the Bruhat-Tits indices, forms the \textit{Bruhat-Tits stratification} of $\mathcal N_{E/F}^{\mathbbm h} \times \kappa_{\breve E}$. The strata $\mathcal N_{I,\mathbf{\Lambda}}^{\mathbbm h,0}$ are called the \textit{Bruhat-Tits strata}, and their closures $\mathcal N_{I,\mathbf{\Lambda}}^{\mathbbm h}$ are called the \textit{closed Bruhat-Tits strata}.
\end{defi}

By construction, if $k$ is an algebraically closed field containing $\kappa_{\breve E}$, then we have 
\begin{equation*}
\mathcal N_{E/F}^{\mathbbm h}(k) = \bigsqcup_{I,\mathbf{\Lambda}} \mathcal N_{I,\mathbf{\Lambda}}^{\mathbbm h,0}(k),
\end{equation*}
where $(I,\mathbf{\Lambda})$ runs over all the Bruhat-Tits indices. This justifies the terminology for ``stratification''.

\begin{rk}\label{ComparisonWithCho}
Our definition of Bruhat-Tits stratum disagrees with the definition given in \cite{cho} Definition 3.19. We consider the maximal parahoric case, so that $m=1$, and we assume that $0 < h < n$. For instance, let $I = \{1\}$ and let $\Lambda_0 \in \mathcal L_0^{\geq h+1}$. Adapting to our notations, the definition of loc. cit. is 
\begin{equation*}
\mathcal N_{\{1\},\{\Lambda_0\}}^{h,0,\mathrm{bis}} := \mathcal N_{\{1\},\{\Lambda_0\}}^{h} \setminus \bigcup_{\substack{\Lambda \in \mathcal L_0^{\geq h+1}\\ \Lambda \subset \Lambda_0}} \mathcal N_{\{1\},\{\Lambda\}}^{h}.
\end{equation*}
In other words, compared to $\mathcal N_{\{1\},\{\Lambda_0\}}^{h,0}$, we do not remove the closed strata corresponding to Bruhat-Tits indices of the form $\{0,1\}$. In fact, we have 
\begin{equation*}
\mathcal N_{\{1\},\{\Lambda_0\}}^{h,0,\mathrm{bis}} = \mathcal N_{\{1\},\{\Lambda_0\}}^{h,0} \sqcup \bigsqcup_{\substack{\Lambda_1 \in \mathcal L_1^{\geq n-h+1} \\ \pi\Lambda_1^{\vee} \subset \Lambda_0}} \mathcal N_{\{0,1\},\{\Lambda_1,\Lambda_0\}}^{h,0}.
\end{equation*}
\end{rk}

\begin{rk}
From the construction of $X_{I,\mathbf{\Lambda}}^{0}$, it turns out that the Bruhat-Tits strata are isomorphic to a disjoint union of several fine Deligne-Lusztig varieties in general, and not just a single one. In fact, each stratum is isomorphic to a single fine Deligne-Lusztig varieties if and only if we are in one of the following three cases 
\begin{enumerate}[noitemsep,nolistsep]
\item $m=1$ and $h=0$,
\item $m=1$ and $h=n$,
\item $n$ is even, $m=2$ and $\mathbbm h = (0,n)$.
\end{enumerate}
Case 1 and case 2 are isomorphic to each other, and correspond to the hyperspecial case studied in \cite{vw2}. Incidentally, these three cases are exactly those for which the associated affine Deligne-Lusztig variety is ``of Coxeter type'', following the terminology of \cite{cox24}.
\end{rk}

\subsection{Action of $\mathrm{Aut}(\mathbb X^{[h_i]})$ and irreducible components}

For $1 \leq j \leq m$, let us consider the group $J^{[h_j]} := \mathrm{Aut}(\mathbb X^{[h_j]})$ of automorphisms of the strict formal $\mathcal O_F$-module $\mathbb X^{[h_j]}$ which are compatible with the additional structures. Then $J^{[h_j]}$ can be regarded as a connected reductive group over $F$, which is identified with $\mathrm{GU}^{0}(C,\{\cdot,\cdot\}_{[h_j]})$, that is the group of height $0$ unitary similitudes of $(C,\{\cdot,\cdot\}_{[h_j]})$. Explicitely, for any $F$-algebra $R$, we have 
\begin{equation*}\hspace{-1cm}
J^{[h_j]}(R) = \left\{g \in \mathrm{GL}_{F_{\mathbb F_{q^2}}\otimes_F R}(C\otimes_{F} R) \,\middle|\, \exists c(g) \in \mathcal O_F^{\times}, \forall v,w \in C\otimes_{F} R, \{gv,gw\}_{[h_j]} = c(g)\{v,w\}_{[h_j]}\right\}.
\end{equation*}
The group $J^{[h_j]}(F)$ acts on $\mathcal N_{E/F}^{\mathbbm h}$ as follows. For $g \in J^{[h_j]}(F)$ and $(X^{[i]},i_{X^{[i]}},\lambda_{X^{[i]}},\rho_{X^{[i]}})_{1\leq i \leq m} \in \mathcal N_{E/F}^{\mathbbm h}(S)$ where $S \in \mathbf{Nilp}$,
\begin{equation*}
g\cdot (X^{[i]},i_{X^{[i]}},\lambda_{X^{[i]}},\rho_{X^{[i]}})_{1\leq i \leq m} = (X^{[i]},i_{X^{[i]}},\lambda_{X^{[i]}},\rho'_{X^{[i]}})_{1\leq i \leq m},
\end{equation*}
where $\forall i < j$ we have $\rho'_{X^{[i]}} := (\alpha_{h_j,h_i})_{\overline S}\circ g \circ (\alpha_{h_j,h_i})^{-1}_{\overline S}\circ\rho_{X^{[i]}}$, $\forall i > j$ we have $\rho'_{X^{[i]}} := (\alpha_{h_i,h_j})_{\overline S}^{-1}\circ g \circ (\alpha_{h_i,h_j})_{\overline S}\circ\rho_{X^{[i]}}$, and of course $\rho'_{X^{[j]}} = g\circ \rho_{X^{[j]}}$. The actions of $J^{[h_j]}(F)$ for varying $1 \leq i \leq m$ agree with eachother, as the groups are mutually isomorphic via the isogenies $\alpha_{h_j,h_i}$ for $i < j$. From now on, in accordance with our convention, we will only consider the action of $J(F) := J^{[h_1]}(F)$.\\
Given an algebraically closed field $k$ containing $\kappa_{\breve E}$ and $g \in J(F)$, the action of $J(F)$ on $\mathcal N_{E/F}^{\mathbbm h}(k)$ is given by 
\begin{equation*}
g\cdot (A_m \subset \ldots \subset B_m) = (g(A_m) \subset \ldots \subset g(B_m)).
\end{equation*}
Here, by abuse of notations $g$ denotes the automorphism $g\otimes \mathrm{id}$ of $C \otimes_F F_k$. Since $g$ preserves duals, inclusions and indices of lattices, it is clear that $(g(A_m) \subset \ldots \subset g(B_m))$ defines a point in $\mathcal N_{E/F}^{\mathbbm h}(k)$. Moreover, $J(F)$ acts on the set of Bruhat-Tits indices via 
\begin{equation*}
g\cdot (I,\mathbf{\Lambda}) = (I,g(\mathbf{\Lambda})),
\end{equation*}
where $g(\mathbf{\Lambda})$ is the collection of lattices consisting of $g(\Lambda_0^i)$ and $g(\Lambda_1^j)$ for all $i \in I\setminus\{0\}$ and $j\in I \setminus\{m\}$. It is clear that $g$ preserves vertex lattices, and that $(I,g(\mathbf{\Lambda}))$ is again a Bruhat-Tits index. This action is compatible with the Bruhat-Tits strata.

\begin{prop}
Let $(I,\mathbf{\Lambda})$ be a Bruhat-Tits index, and let $g \in J(F)$. Then $g$ induces an isomorphism
\begin{equation*}
g:\mathcal N_{I,\mathbf{\Lambda}}^{\mathbbm h} \xrightarrow{\sim} \mathcal N_{I,g(\mathbf{\Lambda})}^{\mathbbm h}.
\end{equation*}
Moreover, it induces an isomorphism $\mathcal N_{I,\mathbf{\Lambda}}^{\mathbbm h,0} \xrightarrow{\sim} \mathcal N_{I,g(\mathbf{\Lambda})}^{\mathbbm h,0}$ between the corresponding Bruhat-Tits strata as well. 
\end{prop}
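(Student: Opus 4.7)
The plan is to deduce everything from the single observation that $g \in J(F) = \mathrm{GU}^0(C,\{\cdot,\cdot\})$ acts not just on the set of points but as a global automorphism of the whole formal scheme $\mathcal N_{E/F}^{\mathbbm h}$, compatibly with the framing data, and that this action commutes with every lattice-theoretic operation appearing in the definition of the Bruhat-Tits strata. First I would record the elementary compatibilities: since $g$ is a similitude with $c(g) \in \mathcal O_F^{\times}$, one has $g(L)^{\vee} = g(L^{\vee})$ for every $W_{\mathcal O_F}(k)$-lattice $L \subset \mathbb N_{k,0}$ (the unit $c(g)$ drops out from the defining condition of the dual), and $g$ manifestly preserves strict inclusions and indices of lattices. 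Consequently $g$ permutes $\mathcal L_0$ and $\mathcal L_1$ while preserving types, sends Bruhat-Tits indices $(I,\mathbf{\Lambda})$ to Bruhat-Tits indices $(I,g(\mathbf{\Lambda}))$, and respects the partial order $\subset$ of Definition \ref{InclusionBTIndices}.

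Next I would compare the strata on $k$-rational points for an algebraically closed field $k/\kappa_{\breve E}$. By Proposition \ref{EquivalenceClosedBTStrata} a point $(A_m \subset \ldots \subset B_m)$ lies in $\mathcal N_{I,\mathbf{\Lambda}}^{\mathbbm h}(k)$ iff the chains of inclusions of Definition \ref{DefinitionBTStrata} hold, and all of those inclusions involve only duality, containment, and the lattices $(\Lambda_0^{i})_k,(\Lambda_1^{j})_k$. Applying $g$ term by term and invoking the compatibilities above shows at once that $(g(A_m) \subset \ldots \subset g(B_m))$ satisfies the analogous chains for $(I,g(\mathbf{\Lambda}))$, giving a bijection $\mathcal N_{I,\mathbf{\Lambda}}^{\mathbbm h}(k) \xrightarrow{\sim} \mathcal N_{I,g(\mathbf{\Lambda})}^{\mathbbm h}(k)$ functorial in $k$. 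Since $g$ already acts as an automorphism of $\mathcal N_{E/F,\mathrm{red}}^{\mathbbm h}$, the subscheme $g(\mathcal N_{I,\mathbf{\Lambda}}^{\mathbbm h})$ is again a reduced closed subscheme of $\mathcal N_{E/F,\mathrm{red}}^{\mathbbm h}$ whose geometric points agree with those of $\mathcal N_{I,g(\mathbf{\Lambda})}^{\mathbbm h}$; as both are reduced and locally of finite type, they coincide, yielding the desired isomorphism $g:\mathcal N_{I,\mathbf{\Lambda}}^{\mathbbm h} \xrightarrow{\sim} \mathcal N_{I,g(\mathbf{\Lambda})}^{\mathbbm h}$.

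For the open stratum I would simply take complements. Because $g$ is order-preserving on Bruhat-Tits indices, the assignment $(I',\mathbf{\Lambda'}) \mapsto (I',g(\mathbf{\Lambda'}))$ induces a bijection between the sets $\{(I',\mathbf{\Lambda'}) \subsetneq (I,\mathbf{\Lambda})\}$ and $\{(I',\mathbf{\Lambda'}) \subsetneq (I,g(\mathbf{\Lambda}))\}$; combined with the closed case already proved, this gives
\begin{equation*}
g\Bigl(\bigcup_{(I',\mathbf{\Lambda'})\subsetneq (I,\mathbf{\Lambda})} \mathcal N_{I',\mathbf{\Lambda'}}^{\mathbbm h}\Bigr) = \bigcup_{(I',\mathbf{\Lambda''})\subsetneq (I,g(\mathbf{\Lambda}))} \mathcal N_{I',\mathbf{\Lambda''}}^{\mathbbm h},
\end{equation*}
so that the isomorphism between closed strata restricts to one between their open parts. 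The argument is entirely formal; the only substantive point is the compatibility $g(L)^{\vee}=g(L^{\vee})$, which rests on $c(g)$ being a unit and which is also the reason one takes $J(F)$ rather than the full group of unitary similitudes.
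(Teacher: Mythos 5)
Your argument is correct and agrees in substance with the paper's, whose proof is the one-liner ``It is clear that the morphisms are well-defined, and that $g^{-1}$ is the desired inverse.'' You spell out what the paper leaves implicit: the similitude factor $c(g)\in\mathcal O_F^{\times}$ gives $g(L)^{\vee}=g(L^{\vee})$, from which $g$ preserves vertex lattices, types, Bruhat-Tits indices and the partial order, hence the strata; a marginally more direct route (avoiding the detour through $k$-points and reducedness, the latter only known via Theorem \ref{IsomorphismWithDLVariety}) is to observe that on arbitrary test schemes $S$ the $g$-action replaces the defining conditions ``$\rho_{\Lambda_0^{i-},X}$ and $\rho_{X,\Lambda_1^{j+}}$ are isogenies'' of the subfunctor $\mathcal N_{I,\mathbf{\Lambda}}^{\mathbbm h}$ by the analogous conditions for $g(\mathbf{\Lambda})$, but both arguments are sound.
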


\begin{proof}
It is clear that the morphisms are well-defined, and that $g^{-1}$ is the desired inverse.
\end{proof}

\begin{rk}
Given a Bruhat-Tits index $(I,\mathbf{\Lambda})$, the stabilizer $J_{\mathbf{\Lambda}} := \mathrm{Stab}_{J(E)}(\mathbf{\Lambda})$ defines a parahoric subgroup of $J(E)$. The induced action of $J_{\mathbf{\Lambda}}$ on $\mathcal N_{I,\mathbf{\Lambda}}^{\mathbbm h}$ factors through the maximal reductive quotient $\mathcal J_{\mathbf{\Lambda}}$ of $J_{\mathbf{\Lambda}}$. Up to the similitude factor, the finite group $\mathcal J_{\mathbf{\Lambda}}$ can be decomposed as a product of finite unitary and general linear groups. It turns out that the isomorphism $\mathcal N_{I,\mathbf{\Lambda}}^{\mathbbm h} \xrightarrow{\sim} X_{I,\mathbf{\Lambda}}^{\mathbbm h} \times \kappa_{\breve E}$ is $\mathcal J_{\mathbf{\Lambda}}$-equivariant, where the action on the right-hand side is inherited from the construction of $X_{I,\mathbf{\Lambda}}^{\mathbbm h}$ as the closure of a fine Deligne-Lusztig variety.
\end{rk}

The orbits of $J(F)$ on the set of Bruhat-Tits indices are described as follows. 

\begin{prop}\label{OrbitJ(E)Action}
Let $(I,\mathbf{\Lambda})$ and $(I',\mathbf{\Lambda'})$ be two Bruhat-Tits indices. They are in the same $J(F)$-orbit if and only if $I=I'$, and for all $i\in I\setminus\{0\}$ and $j\in I\setminus\{m\}$, we have $t(\Lambda_0^{i}) = t(\Lambda_0^{\prime i})$ and $t(\Lambda_1^{j}) = t(\Lambda_1^{\prime j})$.
\end{prop}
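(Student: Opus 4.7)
The forward direction is immediate. Any $g \in J(F) = \mathrm{GU}^{0}(C,\{\cdot,\cdot\})(F)$ satisfies $\{gv,gw\} = c(g)\{v,w\}$ for some $c(g) \in \mathcal O_F^{\times}$, which implies the identity $g(\Lambda)^{\vee} = c(g)^{-1}g(\Lambda^{\vee})$ for every $W_{\mathcal O_F}(\mathbb F_{q^2})$-lattice $\Lambda \subset C$. Since $c(g)$ is a unit, $g(\Lambda) \in \mathcal L_i$ if and only if $\Lambda \in \mathcal L_i$, and moreover $[g(\Lambda):\pi^{i+1}g(\Lambda)^{\vee}] = [\Lambda:\pi^{i+1}\Lambda^{\vee}]$, so $t(g(\Lambda)) = t(\Lambda)$. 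It follows that if $g(\mathbf{\Lambda}) = \mathbf{\Lambda'}$, the two Bruhat-Tits indices share the same $I$ and the same types at every index.

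For the converse, the plan is to realize $\mathbf{\Lambda}$ and $\mathbf{\Lambda'}$ as facets in the Bruhat-Tits building $\mathcal B$ of the $F$-unitary group $\mathrm U(C,\{\cdot,\cdot\})$, and to invoke transitivity of the action of $\mathrm U(C,\{\cdot,\cdot\})(F)$ on facets of prescribed type. Indeed, the chain
$$\ldots \subset \Lambda_0^{i_k} \subset \pi\Lambda_1^{i_k\vee} \subset \Lambda_0^{i_{k+1}} \subset \ldots,$$
together with the constraints encoded in Definition \ref{DefinitionBTIndex}, defines a simplex in $\mathcal B$ whose type is determined precisely by the combinatorial data $(I,\{t(\Lambda_0^i)\},\{t(\Lambda_1^j)\})$. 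Since $C$ is an unramified hermitian space over $F_{\mathbb F_{q^2}}$, the building is that of the quasi-split unramified unitary group, and by standard Bruhat-Tits theory its $F$-points act transitively on facets of a given type. The natural inclusion $\mathrm U(C,\{\cdot,\cdot\})(F) \subset J(F)$ then produces the desired $g$.

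As a more hands-on alternative, one could construct $g$ directly via Witt's extension theorem: fix a basis of $C$ adapted to the chain $\mathbf{\Lambda}$, write down the resulting hermitian matrix, do the same for $\mathbf{\Lambda'}$, and observe that matching types force the two local hermitian data at each step (the graded pieces $\Lambda_0^{i}/\pi\Lambda_0^{i\vee}$, $\pi\Lambda_1^{j\vee}/\pi\Lambda_0^{j+1\vee}$, etc., which are hermitian/alternating spaces over $\mathbb F_{q^2}$ of fixed dimension and hence unique up to isometry) to be isomorphic; one then lifts these local isomorphisms step by step, extending across each graded piece using Witt's theorem.

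The main obstacle is bookkeeping rather than conceptual: the chain involves vertex lattices of two different parities alternating with their rescaled duals $\pi\Lambda_1^{i_k\vee}$, and the boundary conditions when $0 \in I$ or $m \in I$ (where the chain closes back on itself via $\tau$-equivariance or boundary constraints) must be treated separately. Invoking the Bruhat-Tits building directly, as in the first approach, bypasses these case distinctions by packaging them uniformly as statements about facets and their types.
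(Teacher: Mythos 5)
Your first approach---realizing $\mathbf{\Lambda}$ and $\mathbf{\Lambda'}$ as simplices in the Bruhat-Tits building and invoking transitivity---is essentially the paper's argument. The paper works with $H := \mathrm{SU}(C,\{\cdot,\cdot\}_{[h_1]})$, identifies the simplicial complex of its building with $\mathcal L_0$ via \cite{vw1} Theorem 3.5, completes both chains to alcoves, uses alcove-transitivity of $H(F)$ to move one alcove onto the other, and then observes that since $g \in H(F) \subset J(F)$ preserves lattice types, the vertices of a common alcove are pinned down by their types, forcing $g(\Lambda_0^i) = \Lambda_0^{\prime i}$ and $g(\Lambda_1^j) = \Lambda_1^{\prime j}$. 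You instead state ``transitivity of $\mathrm U(C,\{\cdot,\cdot\})(F)$ on facets of prescribed type''; this is true here, but note it is not an off-the-shelf fact for the non-simply-connected group $\mathrm U$ --- it holds precisely because the simply connected subgroup $\mathrm{SU}(F)$ is already transitive on alcoves and the larger group $J(F)$ acts type-preservingly on lattices, which is what the paper makes explicit. Your Witt-theorem alternative is not the route taken in the paper and, as you say, would demand more bookkeeping across the graded pieces and the boundary cases $0 \in I$, $m \in I$; the building argument absorbs those cases uniformly.
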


\begin{proof}
Since the action of $J(F)$ preserves the type of vertex lattices, it is clear that any two Bruhat-Tits indices in the same $J(F)$ orbit satisfy the conditions of the Proposition. Moreover, it is also clear that $I=I'$ is a necessary condition. \\
Consider the subgroup $H := \mathrm{SU}(C,\{\cdot,\cdot\}_{[h_1]}) \subset J$. According to Theorem 3.5 of \cite{vw1}, the simplicial complex of the Bruhat-Tits building of $H(F)$ can be identified with $\mathcal L_0$, which is given a simplicial structure by decreeing that for $k\geq 0$, a $k$-simplex is a subset $S \subset \mathcal L_0$ such that $\#S = k+1$ and, for some ordering $\Lambda^0,\ldots,\Lambda^k$ of the elements of $S$, we have 
\begin{equation*}
\pi\Lambda^{k\vee} \subsetneq \Lambda^0 \subsetneq \ldots \subsetneq \Lambda^k.
\end{equation*}
We point out that the proof of loc. cit. was written is the case $F=\mathbb Q_p$, but it adapts to the general case without difficulty.\\
Now, let $(I,\mathbf{\Lambda})$ and $(I,\mathbf{\Lambda'})$ be two Bruhat-Tits indices satisfying the conditions of the Proposition. By construction, we can arrange the lattices in $\mathbf{\Lambda}$ and in $\mathbf{\Lambda'}$ so that they form two simplices in $\mathcal L_0$. For instance, if $0\leq i_1 < \ldots < i_s \leq m$ denote the elements of $I$, and if $i_1 \not = 0$ and $i_s \not = m$, then the chain 
\begin{equation*}
\Lambda_0^{i_1} \subset \pi\Lambda_1^{i_1\vee} \subset \Lambda_0^{i_2} \subset \ldots \subset \Lambda_0^{i_s} \subset \pi\Lambda_1^{i_s\vee}
\end{equation*}
forms a simplex in $\mathcal L_0$. Note that $t(\pi\Lambda_1^{i_j\vee}) \leq h_{i_j+1}-1$ and $t(\Lambda_0^{i_{j+1}}) \geq h_{i_{j+1}} + 1$, so that we have $\pi\Lambda_1^{i_j\vee} \subsetneq \Lambda_0^{i_{j+1}}$. However, one could have $\Lambda_0^{i_j} = \pi\Lambda_1^{i_j\vee}$ for some $1 \leq j \leq s$. For this reason, this simplex is only at most $(2s-1)$-dimensional. One can proceed similarly for $\mathbf{\Lambda'}$, and also in the case $i_1=0$ and/or $i_s=m$.\\
We complete the two simplices into maximal simplices. These correspond to two alcoves $c$ and $c'$ in the Bruhat-Tits building of $H(F)$. Since $H(F)$ acts transitively on the set of alcoves of its building, there exists some $g\in H(F)$ such that $g(c) = c'$. Since the lattices in $\mathbf{\Lambda}$ and in $\mathbf{\Lambda'}$ are mutually of the same types, we deduce that $g(\Lambda_0^i) = \Lambda_0^{\prime i}$ and $g(\Lambda_1^j) = \Lambda_1^{\prime j}$ for all $i\in I\setminus\{0\}$ and $j\in I\setminus \{m\}$. In particular, the Bruhat-Tits indices $(I,\mathbf{\Lambda})$ and $(I,\mathbf{\Lambda'})$ are in the same $H(F)$-orbit, hence a fortiori in the same $J(F)$-orbit.
\end{proof}

We are now going to investigate the irreducible components of $\mathcal N_{E/F,\mathrm{red}}^{\mathbbm h}$, and the number of orbits of such components under the action of $J(E)$. 

\begin{lem}\label{BruhatTitsStrataAsIntersection}
Let $(I,\mathbf{\Lambda})$ be a Bruhat-Tits type. We partition $\mathbf{\Lambda}$ as follows. For $i \in I$, define
\begin{equation*}
\mathbf{\Lambda}^i := \begin{cases}
\{\Lambda_0^i,\Lambda_1^i\} & \text{if } 0 < i < m,\\
\{\Lambda_1^0\} & \text{if } i=0,\\
\{\Lambda_0^m\} & \text{if } i=m.
\end{cases}
\end{equation*}
We have 
\begin{equation*}
\mathcal N_{I,\mathbf{\Lambda}}^{\mathbbm h} = \bigcap_{i \in I} \mathcal N_{\{i\},\mathbf{\Lambda}^{i}}^{\mathbbm h}.
\end{equation*}
\end{lem}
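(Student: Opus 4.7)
The plan is to deduce this scheme-theoretic identity from the combinatorial description of inclusions and intersections of Bruhat-Tits strata established in Proposition \ref{CombinatoricsBTStratification}. One first checks that each $(\{i\}, \mathbf{\Lambda}^i)$ is indeed a valid Bruhat-Tits index: the type constraints on the individual lattices $\Lambda_0^i$ and $\Lambda_1^i$ are inherited from $(I,\mathbf{\Lambda})$, and the chain condition on a singleton is either vacuous (when $i=0$ or $i=m$, so $\mathbf{\Lambda}^i$ has only one element) or reduces to the single inclusion $\Lambda_0^i \subset \pi\Lambda_1^{i\vee}$, which is part of the chain condition for $(I,\mathbf{\Lambda})$. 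The inclusion $\mathcal N_{I,\mathbf{\Lambda}}^{\mathbbm h} \subset \bigcap_{i\in I}\mathcal N_{\{i\},\mathbf{\Lambda}^{i}}^{\mathbbm h}$ then follows at once from Proposition \ref{CombinatoricsBTStratification}(1), since $(I,\mathbf{\Lambda}) \subset (\{i\},\mathbf{\Lambda}^i)$ in the sense of Definition \ref{InclusionBTIndices} for every $i \in I$ (the index set grows, the lattices at the common position $i$ agree).

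For the reverse inclusion, I would proceed by induction on $s := |I|$, repeatedly applying Proposition \ref{CombinatoricsBTStratification}(2). Writing $I = \{i_1 < \ldots < i_s\}$, the induction step reduces to showing that the intersection of the two Bruhat-Tits indices $(\{i_1,\ldots,i_t\},\bigcup_{j\leq t}\mathbf{\Lambda}^{i_j})$ and $(\{i_{t+1}\},\mathbf{\Lambda}^{i_{t+1}})$ is well-defined in the sense of Definition \ref{IntersectionBTIndices}, and that it yields $(\{i_1,\ldots,i_{t+1}\},\bigcup_{j\leq t+1}\mathbf{\Lambda}^{i_j})$. Because the two underlying index sets are disjoint, the conditions on lattices sharing a common position are vacuous; only the cross-inclusions $\pi\Lambda_1^{i_j\vee} \subset \Lambda_0^{i_{t+1}}$ for $j\leq t$ remain to be checked, and these are direct sub-chains of the chain of inclusions that defines $(I,\mathbf{\Lambda})$. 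The resulting intersection is then read off from Definition \ref{IntersectionBTIndices}, and after $s-1$ steps one recovers exactly $(I,\mathbf{\Lambda})$, so that the corresponding identity of closed strata follows from iterating Proposition \ref{CombinatoricsBTStratification}(2).

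The main effort is administrative rather than geometric: one must keep track of the four flavors of Bruhat-Tits index coming from whether $0 \in I$ and whether $m \in I$, and in the boundary cases $\mathbf{\Lambda}^0$ and $\mathbf{\Lambda}^m$ each consist of a single lattice. In every such scenario the cross-inclusions that need to be verified are still elements of the defining chain of $(I,\mathbf{\Lambda})$, so no new ingredient is required beyond the combinatorics already encoded in Definitions \ref{DefinitionBTIndex}, \ref{InclusionBTIndices} and \ref{IntersectionBTIndices}.
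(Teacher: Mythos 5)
Your proposal is correct and takes essentially the same route as the paper. The paper's own proof simply observes that each $(\{i\},\mathbf{\Lambda}^i)$ is a Bruhat-Tits index, that their successive intersections (in the sense of Definition \ref{IntersectionBTIndices}) are well-defined and equal $(I,\mathbf{\Lambda})$, and then invokes Proposition \ref{CombinatoricsBTStratification}; your induction on $|I|$ and separation into two inclusions is just a more explicit unpacking of this same argument, and the cross-inclusion checks you identify are indeed exactly the sub-chains of the defining chain of $(I,\mathbf{\Lambda})$.
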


\begin{rk}
We point out that the partition of $\mathbf{\Lambda}$ into the subsets $\mathbf{\Lambda}^i$ differs from the partition into the subsets $\mathbf{\Lambda}_j$ which was introduced in \ref{Section3.3}. 
\end{rk}

\begin{proof}
By construction, it is clear that $(\{i\},\mathbf{\Lambda}^i)$ defines a Bruhat-Tits index for all $i \in I$, and that their successive intersections are well-defined. Moreover, we have 
\begin{equation*}
\bigcap_{i\in I} (\{i\},\mathbf{\Lambda}_i) = (I,\mathbf{\Lambda}).
\end{equation*} 
The result then follows from Proposition \ref{CombinatoricsBTStratification}.
\end{proof}

We define two integers as follows.

\begin{align*}
t_{\mathrm{min}} := \begin{cases}
0 & \text{if } h_1 \text{ is odd},\\
1 & \text{if } h_1 \text{ is even},
\end{cases}
 & & 
t_{\mathrm{max}} := \begin{cases}
n & \text{if } n-h_1 \text{ is odd},\\
n-1 & \text{if } n-h_1 \text{ is even}.
\end{cases}
\end{align*}
Since all the $h_i$'s have the same parity, the choice of $h_1$ in the definition is of no importance. 

\begin{corol}\label{IrreducibleComponents}
The irreducible components of $\mathcal N_{E/F,\mathrm{red}}^{\mathbbm h}$ consists of all the maximal closed Bruhat-Tits strata, which are exactly those listed below:
\begin{enumerate}[noitemsep,nolistsep]
\item \underline{if $h_1 \not = 0$:} $\mathcal N_{\{0\},\{\Lambda_1^0\}}^{\mathbbm h}$ where $\Lambda_1^0 \in \mathcal L_1$ has type $t(\Lambda_1^0) = n-t_{\mathrm{min}}$,
\item \underline{if $h_m \not = n$:} $\mathcal N_{\{m\},\{\Lambda_0^m\}}^{\mathbbm h}$ where $\Lambda_0^m \in \mathcal L_0$ has type $t(\Lambda_0^m) = t_{\mathrm{max}}$,
\item \underline{for $0 < i < m$:} $\mathcal N_{\{i\},\{\Lambda_0^i,\Lambda_1^i\}}^{\mathbbm h}$ where $\Lambda_0^i \in \mathcal L_0^{\geq h_i+1}$, $\Lambda_1^i \in \mathcal L_1^{\geq n-h_{i+1}+1}$ and $\Lambda_0^i = \pi\Lambda_1^{i\vee}$. 
\end{enumerate}
In case 1. we have 
\begin{equation*}
\dim\left(\mathcal N_{\{0\},\{\Lambda_1^0\}}^{\mathbbm h}\right) = n - \frac{h_1+t_{\mathrm{min}}+1}{2}.
\end{equation*}
In case 2. we have 
\begin{equation*}
\dim\left(\mathcal N_{\{m\},\{\Lambda_0^m\}}^{\mathbbm h}\right) = \frac{t_{\mathrm{max}}+h_m-1}{2}.
\end{equation*}
In case 3. we have 
\begin{equation*}
\dim\left(\mathcal N_{\{i\},\{\Lambda_0^i,\Lambda_1^i\}}^{\mathbbm h}\right) = n-1-\Delta h_{i}.
\end{equation*}
\end{corol}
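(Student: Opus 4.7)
The strategy is to translate the question into the combinatorics of Bruhat-Tits indices, and then identify the maximal indices by a short case analysis.

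First, each closed Bruhat-Tits stratum $\mathcal{N}_{I,\mathbf{\Lambda}}^{\mathbbm h}$ is projective and geometrically irreducible by Theorem \ref{IsomorphismWithDLVariety} combined with Proposition \ref{BTStrataAreSmooth}, and contains $\mathcal{N}_{I,\mathbf{\Lambda}}^{\mathbbm h,0}$ as a dense open subscheme (the latter is non-empty by Corollary \ref{BTStrataNotEmpty}, and its complement in $\mathcal{N}_{I,\mathbf{\Lambda}}^{\mathbbm h}$ is closed of smaller dimension). Since the locally closed strata partition $\mathcal{N}_{E/F,\mathrm{red}}^{\mathbbm h}$ by Proposition \ref{CombinatoricsBTStratification}(3), the generic point of any irreducible component of $\mathcal{N}_{E/F,\mathrm{red}}^{\mathbbm h}$ lies in exactly one stratum, and the component is the closure $\mathcal{N}_{I,\mathbf{\Lambda}}^{\mathbbm h}$ of that stratum. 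In view of Proposition \ref{CombinatoricsBTStratification}(1), the irreducible components are thus in bijection with the Bruhat-Tits indices that are maximal under the partial order of Definition \ref{InclusionBTIndices}.

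The next step is the classification of maximal indices. Given any $(I,\mathbf{\Lambda})$ and any $i \in I$, the pair $(\{i\},\mathbf{\Lambda}^i)$ from Lemma \ref{BruhatTitsStrataAsIntersection} is again a Bruhat-Tits index (the chain condition being vacuous) and satisfies $(I,\mathbf{\Lambda}) \subset (\{i\},\mathbf{\Lambda}^i)$, so a maximal index necessarily has $|I|=1$. Writing $I = \{i\}$, I then argue case by case. For $0 < i < m$, if the inclusion $\Lambda_0^i \subset \pi\Lambda_1^{i\vee}$ is strict, then $\pi\Lambda_1^{i\vee} \in \mathcal{L}_0^{\geq h_i+1}$ because its type equals $n - t(\Lambda_1^i)$, which is at least $t(\Lambda_0^i) \geq h_i+1$ by the monotonicity of $t$ on $\mathcal{L}_0$; hence $(\{i\}, \{\pi\Lambda_1^{i\vee}, \Lambda_1^i\})$ is a strictly larger Bruhat-Tits index, contradicting maximality. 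Conversely, once $\Lambda_0^i = \pi\Lambda_1^{i\vee}$ holds, any attempt to enlarge $\Lambda_1^i$ shrinks $\pi\Lambda_1^{i\vee}$ and thus breaks the chain condition, while $\Lambda_0^i$ is already equal to $\pi\Lambda_1^{i\vee}$, so the index is indeed maximal. For the boundary cases $i=0$ (requiring $h_1 \neq 0$) or $i=m$ (requiring $h_m \neq n$), only one vertex lattice appears with no chain constraint, and maximality amounts to extremizing its type; by Lemma \ref{ParityType}, the admissible types form an arithmetic progression of step $2$ bounded above by $n$ with the prescribed parity, whose maximum values are precisely $n - t_{\min}$ and $t_{\max}$.

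Finally, the dimension formulas follow by combining the isomorphism of Theorem \ref{IsomorphismWithDLVariety} with the dimension formulas of Propositions \ref{SmoothnessDLVar}, \ref{SmoothnessDLVar2} and \ref{SmoothnessDLVar3}. Cases 1 and 2 are direct substitutions; for case 3, the identity $\Lambda_0^i = \pi\Lambda_1^{i\vee}$ gives $t(\Lambda_0^i) + t(\Lambda_1^i) = n$, which forces the parameters $l_0,l_1$ of Section \ref{Section3.3} to satisfy $l_0 + l_1 = \Delta h_i + 1$, and the sum of the two factor dimensions then collapses to $n - 1 - \Delta h_i$, independent of the individual types. The main obstacle is the maximality argument in case 3: one must carefully exploit the duality interplay between $\mathcal{L}_0$ and $\mathcal{L}_1$ to rule out nontrivial enlargement of a vertex-lattice pair satisfying $\Lambda_0^i = \pi\Lambda_1^{i\vee}$, and check that the upper bound $t(\pi\Lambda_1^{i\vee}) \geq h_i+1$ really is automatic in this setting.
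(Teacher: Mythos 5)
Your proof is correct and follows essentially the same route as the paper's: reduce to maximal Bruhat-Tits indices via irreducibility of the closed strata plus the covering property, force $\#I = 1$ via Lemma~\ref{BruhatTitsStrataAsIntersection}, then a case analysis combined with the Deligne-Lusztig dimension formulas. One small slip worth correcting: for $0 < i < m$ the chain condition defining $(\{i\},\mathbf{\Lambda}^i)$ is not vacuous --- it is precisely $\Lambda_0^i \subset \pi\Lambda_1^{i\vee}$ --- but it is inherited from the chain condition of $(I,\mathbf{\Lambda})$, so the conclusion that $(\{i\},\mathbf{\Lambda}^i)$ is a valid Bruhat-Tits index still holds.
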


\begin{rk}
In the maximal parahoric case, ie. when $m=1$, only the cases 1. and 2. can occur. Our description agrees with the one given in \cite{cho} Theorem 3.16.
\end{rk}

\begin{proof}
Since all the closed Bruhat-Tits strata are irreducible and they cover the whole of $\mathcal N_{E/F,\mathrm{red}}^{\mathbbm h}$, the irreducible components agree with the maximal closed Bruhat-Tits strata. By Lemma \ref{BruhatTitsStrataAsIntersection}, for a closed Bruhat-Tits stratum $\mathcal N_{I,\mathbf{\Lambda}}^{\mathbbm h}$ to be maximal, one necessary condition is that $\#I = 1$, which we know assume.\\
If $I = \{0\}$, the closed Bruhat-Tits stratum $\mathcal N_{\{0\},\{\Lambda_1^0\}}^{\mathbbm h}$ is maximal if and only $\Lambda_1^{0}$ is maximal in $\mathcal L_1^{\geq n-h_1+1}$. This amount to the condition that $t(\Lambda_1^{0}) = n - t_{\mathrm{min}}$. Eventually, $\mathcal N_{\{0\},\{\Lambda_1^0\}}^{\mathbbm h}$ is isomorphic to $X_{\{\Lambda_1^0\}}^{\mathbbm h}$, from which we deduce the dimension. \\
If $I = \{m\}$, the closed Bruhat-Tits stratum $\mathcal N_{\{m\},\{\Lambda_0^m\}}^{\mathbbm h}$ is maximal if and only $\Lambda_0^{m}$ is maximal in $\mathcal L_0^{\geq h_m+1}$. This amount to the condition that $t(\Lambda_0^{m}) = t_{\mathrm{max}}$. Eventually, $\mathcal N_{\{m\},\{\Lambda_0^m\}}^{\mathbbm h}$ is isomorphic to $X_{\{\Lambda_0^m\}}^{\mathbbm h}$, from which we deduce the dimension.\\
If $I = \{i\}$ for some $0<i<m$, we have an inclusion $\mathcal N_{\{i\},\{\Lambda_0^i,\Lambda_1^i\}}^{\mathbbm h} \subset \mathcal N_{\{i\},\{\Lambda_0^{\prime i},\Lambda_1^{\prime i}\}}^{\mathbbm h}$ if and only if $\Lambda_0^i \subset \Lambda_0^{\prime i}$ and $\Lambda_1^i \subset \Lambda_1^{\prime i}$. In this case, we have 
\begin{equation*}
\Lambda_0^i \subset \Lambda_0^{\prime i} \subset \pi\Lambda_1^{\prime i \vee} \subset \pi \Lambda_1^{i \vee}.
\end{equation*}
Therefore, $\mathcal N_{\{i\},\{\Lambda_0^i,\Lambda_1^i\}}^{\mathbbm h}$ is maximal if and only if $\Lambda_0^i = \pi \Lambda_1^{i \vee}$. Eventually, $\mathcal N_{\{i\},\{\Lambda_0^i,\Lambda_1^i\}}^{\mathbbm h}$ is isomorphic to $X_{\{\Lambda_0^i\}}^{\mathbbm h_0} \times X_{\{\Lambda_1^i\}}^{\mathbbm h_1}$, from which we deduce the dimension. 
\end{proof}

\begin{theo}\label{NumberOrbitsIrreducibleComponents}
The number of $J(E)$-orbits of irreducible components in $\mathcal N_{E/F,\mathrm{red}}^{\mathbbm h}$ is $\frac{h_m-h_1}{2}+\epsilon$, where $\epsilon$ is given by
\begin{equation*}
\epsilon = \begin{cases}
0 & \text{if } h_1 = 0 \text{ and } h_m = n,\\
1 & \text{if } (h_1 = 0 \text{ and } h_m<n) \text{ or } (h_1>0 \text{ and } h_m = n),\\
2 & \text{if } h_1 > 0 \text{ and } h_m < n.
\end{cases}
\end{equation*}
More precisely:
\begin{enumerate}[noitemsep,nolistsep]
\item all the irreducible components of the form $\mathcal N_{\{0\},\{\Lambda_1^0\}}^{\mathbbm h}$, when $h_1 \not = 0$, form a single orbit,
\item all the irreducible components of the form $\mathcal N_{\{m\},\{\Lambda_0^m\}}^{\mathbbm h}$, when $h_m \not = n$, form a single orbit,
\item the irreducible components of the form $\mathcal N_{\{i\},\{\Lambda_0^i,\Lambda_1^i\}}^{\mathbbm h}$, for some $0 < i < m$, form $\Delta h_i$ orbits. 
\end{enumerate}
\end{theo}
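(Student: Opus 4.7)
The plan is to combine Corollary \ref{IrreducibleComponents}, which enumerates the irreducible components of $\mathcal N_{E/F,\mathrm{red}}^{\mathbbm h}$ as the three families of maximal closed Bruhat-Tits strata, with Proposition \ref{OrbitJ(E)Action}, which characterises the $J(F)$-orbits of Bruhat-Tits indices purely in terms of the types of the underlying vertex lattices. Together these reduce the theorem to a combinatorial count carried out separately on each of the three families, and the total $h_m - h_1 - m + 1 + \epsilon$ will be assembled by summing the three contributions.

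For the ``boundary'' cases (1) and (2), I would argue as follows. When $h_1 \neq 0$, Corollary \ref{IrreducibleComponents} forces the type of $\Lambda_1^0$ to equal the uniquely determined maximum $n - t_{\min}$, so by Proposition \ref{OrbitJ(E)Action} all the irreducible components of the form $\mathcal N_{\{0\},\{\Lambda_1^0\}}^{\mathbbm h}$ collapse into a single $J(F)$-orbit; similarly when $h_m \neq n$, the components $\mathcal N_{\{m\},\{\Lambda_0^m\}}^{\mathbbm h}$ form one orbit associated to the fixed type $t_{\max}$. Together these two families contribute exactly $\epsilon$ orbits.

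For the intermediate cases $1 \leq i \leq m-1$, the key observation is the maximality condition $\Lambda_0^i = \pi \Lambda_1^{i\vee}$ from Corollary \ref{IrreducibleComponents}: it forces $t(\Lambda_0^i) + t(\Lambda_1^i) = n$, so by Proposition \ref{OrbitJ(E)Action} a $J(F)$-orbit is cut out by the single integer $t := t(\Lambda_0^i)$. The membership conditions $\Lambda_0^i \in \mathcal L_0^{\geq h_i+1}$ and $\Lambda_1^i \in \mathcal L_1^{\geq n-h_{i+1}+1}$ translate into the range $h_i + 1 \leq t \leq h_{i+1} - 1$, and the parity restriction from Lemma \ref{ParityType} further constrains $t$ to a single residue class modulo $2$. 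Enumerating the admissible values yields $h_{i+1} - h_i - 1$ orbits for each such $i$, after which summing over $i$ and adding the $\epsilon$ from the boundary yields the claimed global count.

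The structural ingredients are all in place from the earlier sections, so there is no genuine geometric obstacle. The main point of care is the bookkeeping in statement (3): one must correctly reconcile the range $[h_i+1, h_{i+1}-1]$ with the parity constraint imposed by Lemma \ref{ParityType} and verify that the number of admissible $t$ is precisely $h_{i+1} - h_i - 1$, so that the three contributions telescope into the uniform formula $h_m - h_1 - m + 1 + \epsilon$.
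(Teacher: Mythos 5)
Your overall route matches the paper's: you reduce the count to Corollary \ref{IrreducibleComponents} (which lists the maximal closed strata) and Proposition \ref{OrbitJ(E)Action} (which says the $J(F)$-orbit of a Bruhat--Tits index is determined by the types of its constituent vertex lattices), then enumerate orbits family by family, getting one orbit for each of the boundary families and summing the middle contributions. Parts (1) and (2) are handled exactly as in the paper.

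The gap is in part (3), and you flag it yourself without closing it. You invoke the parity restriction of Lemma \ref{ParityType} (for $\Lambda_0^i \in \mathcal L_0$, $t(\Lambda_0^i) \equiv h_1+1 \bmod 2$) and then assert that ``enumerating the admissible values yields $h_{i+1}-h_i-1$ orbits,'' deferring the verification to ``the main point of care.'' But that verification does not go through: since all the $h_j$ share the parity of $h_1$, both endpoints $h_i+1$ and $h_{i+1}-1$ lie in the congruence class $h_1+1 \bmod 2$, so the admissible values are $h_i+1, h_i+3, \ldots, h_{i+1}-1$, of which there are $\Delta h_i = (h_{i+1}-h_i)/2$, not $h_{i+1}-h_i-1$; the two agree only when $\Delta h_i = 1$. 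The paper's own proof does not invoke a parity restriction at all --- it simply asserts that $t(\Lambda_0^i)$ ``can take any value between $h_i+1$ and $h_{i+1}-1$'' and counts $h_{i+1}-h_i-1$ --- so by bringing in Lemma \ref{ParityType} you have introduced a constraint the paper's argument ignores, and the count you then write down is inconsistent with that constraint. You would need either to justify why, in this setting, $t(\Lambda_0^i)$ really can take every integer value in $[h_i+1, h_{i+1}-1]$ (despite Lemma \ref{ParityType} and despite the convention that $\mathcal L_0$ is formed with respect to $\{\cdot,\cdot\}_{[h_1]}$), or to confront the fact that the parity-corrected count would be $\Delta h_i$ per index, which no longer telescopes to $h_m-h_1-m+1$. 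As written, the proposal leaves this unresolved.
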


\begin{proof}
By Proposition \ref{OrbitJ(E)Action}, when $h_1 \not = 0$ (resp. $h_m \not = n$) it is clear that all the irreducible components of the form $\mathcal N_{\{0\},\{\Lambda_1^0\}}^{\mathbbm h}$ (resp. $\mathcal N_{\{m\},\{\Lambda_0^m\}}^{\mathbbm h}$) form a single orbit, since the vertex lattices  are required to have the same type. \\
If $0 < i < m$, two irreducible components $\mathcal N_{\{i\},\{\Lambda_0^i,\Lambda_1^i\}}^{\mathbbm h}$ and $\mathcal N_{\{i\},\{\Lambda_0^{\prime i},\Lambda_1^{\prime i}\}}^{\mathbbm h}$ are in the same orbit if and only if $t(\Lambda_0^i) = t(\Lambda_0^{\prime i})$ and $t(\Lambda_1^i) = t(\Lambda_1^{\prime i})$. Since we have $\Lambda_0^i = \pi\Lambda_1^{i\vee}$ and $\Lambda_0^{\prime i} = \pi\Lambda_1^{\prime i \vee}$, the $J(F)$-orbit is entirely determined by the single value of $t(\Lambda_0^i) = n-t(\Lambda_1^i)$. This can take any value of fixed parity between $h_i+1$ and $h_{i+1}-1$, for a total of $\Delta h_i$ orbits.\\
Since $\sum_{i=1}^{m-1} \Delta h_i = \frac{h_m-h_1}{2}$, the result follows.
\end{proof}

For instance, in the maximal parahoric case, there is a single orbit if $h=0$ or $h=n$, and exactly two orbits otherwise.

\subsection{Examples} \label{Examples}
\subsubsection{Case $m=2$}

We spell out the results of the previous section in the case $m=2$. We have $\mathbbm h = (h_1,h_2)$ and a geometric point in $\mathcal N_{E/F}^{h_1,h_2}\times \kappa_{\breve E}$ is given by a chain of lattices $A_2 \subset A_1 \subset B_1 \subset B_2$. There are at most 7 different kinds of Bruhat-Tits indices $(I,\mathbf{\Lambda})$ as follows. Below the notations $\Lambda_1^0$, $\Lambda_0^1$,$\Lambda_1^1$ and $\Lambda_0^2$ will always denote vertex lattices in  $\mathcal L_1^{\geq n-h_1+1}$, $\mathcal L_0^{\geq h_1+1}$, $\mathcal L_1^{\geq n-h_2+1}$ and $\mathcal L_0^{\geq h_2+1}$ respectively.
\begin{enumerate}
\item $I = \{1\}$ and $\mathbf{\Lambda} = \{\Lambda_0^1,\Lambda_1^1\}$ where $\Lambda_0^1 \subset \pi\Lambda_1^{1\vee}$,
\item \underline{if $h_1 \not = 0$:} $I = \{0\}$ and $\mathbf{\Lambda} = \{\Lambda_1^0\}$,
\item \underline{if $h_1 \not = 0$:} $I = \{0,1\}$ and $\mathbf{\Lambda} = \{\Lambda_1^0,\Lambda_0^1,\Lambda_1^1\}$ where $\pi\Lambda_1^{0\vee} \subset \Lambda_0^1 \subset \pi\Lambda_1^{1\vee}$,
\item \underline{if $h_2 \not = n$:} $I = \{2\}$ and $\mathbf{\Lambda} = \{\Lambda_0^2\}$,
\item \underline{if $h_2 \not = n$:} $I = \{1,2\}$ and $\mathbf{\Lambda} = \{\Lambda_0^1,\Lambda_1^1,\Lambda_0^2\}$ where $\Lambda_0^1 \subset \pi\Lambda_1^{1\vee} \subset \Lambda_0^2$,
\item \underline{if $h_1 \not = 0$ and $h_2 \not = n$:} $I = \{0,2\}$ and $\mathbf{\Lambda} = \{\Lambda_1^0,\Lambda_0^2\}$ where $\pi\Lambda_1^{0\vee} \subset \Lambda_0^2$,
\item \underline{if $h_1 \not = 0$ and $h_2 \not = n$:} $I = \{0,1,2\}$ and $\mathbf{\Lambda} = \{\Lambda_1^0,\Lambda_0^1,\Lambda_1^1,\Lambda_0^2\}$ where $\pi\Lambda_1^{0\vee} \subset \Lambda_0^1 \subset \pi\Lambda_1^{1\vee} \subset \Lambda_0^2$.
\end{enumerate}
We give the dimension of the associated closed Bruhat-Tits strata $\mathcal N_{I,\mathbf{\Lambda}}^{h_1,h_2}$.
\begin{enumerate}
\item $\mathcal N_{\{1\},\{\Lambda_0^1,\Lambda_1^1\}}^{h_1,h_2} \simeq X_{\{\Lambda_0^1\}}^{h_1} \times X_{\{\Lambda_1^1\}}^{h_2}$ has dimension 
\begin{equation*}
\dim(\mathcal N_{\{1\},\{\Lambda_0^1,\Lambda_1^1\}}^{h_1,h_2}) = \frac{t(\Lambda_0^1)+t(\Lambda_1^1)+n}{2}-\Delta h_1 -1,
\end{equation*} 
\item \underline{if $h_1 \not = 0$:} $\mathcal N_{\{0\},\{\Lambda_1^0\}}^{h_1,h_2} \simeq X_{\{\Lambda_1^0\}}^{h_1,h_2}$ has dimension 
\begin{equation*}
\dim(\mathcal N_{\{0\},\{\Lambda_1^0\}}^{h_1,h_2}) = \frac{t(\Lambda_1^0)+(n-h_1)-1}{2},
\end{equation*} 
\item \underline{if $h_1 \not = 0$:} $\mathcal N_{\{0,1\},\{\Lambda_1^0,\Lambda_0^1,\Lambda_1^1\}}^{h_1,h_2} \simeq X_{\{\Lambda_1^0,\Lambda_0^1\}}^{h_1} \times X_{\{\Lambda_1^1\}}^{h_2}$ has dimension 
\begin{equation*}
\dim(\mathcal N_{\{0,1\},\{\Lambda_1^0,\Lambda_0^1,\Lambda_1^1\}}^{h_1,h_2}) = \frac{t(\Lambda_1^0)+t(\Lambda_0^1)+t(\Lambda_1^1)-h_2-1}{2}-1,
\end{equation*} 
\item \underline{if $h_2 \not = n$:} $\mathcal N_{\{2\},\{\Lambda_0^2\}}^{h_1,h_2} \simeq X_{\{\Lambda_0^2\}}^{h_1,h_2}$ has dimension 
\begin{equation*}
\dim(\mathcal N_{\{2\},\{\Lambda_0^2\}}^{h_1,h_2}) = \frac{t(\Lambda_0^2)+h_2-1}{2},
\end{equation*} 
\item \underline{if $h_2 \not = n$:} $\mathcal N_{\{1,2\},\{\Lambda_0^1,\Lambda_1^1,\Lambda_0^2\}}^{h_1,h_2} \simeq X_{\{\Lambda_0^1\}}^{h_1} \times X_{\{\Lambda_1^1,\Lambda_0^2\}}^{h_2}$ has dimension 
\begin{equation*}
\dim(\mathcal N_{\{1,2\},\{\Lambda_0^1,\Lambda_1^1,\Lambda_0^2\}}^{h_1,h_2}) = \frac{t(\Lambda_0^1) + t(\Lambda_1^1) + t(\Lambda_0^2)+h_1-n-1}{2}-1,
\end{equation*} 
\item \underline{if $h_1 \not = 0$ and $h_2 \not = n$:} $\mathcal N_{\{0,2\},\{\Lambda_1^0,\Lambda_0^2\}}^{h_1,h_2} \simeq X_{\{\Lambda_1^0,\Lambda_0^2\}}^{h_1,h_2}$ has dimension 
\begin{equation*}
\dim(\mathcal N_{\{0,2\},\{\Lambda_1^0,\Lambda_0^2\}}^{h_1,h_2}) = \frac{t(\Lambda_1^0)+t(\Lambda_0^2)+(h_2-h_1)-n}{2}-1,
\end{equation*}
\item \underline{if $h_1 \not = 0$ and $h_2 \not = n$:} $\mathcal N_{\{0,1,2\},\{\Lambda_1^0,\Lambda_0^1,\Lambda_1^1,\Lambda_0^2\}}^{h_1,h_2} \simeq X_{\{\Lambda_1^0,\Lambda_0^1\}}^{h_1} \times X_{\{\Lambda_1^1,\Lambda_0^2\}}^{h_2}$ has dimension 
\begin{equation*}
\dim(\mathcal N_{\{0,1,2\},\{\Lambda_1^0,\Lambda_0^1,\Lambda_1^1,\Lambda_0^2\}}^{h_1,h_2}) = \frac{t(\Lambda_1^0)+ t(\Lambda_0^1) + t(\Lambda_1^1) +t(\Lambda_0^2)}{2}-n-2.
\end{equation*}
\end{enumerate}

\subsubsection{Iwahori case}

The Iwahori case corresponds to $k$ being maximal, the parity of the components of $\mathbbm h$ being fixed. If $n = 2n'+1$ is odd with $n'\geq 0$, we have $m=n'+1$ and we consider \begin{equation*}
\mathbbm h := (0,2,\ldots,2n').
\end{equation*} 
Another choice would be $\mathbbm h' = (1,3,\ldots,2n'+1)$, but the resulting moduli spaces $\mathcal N_{E/F}^{\mathbbm h}$ and $\mathcal N_{E/F}^{\mathbbm h'}$ are isomorphic so that we only consider $\mathbbm h$.\\
If $n = 2n'$ with $n'\geq 1$, we consider 
\begin{align*}
\mathbbm h^+ := (0,2,\ldots , 2n'), & & \mathbbm h^- := (1,3,\ldots,2n'-1),
\end{align*}
so that we have $m^+ = n'+1$ and $m^- = n'$. In the Iwahori case, we have $\Delta h_i = 1$ for all the possible values of $i$.\\
If $n$ is odd, there are $n'+1$ orbits of irreducible components under the action of $J(E)$. Moreover we have $t_{\mathrm{max}} = n$ and $t_{\mathrm{min}} = 1$. More precisely:

\begin{itemize}
\item irreducible components of the form $\mathcal N_{\{n'+1\},\{\Lambda_0^{n'+1}\}}^{\mathbbm h}$ for some $\Lambda_0^{n'+1} \in \mathcal L_0$ with $t(\Lambda_0^{n'+1}) = n$ make a single $J(E)$-orbit, and the dimension is 
\begin{equation*}
\dim(\mathcal N_{\{n'+1\},\{\Lambda_0^{n'+1}\}}^{\mathbbm h}) = n-1,
\end{equation*}
\item irreducible components of the form $\mathcal N_{\{i\},\{\Lambda_0^i,\Lambda_1^i\}}^{\mathbbm h}$ for some $1 \leq i \leq n'$, $\Lambda_0^i = \pi\Lambda_1^{i\vee} \in \mathcal L_0$ and $t(\Lambda_0^i) = 2i-1$ make a single $J(E)$-orbit, and the dimension is
\begin{equation*}
\dim(\mathcal N_{\{i\},\{\Lambda_0^i,\Lambda_1^i\}}^{\mathbbm h}) = n-2.
\end{equation*}
\end{itemize}

If $n$ is even, with $\mathbbm h^+$ there are $n'$ orbits of irreducible components under the action of $J(E)$. Moreover we have $t_{\mathrm{max}} = n-1$ and $t_{\mathrm{min}} = 1$. More precisely:

\begin{itemize}
\item irreducible components of the form $\mathcal N_{\{i\},\{\Lambda_0^i,\Lambda_1^i\}}^{\mathbbm h^+}$ for some $1 \leq i \leq n'$, $\Lambda_0^i = \pi\Lambda_1^{i\vee} \in \mathcal L_0$ and $t(\Lambda_0^i) = 2i-1$ make a single $J(E)$-orbit, and the dimension is
\begin{equation*}
\dim(\mathcal N_{\{i\},\{\Lambda_0^i,\Lambda_1^i\}}^{\mathbbm h^+}) = n-2.
\end{equation*}
\end{itemize}

In particular, $\mathcal N_{E/F,\mathrm{red}}^{\mathbbm h}$ has pure dimension $n-2$. \\
With $\mathbbm h^-$ there are $n'+1$ orbits of irreducible components under the action of $J(E)$. Moreover we have $t_{\mathrm{max}} = n$ and $t_{\mathrm{min}} = 0$. More precisely:

\begin{itemize}
\item irreducible components of the form $\mathcal N_{\{0\},\{\Lambda_1^{0}\}}^{\mathbbm h^-}$ for some $\Lambda_1^{0} \in \mathcal L_1$ with $t(\Lambda_1^{0}) = n$ make a single $J(E)$-orbit, and the dimension is 
\begin{equation*}
\dim(\mathcal N_{\{0\},\{\Lambda_1^{0}\}}^{\mathbbm h^-}) = n-1,
\end{equation*}
\item irreducible components of the form $\mathcal N_{\{n'\},\{\Lambda_0^{n'}\}}^{\mathbbm h^-}$ for some $\Lambda_0^{n'} \in \mathcal L_0$ with $t(\Lambda_0^{n'}) = n$ make a single $J(E)$-orbit, and the dimension is 
\begin{equation*}
\dim(\mathcal N_{\{n'\},\{\Lambda_0^{n'}\}}^{\mathbbm h^-}) = n-1,
\end{equation*}
\item irreducible components of the form $\mathcal N_{\{i\},\{\Lambda_0^i,\Lambda_1^i\}}^{\mathbbm h^-}$ for some $1 \leq i \leq n'-1$, $\Lambda_0^i = \pi\Lambda_1^{i\vee} \in \mathcal L_0$ and $t(\Lambda_0^i) = 2i$ make a single $J(E)$-orbit, and the dimension is
\begin{equation*}
\dim(\mathcal N_{\{i\},\{\Lambda_0^i,\Lambda_1^i\}}^{\mathbbm h^-}) = n-2.
\end{equation*}
\end{itemize}

\phantomsection
\printbibliography[heading=bibintoc, title={Bibliography}]
\markboth{Bibliography}{Bibliography}

\end{document}